\def\P{\mathbb{P}}
\def\Oo{\mathcal{O}}
\DeclareMathOperator*{\GL}{\mathrm{GL}}
\DeclareMathOperator{\Aff}{Aff}
\newtheorem{thmdef}[thm]{Theorem/Definition}
\newtheorem{maintheorem}{Theorem}
\Crefname{maintheorem}{Theorem}{Theorems}
\Crefname{maincorollary}{Corollary}{Corollaries}
\Crefname{mainexample}{Example}{Examples}
\def\x{\times} 
\newcommand{\xra}[1]{\xrightarrow{#1}}
\def\inv{^{-1}}
\DeclareMathOperator{\Alb}{\mathrm{Alb}}
\DeclareMathOperator{\Cob}{\mathrm{Cob}}
\DeclareMathOperator{\CH}{\mathrm{CH}}
\DeclareMathOperator{\alb}{\mathrm{alb}}
\newcommand{\into}{\hookrightarrow}
\newcommand{\set}[1]{\left\lbrace{#1}\right\rbrace}
\newcommand{\vspan}[1]{\langle #1 \rangle}
\newcommand{\mcal}{\mathcal}
\newcommand{\restr}[1]{|_{#1}}
\newcommand{\st}{\,\vert\,}
\newcommand{\wdg}{\wedge}
\begin{document}

\title{Lagrangian cobordisms and $K$-theory of symplectic bielliptic surfaces}
\author{Álvaro Muñiz-Brea}

\begin{abstract}
    \textsc{Abstract:} 
    We consider a family of closed symplectic manifolds $4$-manifolds which we call symplectic bielliptic surfaces and study its Lagrangian cobordism group of weakly-exact Lagrangian $G$-branes (that is, Lagrangians equipped with a grading, a Pin structure and a $G$-local system); relations come from  Lagrangian cobordisms satisfying a tautologically unobstructedness-type condition, also equipped with $G$-brane structures.
    Our first theorem computes its subgroup generated by tropical Lagrangians. 
    When $G$ is the unitary group of the Novikov field, we use homological mirror symmetry to compute the Grothendieck group of the Fukaya category and show it agrees with our computation for the cobordism group.
    This leads us to conjecture that tropical Lagrangians generate the whole cobordism group.
\end{abstract}

\maketitle


\section{Introduction}

    \subsection{Lagrangian cobordisms and the Grothendieck group}\label{sec:introcobandk0}
One of the main goals of symplectic topology is to understand and classify the Lagrangian submanifolds of a given symplectic manifold.
In its most classical form, this problem asks for a classification of Lagrangians up to Hamiltonian isotopy; although easy to state, this problem is remarkably hard---barely any answers are known outside real dimension two.
In an attempt to define a coarser relation between Lagrangians that would be amenable to classification, Arnold introduced in a series of two papers \cite{arnol1980lagrange,arnol1980lagrange2} the notion of a \emph{Lagrangian cobordism}.
A Lagrangian cobordism between Lagrangians $L_-,L_+ \subset X$ is a properly embedded Lagrangian submanifold $V \subset \C \x X$ such that, outside some compact set $K \subset \C$, one has 
$$
V \setminus \pi\inv_\C(K) = L_- \x \R_{<-a_-} \bigsqcup L_+ \x \R_{>a_+}
$$
for some values $a_\pm \in \R_{>0}$. 
(It is helpful to think of $V$ via its projection to $\C$, which is depicted in \Cref{fig:cobprojection}.)
When such a cobordism exists, one says that the Lagrangians $L_-$ and $L_+$ are Lagrangian cobordant.
This definition extends straightforwardly to Lagrangian cobordisms between Lagrangian tuples, where the cobordism must now fiber over several horizontal rays outside some compact set.
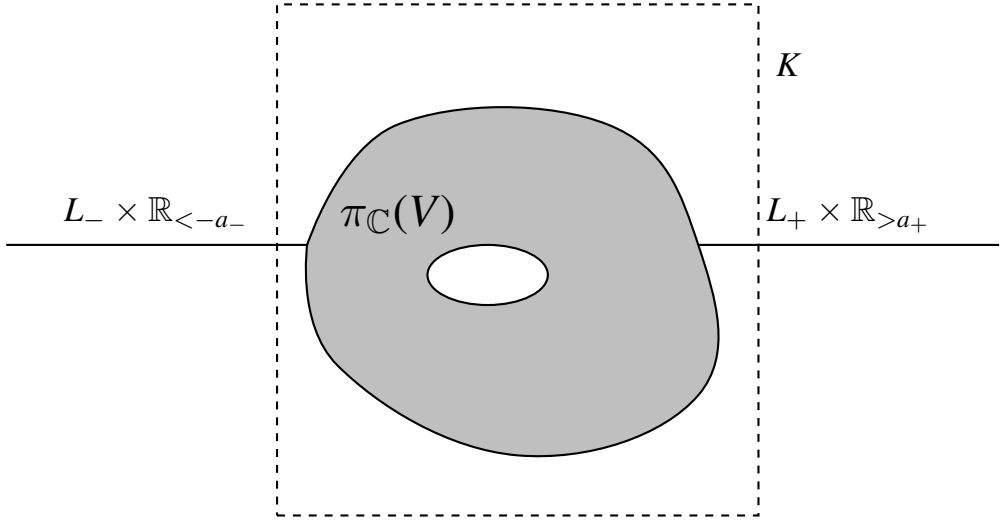
\begin{figure}
    \centering
    
    \begin{tikzpicture}[scale = 0.8, transform shape, thick]

\filldraw[color = black, fill = lightgray]  plot[smooth, tension=.7] coordinates {(-3,0) (-1.5,2) (2,2) (3.5,0) (3.5,-2.5) (0.5,-3.5) (-2.5,-2) (-3,0)};
        \filldraw[color = black, fill = white]  (0,-0.5) ellipse (1 and 0.5);
        
\draw (-8,0) -- (-3,0);
        
\draw (3.5,0) -- (8.5,0);
        
\draw[dashed]  (-3.5,4) rectangle (4.5,-4.5);
        
\node[scale = 1.5] at (-5.5,0.5) {$L_- \times \mathbb{R}_{<-a_-}$};
        \node[scale = 1.5]  at (6,0.5) {$L_+  \times \mathbb{R}_{>a_+}$};
        \node[scale = 2]  at (-1.5,0.5) {$\pi_{\mathbb{C}}(V)$};
        \node[scale = 1.5] at (5,3) {$K$};

    \end{tikzpicture}
    
    \caption{The shaded region (including the two horizontal lines) depicts a typical projection of a two-ended Lagrangian cobordism $V \subset X \x \C$ to $\C$. We have included (in dashed lines) an example of a compact region $K \subset\C$ outside which $V$ is product type: in $\C \setminus K$ the projection looks like two straight lines, and living over them in $X \x \C$ we have the cylidrical Lagrangians $L_- \x \R_{<-a_-}$ and $L_+ \x \R_{>a_+}$.}
    \label{fig:cobprojection}
\end{figure}

\begin{rmk}
    There is a different but related notion of Lagrangian cobordism between Legendrian submanifolds, where one instead tries to classify Legendrian submanifolds in a contact manifold up to  Lagrangian cobordism in the symplectisation (see e.g. \cite{chantraine2010lagrangian}). 
    While both definitions attempt to classify objects within a given manifold by looking at Lagrangian cobordisms in a bigger space, the invariants obtained are very different.
\end{rmk}

There are several algebraic structures that can be obtained from the notion of Lagrangian cobordism, and many of them have been connected to the Fukaya category.
For instance, Biran-Cornea work in the compact, monotone setting and use Lagrangian cobordisms to construct a `Lagrangian cobordism category' \cite[Section 7]{biran2013lagrangian}. 
In \cite{biran2014lagrangian} they show that Lagrangian cobordisms provide a \emph{functorial} way to decompose one end of a cobordism as an iterated mapping cone of the other ends, and using this they show there is a natural functor from their Lagrangian cobordism category to a category which encodes triangle decompositions in the derived Fukaya category.
With a different set-up, Nadler-Tanaka \cite{nadler2020stable} work in the non-compact, exact case and construct a stable $\infty$-category whose higher morphisms are `cobordisms between cobordisms', and they conjecture it to be equivalent to the (partially) wrapped Fukaya category. 
Further work of Tanaka \cite{tanaka2016fukaya} shows that there is indeed a functor from this stable $\infty$-category to (modules over) the wrapped Fukaya category, and that this functor is exact \cite{tanaka2016fukaya2} (in particular, exactness of the functor recovers the above-mentioned result of Biran-Cornea).

In this paper, we focus on a linear algebra invariant going back to Arnold, the {\it Lagrangian cobordism group}.
In its most naive form, it is an abelian group freely generated by Lagrangians and whose relations come from Lagrangian cobordisms:
\begin{defn}\label{def:lagcobgroup}
    Let $\mcal L$ be a collection of Lagrangians in $X$ and $\mcal L_{cob}$ a collection of cobordisms in $X \x \C$ whose ends are in $\mcal L$. 
    We define the {\it Lagrangian cobordism group} as the quotient
    $$
    \Cob(X) \equiv \Cob (X;\mcal L,\mcal L_{cob}) := \Z^{\mcal L} / \sim,
    $$
    where $\sim$ is the equivalence relation on $\Z^{\mcal L}$ generated by expressions of the form 
    $$
    L_1^+ +\dots+L^+_k \sim L_1^-+\dots+L_s^-
    $$
    whenever there exists a cobordism in $\mcal L_{cob}$ between tuples  $(L_1^+,\dots,L_k^+)$ and $(L_1^-,\dots,L_s^-)$. 
\end{defn}
\begin{rmk}\label{rmk:restrictionofdata}
    Often the Lagrangians  $\mcal L$ under consideration are not plain Lagrangians but come equipped with some extra structure (for instance, one can take $\mcal L$ to be the collection of orientable Lagrangians \emph{plus} a choice of orientation, or impose conditions such as exactness or monotonicity).
    In this case, it makes sense to ask the cobordisms in $\mcal L_{cob}$ to carry the same extra structure, and this data should be compatible with restriction (whenever restriction of data makes sense).
    A Lagrangian cobordism $V$ equipped with some extra data $\mcal D$ will then induce a relation 
    $$
    (L_1^+,\mcal D\restr{L_1^+}) + \dots + (L_k^+,\mcal D\restr{L_k^+}) \sim (L_1^-,\mcal D\restr{L_1^-}) + \dots + (L_s^-,\mcal D\restr{L_s^-})
    $$
    in the cobordism group.
\end{rmk}
Arnold himself performed the first computations of these groups, showing that in $X = \R^2$ oriented immersed Lagrangians are classified up to oriented cobordism by the Maslov index and the area they enclose.
He also computed the cobordism group of $T^*S^1$, where Lagrangians are classified essentially by the same data (their homology class and the area they enclose).
Soon after Eliashberg \cite{eliashberg1984cobordisme} argued that immersed, exact Lagrangian cobordism groups are isomorphic to homotopy groups of certain Thom spaces, and Audin \cite{audin1985quelques} used this to show that these Lagrangian cobordism groups are topological.

Nonetheless, in recent years it has been discovered that this flexibility for Lagrangian cobordism groups of immersed exact Lagrangians disappears when one equips the Lagrangians and/or cobordism with various decorations.
In a series of two papers \cite{biran2013lagrangian,biran2014lagrangian} Biran-Cornea brought Lagrangian cobordisms back to the attention of the symplectic topology community by exhibiting a relation between Lagrangian cobordisms and cone decompositions in the Fukaya category. Whereas Eliashberg and Audin had shown that the most naive notion of a Lagrangian cobordism reduces to algebraic topology, Biran-Cornea showed that if the Lagrangians in $\mcal L$ and $\mcal L_{cob}$ are required to be \emph{monotone} (with a fixed monotonicity constant) and satisfy a Gromov-Witten type condition, then Lagrangian cobordism is no longer a flexible notion.
One of their main results is the following: if $V$ is a monotone Lagrangian cobordism between a tuple $(L_1,\dots,L_k)$ and $L_0$, then $L_0$ is generated by $L_1,\dots,L_k$ in the derived Fukaya category $D^b\mcal Fuk(X)$ (cf. \cite[Theorem A]{biran2014lagrangian}).
It follows   that the Lagrangian cobordism group admits a map 
\begin{equation}\label{eq:BCmap}
    \Cob(X) \to K_0(D^b\mcal Fuk(X))
\end{equation}
to the Grothendieck group of the triangulated envelope of the Fukaya category (cf. \cite[Corollary 1.2.1]{biran2014lagrangian}).\footnote{Recall the Grothendieck group of a triangulated category is the free abelian group on its objects modulo relations $Z_1 - Z_2 + Z_3 = 0$ whenever there is a distinguished triangle $Z_1 \to Z_2 \to Z_3 \to Z_1[1]$.}
In further work, Biran-Cornea-Shelukhin \cite{biran2021lagrangian} take $\mcal L$ to be \emph{weakly-exact} Lagrangians (meaning $\omega(\pi_2(X,L)) = 0$) and $\mcal L_{cob}$ to be \emph{quasi-exact} cobordisms (meaning there exists a compatible almost complex structure such that they bound no holomorphic disks).
Under these assumptions, they extend the results in \cite{biran2014lagrangian} and construct a map $\Cob(X) \to K_0(D^b\mcal Fuk(X))$.
In this paper, we analyze a particular instance of the following question:
\begin{center}
    \emph{``is the map $\Cob(X) \to K_0(D^b\mcal Fuk(X))$ an isomorphism?''}
\end{center}
\begin{rmk}
By construction, the map (\ref{eq:BCmap}) is always surjective,  hence the question is whether it is injective.
The main result in \cite{biran2014lagrangian} shows that cobordisms induce cone decompositions in the Fukaya category.
Injectivity would mean that every triangle relation in the Fukaya category can be obtained from Lagrangian cobordisms.
\end{rmk}

To date, answers to the above question (in the compact case) have only appeared for $\dim X = 2$: \cite{haug2015lagrangian} computes the Lagrangian cobordism group of $T^2$, and shows the map (\ref{eq:BCmap}) is an isomorphism in this case; and \cite{perrier2019lagrangian,rathel2023unobstructed} extend the result to higher genus surfaces.
In this paper, we study the question for a closed symplectic $4$-manifold.
We work with a similar set-up to that in \cite{biran2021lagrangian}: the collection of Lagrangians $\mcal L$ will consist of weakly-exact Lagrangians equipped a grading, a Pin structure and a local system, and the collection $\mcal L_{cob}$ will consist of Lagrangian cobordisms carrying the same decorations and satisfying a quasi-exactness type condition (see \Cref{def:cobgroupKk} for the precise definition of the cobordism group, and \Cref{sec:branes} for an exposition of the extra data that we put on our Lagrangians).
A minor adaptation of the results in \cite{biran2021lagrangian} (together with \cite{haug2015lagrangian}, who works out the inclusion of the above decorations) ensures there is a well-defined map $\Cob(X) \to K_0(D^b\mcal Fuk(X))$. We will take $X = \mcal K$ to be a symplectic bielliptic surface as in \Cref{sec:biellipticsurfaces} and we will consider the subgroup $\Cob^{trop}(\mcal K) \subset \Cob(X)$ generated by Lagrangian sections and fibers.
We present an explicit computation of $\Cob^{trop}(\mcal K)$ in \Cref{th:cobK}, and show that the composition $\Cob^{trop}(\mcal K) \into \Cob(\mcal K) \to K_0(D^b\mcal Fuk(X))$ is an isomorphism in \Cref{th:isoK}.
\begin{rmk}
    With a slightly different set-up, Lagrangian cobordism between non-compact Lagrangians in exact symplectic manifolds---where the definition of a cobordism is suitably modified---have also been considered in the literature \cite{nadler2020stable,bosshard2021lagrangian}.
    As far as cobordism groups in this setting, \cite{bosshard2021lagrangian} defines a corresponding version of the Lagrangian cobordism group and constructs an analog of the map (\ref{eq:BCmap}) (where the Fukaya category is replaced by the \emph{wrapped} Fukaya category). 
    He also shows that such a map is an isomorphism for punctured Riemann surfaces.
    In later work, he extends his results to arbitrary Weinstein sectors, showing the Lagrangian cobordism group is isomorphic to the middle relative homology of the Weinstein sector  \cite{bosshard2023note}.
    A consequence of his computation is that the map (\ref{eq:BCmap}) is \emph{not} an isomorphism for flexible Weinstein manifolds.
    We remark however that his definition of a Lagrangian cobordism is different from ours (due to the non-compact nature of Lagrangians in wrapped Fukaya categories).
\end{rmk}

\subsection{Main results}\label{sec:results}
Consider the symplectic manifold $T^*\R^2$ with coordinates $(x_1,y_1,x_2,y_2)$ and the standard symplectic structure $dx_1\wdg dy_1 + dx_2 \wdg dy_2$.
Let $\mcal K$ be the quotient of $T^*\R^2$ by the action of $\Z^4$ as well as the relation 
$$
(x_1,y_1,x_2,y_2) \sim (x_1 + 1/2, y_1, -x_2,-y_2).
$$
Note that all actions commute with the symplectic form, hence $\mcal K$ inherits a symplectic structure from $T^*\R^2$.
We call $\mcal K$ a \emph{symplectic bielliptic surface} (cf. \Cref{def:biellipticsymplectic}).

The projection to the $(x_1,x_2)$ coordinates gives a Lagrangian torus fibration $\pi: \mcal K \to K$ over a tropical Klein bottle $K$.
Let $\det_\Z K = \wedge^2 T_\Z K$ be the integral orientation bundle of $K$, and denote by $\xi := \pi^*\det_\Z K$ its pullback to $\mcal K$.
Given an abelian group $G$, we consider a cobordism group $\Cob(\mcal K)$ whose generators are weakly-exact Lagrangian $G$-branes and whose relations come from tautologically unobstructed Lagrangian cobordisms equipped a $G$-brane structure (see \Cref{sec:branes} and \Cref{def:cobgroupKk} for precise definitions).
Our first result is the following:
\begin{maintheorem}[$=$\Cref{th:cobK}]\label{th:theoremA}
    Let $\mcal K$ be a symplectic bielliptic surface and $\pi:\mcal K \to K$ its Lagrangian torus fibration over a tropical Klein bottle.
    Let $\Cob^{trop}(\mcal K) \subset \Cob(\mcal K)$ be the subgroup generated by Lagrangian fibers and Lagrangian sections.
    Then there is an isomorphism
    $$
    \Cob^{trop}(\mcal K) \cong H_2(\mcal K;\xi) \oplus G_{(2)} \oplus (S^1 \oplus G) \oplus (S^1 \oplus G)
    $$
    for $G_{(2)} \subset G$ the subgroup of $2$-torsion elements.
\end{maintheorem}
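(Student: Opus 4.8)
The plan is to produce an explicit homomorphism
$$
\Phi\colon \Cob^{trop}(\mcal K) \longrightarrow H_2(\mcal K;\xi)\oplus G_{(2)}\oplus (S^1\oplus G)\oplus (S^1\oplus G),
$$
and then to prove, separately, that it is well defined on the cobordism relations, surjective, and injective. On a tropical Lagrangian $G$-brane $L$ — a fiber $F_b=\pi\inv(b)$ or a section $\sigma\colon K\to\mcal K$, together with its grading, Pin structure and $G$-local system — the components of $\Phi(L)$ are meant to be: its $\xi$-twisted fundamental class $[L]\in H_2(\mcal K;\xi)$ (the twist by $\xi=\pi^*\det_\Z K$ is exactly what allows fibers and sections, whose monodromy around the orientation-reversing loop of $K$ is orientation-reversing, to carry a fundamental class at all); a $2$-torsion class in $G$ obstructing the identification of the Pin structure of $L$ with that of a fixed reference brane in the same $\xi$-homology class after transport around that loop; and, in the two copies of $S^1\oplus G$, the pair (flux/area coordinate, total holonomy of the $G$-local system) of $L$ relative to the reference brane, one copy for the fiber family and one for the section family. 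Here $H_2(\mcal K;\xi)$ is the ``topological'' part of the answer and the other summands its ``continuous'' part — two copies of the elliptic-curve-like factor $S^1\oplus G$ familiar from the surface case \cite{haug2015lagrangian}, plus a finite $2$-torsion piece forced by non-orientability. The shape of the target, and hence of $\Phi$, is also dictated by the mirror computation of $K_0(D^b\mcal Fuk(\mcal K))$ used in \Cref{th:isoK}; on the cobordism side we only need to match it directly.

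For well-definedness: a tautologically unobstructed cobordism $V\subset\C\x\mcal K$ between tuples carries, by restriction of the brane data of \Cref{sec:branes} (cf.\ \Cref{rmk:restrictionofdata}), a consistent $\pi^*\det_\Z K$-orientation, grading, Pin structure and $G$-local system; pushing its $\xi$-twisted fundamental class forward along $\C\x\mcal K\to\mcal K$ produces exactly the relation among boundary classes that the first component of $\Phi$ must respect, the flux/area component is an Arnold-type symplectic area invariant \cite{arnol1980lagrange} defined on $V$ only up to the periods of $\omega$ (which is why it lands in $S^1$ rather than $\R$), and the $G$- and $G_{(2)}$-components are read off from the ends of $V$, using that $G$ is abelian so that end-holonomies add. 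For surjectivity I would first compute $H_2(\mcal K;\xi)$ — via the Leray--Serre spectral sequence of $\pi\colon\mcal K\to K$ with coefficients in $\xi$, or directly from the presentation of $\pi_1(\mcal K)$ coming from the $\Z^4$-action and the half-shift — and check that the $\xi$-classes of one fiber, one section, and their integral combinations already generate it. The remaining summands are then realized by decorating a fixed fiber and a fixed section: the $G$-local systems surject onto each $G$, a $1$-parameter family of parallel sections (resp.\ fibers) realizes every value of the $S^1$ flux coordinate, and an appropriate Pin structure on a fiber produces the generator of $G_{(2)}$.

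Injectivity is the heart of the matter, and the step I expect to be the main obstacle. The strategy is to use Lagrangian cobordisms to put an arbitrary element of $\ker\Phi$ into a normal form on which $\Phi$ is visibly injective, via three families of cobordisms:
\begin{enumerate}
\item \emph{Dragging fibers.} Over any path in $K$ the total space of fibers, with corners rounded, is a tautologically unobstructed cobordism; since $K$ is connected, any two fibers become cobordant after a correction of their brane data, and tracking this correction as one drags a fiber around the orientation-reversing loop of $K$ is precisely what forces — and bounds — the $G_{(2)}$ summand.
\item \emph{Surgery traces.} The trace of a Lagrangian surgery gives relations $\sigma\#\sigma'\sim\sigma''+F$ between sections, and between a section and a fiber, allowing one to trade any section for a fixed reference section plus fibers plus a flux/holonomy correction; one must verify these traces are weakly-exact/tautologically unobstructed, which should hold because in the tropical torus-fibration model the surgeries can be performed along admissible Lagrangians bounding no holomorphic disks.
\item \emph{Flux and holonomy cobordisms.} Moving a section through one full period, resp.\ switching on holonomy along a vanishing cycle, yields the cobordisms giving the $S^1$ and $G$ summands their group structure inside $\Cob^{trop}(\mcal K)$, in particular the relation ``one unit of section-flux $=$ one fiber'' that closes the $\R$ of fluxes into $S^1$.
\end{enumerate}
Combining (1)--(3) rewrites any integral combination of fibers and sections as $n\,[\text{ref.\ section}]+m\,[\text{ref.\ fiber}]$ together with secondary data in $G_{(2)}\oplus(S^1\oplus G)^{\oplus 2}$, on which $\Phi$ is an isomorphism onto its image by construction; comparing with the surjectivity statement concludes. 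The genuinely delicate points I anticipate are: proving tautological unobstructedness of the surgery traces in (2) in the presence of the half-integer shift defining $\mcal K$ (these cobordisms are not suspensions of Hamiltonian isotopies, so one must control their holomorphic disks), and keeping the grading/Pin/local-system bookkeeping consistent through the non-orientable monodromy in (1) — this is what separates the computation from the orientable surface cases of \cite{haug2015lagrangian,biran2021lagrangian} and is responsible both for the $\xi$-twist in $H_2(\mcal K;\xi)$ and for the $G_{(2)}$ factor.
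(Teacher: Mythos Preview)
Your overall architecture---build a map to the target, then prove well-definedness, surjectivity, injectivity---matches the paper's, and your first component (the $\xi$-twisted cycle class) is exactly right. But two of the remaining components are misidentified, and the key structural idea that makes the computation go through is absent.

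First, the $G_{(2)}$ factor does \emph{not} come from Pin structures. In the paper it is defined by intersecting a Lagrangian brane with the $2$-torsion subgroup of $H_3(\mcal K;\xi)$ and recording the monodromy of the \emph{$G$-local system} around the resulting $1$-cycle; this gives a map $\Psi\colon\Cob^{trop}(\mcal K)\to\Hom(H_3(\mcal K;\xi)_{(2)},G)\cong G_{(2)}$. The generators of $\Cob^{trop}(\mcal K)$ all carry a single \emph{preferred} Pin structure (and the extra relation in the definition of $\Cob(\mcal K)$ absorbs $\Z_2$-local-system twists of the Pin structure), so Pin structures contribute nothing beyond homology. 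Your description ``an appropriate Pin structure on a fiber produces the generator of $G_{(2)}$'' is therefore off; the generator is a zero-section equipped with a non-trivial $2$-torsion local system in the $dy$-direction.

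Second, and more seriously, the two $(S^1\oplus G)$ factors are not a ``fiber flux'' and a ``section flux'' defined separately. The paper's mechanism is: there is an involution $\Psi$ of $\mcal K$ swapping a fiber coordinate with its cotangent coordinate, producing a \emph{second} Lagrangian torus fibration $\pi'\colon\mcal K\to K'$. Under $\Psi$, fibers of $\pi$ become tropical Lagrangians $L_{f_{m\theta}}$ of $\pi'$ and vice versa. Each fibration has its own Albanese map $\alb^{loc}$, $\alb'^{,loc}$ on $\Cob^{trop}(\mcal K)_0$, and the isomorphism is $\alb^{loc}\oplus\alb'^{,loc}\colon\Cob^{trop}(\mcal K)_0\to(\Alb(K)\oplus G)\oplus(\Alb(K')\oplus G)$. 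Without this second fibration you have no candidate for a cobordism-invariant ``section flux'': a cobordism can have both fibers and (pieces of) sections as ends, and the argument that the Albanese map vanishes on such mixed relations is a genuine computation---one closes up the cobordism to a $\xi$-oriented $3$-cycle, projects it via the Leray spectral sequence to $H_1(K;\Z)$, and checks that the extra $2$-chains coming from the non-fiber ends lie in the zero-section and hence have zero symplectic area. Your proposal does not address this, and the informal ``flux of a section relative to a reference section'' is not obviously a cobordism invariant once surgeries start mixing fibers and sections.

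Finally, your injectivity sketch omits the step that actually does the work in the paper: one first uses an explicit classification of Lagrangian sections ($H^1(\Aff_K)\cong\Z^2\oplus\Z_2\oplus S^1$) and Hicks' surgery cobordism to reduce any element of $\ker(\widetilde{cyc})$ to sums of differences $F_{b^+}-F_{b^-}$ and $L_{f_{m^+\theta^+}}-L_{f_{m^-\theta^-}}$, with all brane data tracked (this is a substantial compatibility check, occupying \Cref{ap:compatibility}). Only then does the two-fibration Albanese argument apply. Your items (1)--(3) gesture at the needed cobordisms but do not supply the reduction, and the ``delicate points'' you flag are in fact handled by pulling everything back to the product cover $T^4\to\mcal K$, where the cobordisms become products $S^1\times W$ with $W$ a surgery trace in $T^2$, and tautological unobstructedness follows from the $T^2$ case.
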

\Cref{th:theoremA} shows that $\Cob^{trop}(\mcal K)$ is {\it finite-dimensional} in the sense of \Cref{def:infinitedimensionality}---in fact, one can see directly from the definition that it is a $2$-dimensional cobordism group (roughly speaking, this means that $2$ is the minimum $d$ such that  $\Cob^{trop}(\mcal K)$ can be parametrized by a  $d$-dimensional family of Lagrangians).
We compare this with the work of Sheridan-Smith, who show that a symplectic 4-manifold admitting a Lagrangian of genus at least 1 has an infinite-dimensional cobordism group \cite[Lemma 7.9]{sheridan2020rational}.
In fact, we are able to extend the result of Sheridan-Smith to arbitrary dimension and prove the following (see \Cref{ap:Roitmantheorem}):
\begin{prop}\label{prop:symplecticRoitman}
    Let $X$ be a closed symplectic manifold of dimension at least 4 with trivial canonical bundle.
    If $X$ contains a Lagrangian torus of Maslov index zero, then the cobordism group of graded Lagrangians $\Cob(X)$ is infinite-dimensional.
\end{prop}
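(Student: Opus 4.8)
The plan is to follow the strategy of Sheridan--Smith \cite[Lemma 7.9]{sheridan2020rational}, who handle $\dim X = 4$ with a Lagrangian of genus at least one, and to upgrade their use of Mumford's theorem on zero-cycles to the Mumford--Roitman theorem in arbitrary dimension. First I would set up the relevant branes. Since $K_X$ is trivial we may fix a grading datum and grade every Lagrangian; and since $T\cong (S^1)^n$ (with $2n=\dim X$, so $n\ge 2$) has Maslov class zero, so do the nearby Lagrangian tori $T_\alpha\subset X$ obtained as graphs of small closed $1$-forms $\alpha$ in a Weinstein neighbourhood $\nu(T)\cong (T^*T)_{\le\varepsilon}$. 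Equipping $T_\alpha$ with a rank-one $G$-local system $\rho$ (and the induced grading and Pin structure) gives a weakly-exact $G$-brane and hence a class $[T_\alpha,\rho]\in\Cob(X)$. In dimension four these branes are automatically tautologically unobstructed (a Maslov-zero holomorphic disc would lie in a moduli space of negative virtual dimension); for $n\ge 3$ one keeps only those nearby tori that are tautologically unobstructed, which still form an open set of enough branes and is consistent with the definition of $\Cob$ used here (whose relations in any case involve only tautologically unobstructed cobordisms). By family Floer theory applied to the fibration $\nu(T)\to(\text{base})$, the parameter space $Y_0$ of such branes is an affinoid germ in the rigid analytic torus $(\Lambda^*)^n$ over the Novikov field $\Lambda$, with $[T_\alpha,\rho]$ corresponding to the skyscraper sheaf at the point it represents; formal sums $\sum_i [T_{\alpha_i},\rho_i]$ then play the role of effective zero-cycles, parametrised by symmetric powers of $Y_0$.

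The core of the argument is a Mumford--Roitman-type statement for $\Cob(X)$. The map $\Cob(X)\to K_0(D^b\Fuk(X))$ provided by \cite{biran2021lagrangian} (in the decorated form used in this paper, after \cite{haug2015lagrangian}) is surjective, and a Lagrangian cobordism contributes an exact triangle, i.e.\ a relation in $K_0$; so it is enough to exhibit an infinite-dimensional quotient of $K_0(D^b\Fuk(X))$ through which the $[T_\alpha,\rho]$ map to zero-cycles on $Y_0$ and cobordism relations map to rational-equivalence-type relations. Here the hypothesis $\dim X\ge 4$ enters: the $n$-Calabi--Yau structure of $\Fuk(X)$ furnishes a nonzero copairing class in $HH_n(\Fuk(X))$ with $n\ge 2$, the Hochschild-homological avatar of a nonzero holomorphic $p$-form with $p\ge 2$ — exactly the input that forces the zero-cycle group to be infinite-dimensional in Mumford--Roitman. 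Transporting the classical argument to this setting (no finite-dimensional family of branes dominates the group generated by the $[T_\alpha,\rho]$ and their sums, because the cobordism relations are isotropic for this Hochschild class, just as rational equivalence is isotropic for a holomorphic $p$-form) shows that this subgroup — and hence $\Cob(X)$ — is not parametrisable by a finite-dimensional family of Lagrangians, i.e.\ is infinite-dimensional in the sense of \Cref{def:infinitedimensionality}. In the presence of homological mirror symmetry to a smooth proper Calabi--Yau $n$-fold $\check X$ this is literally Mumford--Roitman applied to $\CH_0(\check X)$, via $\Cob(X)\twoheadrightarrow K_0(D^b\Fuk(X))\cong K_0(D^b\mathrm{Coh}(\check X))\twoheadrightarrow\CH_0(\check X)$ together with $H^0(\check X,\Omega^n_{\check X})\ne 0$ and $n\ge 2$; the work lies in making the argument intrinsic to $\Fuk(X)$ (equivalently, to $\Cob(X)$), which is what the appendix carries out.

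The main obstacle is precisely that intrinsic reformulation. One must construct the relevant Albanese/Abel--Jacobi homomorphism $\alb$ out of $\Cob(X)$ — or locate the zero-cycle quotient inside $K_0(D^b\Fuk(X))$ — using only the Hochschild $n$-Calabi--Yau class in place of a holomorphic volume form, and then verify the two geometric inputs of the Mumford--Roitman proof: that cobordism relations are isotropic for this class, and that the relevant ``period lattice'' is too thin (a transcendence and dimension count) for the image to be finite-dimensional. A secondary point is that the classical Mumford--Roitman argument is written over $\C$, so the symmetric-product and countability steps must be run over the non-archimedean field $\Lambda$, which is where family Floer theory and Abouzaid's mirror to $\nu(T)$ are used; and one must check that enough nearby tori are tautologically unobstructed when $n\ge 3$, which — as noted above — is harmless, since $\Cob$ is defined using only tautologically unobstructed cobordisms and the construction only ever needs an open set of such branes near $T$.
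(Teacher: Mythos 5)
Your high-level diagnosis — that \cite[Lemma 7.9]{sheridan2020rational} should be upgraded from a ``Mumford'' to a ``Mumford--Roitman'' statement, with the symplectic form on flux space replaced by a suitable $n$-form for $n\ge 2$ — is correct, and it is exactly the structure of the paper's argument. But the route you propose for carrying this out is far more elaborate than what is actually needed, and it has gaps that would be substantial to fill. You reach for the Fukaya category: the map $\Cob(X)\to K_0(D^b\Fuk(X))$, family Floer theory in a Weinstein neighbourhood of $T$, an affinoid germ mirror, and a Hochschild-homological $n$-Calabi--Yau copairing class playing the role of a holomorphic $p$-form. None of these objects is constructed in your proposal, and several do not exist in the generality of the statement: there is no homological mirror symmetry result for an arbitrary closed symplectic $X$ with $2c_1(X)=0$, the family Floer mirror of $\nu(T)$ would require separate foundational work, and ``transporting the classical argument to this setting'' — locating a zero-cycle quotient inside $K_0$, proving isotropy of cobordism relations for a Hochschild class, running a transcendence/countability count over the Novikov field — is the entire content of the theorem and is only gestured at. You also invoke local systems, $G$-branes and tautological unobstructedness of nearby tori, none of which enters the statement or the proof.

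The ingredient you are missing is that the ``intrinsic reformulation'' you seek already exists \emph{at the level of flux coordinates}, with no Floer theory at all. Sheridan--Smith prove (their Lemma 7.5, recalled as \Cref{lem:Omegavanishes}) that the $n$-form
\[
\Omega(\alpha^1,\dots,\alpha^n)=\sum_i\int_{L_i}\alpha^1_i\cup\dots\cup\alpha^n_i
\]
on the flux parameter space $\mathbb U_{\mathbb L}\subset\prod_i H^1(L_i;\R)$ vanishes on the cobordism-coincidence locus $\mcal Z_{\mathbb L,\mathbb L'}$, and that $\mcal Z_{\mathbb L,\mathbb L'}$ is a countable union of affine pieces (their Lemma 7.4). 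This is already the ``isotropy of the equivalence relation for a $p$-form'' input; the only thing preventing the Sheridan--Smith proof from going through for $n>2$ is that their dimension bound $\dim\mcal Z\le\tfrac12\dim V$ uses nondegeneracy of $\Omega$ as a $2$-form. The fix is a purely linear-algebraic bound due to Roitman \cite[Lemma 9]{roitman1971gamma}, stated here as \Cref{lem:roitmanbound}: if $V=\oplus_{j=1}^m V_j$, $0\ne\omega_j\in\wedge^q V_j^*$ with $q\ge 2$, and $W\subset V$ is isotropic for $\sum_j pr_j^*\omega_j$, then $\dim W\le\dim V-m$. Applying this with $\mathbb L^{(N)}$ equal to $N$ copies of the torus $T$ (here the hypothesis that $T$ is a torus is used to guarantee $\omega_T\ne 0$, since cup product $H^1(T;\R)^{\otimes n}\to H^n(T;\R)$ is surjective; and Maslov index zero with $K_X$ trivial guarantees the tori are graded) gives $\dim\mcal Z_{\mathbb L_i,\mathbb L^{(N)}}<\dim\mathbb U_{\mathbb L^{(N)}}$, and the countability step finishes the proof. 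So the proposition is genuinely just linear algebra on top of Sheridan--Smith's geometric lemmas, not a categorical Mumford--Roitman theorem for Fukaya categories.
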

The crucial difference in \Cref{th:theoremA} is that both Sheridan-Smith and \Cref{prop:symplecticRoitman} work in the Calabi-Yau case (i.e. the canonical bundle is trivial), whereas symplectic bielliptic surfaces have torsion but non-trivial canonical bundle.
Given a nowhere vanishing quadratic volume form $\eta^2 \in H^0((\Omega^n)^{\otimes 2})$, a graded Lagragian with respect to $\eta^2$ is naturally oriented with respect to the local system where $\pm \eta$ takes real values (see \cite[Remark 11.18]{seidel2008fukaya}).
When the canonical bundle is trivial the quadratic form $\eta^2$ admits a square root, hence the local system is  trivial and graded Lagrangians are naturally oriented in the usual sense---this is the setting of \cite{sheridan2020rational} and \Cref{prop:symplecticRoitman}.
However, in this paper the canonical bundle is non-trivial, hence Lagrangians are only oriented with respect to some non-trivial local system (this is the local system $\xi$ of \Cref{th:theoremA}).
Obviously, the result of Sheridan-Smith and \Cref{prop:symplecticRoitman} still holds for the oriented version of the cobordism group of a bielliptic surface. 
The remarkable fact is that introducing these new graded Lagrangian cobordisms---despite eliminating some too, as not every oriented Lagrangian is graded---creates sufficiently many new relations to get a finite-dimensional cobordism group.

As remarked above, the result in \Cref{th:theoremA} is, to the best of the author's knowledge, the first computation of a Lagrangian cobordism group  in dimension greater than two.
Although \Cref{th:theoremA} only gives a computation of a subgroup  of $\Cob(\mcal K)$ (the subgroup $\Cob^{trop}(\mcal K)$ generated by tropical Lagrangians), our second main result shows that this subgroup is indeed an interesting subgroup:

\begin{maintheorem}[$=$\Cref{th:isoK}]\label{th:theoremB}
    Fix $G = U_\Lambda$ to be the unitary group of the Novikov field.
    Then the composition
    $$
    \Cob^{trop}(\mcal K)  \into \Cob(\mcal K) \to K_0(D^b\mcal Fuk(X))
    $$
    is an isomorphism.
\end{maintheorem}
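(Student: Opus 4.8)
The strategy is to identify $K_0(D^b\mcal Fuk(\mcal K))$ with the Grothendieck group of the mirror via homological mirror symmetry, compute that group from the algebraic geometry of bielliptic surfaces, observe that it agrees with the answer of \Cref{th:theoremA}, and then promote this coincidence into a proof that $\Phi\colon\Cob^{trop}(\mcal K)\to K_0(D^b\mcal Fuk(\mcal K))$ is an isomorphism by comparing natural invariants on both sides.

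First I would invoke homological mirror symmetry for $\mcal K$. Dualising the Lagrangian torus fibration $\pi\colon\mcal K\to K$ --- equivalently, descending homological mirror symmetry for the $4$-torus along the $\Z/2$-quotient that presents $\mcal K$ as $T^4/(\Z/2)$ --- produces a bielliptic surface $\check{\mcal K}$ over the Novikov field $\Lambda$ together with an equivalence $D^b\mcal Fuk(\mcal K)\simeq D^b\mathrm{Coh}(\check{\mcal K})$, under which Lagrangian torus fibers with their $U_\Lambda$-brane data go to skyscraper sheaves and Lagrangian sections go to line bundles; the grading controls the homological shift, while the $U_\Lambda$-local system together with the symplectic area / Novikov weight controls the degree-zero part of the line bundle (respectively the $\Lambda$-point carrying the skyscraper). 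Tracking the grading, Pin structure and $U_\Lambda$-local system through this equivalence, and identifying $\check{\mcal K}$ explicitly, is the set-up-specific input, and I expect it to be the main conceptual obstacle --- although it should reduce to known mirror symmetry for $T^4$ plus an equivariant-descent argument.

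Next I would compute $K_0(D^b\mathrm{Coh}(\check{\mcal K}))$ algebro-geometrically. For a smooth projective surface the topological filtration $F^0\supseteq F^1\supseteq F^2\supseteq 0$ on $K_0$ satisfies $F^0/F^1\cong\Z$ (rank), $F^1/F^2\cong\mathrm{Pic}$ ($c_1$), $F^2\cong\CH_0$. For the bielliptic surface $\check{\mcal K}$ one has $\mathrm{Pic}(\check{\mcal K})=\mathrm{NS}(\check{\mcal K})\oplus\mathrm{Pic}^0(\check{\mcal K})$ with $\mathrm{NS}(\check{\mcal K})\cong\Z^2\oplus(\text{torsion})$, since bielliptic surfaces have Picard number $2$; $\CH_0(\check{\mcal K})\cong\Z\oplus\Alb(\check{\mcal K})$ by Bloch's conjecture, which is a theorem for bielliptic surfaces (Bloch--Kas--Lieberman, valid over $\Lambda$ by a base-change argument using that $\check{\mcal K}$ is a $\Z/2$-quotient of a product of elliptic curves over $\Lambda$); and both $\mathrm{Pic}^0(\check{\mcal K})$ and $\Alb(\check{\mcal K})$ are elliptic curves over $\Lambda$ with multiplicative reduction --- as is forced for the mirror of a compact symplectic manifold --- so $\mathrm{Pic}^0(\check{\mcal K})(\Lambda)\cong\Alb(\check{\mcal K})(\Lambda)\cong\Lambda^{\times}/q^{\Z}\cong S^1\oplus U_\Lambda=S^1\oplus G$. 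Assembled, $K_0(D^b\mathrm{Coh}(\check{\mcal K}))$ is built out of $\Z$, $\mathrm{NS}(\check{\mcal K})$, $\mathrm{Pic}^0(\check{\mcal K})$, $\Z$ and $\Alb(\check{\mcal K})$; its finitely generated part should match $H_2(\mcal K;\xi)\oplus G_{(2)}$ --- here $H_2(\mcal K;\xi)$ is computed from the $T^4/(\Z/2)$ description, with $\xi$ the pull-back of the orientation character of the base Klein bottle, and has rank $4$, carrying the classes of the four planes mixing a base and a fiber direction --- and its two divisible summands match the two copies of $S^1\oplus G$ in \Cref{th:theoremA}.

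Finally, surjectivity of $\Phi$ follows from the mirror equivalence together with the elementary fact that line bundles and skyscraper sheaves generate $K_0$ of a surface (every zero-cycle being an integral combination of points), since the tropical Lagrangians realise all skyscrapers and enough line bundles. Injectivity is not formal --- these groups need not be Hopfian --- so I would evaluate $\Phi$ on the explicit generators of $\Cob^{trop}(\mcal K)$ used to prove \Cref{th:theoremA} and post-compose with the filtration above: a Lagrangian fiber lands in $F^2\cong\CH_0(\check{\mcal K})$ and, as its $U_\Lambda$-local system varies, sweeps out all of $\Alb(\check{\mcal K})(\Lambda)$; the zero section and its perturbations have $c_1$ spanning $\mathrm{NS}(\check{\mcal K})$, while varying their heights, local systems and Novikov weights sweeps out $\mathrm{Pic}^0(\check{\mcal K})(\Lambda)$; and differences of sections account for the remaining lattice classes. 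One then checks that the resulting map $\Cob^{trop}(\mcal K)\to\mathrm{gr}_F K_0(D^b\mathrm{Coh}(\check{\mcal K}))=\Z\oplus\mathrm{Pic}(\check{\mcal K})\oplus\CH_0(\check{\mcal K})$ is precisely the isomorphism of \Cref{th:theoremA}, which forces $\Phi$ to be injective. The most delicate point will be reconciling the various $2$-torsion contributions --- $G_{(2)}$, the torsion of $\mathrm{NS}(\check{\mcal K})$, and any torsion in $H_2(\mcal K;\xi)$, all originating from the $\Z/2$ in the construction --- which requires keeping track of both the $F^1\to F^0$ and $F^2\to F^1$ extensions and the Pin-structure and grading data through the mirror equivalence.
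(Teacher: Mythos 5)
Your proposal follows the same high-level strategy as the paper's proof --- identify $K_0(D^\pi\mcal Fuk(\mcal K))$ with $K_0(D^bCoh(Y))$ via homological mirror symmetry, compute that from the Chow groups of the bielliptic surface, and match generators under the mirror functor --- but there are three places where the paper does something more careful than you indicate and one genuine gap.

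On the HMS input, you propose getting the equivalence by ``equivariant descent'' from HMS for $T^4$ along the $\Z/2$-quotient; the paper instead applies Abouzaid's family Floer theorem directly to the Lagrangian torus fibration $\mcal K\to K$, which has no singular fibers, and then uses rigid-analytic GAGA. Equivariant descent for Fukaya categories is not an off-the-shelf tool in this generality, whereas family Floer is; the paper's route is the one that is actually available. On the $K$-theory computation, you work with the topological filtration and only produce an isomorphism onto the \emph{associated graded} $\mathrm{gr}_F K_0 = \Z\oplus\Pic(Y)\oplus\CH_0(Y)$; the paper goes further and constructs (Appendix~B, the integral Chern character $\widetilde{ch}$) an honest isomorphism $K_0(D^bCoh(Y))\cong\CH^*(Y)$ by choosing a ``half-class'' $P$ with $2P=D_2^2$ to clear the denominator in $\mathrm{ch}_2$. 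Your argument --- injectivity of a map to $\mathrm{gr}_F$ implies injectivity into $K_0$ --- is logically sound for the purpose of proving Theorem~B, so this is a valid shortcut; but the paper's stronger statement that the filtration splits (and splits compatibly with the tropical generators) is needed to present $K_0$ explicitly as $\Z^4\oplus\Z_2^2\oplus E^2$, and your proposal does leave the $2$-torsion bookkeeping to the reader where the paper makes it explicit.

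The genuine gap is the distinction between $D^b\mcal Fuk(\mcal K)$ and its split-closure $D^\pi\mcal Fuk(\mcal K)$. Homological mirror symmetry (including Abouzaid's theorem) gives $D^\pi\mcal Fuk(\mcal K)\simeq D^bCoh(Y)$, not an equivalence for $D^b\mcal Fuk$, and your chain of identifications silently conflates the two. The statement you are proving involves $K_0(D^b\mcal Fuk(\mcal K))$, so after showing the composition into $K_0(D^\pi\mcal Fuk(\mcal K))$ is an isomorphism one must separately argue that the inclusion $K_0(D^b\mcal Fuk(\mcal K))\hookrightarrow K_0(D^\pi\mcal Fuk(\mcal K))$ is an isomorphism. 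The paper closes this by noting the inclusion is automatically injective and is surjective because the generators it produces are honest Lagrangian branes rather than idempotent summands; Remark~\ref{rmk:dbcongdpi} then upgrades this, via Thomason's theorem, to $D^b\mcal Fuk(\mcal K)\simeq D^\pi\mcal Fuk(\mcal K)$. Without this step your argument only proves the statement for $K_0(D^\pi\mcal Fuk(\mcal K))$, which is a priori weaker.
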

\begin{rmk}\label{rmk:dbcongdpi}
    To prove \Cref{th:theoremB} we use Abouzaid's homological mirror symmetry result \cite{abouzaid2021homological} to show that symplectic bielliptic surfaces are mirror to algebraic bielliptic surfaces as defined in \Cref{sec:biellipticsurfaces}.
We then show that the composition 
    $$
        \Cob^{trop}(\mcal K) \to K_0(D^b\mcal Fuk(\mcal K)) \into K_0(D^\pi\mcal Fuk(\mcal K)) \xrightarrow{\sim} K_0(D^bCoh(Y))
    $$
    is an isomorphism, which yields the result.
    Here, we denoted by $D^\pi\mcal Fuk(\mcal K)$ the split-closure of the Fukaya category. 
    In general, this is larger than $D^b\mcal Fuk(\mcal K)$, but it follows from our argument that the inclusion $K_0(D^b\mcal Fuk(\mcal K)) \into K_0(D^\pi\mcal Fuk(\mcal K))$ is an isomorphism.
    It is a theorem of Thomason \cite{thomason1997classification} that this isomorphism of Grothendieck groups is enough to conclude $D^b\mcal Fuk(\mcal K) \simeq D^\pi \mcal Fuk(\mcal K)$.
    In particular, for bielliptic surfaces there is no need to take the split-closure of the Fukaya category to obtain a homological mirror symmetry equivalence.

\end{rmk}

\Cref{th:theoremB} on its own---together with mirror symmetry considerations and the analogous statement for Chow groups---lead us to the following conjecture:
\begin{conj}
    The inclusion $\Cob^{trop}(\mcal K)  \into \Cob(\mcal K)$ is an isomorphism.
\end{conj}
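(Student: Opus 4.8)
The plan is to realize the composite of \Cref{th:isoK} as an explicit homomorphism to $K_0$ of an algebraic bielliptic surface and to prove it bijective by comparison with \Cref{th:theoremA}. Write $\Phi$ for the composition
\[
\Cob^{trop}(\mcal K)\into\Cob(\mcal K)\to K_0(D^b\mcal Fuk(\mcal K))\into K_0(D^\pi\mcal Fuk(\mcal K))\xrightarrow{\sim}K_0(D^bCoh(Y)),
\]
whose last arrow is the homological mirror symmetry equivalence discussed in \Cref{rmk:dbcongdpi}. It suffices to show $\Phi$ is an isomorphism. Indeed, $D^b\mcal Fuk(\mcal K)$ is dense in its idempotent completion $D^\pi\mcal Fuk(\mcal K)$, so $K_0(D^b\mcal Fuk(\mcal K))\into K_0(D^\pi\mcal Fuk(\mcal K))$ is injective by \cite{thomason1997classification}; if $\Phi$ is an isomorphism then, since it factors through this injection, the injection is also surjective and hence an isomorphism, whence $\Cob^{trop}(\mcal K)\to K_0(D^b\mcal Fuk(\mcal K))$ — which is $\Phi$ post-composed with the inverse of an isomorphism — is itself an isomorphism, as claimed.

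First I would set up the mirror. The fibration $\pi:\mcal K\to K$ has no singular fibres: it is a smooth Lagrangian $T^2$-bundle over the tropical Klein bottle $K$, in the flat (semi-flat) setting, so there are no quantum corrections and Abouzaid's family Floer construction \cite{abouzaid2021homological} yields an equivalence $D^\pi\mcal Fuk(\mcal K)\simeq D^bCoh(Y)$, where $Y$ is the dual torus fibration. Concretely the symplectic form makes the dual analytic torus an abelian surface $A$ over the Novikov field $\Lambda$ (Mumford), and $Y=A/(\Z/2)$, the $\Z/2$ descending from the relation $(x_1,y_1,x_2,y_2)\sim(x_1+1/2,y_1,-x_2,-y_2)$ defining $\mcal K$; this is an algebraic bielliptic surface as in \Cref{sec:biellipticsurfaces}. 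Under the equivalence a Lagrangian section of $\pi$ with a $U_\Lambda$-brane structure corresponds to a line bundle on $Y$ (its homotopy class controls the $\mathrm{NS}(Y)$-component, and a constant $U_\Lambda$-holonomy twist sweeps out $\mathrm{Pic}^0(Y)$), while a Lagrangian torus fibre with a $U_\Lambda$-local system corresponds to a closed point of $Y$, the holonomy being its coordinate in the dual fibre. Consequently the image of $\Phi$ is the subgroup of $K_0(D^bCoh(Y))=K_0(Y)$ generated by classes of line bundles and of skyscrapers $\Oo_y$.

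That subgroup is everything. On the smooth projective surface $Y$ the $\gamma$-filtration $F^0\supset F^1\supset F^2\supset 0$ satisfies $F^0/F^1\cong\Z$ via the rank, $F^1/F^2\cong\mathrm{Pic}(Y)$ via $c_1$, and $F^2\cong\CH_0(Y)$; the class $[\Oo_Y]$ hits the generator of $F^0/F^1$, the classes of line bundles surject onto $F^1/F^2$ and — via $[\Oo_C]=[\Oo_Y]-[\Oo_Y(-C)]$ for an effective divisor $C$ — already span $F^1$ modulo $F^2$, and the classes $[\Oo_y]$ span $F^2\cong\CH_0(Y)$. Hence line bundles and points generate $K_0(Y)$, so $\Phi$ is surjective; in particular, combined with the injectivity of $K_0(D^b\mcal Fuk(\mcal K))\into K_0(D^\pi\mcal Fuk(\mcal K))$, this shows there is no need to split-close the Fukaya category, confirming \Cref{rmk:dbcongdpi}.

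It remains to see $\Phi$ is injective, which I would do by computing $K_0(D^bCoh(Y))$ and matching it term by term with \Cref{th:theoremA}. Using the structure theory of bielliptic surfaces, $\mathrm{Pic}(Y)=\mathrm{NS}(Y)\oplus\mathrm{Pic}^0(Y)$ with $\mathrm{NS}(Y)$ of rank $2$ plus the torsion prescribed by the type of $Y$, and $\mathrm{Pic}^0(Y)=\hat E$ dual to the Albanese elliptic curve $E=\Alb(Y)$; since $Y$ has Kodaira dimension $0$ it is not of general type, so Bloch's conjecture holds and $\CH_0(Y)\cong\Z\oplus\Alb(Y)$. The curves $E$ and $\hat E$ over $\Lambda$ are Tate curves (mirrors of a smooth torus fibration have totally multiplicative reduction), so Tate uniformization gives $E(\Lambda)\cong\hat E(\Lambda)\cong S^1\oplus U_\Lambda$, while $G_{(2)}=(U_\Lambda)_{(2)}=\{\pm 1\}\cong\Z/2$. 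Reading off the $\gamma$-filtration, $K_0(Y)$ is an iterated extension of $\Z$, $\mathrm{NS}(Y)\oplus\hat E(\Lambda)$ and $\Z\oplus E(\Lambda)$, which one checks is abstractly $H_2(\mcal K;\xi)\oplus G_{(2)}\oplus(S^1\oplus U_\Lambda)\oplus(S^1\oplus U_\Lambda)$: $H_2(\mcal K;\xi)$ matches the free lattice $\Z\oplus\mathrm{NS}(Y)_{\mathrm{free}}\oplus\Z$ (rank, divisor classes, degree of $0$-cycles), $G_{(2)}$ matches the torsion, and the two copies of $S^1\oplus U_\Lambda$ match $\mathrm{Pic}^0(Y)$ and the degree-zero part of $\CH_0(Y)$. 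The substantive step is to check that the mirror dictionary of the second paragraph turns this abstract identification into $\Phi$ itself — that $\Phi$ is filtered for the filtration by dimension of support on the symplectic side and the $\gamma$-filtration on the algebraic side and induces isomorphisms on the associated graded — and I expect this to be the main obstacle: pinning down precisely which bielliptic surface $Y$ is, computing $\mathrm{NS}(Y)$ and $\mathrm{Pic}^0(Y)$, and tracking the $U_\Lambda$-brane data (grading and Pin structure included) through family Floer cohomology finely enough to identify generators on the nose. By comparison, the surjectivity of $\Phi$ and the Tate-uniformization input are routine.
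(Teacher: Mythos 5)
The statement you are asked to prove is a \emph{conjecture}; the paper explicitly does not prove it, and even explains why: ``The main difficulty to address this question is our lack of understanding of what are all the weakly-exact Lagrangians in $\mcal K$.'' Your argument does not address that difficulty, and in fact it does not prove the conjecture at all.

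Look carefully at what your first paragraph concludes: after arguing that $\Phi$ is an isomorphism, you deduce that ``$\Cob^{trop}(\mcal K)\to K_0(D^b\mcal Fuk(\mcal K))$ is itself an isomorphism, as claimed.'' But that is \Cref{th:isoK} (Theorem~B), which the paper already proves --- it is \emph{not} the statement of the conjecture. The conjecture asserts that the inclusion $\Cob^{trop}(\mcal K)\into\Cob(\mcal K)$ is surjective, i.e.\ that \emph{every} weakly-exact Lagrangian $U_\Lambda$-brane in $\mcal K$ is cobordant, through an admissible decorated cobordism, to a $\Z$-linear combination of Lagrangian fibers and sections. Knowing that the composite $\Cob^{trop}(\mcal K)\into\Cob(\mcal K)\to K_0(D^b\mcal Fuk(\mcal K))$ is an isomorphism only tells you that the inclusion is injective (trivially) and that the second arrow is surjective onto $K_0$ (already known). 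To conclude surjectivity of the inclusion you would additionally need injectivity of $\Cob(\mcal K)\to K_0(D^b\mcal Fuk(\mcal K))$; without it there could be a non-tropical Lagrangian brane $L$ whose image in $K_0$ coincides with that of some tropical combination $y$, yet with $L\neq y$ in $\Cob(\mcal K)$. Nothing in your argument rules this out.

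Everything substantive in your write-up concerns mirror images of \emph{fibers and sections} --- skyscrapers and line bundles on $Y$ --- and a $K_0$ computation on the $B$-side. That is precisely the content of \Cref{th:isoK} and the $\CH^*(Y)$ matching already carried out in \Cref{sec:isoK} and \Cref{ap:integralChern}. To make progress on the conjecture one would have to either (i) classify all weakly-exact Lagrangian branes in $\mcal K$ up to the relevant cobordism relation and show they all lie in $\Cob^{trop}(\mcal K)$, or (ii) establish injectivity of the Biran--Cornea--Shelukhin map $\Cob(\mcal K)\to K_0(D^b\mcal Fuk(\mcal K))$, which is precisely the open question (``is the map (\ref{eq:BCmap}) an isomorphism?'') highlighted in \Cref{sec:introcobandk0}. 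Your proof does neither, so the gap is not a minor omission but the entire content of the conjecture.
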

The main difficulty to address this question is our lack of understanding of what are all the weakly-exact Lagrangians in $\mcal K$.

\subsection{Organization of the paper}
The paper is organized as follows.
In \Cref{sec:tropicalgeometry} we review the necessary background on tropical geometry, including the main definitions (\Cref{sec:tropicalmanifolds}), the construction of the Albanese variety and the Albanese morphism (\Cref{sec:albanese}) and the relationship between tropical and symplectic geometry (\Cref{sec:lagtorusfib}).
The tropical manifold we study is a tropical Klein bottle, and this is introduced in \Cref{sec:kleinbottles}.
\Cref{sec:lagrangiancobordisms} deals with the main object of this paper, Lagrangian cobordisms. 
We  present the main constructions of cobordisms in \Cref{sec:cobconstructions} and use them to compute the cobordism group of $T^2$ \Cref{sec:lagcobT2}---this will be useful for the main computations in \Cref{sec:computationcob}.
In the last part of this section, \Cref{sec:branes}, we  explain the extra data we consider on Lagrangians and cobordisms.
The main results of the paper are in \Cref{sec:computationcob}.
In \Cref{sec:biellipticsurfaces,sec:hmsbielliptic} we introduce symplectic and algebraic bielliptic surfaces and explain how they fit into Abouzaid's homological mirror symmetry result \cite{abouzaid2021homological} to conclude they are homologically mirror.
We then build up the necessary results to prove the main theorems in \Cref{sec:admissible,sec:cycleclassmap,sec:symplecticalbanese}.
The main theorems are in the next two sections: we compute the subgroup of the Lagrangian cobordism group generated by tropical Lagrangians in \Cref{sec:finitedimensionality} (see \Cref{th:cobK}) and show it is isomorphic to the Grothendieck group of the Fukaya category in \Cref{sec:isoK} (see \Cref{th:isoK}).
In the last part, \Cref{ap:compatibility}, we prove some compatibility results involving the brane structure we put on our Lagrangians and cobordisms---it is essential for our computations but can safely be skipped by the reader on a first read.
We also include four appendices at the end.
The first three prove (technical) results claimed in several parts of the paper, whose proofs we decided to separate from the main text to improve the readability of the paper and whose content might be of independent interest to the reader.
The last appendix contains the proof of \Cref{prop:symplecticRoitman}: the content of this Proposition ties directly to the main results of the paper, but its proof is a simple linear algebra result and does not provide any additional insights.

\paragraph{Acknowledgments} 
I would  like to thank my advisor Nick Sheridan for his guidance on this project. 
He has been extremely generous with his time, very patient and has provided invaluable insights.
The idea to use multivalued perturbations as in \Cref{ap:multivaluedFuk} is entirely due to him.
I would also like to thank Jeff Hicks for many helpful discussions. 
I owe to him much of my understanding of Lagrangian surgery.
I also thank Arend Bayer for a useful conversation that motivated the results in \Cref{ap:integralChern}.
This work was supported by an ERC Starting Grant (award number 850713-HMS).

\section{Tropical geometry}\label{sec:tropicalgeometry}

    In this section we give a review of the tropical geometry background that will be needed to understand some of the constructions in this paper. 
    The symplectic manifold we consider in this paper comes with a Lagrangian torus fibration over a tropical affine manifold.
    Moreover, all the Lagrangians we consider are \emph{tropical Lagrangians}: Lagrangians living over tropical subvarieties of the base.
    Understanding the tropical geometry of the base is hence essential for our computations.

    \subsection{Tropical affine manifolds}\label{sec:tropicalmanifolds}

Let $\Aff(\R^n):=\R^n \rtimes \GL(n,\Z)$ denote the group of affine transformations of $\R^n$ whose linear part is integral. 
A {\it tropical affine manifold of dimension $n$} is a smooth manifold together with an atlas of charts whose transition functions belong to $\Aff(\R^n)$.

\begin{example}\label{ex:vsaretropical}
A choice of basis on a real vector space gives it the structure of  a tropical affine manifold. 
In particular, $\R^n$ has a natural tropical affine structure.
\end{example}

Let $B_1,B_2$ be tropical affine manifolds of dimension $n_1,n_2$.
A smooth map $f:B_1\to B_2$ is said to be {\it integral affine} if, locally in some charts, it is an affine map $\R^{n_1} \to \R^{n_2}$ with integer slope.
We say it is a {\it tropical map} if, locally in some charts, it is a piecewise affine map with integer slope.

\begin{example}\label{ex:tropicalquotient}
Let $B$ be a tropical affine manifold and $G$ be a group acting freely and properly on $B$ by integral affine transformations. 
Then $B/G$ has a natural tropical affine structure making the projection a tropical submersion.
\end{example}

Let $B$ be a tropical affine manifold.
We say a smooth function $f: B \to \R$ is {\it tropical affine} if it is an integral affine map $B \to \R$, where we view $\R$ as a tropical affine manifold by equipping it with its vector space tropical affine structure as in \Cref{ex:vsaretropical}.
Similarly, a continuous function $f : B \to \R$ is said to be {\it tropical} if it is piecewise integral affine.
We denote by $\Aff_B \subset C^\infty_B$ the subsheaf of affine functions.
It naturally gives a lattice bundle of integer covectors 
$$
T^*_\Z B :=  d(\Aff_B) \subset T^*B,
$$
where $d: \Omega^0(B)  \to \Omega^1(B)$ is the usual differential. 
Dually, we get a lattice of vectors $T_\Z B \subset TB$ defined as
$$
T_\Z B := \set{
v\in TB \st \alpha(v) \in \Z \text{ for all }\alpha \in T_\Z^* B
}.
$$
Both $T_\Z B$ and $T^*_\Z B$ form local systems of lattices over $B$.
Global sections $\alpha \in H^0(T^*_\Z B)$ are called \emph{tropical $1$-forms}.

\begin{rmk}
    A tropical affine manifold determines a $\Z^n$-lattice $T_\Z B \subset TB$.
    However, not every $\Z^n$-lattice $\Lambda \subset TB$ equips $B$ with a tropical affine structure---the lattice must be integrable.
    Sometimes we will refer to a tropical affine manifold as the data $(B,\Lambda)$ of a smooth manifold together with a $\Z^n$-lattice $\Lambda \subset TB$: it will always be implicit that $\Lambda$ is integrable. 
\end{rmk}

Given a tropical affine manifold $B$, a {\it tropical affine submanifold} is a submanifold $V \subset B$ such that $TV = L\otimes \R$ for some sublattice $L \subset T_{\Z} B\restr{V}$. 
We remark that $L$ is not part of the data.
\begin{example}
    Tropical affine submanifolds of $(\R^n,\Z^n)$ are precisely linear subspaces whose defining equations have integral coefficients.
\end{example}     \subsection{The Albanese variety}\label{sec:albanese}
Associated to any tropical affine manifold $B$ there is a {\it tropical Albanese variety} $\Alb(B)$ constructed as follows.
Given an integral $1$-form $\alpha \in H^0(T^*_\Z B)$ and a singular $1$-chain $\gamma \in C_1(B)$, we can regard $\alpha$ as a usual differential form and integrate $\alpha$ over $\gamma$. 
Since integral forms are closed (they are locally exact), the integration pairing $\int: H^0(T^*_\Z B) \otimes C_1(B) \to \R$ descends to a map
\begin{equation*}\label{eq:pairing}
    \int: H^0(T_\Z^* B) \otimes H_1(B;\Z) \to \R
\end{equation*}
over classes in the first homology of $B$. 
In particular, there is a map $H_1(B;\Z) \to \Hom(H^0(T_\Z^* B),\R)$, and we define the {\it tropical Albanese variety} of $B$ to be the quotient
\begin{equation}\label{eq:albanesevariety}
    \Alb(B) := \frac{\Hom(H^0(T_\Z^* B),\R)}{H_1(B;\Z)}.
\end{equation}

\begin{rmk}
    Associated to any smooth algebraic variety there is an (algebraic) Albanese variety, which is an abelian variety satisfying a universal property.
    Whereas this algebraic object is always smooth, the tropical  Albanese variety associated to a tropical affine manifold need not even be a manifold.
    However, in all the examples we will consider in this paper it will not just be a manifold but moreover it will come equipped with a natural structure of a tropical affine torus.
\end{rmk}

Denote by $Z_0(B)$ the homologically trivial (i.e. degree zero) $0$-cycles on $B$. 
There is a natural map
\begin{align}\label{eq:albz0}
    \begin{split}
        \alb:Z_0(B) & \to \Alb(B) \\
        \alb(b^+ - b^-) & = \int_\gamma-: H^0(T_\Z^*B) \to \R,
    \end{split}
\end{align}
where $\gamma \in C_1(B)$ is any $1$-chain with $\partial\gamma = b^+ - b^-$ (the quotient in \Cref{eq:albanesevariety} makes the map independent of the choice of $\gamma$).
In particular, after choosing a base-point $b_0 \in B$, composition of the above map with the inclusion $b \in B \mapsto b - b_0 \in Z_0(B)$ gives an {\it albanese map} (still denoted by $\alb$)
\begin{equation*}\label{eq:alb}
    \alb:B \to \Alb(B) 
\end{equation*}
It is an isomorphism whenever $B$ is a tropical affine torus (see \Cref{def:tropicalaffinetorus}).

     \subsection{From tropical  to symplectic geometry}\label{sec:lagtorusfib}
Recall that there is a map from smooth manifolds to symplectic manifolds that assigns to each smooth manifold $B$ its cotangent bundle $T^*B$.
Moreover, the canonical projection $T^*B \to B$ is a Lagrangian fibration (i.e. the fibers are Lagrangian).
Note that $T^*B$ is never compact.

When $B$ has the structure of a compact tropical affine manifold, we can refine this construction to give a compact symplectic manifold: the quotient 
$$
X(B) := T^*B / T^*_\Z B
$$
inherits a symplectic structure from  $T^*B$.
The projection $X(B) \to B$ is now a Lagrangian torus fibration.
There is a rich duality between the tropical geometry of $B$ and the symplectic geometry of $X(B)$, which we now briefly recall.

First of all, $X(B)$ possesses a broader source of Lagrangian sections.
In the cotangent bundle $T^*B$ one can obtain Lagrangians as graphs of closed $1$-forms---that is, sections of the sheaf $\Omega^1_c$.
In $X(B)$, the sheaf $\Omega^1_c \cong C^\infty / \R$ is replaced by $C^\infty / \Aff$, as affine functions have vanishing differential in the quotient $X(B) = T^*B/ T_\Z^*B$.
\begin{example}
    The function $f: \R \to \R, f(x) = \frac{1}{2}x^2$ does not descend to a section of $C^\infty_{\R/\Z}/\R$, but it does give a section of $C^\infty_{\R/\Z}/\Aff$.
    The corresponding Lagrangian in $X(\R/\Z) = T^2$ is a geodesic of homology class $(1,1)$.
\end{example}

Second, in the cotangent bundle one can produce Lagrangians from submanifolds $ V \subset B$ by taking the conormal bundle $N^*V \subset T^*B$.
On the other hand, some form of rigidity appears in $X(B)$ due to the compactness of the fibers.
Without further assumptions, the projection to $X(B)$ of the conormal lift of an arbitrary submanifold need not be a (compact) embedded Lagrangian.
Instead, under the assumption that $V \subset B$ is a \emph{tropical} submanifold (see \Cref{sec:tropicalmanifolds}), the quotient
$$
L_V := N^*V / N^*_\Z V
$$
is a compact Lagrangian submanifold of $X(B)$.
We call $L_V$ a {\it tropical Lagrangian} and say it is a \emph{tropical lift of $V$}.
\begin{rmk}
   The setting in which one can obtain a Lagrangian from a subset (not even a submanifold) of the base is an active topic of research.
   Lagrangian lifts of subsets with much greater generality than what is presented here---so-called \emph{tropical subvarieties}---have been constructed; see e.g. \cite{mikhalkin2019examples,hicks2020tropical,mak2020tropically,matessi2021lagrangian, sheridan2021lagrangian}.
\end{rmk}

     \subsection{Tropical Klein bottles}\label{sec:kleinbottles}

Let us now introduce the tropical affine manifold that we will study in this paper.
Consider $(\R^2,\Z^2)$ with the standard tropical affine structure, and let $\Lambda\subset\R^2$ be a lattice.
Recall that  the quotient of a tropical affine manifold by a group of tropical automorphisms inherits a tropical affine structure (see \Cref{ex:tropicalquotient}).
\begin{defn}\label{def:tropicalaffinetorus} 
    We call the quotient 
    \begin{equation}\label{eq:tropicalaffinetorus}
       T^2_\Lambda:=\R^2/\Lambda 
    \end{equation}
    a {\it tropical affine torus}.
\end{defn}

\begin{defn}\label{def:kleinbottle}
    Let $T^2_\Lambda$ be a tropical affine torus and $\psi \in \Aff(T^2_\Lambda)$ an orientation reversing, affine involution without fixed points.
    We call the quotient
    \begin{equation}\label{eq:tropicalKleinbottle}
        K_{\Lambda,\psi} := T^2_\Lambda / \vspan{\psi}
    \end{equation}
     a {\it tropical Klein bottle}.
\end{defn}

\begin{rmk}\label{rem:exoticKlein}\label{rem:exotictori}
    There exist tropical affine manifolds with the topology of a torus which do not arise as a quotient of the form in \Cref{def:tropicalaffinetorus} (see \cite[Section 4, Theorem A]{mishachev1996classification}). 
    Our definition only covers those tropical affine tori whose associated symplectic manifold $X(T^2_\Lambda) = T^* T^2_\Lambda / T_\Z^* T^2_\Lambda$, as defined in \Cref{sec:lagtorusfib}, is a symplectic 4-torus. 
    It follows from this that there exist tropical affine manifolds with the topology of a Klein bottle that are not of the form (\ref{eq:tropicalKleinbottle}).
    Again, our definition covers those tropical affine Klein bottles whose associated symplectic manifold is a quotient of a 4-torus
\end{rmk}

Tropical Klein bottles have been classified by Sepe \cite{sepe2010classification}. 
In fact, they classify tropical affine structures on all types of Klein bottles, not just the restricted types we consider in this paper (cf. \Cref{rem:exoticKlein}).
Let us recall the part of their result that relates to those Klein bottles in \Cref{def:kleinbottle}, which we phrase in an equivalent way that is more convenient for our purposes:

\begin{lem}[\cite{sepe2010classification}]\label{lem:classificationklein}
    Let $L\subset \R^2$ be a lattice and denote by $(\R^2,L)$ the smooth manifold $\R^2$ together with the tropical affine structure defined by $L$. 
    Every tropical Klein bottle is isomorphic to a unique `standard' Klein bottle, i.e. one of the form $K_L:=((\R^2,L)/\Z^2)/\Z_2$, where $\Z_2$ acts as above with
    \begin{equation}\label{eq:standardKlein}
        A=\begin{pmatrix}
        1 & 0\\
        0 & -1
        \end{pmatrix},
        \quad
        c=(1/2,0).
    \end{equation}
\end{lem}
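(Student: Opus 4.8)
The plan is to prove \Cref{lem:classificationklein} by a direct normalization of the affine data $(L, A, c)$ defining a tropical Klein bottle, following the structure of the torus quotient and then the $\Z_2$ quotient. Write the Klein bottle as $K_{\Lambda, \psi} = (\R^2/\Lambda)/\langle\psi\rangle$ where $\psi(v) = Bv + d$ for some $B \in \GL(2,\Z)$ (the integral linear part, automatically integral because $\psi \in \Aff(T^2_\Lambda)$ must preserve the lattice $\Lambda$) and some $d \in \R^2$. The first step is to recall that a tropical affine isomorphism of $\R^2$ is given by an element of $\Aff(\R^2) = \R^2 \rtimes \GL(2,\Z)$, and that such isomorphisms act on the triple $(\Lambda, B, d)$ by conjugation on $B$ and by the corresponding affine change on $\Lambda$ and $d$. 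In particular, after composing with $g \in \GL(2,\Z)$ we may replace $\Lambda$ by $g\Lambda$ and $B$ by $gBg^{-1}$; after translating we may modify $d$.

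First I would pin down $B$. Since $\psi$ is an involution of $\R^2/\Lambda$, we have $\psi^2 = \mathrm{id}$ on the torus, so $B^2 = I$ and $(B+I)d \in \Lambda$. Since $\psi$ is orientation-reversing, $\det B = -1$, so $B$ is conjugate in $\GL(2,\Z)$ to $\mathrm{diag}(1,-1) =: A$ — this is the standard classification of order-two matrices in $\GL(2,\Z)$ with determinant $-1$ (the only such conjugacy class). Applying the conjugating element of $\GL(2,\Z)$ to the whole picture, we may assume $B = A$ outright, at the cost of changing the lattice $\Lambda$. Now the eigenspaces of $A$ are the two coordinate axes, with eigenvalues $+1$ and $-1$.

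Next I would normalize $\Lambda$ and $d$. Because $\psi$ is a free action, $(A+I)d = (2d_1, 0) \in \Lambda$ but $d \notin \Lambda$ and more precisely no fixed point exists on the torus; examining the fixed-point condition $Av + d \equiv v \pmod \Lambda$, i.e. $(A - I)v \equiv -d$, i.e. $(-2v_2, 0) \in -d + \Lambda$, freeness forces $d$ to have a nonzero component outside the image lattice in the appropriate sense — concretely one shows $\Lambda$ must be compatible with $A$ (it must be $A$-invariant, since $A = \psi^2$-derived translations and the quotient-of-quotients structure force $A\Lambda = \Lambda$), and an $A$-invariant lattice in $\R^2$ with $A = \mathrm{diag}(1,-1)$ is, after a further $\GL(2,\Z)$ move commuting with $A$ (these are $\mathrm{diag}(\pm1,\pm1)$ together with... in fact only the diagonal sign matrices commute with $\mathrm{diag}(1,-1)$, which is not enough freedom to diagonalize a general $A$-invariant lattice), actually not always diagonal. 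So the honest statement is: $\Lambda$ is $A$-invariant, hence splits appropriately, and one absorbs the off-diagonal part of $\Lambda$ and the freely-movable part of $d$ into a single normalization, ending with $\Lambda = L$ some lattice and $c = (1/2, 0)$, with $2c \in L$ but $c \notin L$ encoding the free involution. Uniqueness of the resulting $K_L$ follows by tracking which moves are still available and checking they act trivially on the isomorphism class, i.e. two standard $K_L, K_{L'}$ are isomorphic iff $L = L'$ — this reduces to the statement that the residual symmetry group (affine maps commuting with both $A$ and the translation-by-$\Z^2$-and-$c$ structure) cannot identify distinct lattices $L$.

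The main obstacle will be the bookkeeping in the normalization of the lattice $\Lambda$ together with $c$: the centralizer of $A$ in $\GL(2,\Z)$ is small (essentially just the diagonal sign matrices), so one cannot simply diagonalize an arbitrary $A$-invariant lattice, and the argument must instead use the interplay between the lattice, the translation vector $c$, and the freeness/involution conditions to cut down to the normal form $(L, A, (1/2,0))$ — and then separately verify that the remaining ambiguity in $L$ is a genuine invariant (this is the ``unique'' in the statement). I would handle this by decomposing $\Lambda$ with respect to the $\pm1$-eigenlines of $A$: intersect $\Lambda$ with the $+1$-eigenline and with the $-1$-eigenline, show these are rank-one (using $A$-invariance), and express $\Lambda$ as an extension; the off-diagonal ``glue'' parameter together with $c$ is then brought to standard form, and whatever lattice invariant survives is exactly the $L$ appearing in $K_L$. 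The appeal to \cite{sepe2010classification} can be invoked to shortcut the final uniqueness statement if the direct verification becomes unwieldy.
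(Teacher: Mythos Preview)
The paper does not prove this lemma; it is quoted from \cite{sepe2010classification} without argument. So there is no proof in the paper to compare against, and I will simply assess your sketch.

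There is a genuine error at the first step. You claim that an order-two matrix $B\in\GL(2,\Z)$ with $\det B=-1$ is conjugate in $\GL(2,\Z)$ to $\mathrm{diag}(1,-1)$, calling this ``the only such conjugacy class.'' This is false: there are exactly two such conjugacy classes, represented by $\mathrm{diag}(1,-1)$ and the swap $\left(\begin{smallmatrix}0&1\\1&0\end{smallmatrix}\right)$. They are distinguished by whether $\Z^2$ splits as the direct sum of its $\pm1$-eigenlattices (it does for the first, not for the second). Both cases actually occur for tropical Klein bottles: taking $B$ to be the swap, $\Lambda=\Z(1,1)\oplus\Z(1,-1)$, and $d=(\tfrac12,\tfrac12)$ gives a valid free involution. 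So your reduction ``conjugate $B$ to $A$ in $\GL(2,\Z)$'' fails for half of the examples---precisely those that end up as type $K_2$ in \Cref{rmk:Kleinfamilies}.

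You have also inverted the roles of the two lattices. In the normal form $K_L=((\R^2,L)/\Z^2)/\Z_2$ the lattice $L$ is the \emph{integral} (tropical) lattice and $\Z^2$ is the \emph{translation} lattice, whereas your write-up ends with ``$\Lambda=L$'' as the translation lattice. Passing to the lemma's normal form therefore requires a linear change in $\GL(2,\R)$ sending $\Lambda$ to $\Z^2$, not merely a $\GL(2,\Z)$-move; the new tropical lattice $L$ is then the image of the old $\Z^2$. For such a change to carry $B$ to $\mathrm{diag}(1,-1)$ acting on the new $\Z^2$, one needs $\Lambda$ to admit an eigenbasis for $B$, i.e.\ $B|_\Lambda$ must lie in the diagonal conjugacy class of $\GL(\Lambda)$. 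This is exactly where freeness enters: writing $d=d^++d^-$ along the eigenspaces, the involution condition forces $2d^+\in\Lambda^+$, while freeness forces $d^+\notin\mathrm{pr}_{V^+}(\Lambda)$; if $B|_\Lambda$ were in the swap class then $\mathrm{pr}_{V^+}(\Lambda)=\tfrac12\Lambda^+$ and these two conditions contradict each other. That observation---freeness forces $B|_\Lambda$ to be diagonalizable over $\Lambda$---is the key step your sketch gestures toward but does not isolate, and it is what replaces your incorrect conjugacy claim.
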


\begin{rmk}\label{rmk:Kleinfamilies}
    Only those integral affine structures $L$ that are preserved by this standard $A$  give well-defined tropical Klein bottles (cf. \Cref{ex:tropicalquotient}). 
    In other words, the map 
    $$
    \left\lbrace\begin{array}{cc}\text{isomorphism classes of}\\ \text{tropical Klein bottles} \end{array}\right\rbrace \to \{L\subset\R^2 \text{ lattice}\}
    $$
    is an injection but not a surjection, and its image is precisely those lattices that are symmetric with respect to the involution $(x,y)\mapsto(x,-y)$. 
    These lattices are easily classified: choosing $l_1,l_2\in L$ primitive lattice points lying in both axis, one can easily check using the $A$-invariance of $L$ that it has to be of the form $L_1 = \Z\vspan{l_1,l_2}$ or $L_2 = \Z \vspan{l_1,1/2(l_1+l_2)}$.
    The corresponding Klein bottles will be denoted by $K_1$ and $K_2$, and correspond to the families in equations (2) and (4) in \cite[Theorem 3.1]{sepe2010classification}.
\end{rmk}

\begin{rmk}\label{rmk:kleintypeIandII}
        The two types of Klein bottles obtained in the above remark give rise to complex surfaces $TK_i / T_\Z K_i$ corresponding to bielliptic surfaces of type I and II in the classification of complex bielliptic surfaces (see for instance \cite[List VI.20]{beauville1996complex} for such classification, originally due to Bagnera-de Franchis).
        Although such classification contains a total of seven types, only the first two admit maximal degenerations and thus are relevant from the point of view of mirror symmetry.
\end{rmk}

\begin{rmk}
It follows from \Cref{lem:classificationklein} and \Cref{rmk:Kleinfamilies} that tori $T^2_\Lambda$ admitting a free $\Z_2$ orientation reversing integral affine action  are `rational', in the sense that there is a basis of $\Lambda$ with rational slope (although not necessarily rational coordinates).
\end{rmk}

\section{Lagrangian cobordisms}\label{sec:lagrangiancobordisms}
    Recall from \Cref{sec:introcobandk0} that a {\it (planar) Lagrangian cobordism} from $(L_1^+,\dots,L_k^+)$ to $(L_1^-,\dots,L_s^-)$ is a properly embedded Lagrangian submanifold $V\subset X\x \C$ such that
    $$
    V\setminus \pi\inv_\C(K)=\left(\bigsqcup_{i=1}^k L^+_i \x \gamma_i^+\right) \bigsqcup \left(\bigsqcup_{j=1}^s L^-_j \x \gamma_j^-\right),
    $$
    for some compact region $K\subset \C$ and horizontal curves $\gamma^\pm_j(t) =(\pm t, j), \pm t >  a_j^\pm$.
    Given collections $\mcal L$ and $\mcal L_{cob}$ of Lagrangians in $X$ and Lagrangian cobordisms in $X\x \C$ (all of whose ends are in $\mcal L$), one defines the {\it Lagrangian cobordism group} as the free abelian group on $\mcal L$ modulo the relations generated by $\mcal L_{cob}$ (see \Cref{def:lagcobgroup}).
    More generally, the collection of Lagrangians and cobordisms can be equipped with various extra decorations (e.g. orientations) or required to satisfy some property (e.g. monotonicity), and in this case the cobordism group is defined as the free abelian group on the decorated Lagrangians with the desired property modulo the relations generated by the decorated cobordisms, where the decorations must restrict appropriately (cf. \Cref{rmk:restrictionofdata}).
    
In this section we  review the main constructions of Lagrangian cobordisms and detail the decorations we will put on our Lagrangians and cobordisms.
    In \Cref{sec:cobconstructions} we present the Lagrangian cobordism associated to a Hamiltonian isotopy (so-called suspension cobordism) and several types of surgeries.
    We use these to compute the Lagrangian cobordism group of $T^2$ in \Cref{sec:lagcobT2}, whose cobordism relations will be used in \Cref{sec:computationcob}.
    \Cref{sec:branes} recalls the definition of a Pin structure, a grading and a local system---these form the decorations that we will put on our Lagrangians in \Cref{sec:computationcob}.

\subsection{Cobordism constructions}\label{sec:cobconstructions}
There are essentially two main sources of Lagrangian cobordisms: Hamiltonian isotopies and Lagrangian surgery.\footnote{A third construction due to Sheridan-Smith constructs cobordisms from tropical curves \cite{sheridan2021lagrangian}. However, this construction produces \emph{cylindrical} cobordisms--- Lagrangian submanifolds in $X \x \C^*$ that fiber over rays near $0$ and $\infty$. Every cobordism in $ X \x \C$ gives a cylindrical cobordism by quotienting $\C$ by a large enough imaginary translation, but not every cylindrical cobordism gives a cobordism in $X \x \C$.}
We briefly recall these.
\begin{lem}[\cite{audin1994symplectic}]
If $L_0$ and $L_1$ are Hamiltonian isotopic, then there is a Lagrangian cobordism between  $L_0$ and $L_1$.
\end{lem}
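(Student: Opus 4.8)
The plan is to construct the classical \emph{suspension cobordism} of the Hamiltonian isotopy and to verify directly that it meets the requirements of \Cref{def:lagcobgroup}. Write $(\phi_t)_{t\in[0,1]}$ for a Hamiltonian isotopy with $\phi_0=\mathrm{id}$ and $\phi_1(L_0)=L_1$, generated by a (time-dependent) smooth Hamiltonian $H_t\colon X\to\R$, so that $\frac{d}{dt}\phi_t=X_{H_t}\circ\phi_t$ with $\iota_{X_{H_t}}\omega=dH_t$. The first step is a harmless reparametrisation in time: choosing a smooth non-decreasing $\beta\colon\R\to[0,1]$ that is identically $0$ on $(-\infty,\epsilon]$ and identically $1$ on $[1-\epsilon,\infty)$ and replacing $\phi_t$ by $\phi_{\beta(t)}$, I may assume that $\phi_t$ is locally constant for $t$ outside a compact interval (equal to $\mathrm{id}$ for $t$ very negative, to $\phi_1$ for $t$ very positive) and that the corresponding generating Hamiltonian, which I still call $H_t$, vanishes for such $t$.

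Next I would define $F\colon\R\x L_0\to X\x\C$ by $F(t,x)=\big(\phi_t(x),\,t-iH_t(\phi_t(x))\big)$ and set $V:=F(\R\x L_0)$. Writing the $\C$-coordinate as $s_1+is_2$, so that the symplectic form on $X\x\C$ is $\omega+ds_1\wedge ds_2$, one checks that the pullback of $\omega$ to $\R\x L_0$ equals $dt\wedge d_x(H_t\circ\phi_t)$: the $TL_0\times TL_0$-component vanishes because $\phi_t$ is a symplectomorphism and $L_0$ is Lagrangian, while the mixed $dt$-component is $(\iota_{X_{H_t}}\omega)\circ d\phi_t=d_x(H_t\circ\phi_t)$; on the other hand the pullback of $ds_1\wedge ds_2$ equals $dt\wedge d_x(-H_t\circ\phi_t)$. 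Hence $F^*(\omega+ds_1\wedge ds_2)=0$ and $V$ is Lagrangian. Moreover $F$ is an injective immersion (the real part of its $\C$-component is $t$, which then recovers $x$ via $\phi_t$, and this real part is $1$ on $\partial_t$ and $0$ on $TL_0$, on which $d\phi_t$ is injective), and it is \emph{proper} because $L_0$ is compact, so $F^{-1}\big(\{|\mathrm{Re}|\le N\}\x X\big)=[-N,N]\x L_0$; thus $V$ is a properly embedded Lagrangian.

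Finally I would read off the ends: for $t$ sufficiently positive $H_t\equiv0$ and $\phi_t=\phi_1$, so $V$ agrees over that ray with $\phi_1(L_0)\x\R_{>a_+}=L_1\x\R_{>a_+}$; for $t$ sufficiently negative it agrees with $L_0\x\R_{<-a_-}$; and over the remaining bounded range of $t$ the image lies in a compact $K\subset\C$, since $\mathrm{Re}$ is bounded there and $\mathrm{Im}=-H_t(\phi_t(x))$ is bounded on the compact set $\bigcup_t\phi_t(L_0)$. Hence $V\subset X\x\C$ is a Lagrangian cobordism between $L_0$ and $L_1$ in the sense of \Cref{sec:introcobandk0}. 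I do not expect a genuine obstacle here---this is a standard construction---the only points needing care being the sign bookkeeping in the Lagrangian-condition computation (which depends on the convention for $\iota_{X_H}\omega$) and the reparametrisation that makes the two ends honestly cylindrical; in the paper's setting, with $X$ and the $L_i$ closed, the boundedness statements above are automatic.
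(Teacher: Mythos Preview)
The paper does not give its own proof of this lemma; it simply cites \cite{audin1994symplectic}. Your argument is the standard suspension construction that appears in that reference (and in the later literature, e.g.\ Biran--Cornea), and it is correct as written: the reparametrisation makes the ends genuinely cylindrical, your computation of $F^*(\omega+ds_1\wedge ds_2)=0$ is right under the stated convention $\iota_{X_{H_t}}\omega=dH_t$, and properness/embeddedness follow from compactness of $L_0$ exactly as you say. There is nothing to compare---you have supplied the omitted proof.
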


The second main cobordism source is that of Lagrangian surgery.
Lagrangian surgery produces, from two (embedded) Lagrangians $L_0,L_1$ intersecting transversely at some point $p$, a third (embedded) Lagrangian $L_0 \# L_1$ which agrees with $L_0$ and $L_1$ outside an arbitrarily small neighborhood of $p$.
The Lagrangians $L_0,L_1$ and $L_0\#L_1$ are furthermore related by a Lagrangian cobordism.
We now recall the construction of $L_0 \# L_1$, which is classical and goes back to Lalonde-Sikorav \cite{lalonde1991sous} in the $2$-dimensional case and to Polterovich \cite{polterovich1991surgery} in arbitrary dimension.

Choose a Darboux chart $\varphi:U\to \C^n$ around $p\in U$ such that $\varphi(L_0)=\R^n$ and $\varphi(L_1)=(i\R)^n$. Consider a curve $\gamma:\R\to \C$ such that 
\begin{itemize}
    \item $\gamma(t)=t$ for $t\in(-\infty,-1]$;
    \item $\gamma(t)=it$ for $t\in[1,\infty)$;
    \item $\gamma'(t)\in\R_+\x i\R_+\subset \C$ for $t\in (-1,1)$.
\end{itemize}
We define the {\it surgery neck} $N$ to be
$$
N:=\gamma\cdot S^{n-1}=\{(\gamma(t)x_1,\dots,\gamma(t)x_{n})\in\C^n\st \sum_{i=1}^{n}x_i^2=1\}
$$
and the surgery of $L_0$ and $L_1$ to be $L_1\# L_2:=[(L_1\cup L_2)\setminus U]\cup \varphi\inv (N)$.
Biran-Cornea show that the Lagrangians $L_i$ and their surgery are related by a Lagrangian cobordism:
\begin{prop}\label{pr:surgery}
Let $L_0,L_1\subset X$ be Lagrangians intersecting transversely at a single point $p$. 
There is a cobordism with ends $L_0,L_1$ and $L_0\# L_1$. 
\end{prop}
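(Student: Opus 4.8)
The plan is to build $V$ by hand, as the \emph{trace of the surgery handle}; this is classical, going back to Polterovich and carried out in this generality by Biran--Cornea. First I would localize the problem. Since $L_0\#L_1$ coincides with $L_0\cup L_1$ outside $\varphi\inv(U')$ for an arbitrarily small ball $U'\ni 0$ in the Darboux chart, and $L_0,L_1$ are disjoint there, it suffices to produce the cobordism inside $\varphi\inv(U')\x\C$ and glue it to the \emph{product part}
$$
\big((L_0\setminus\varphi\inv(U'))\x\eta_0\big)\ \sqcup\ \big((L_1\setminus\varphi\inv(U'))\x\eta_1\big)\ \subset\ X\x\C,
$$
where $\eta_0,\eta_1\subset\C$ are embedded arcs, each joining a horizontal left end (at distinct heights $h_0\neq h_1$, living over $L_0$ resp.\ $L_1$) to the common horizontal right end at height $h_2$ (living over $L_0\#L_1$), arranged to coincide near that right end. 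Thus the problem reduces to building a model cobordism $V_{\mathrm{loc}}\subset\C^n\x\C_z$ with cylindrical ends $\R^n\x\ell_0$, $(i\R)^n\x\ell_1$ and $N\x\ell_2$, where $N=\gamma\cdot S^{n-1}$ is the Polterovich handle and $\ell_0,\ell_1,\ell_2$ are horizontal rays.

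For $n\ge 2$ I would look for $V_{\mathrm{loc}}$ in rotationally symmetric form,
$$
V_{\mathrm{loc}}=\set{(w\cdot x,\,z)\ :\ (w,z)\in\Sigma,\ x\in S^{n-1}\subset\R^n},
$$
for a surface $\Sigma\subset\C_w\x\C_z$ (with boundary along the locus $w=0$), where $w\in\C$ acts on $\C^n$ by scalar multiplication. Using $\sum_i x_i\,dx_i=0$ along $S^{n-1}$, a short computation shows that $\omega_{\C^n}$ restricts on $V_{\mathrm{loc}}$ to the pullback of $\omega_{\C_w}$ from $\Sigma$; hence $V_{\mathrm{loc}}$ is Lagrangian in $(\C^n\x\C_z,\ \omega_{\C^n}\oplus\omega_{\C_z})$ precisely when $\Sigma$ is a Lagrangian surface in $(\C^2,\ \omega_{\C_w}\oplus\omega_{\C_z})$. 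Unwinding the three prescribed ends, $\Sigma$ must be asymptotically cylindrical, equal over its ends to a half-line $\R_{\ge0}\x\ell_0$ through the origin, to $i\R_{\ge0}\x\ell_1$, and to $\mathrm{im}(\gamma)\x\ell_2$: the collapsing half-axes $\R_{\ge0}, i\R_{\ge0}$ sweep out $\R^n$ and $(i\R)^n$ under the $S^{n-1}$-action, while $\mathrm{im}(\gamma)$ avoids $w=0$ and joins the two coordinate half-axes, so $\mathrm{im}(\gamma)\cdot S^{n-1}=N$. Such a $\Sigma$ is an elementary one-dimensional surgery cobordism in $\C$ and can be exhibited directly---e.g.\ as the trace of an isotopy of embedded arcs in $\C_w$ from a desingularization of $\R_{\ge0}\cup i\R_{\ge0}$ to $\mathrm{im}(\gamma)$, together with a Lagrangian suspension in the $z$-direction whose area primitive is locally constant near the ends (so the ends are genuinely cylindrical rather than merely conical).

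It then remains to assemble the pieces: set $V_{\mathrm{loc}}=\Sigma\cdot S^{n-1}$ and check it is a smooth embedded $(n+1)$-submanifold---the $S^{n-1}$-fibre degenerates only over the part of $\partial\Sigma$ at $w=0$, producing exactly the smooth pieces $\R^n\x\ell_0$ and $(i\R)^n\x\ell_1$, and embeddedness elsewhere follows from that of $\Sigma$ together with $\mathrm{im}(\gamma)\cap(-\mathrm{im}(\gamma))=\varnothing$; verify that its ends are $L_0\x\ell_0$, $L_1\x\ell_1$, $(L_0\#L_1)\x\ell_2$; glue it to the product part from the first step along a neighbourhood of $\partial(\varphi\inv(U'))\x\C$, the result being a smooth embedded Lagrangian; and finally straighten the $\eta_i,\ell_i$ by a compactly supported Hamiltonian isotopy so that, outside a compact $K\subset\C$, the cobordism fibres over honest horizontal rays as demanded by the definition. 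The case $n=1$ is the original Lalonde--Sikorav construction and can be written down directly as a Lagrangian pair-of-pants-type surface in $\C^2$.

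The substance of the argument lies entirely in this last assembly step: one must choose the profile curve $\gamma$ and the planar data $\ell_i,\eta_i$ so that $V_{\mathrm{loc}}$ is \emph{simultaneously} embedded, Lagrangian, asymptotic to exactly the prescribed cylindrical ends, and matched smoothly (and through Lagrangian pieces) to the product part without introducing any new intersection points---in particular controlling the area swept by the suspension so that all three ends are genuinely cylindrical. Everything else is formal.
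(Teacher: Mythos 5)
Your reconstruction is correct and follows essentially the same $O(n)$-equivariant scheme as the Biran--Cornea argument the paper cites: the ansatz $V_{\mathrm{loc}}=\Sigma\cdot S^{n-1}$, together with the observation that $\sum_i x_i\,dx_i=0$ on $S^{n-1}$ forces $\omega_{\C^n}$ to pull back to $\omega_{\C_w}$, reduces the problem to producing a Lagrangian pair-of-pants $\Sigma\subset\C_w\times\C_z$ with prescribed cylindrical ends, which is precisely how the surgery trace is built there. The one point worth spelling out beyond what you already acknowledge in your final paragraph is the smoothness of $V_{\mathrm{loc}}$ along the collapsed locus $\partial\Sigma\subset\{w=0\}$: this requires $\Sigma$ to meet $\{w=0\}$ cleanly (for instance with the $z$-coordinate an even function of the radial coordinate near $\{w=0\}$), a normalization that is compatible with the Lagrangian condition since $\omega_{\C_w}$ pulled back to $\Sigma$ already vanishes to first order there.
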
 
\begin{proof}
See \cite[Lemma 6.1.1]{biran2013lagrangian}.
\end{proof}

Since the surgery construction is local, one immediately gets:
\begin{cor}\label{cor:multiplesurgery}
Let $L_0$ and $L_1$ be Lagrangians intersecting transversely at a finite collection of points. 
Let $L_0\# L_1$ be the Lagrangian obtained by performing the above surgery at each intersection point.\footnote{When $L_0$ and $L_1$ intersect at a unique point, the Lagrangian $L_0 \# L_1$ is topologically a connect sum. This is no longer true when the intersection consists of more than one point.} 
Then there is a Lagrangian cobordism with ends $L_0, L_1$ and $L_0\# L_1$.
\end{cor}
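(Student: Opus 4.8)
The plan is to reduce to the single-point statement of \Cref{pr:surgery} by exploiting that both the surgery operation and the Biran--Cornea cobordism are \emph{local} constructions. Write $L_0 \cap L_1 = \set{p_1,\dots,p_m}$ and choose pairwise disjoint Darboux charts $\varphi_i : U_i \to \C^n$ with $\varphi_i(L_0 \cap U_i) = \R^n$ and $\varphi_i(L_1 \cap U_i) = (i\R)^n$, taken small enough that the surgery neck at $p_i$ (for one fixed, sufficiently small surgery profile $\gamma$) is contained in $\varphi_i\inv(\C^n)$. By definition $L_0 \# L_1$ agrees with $L_0 \cup L_1$ on $X \setminus \bigcup_i U_i$ and equals the local surgery model inside each $U_i$.

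The key point, which I would extract from the proof of \cite[Lemma 6.1.1]{biran2013lagrangian}, is that the cobordism $V^{BC}$ produced by \Cref{pr:surgery} for a single point $p \in U$ can be normalised to the following shape. It is cylindrical at infinity with three ends $L_0 \x \gamma_0$, $L_1 \x \gamma_1$, $(L_0\#L_1) \x \gamma$, where $\gamma_0,\gamma_1$ are parallel horizontal rays going off in one direction and $\gamma$ a horizontal ray going off in the other, all merging over a compact region of $\C$; write $\Gamma := \gamma_0 \cup \gamma_1 \cup \gamma$. Moreover, $V^{BC}$ restricted to $(X \setminus U) \x \C$ is exactly the product $\big((L_0 \cup L_1) \setminus U\big) \x \Gamma$: all of the non-product behaviour (the handle attachment) takes place over $\overline U \x \C$, and near the vertical boundary $\partial U \x \C$ the cobordism already agrees with the product model $\big((L_0 \cup L_1) \cap \partial U\big) \x \Gamma$.

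Granting this, the corollary follows by superposition. Build $V \subset X \x \C$ by taking $\big((L_0 \cup L_1) \setminus \bigcup_i U_i\big) \x \Gamma$ over $(X \setminus \bigcup_i U_i) \x \C$, and inserting over each $\overline U_i \x \C$ the local Biran--Cornea piece $V^{BC}_i$ built with the \emph{same} curve configuration $\Gamma$. Since the $U_i$ are disjoint and each $V^{BC}_i$ matches the product model near $\partial U_i$, the patches glue to a properly embedded submanifold $V$; it is Lagrangian because the Lagrangian condition is local and is satisfied on each patch; and it is cylindrical at infinity with ends precisely $L_0$, $L_1$, $L_0 \# L_1$, since the ends live over the non-compact parts of $\Gamma$, which are untouched by the gluing. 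This $V$ is the desired cobordism.

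The only real content is the first step: verifying that $V^{BC}$ can be arranged to be product-type outside the Darboux chart with a curve configuration $\Gamma$ that is independent of $p$, so that the local pieces share a common exterior along which to glue. This is a matter of unwinding the explicit handle construction in \cite[Lemma 6.1.1]{biran2013lagrangian}---the handle is glued inside $U \x \C$, and outside $U$ the only modification is an isotopy of the three curves making them merge, which can be performed identically at every $p_i$. Everything after that (smoothness of the glued Lagrangian, the prescribed behaviour at infinity) is routine and local.
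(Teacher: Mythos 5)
Your overall approach — reduce to the one‑point case and exploit the locality of both the surgery and the Biran--Cornea trace to patch the local pieces along a common product‑type exterior — is exactly the paper's intent; the paper itself gives no argument beyond the one‑line remark ``since the surgery construction is local, one immediately gets\ldots'', and your proposal is a sensible unwinding of that remark.

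However, there is one statement that is wrong as written and would break the gluing if taken literally. You assert that $V^{BC}$ restricted to $(X\setminus U)\times\C$ equals the Cartesian product $\big((L_0\cup L_1)\setminus U\big)\times\Gamma$ with $\Gamma=\gamma_0\cup\gamma_1\cup\gamma$. That cannot be right: it would force, e.g., $(L_1\setminus U)\times\gamma_0\subset V^{BC}$, whereas the $\gamma_0$-end of the cobordism is $L_0\times\gamma_0$ only, and $L_0$, $L_1$ are disjoint outside $U$. The correct description of the exterior is that $V^{BC}\cap\big((X\setminus U)\times\C\big)=(L_0\setminus U)\times c_0\ \cup\ (L_1\setminus U)\times c_1$, where $c_0$ and $c_1$ are two curves in $\C$, each consisting of one of the ``incoming'' rays $\gamma_0,\gamma_1$, a compact kink, and the common ``outgoing'' ray $\gamma$; the union $c_0\cup c_1$ is the lambda‑shaped configuration, and $c_0\neq c_1$. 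Since $L_0\setminus U$ and $L_1\setminus U$ are disjoint, this set is a smooth embedded Lagrangian even though $c_0\cup c_1$ is not a manifold at the branch point. With this corrected description — and choosing the same pair $(c_0,c_1)$ for every local piece $V_i^{BC}$, which the construction of \cite[Lemma~6.1.1]{biran2013lagrangian} permits — your gluing argument goes through verbatim: the exterior pieces over $(X\setminus\bigcup_i U_i)\times\C$ agree, and near each $\partial U_i\times\C$ the local model $V_i^{BC}$ matches $(L_0\cap\partial U_i)\times c_0\cup(L_1\cap\partial U_i)\times c_1$. So this is a fixable slip rather than a gap in the idea, but do state the exterior model correctly, as the literal product $(L_0\cup L_1)\times\Gamma$ is not what the Biran--Cornea trace looks like.
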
 

\begin{rmk} 
One can show that when performing surgery at a single point, $L_0\# L_1$ is independent, up to Hamiltonian isotopy, of the specific curve $\gamma$ used to construct the surgery neck. 
This is no longer the case when there are multiple intersection points. 
In particular, byconcatenating two cobordisms with different surgery necks we obtain cobordant (embedded) Lagrangians that need not be Hamiltonian isotopic. 
This shows that Lagrangian cobordism is a strictly coarser equivalence relation than Hamiltonian isotopy.
\end{rmk} 

\begin{rmk}\label{rmk:hprinciple}
It follows from \Cref{cor:multiplesurgery} that (embedded) Lagrangian cobordisms, without extra decorations, are topological.
Namely, there is an $h$-principle for Lagrangian immersions, showing they are governed by algebraic topology. 
One can then perturb the Lagrangian immersion by a Hamiltonian isotopy to make all the intersections transverse, and applying the surgery cobordism yields an embedded Lagrangian cobordism.
However, once we equip Lagrangians and cobordisms with extra data (see \Cref{rmk:restrictionofdata}) some rigidity appears.
This was first noticed by Chekanov \cite{chekanov1997lagrangian}, who showed that monotone Lagrangian cobordisms are no longer flexible.
Monotone Lagrangians are precisely the Lagrangians considered in the original work of Biran-Cornea \cite{biran2013lagrangian,biran2014lagrangian}, where they show that monotonicity imposes enough rigidity on Lagrangian cobordisms to preserve Floer theory (meaning that monotone Lagrangian cobordant Lagrangians are isomorphic objects in the Fukaya category).
\end{rmk} 

A generalization of the above surgery cobordism to non-transverse intersections has been constructed by Hicks \cite{hicks2019tropical,hicks2020tropical}.
He shows that if $L_0$ and $L_1$ are Lagrangians whose intersection can be modeled on the intersection of the zero section and the graph of a convex function, then there exists a Lagrangian $L_0 \# L_1$ agreeing with $L_0$ and $L_1$ outside a neighborhood of the intersection region; furthermore, there is a cobordism between $L_0, L_1$ and $L_0\# L_1$ \cite[Corollary 3.5]{hicks2020tropical}.
Now assume $X(B) \to B$ comes equipped with a Lagrangian torus fibration.
Let $L_0 = \Gamma_0$ be the zero section and $L_1 = \Gamma(d\phi) : = \Gamma(d\tilde\phi)$  be the graph of (a smoothing of) a \emph{tropical polynomial}---a  piecewise linear function  $\phi: B \to \R$ with integer slope which is furthermore convex ($\tilde\phi$ denotes a suitable smoothing, see \cite[Section 3.2]{hicks2020tropical}).
Then Hicks uses his surgery construction to show that the projection of the surgery Lagrangian $\Gamma_0\# \Gamma(d\phi)$ to $B$ lives arbitrarily close to the \emph{tropical hypersurface} $V(\phi)$ (the locus of points in $B$ where $\phi$ is not smooth) \cite[Theorem 3.17]{hicks2020tropical}.

We will use the following instance of Hicks' construction in our computations in the next sections.
The tropical hypersurface $V(\phi)$ will be a collection of disjoint, codimension-$1$ tropical submanifolds $V_1,\dots ,V_k \subset B$.
Each $V_i$ comes with a `weight' $w_i \in \Z$ that records the `amount of bending' of $\phi$ at $V_i$.
Let $L_i = N^*V_i / N_\Z^* V_i$ be the Lagrangian lift of $V_i$ and write  $L_\phi : = \sum_i w_i L_i$.
A surgery cobordism shows  that $\Gamma_0 \# \Gamma(d\phi) \sim L_\phi$ in the cobordism group.\footnote{An interesting point is that a cobordism from $\Gamma_0 \# \Gamma(d\phi)$ to $L_\phi$ does \emph{not} exist; rather, there is a cobordism between $(\Gamma_0 \# \Gamma(d\phi), \Gamma_0)$ and $(L_\phi,\Gamma_0)$.}
Let us state the precise result we will need for further reference:
\begin{prop}\label{prop:jeffsurgery}
    Consider the Lagrangian torus fibration $X(B) \to B$, and let $\Gamma_0$ be the zero section.
    Let $\phi$ be a convex piecewise linear function such that $V(\phi)=V_1\sqcup \dots \sqcup V_k$ is a collection of disjoint tropical submanifolds and let $L_\phi$ be defined as above.
    Then the relation
    $$
    \Gamma_0 + L_\phi \sim  \Gamma(d\phi)
    $$
    holds in $\Cob(X(B))$. 
\end{prop}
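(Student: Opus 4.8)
The plan is to build the cobordism directly from Hicks' surgery cobordism of \Cref{cor:multiplesurgery}'s generalization, which already supplies a cobordism with ends $\Gamma_0$, $\Gamma(d\phi)$ and $\Gamma_0 \# \Gamma(d\phi)$. Concretely, apply the results quoted just above (\cite[Corollary 3.5, Theorem 3.17]{hicks2020tropical}): since $\phi$ is convex piecewise linear with $V(\phi)$ a disjoint union of tropical submanifolds $V_1 \sqcup \dots \sqcup V_k$, the surgery $\Gamma_0 \# \Gamma(d\tilde\phi)$ exists (taking $\tilde\phi$ a suitable smoothing), its projection to $B$ lies arbitrarily close to $V(\phi)$, and there is a cobordism between the tuple $(\Gamma_0 \# \Gamma(d\phi), \Gamma_0)$ and $(\Gamma(d\phi))$ — note the extra $\Gamma_0$ on one end, as flagged in the footnote preceding the statement. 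This yields the relation $\Gamma_0 \# \Gamma(d\phi) + \Gamma_0 \sim \Gamma(d\phi)$ in $\Cob(X(B))$.

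Next I would identify $\Gamma_0 \# \Gamma(d\phi)$ with $L_\phi = \sum_i w_i L_i$ in the cobordism group. Because the $V_i$ are disjoint, the surgery Lagrangian is, away from a neighbourhood of $V(\phi)$, just a union of pieces of $\Gamma_0$ and $\Gamma(d\phi)$, and near each component $V_i$ it looks locally like the surgery model for the zero section against the graph of a convex function bending by $w_i$ across $V_i$. The local model of Hicks shows this piece is (Hamiltonian isotopic to, hence cobordant to) the Lagrangian lift $w_i L_i = N^*V_i/N^*_\Z V_i$ with multiplicity $w_i$ — where multiplicity $w_i$ means $w_i$ parallel copies, or equivalently $w_i$ times the class $[L_i]$ in $\Cob$. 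Since the components are disjoint one can perform these identifications simultaneously (or one at a time, using that each is a local modification) and conclude $\Gamma_0 \# \Gamma(d\phi) \sim \sum_i w_i L_i = L_\phi$.

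Combining the two steps gives $\Gamma_0 + L_\phi \sim \Gamma(d\phi)$, as desired. It remains only to check that all Lagrangians and cobordisms involved carry the required brane/quasi-exactness decorations so that the relation is valid in the decorated group $\Cob(X(B))$ of \Cref{rmk:restrictionofdata}; this is where one invokes the compatibility results of \Cref{ap:compatibility} (restriction of gradings, Pin structures and local systems along the surgery cobordism), and one should also confirm the surgery cobordism is tautologically unobstructed in the relevant sense.

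The main obstacle I expect is the second step: cleanly matching the surgered Lagrangian $\Gamma_0 \# \Gamma(d\phi)$ with the formal sum $L_\phi = \sum_i w_i L_i$, including getting the integer weights $w_i$ right from the ``amount of bending'' of $\phi$ and verifying that the brane data on the surgery neck restricts to the intended brane data on the $L_i$. The topological picture is standard, but turning ``lives arbitrarily close to $V(\phi)$'' into an honest cobordism relation with the correct multiplicities — and checking the decorations survive — is the delicate point; everything else is a direct appeal to \Cref{pr:surgery}, \Cref{cor:multiplesurgery} and Hicks' theorems.
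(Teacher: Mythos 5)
Your overall outline---Hicks' surgery cobordism gives $\Gamma_0 \# \Gamma(d\phi) + \Gamma_0 \sim \Gamma(d\phi)$, then $\Gamma_0 \# \Gamma(d\phi) \sim L_\phi$, then combine---is the route the paper itself gestures at (the paper supplies no formal proof: it cites Hicks' Corollary 3.5 and Theorem 3.17, asserts the second relation in the paragraph before the statement, and gives a hands-on alternative in the remark that follows, descending a genuine product surgery cobordism $V \times \Gamma_0$ from $T^2 \times T^2 \times \C$, which is what is actually used in the computations). However, your justification of the second step contains a real error. You write that near each $V_i$ the surgered Lagrangian $\Gamma_0 \# \Gamma(d\phi)$ is ``(Hamiltonian isotopic to, hence cobordant to)'' $w_i$ parallel copies of $L_i$. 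This cannot be right: the surgery resolution wrapping $w_i$ times across $V_i$ has a different number of connected components than $w_i$ disjoint copies of $L_i$, and Hamiltonian isotopy preserves connected components. The footnote in the paper says exactly this: \emph{no} two-ended cobordism from $\Gamma_0 \# \Gamma(d\phi)$ to $L_\phi$ exists; one must instead produce a four-ended cobordism between $(\Gamma_0 \# \Gamma(d\phi), \Gamma_0)$ and $(L_\phi, \Gamma_0)$, and then cancel the $\Gamma_0$'s in $\Cob(X(B))$. Your proposal, by asserting a Hamiltonian isotopy, would produce precisely the direct cobordism the footnote says does not exist.

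Relatedly, you misattribute the footnote: you read the ``extra $\Gamma_0$'' it flags as part of Hicks' surgery cobordism with ends $\Gamma_0$, $\Gamma(d\phi)$, $\Gamma_0 \# \Gamma(d\phi)$, but that $\Gamma_0$ is one of the two Lagrangians entering the surgery, not a correction term; the footnote's warning is entirely about the $\Gamma_0 \# \Gamma(d\phi) \sim L_\phi$ step, which is the step your argument glosses over. You are right to flag the second step as ``the delicate point,'' and your first step and decoration/unobstructedness checks are fine; but the resolution of the delicate point is not an upgrade of the local model to a Hamiltonian isotopy---it is the construction of the additional four-ended cobordism (or, for the product-type sections the paper actually needs, the explicit descent construction of the remark). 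Until that is supplied, the key identity $\Gamma_0 \# \Gamma(d\phi) \sim L_\phi$ is asserted rather than established.
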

\begin{rmk}
    In fact, in our computations the sections $\Gamma(d\phi)$ will be product-type, meaning the function $\phi$ will only depend on one of the coordinates $(x,y) \in K$.
    One could then pull back such a section to $T^2 \x T^2 \to \mcal K$ and surger it with the $0$-section by considering the product $V \x \Gamma_0 \subset (T^2 \x \C) \x T^2$ (or $\Gamma_0 \x V \subset T^2 \x (T^2\x\C)$) of a standard surgery cobordism $V \subset T^2 \x \C$ in one of the $T^2$-factors with the $0$-section $\Gamma_0$ in the other factor.
    This cobordism is invariant under the covering group of the cover $T^2 \x T^2 \x \C \to \mcal K \x C$, thus descends to $\mcal K \x \C$ and recovers the cobordism relation of \Cref{prop:jeffsurgery}.
\end{rmk}

\subsection{The Lagrangian cobordism group of $T^2$}\label{sec:lagcobT2}
In this section we recall the computation of the Lagrangian cobordism group of $T^2$ whose generators are oriented embedded non-contractible Lagrangians and whose relations come from oriented embedded cobordisms.
This will not just exhibit the use of Hamiltonian isotopies and Lagrangian surgeries to compute cobordism groups, but in fact will be essential in our computation of the cobordism group of a bielliptic surface in \Cref{sec:computationcob}.
The computation of $\Cob(T^2)$ was first done in \cite{haug2015lagrangian} using homological mirror symmetry. 
We will present a computation that only requires standard symplectic topology arguments.

Denote by $\Cob(T^2)_{hom}$  the subgroup of $\Cob(T^2)$ generated by homologically trivial formal sums of Lagrangians (this is the kernel of the {\it cycle-class map} $\Cob(T^2) \to H_1(T^2;\Z)$, which exists because all Lagrangians and cobordisms are oriented---cf. \Cref{sec:cycleclassmap}). 
The following Lemma is an application of Stokes' theorem.
\begin{lem}\label{lem:fluxintorus}
    There is a well-defined map $\Phi:\Cob(T^2)_{hom} \to \R/(\Z\, area(T^2))\cong S^1$ sending a homologically trivial collection of Lagrangians $L_1+\dots+ L_k$ to the area $\omega(u)$ of a topological 2-chain with $\partial u=L_1+\dots+L_k$. 
\end{lem}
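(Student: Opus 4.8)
The plan is to define $\Phi$ on the generating set of $\Cob(T^2)_{hom}$ first, then check it descends through the cobordism relations. Given a homologically trivial formal sum $L_1 + \dots + L_k$ (meaning $[L_1] + \dots + [L_k] = 0$ in $H_1(T^2;\Z)$), the orientations on the $L_i$ let us view them together as an integral $1$-cycle; since it is null-homologous there exists a singular $2$-chain $u \in C_2(T^2)$ with $\partial u = L_1 + \dots + L_k$. Set $\Phi(L_1 + \dots + L_k) := \omega(u) \bmod \Z\cdot area(T^2)$. The first step is to verify this is independent of the choice of $u$: if $u'$ is another such chain, then $u - u'$ is a $2$-cycle, and by Stokes' theorem $\omega(u) - \omega(u') = \langle [\omega], [u-u'] \rangle$, which lands in the subgroup of $\R$ generated by the periods of $\omega$ over $H_2(T^2;\Z)$; since $H_2(T^2;\Z) \cong \Z$ is generated by the fundamental class, this subgroup is exactly $\Z\cdot area(T^2)$. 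Hence $\Phi$ is well-defined as a map to $\R/(\Z\, area(T^2)) \cong S^1$, and it is clearly additive in the formal sum, so it extends $\Z$-linearly to all of $\Cob(T^2)_{hom}$.

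The second step is to check $\Phi$ kills the cobordism relations, i.e. that it is well-defined on the quotient group. Suppose $V \subset T^2 \times \C$ is an (oriented, embedded) Lagrangian cobordism between tuples $(L_1^+,\dots,L_k^+)$ and $(L_1^-,\dots,L_s^-)$, and suppose the associated element $\sum_i L_i^+ - \sum_j L_j^-$ lies in $\Cob(T^2)_{hom}$ (automatically, once we know both tuples have the same total homology class — which follows because $V$ itself provides a homology between the ends inside $T^2$, after projecting out the $\C$-direction). I want to produce a $2$-chain $u$ in $T^2$ with $\partial u = \sum_i L_i^+ - \sum_j L_j^-$ and $\omega(u) = 0 \bmod \Z\cdot area(T^2)$. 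The idea is to use $V$ itself: near its ends $V$ is cylindrical, $L_i^\pm \times \gamma_i^\pm$, so truncating $V$ at a large compact region of $\C$ and projecting to $T^2$ gives a $2$-chain $\bar u$ whose boundary is $\sum_i L_i^+ - \sum_j L_j^-$ (the horizontal ray pieces project to the $L_i^\pm$, with signs from the orientation convention at the two ends of a cobordism). Because $V$ is Lagrangian in $T^2 \times \C$ with the split symplectic form $\omega \oplus \omega_{\mathrm{std}}$, we have $(\omega \oplus \omega_{\mathrm{std}})|_V = 0$; integrating over the truncated piece and using that the cylindrical ends contribute zero to the $\omega$-part (the ends are $L_i^\pm \times \gamma_i^\pm$ and $\omega$ vanishes on the $\gamma$-directions while $L_i^\pm$ is a curve hence has no area), one gets $\omega(\bar u) = -\omega_{\mathrm{std}}(\text{projection of truncated } V \text{ to } \C)$; but this latter quantity is the signed area in $\C$ enclosed by $\pi_\C(V)$, which depends only on the cobordism and, after the mod $\Z\cdot area(T^2)$ reduction, I claim it vanishes — here one invokes that $V$ being embedded and Lagrangian forces the relevant $\C$-area to be controlled, or more cleanly, one replaces $\omega_{\mathrm{std}}$ on $\C$ by an \emph{exact} form $d\lambda$ so that $\omega_{\mathrm{std}}(\pi_\C(\text{truncated } V))$ is a sum of boundary integrals over the horizontal rays, each of which is zero since $\lambda$ vanishes (or is constant) along horizontal rays. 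This shows $\omega(\bar u) = 0$ exactly, hence $\Phi(\sum_i L_i^+ - \sum_j L_j^-) = 0$.

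The main obstacle, and the step deserving the most care, is the second one: tracking orientations and signs so that $\partial$ of the truncated projected cobordism is genuinely $\sum_i L_i^+ - \sum_j L_j^-$ (the sign flip between incoming and outgoing ends of a cobordism is a standard but easy-to-botch point), and making rigorous the claim that the $\C$-direction contributes nothing — the cleanest route is to work from the start with a primitive of the area form on $\C$ supported away from the horizontal ends, so that the split form on $T^2 \times \C$ restricted to $V$ gives, via Stokes, precisely $\omega(\bar u) = 0$ with no leftover period ambiguity. A subtlety worth noting is that one should confirm the construction is insensitive to the choice of truncation region $K \subset \C$: enlarging $K$ changes $\bar u$ only by product pieces $L_i^\pm \times (\text{interval})$, which are null-homotopic in $T^2$ after collapsing the interval and in any case contribute zero $\omega$-area since $L_i^\pm$ is $1$-dimensional. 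Once these bookkeeping points are pinned down, $\Phi$ descends to a well-defined homomorphism $\Cob(T^2)_{hom} \to S^1$, which is the assertion of the lemma.
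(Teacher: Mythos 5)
Your proposal is correct and is precisely what the paper means by "an application of Stokes' theorem" (the paper gives no further detail, so there is no competing argument in the text to compare against). The first step (periods of $\omega$ over $H_2(T^2;\Z) \cong \Z$ give exactly the ambiguity $\Z\cdot area(T^2)$) is clean and right.

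One minor imprecision in the second step is worth flagging. You describe the $\C$-contribution as "a sum of boundary integrals over the horizontal rays" killed by choosing $\lambda$ to vanish along the rays. In fact the boundary components of the truncated cobordism $V_K$ are the circles $L_i^\pm \times \{p_i^\pm\}$ with $p_i^\pm \in \partial K$ a single point on each ray, not arcs of the rays. Since $\pi_\C$ is constant on each such circle, $\pi_\C^*\lambda$ restricts to zero on every boundary component automatically, for \emph{any} primitive $\lambda$ of $\omega_\C$; no special choice of $\lambda$ is needed. So the Stokes computation
$$
\int_{V_K}\pi_\C^*\omega_\C \;=\; \int_{\partial V_K}\pi_\C^*\lambda \;=\; 0
$$
is exact, and combined with $(\pi_{T^2}^*\omega_{T^2}+\pi_\C^*\omega_\C)|_V=0$ gives $\omega(\bar u)=0$ on the nose, so $\Phi$ kills cobordism relations. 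Your worry that the $\C$-area merely needs to be "controlled" is also more conservative than necessary: it is identically zero.
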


We now state the main cobordism relations in $T^2$, which will be used to compute $\Cob(T^2)$ and will also be useful later in \Cref{sec:computationcob}.
\begin{lem}\label{lem:relationsT2}
The following relations hold in $\Cob(T^2)$:
\begin{itemize}
    \item For any $a,b,\theta\in \R/\Z$, we have
    \begin{equation}\label{eq:relation1T2}
        S^1\x\{a\}-S^1\x\{a+\theta\}\sim \{b\}\x S^1 - \{b+\theta\}\x S^1.
    \end{equation}
    \item For any $a,\theta\in S^1$, we have
    \begin{equation}\label{eq:relation2T2}
    S^1\x\{0\}-S^1\x\{\theta\}\sim S^1\x\{a\}-S^1\x\{a+\theta\}.
    \end{equation}
\end{itemize}
\end{lem}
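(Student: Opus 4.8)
The plan is to produce both relations by explicit Lagrangian surgery/suspension cobordisms, exploiting the product structure $T^2 = S^1 \x S^1$ and the translation symmetry. For relation (\ref{eq:relation1T2}), the key observation is that both sides are instances of the same phenomenon: a pair of parallel circles in one factor differing by a translation by $\theta$ is cobordant to whatever one produces by ``rotating the cobordism.'' Concretely, I would first treat the case $b = 0$ and construct a single cobordism $V \subset T^2 \x \C$ whose four ends are $S^1\x\{a\}$, $S^1\x\{a+\theta\}$ (incoming) and $\{0\}\x S^1$, $\{\theta\}\x S^1$ (outgoing). The most economical route is via surgery: $S^1\x\{a\}$ and $\{0\}\x S^1$ intersect transversely at one point, and their Lagrangian surgery $(S^1\x\{a\}) \# (\{0\}\x S^1)$ is (up to Hamiltonian isotopy) a curve of homology class $(1,1)$; similarly for the shifted pair. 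One then checks that the two surgered $(1,1)$-curves are Hamiltonian isotopic to each other — they enclose the same area after accounting for the shift $\theta$, and in $T^2$ homology class plus enclosed area is a complete invariant — and concatenates the two surgery cobordisms (\Cref{pr:surgery}) with the suspension cobordism of that Hamiltonian isotopy (\Cref{lem:fluxintorus} and the suspension lemma). This yields $S^1\x\{a\} + \{0\}\x S^1 \sim S^1\x\{a+\theta\} + \{\theta\}\x S^1$, which rearranges to (\ref{eq:relation1T2}) with $b=0$. The general $b$ follows by applying a Hamiltonian translation in the first factor (which moves $\{0\}\x S^1 \mapsto \{b\}\x S^1$ and fixes the $S^1\x\{\cdot\}$ factors up to isotopy) and using the suspension cobordism of that translation.

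For relation (\ref{eq:relation2T2}), the point is purely that translating in the $a$-direction is Hamiltonian. The four-circle collection $S^1\x\{0\} + S^1\x\{a+\theta\}$ is Hamiltonian isotopic to $S^1\x\{a\} + S^1\x\{\theta\}$ via the flow translating the second coordinate by $a$, since this flow sends $\{0\}\mapsto\{a\}$ and $\{a+\theta\}\mapsto\{a+\theta+a\}$ — wait, that is not quite right, so instead I would translate only the ``first'' circle of each pair: apply the time-one flow of a Hamiltonian supported near $S^1\x\{0\}$ and $S^1\x\{a+\theta\}$ that pushes the former to $S^1\x\{a\}$ and the latter to $S^1\x\{\theta\}$ — possible since these are disjoint circles and translation in the $y$-coordinate is realized by an (autonomous, hence Hamiltonian) flow. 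The suspension of this isotopy is the desired cobordism, giving (\ref{eq:relation2T2}) directly. Alternatively, and perhaps cleaner, one notes both sides have zero image under the cycle-class map and computes $\Phi$ of each side via \Cref{lem:fluxintorus}: both equal $\theta \cdot \mathrm{area}(S^1)$ (the area swept out is independent of the starting position $a$), so after also matching homology classes the two collections are cobordant. I would present whichever of these is shortest.

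The main obstacle I expect is bookkeeping the orientations and the precise area/homology invariants so that the Hamiltonian isotopies genuinely exist: one must be careful that ``translate one circle of a parallel pair'' is realized by a global Hamiltonian isotopy of $T^2$ (it is, because the circles are disjoint and one can cut off the translating Hamiltonian in a neighborhood of each), and that the surgered $(1,1)$-curves in the proof of (\ref{eq:relation1T2}) really are Hamiltonian isotopic — this uses the classification of embedded curves in $T^2$ up to Hamiltonian isotopy by homology class and enclosed area, which is the $\dim = 2$ input that \Cref{lem:fluxintorus} is built on. Once that classification is invoked, everything reduces to computing enclosed areas, which is elementary. A secondary, purely cosmetic point is keeping track of signs in the formal sums so that the surgery relations $L_0 + L_1 \sim L_0\#L_1$ (with the auxiliary $\Gamma_0$-type end, cf.\ the footnote after \Cref{cor:multiplesurgery}) assemble correctly into the stated two-term-versus-two-term relations; I would phrase the surgeries so that the extra ends cancel in the concatenation.
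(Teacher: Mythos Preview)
Your strategy for relation (\ref{eq:relation1T2}) is exactly the paper's: rewrite it as an equality of two-term sums, surger each pair into a single $(1,1)$-curve, and check via a flux/area computation that the two surgered curves are Hamiltonian isotopic. The paper works directly with general $b$ (pairing $S^1\times\{a\}$ with $\{b+\theta\}\times S^1$), so your reduction to $b=0$ followed by translation is unnecessary. Note also that your ``Hamiltonian translation in the first factor'' is a symplectomorphism of $T^2$ but \emph{not} a Hamiltonian one; since symplectomorphisms still transport cobordism relations, the conclusion survives, but the justification should be phrased that way.

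For relation (\ref{eq:relation2T2}) there is a genuine gap. Approach A fails: translation in the second coordinate of $T^2$ is not Hamiltonian (the form $\iota_{\partial_y}\omega = -dx$ is closed but not exact), so your claim that it is ``autonomous, hence Hamiltonian'' is incorrect. Concretely, for the disconnected Lagrangian $S^1\times\{0\}\sqcup S^1\times\{a+\theta\}$, the flux of any Lagrangian isotopy to $S^1\times\{a\}\sqcup S^1\times\{\theta\}$ lies in $H^1\cong\R\oplus\R$ and equals $(\pm a,\mp a)$ or $(\pm\theta,\mp\theta)$ depending on how components are matched --- nonzero for generic $a,\theta$ --- so no Hamiltonian isotopy exists. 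Approach B is circular: inferring ``same image under $\Phi$ and same homology class, hence cobordant'' is precisely the content of \Cref{prop:fluxisaniso}, whose proof \emph{uses} the present Lemma. The paper instead observes that (\ref{eq:relation2T2}) is an immediate corollary of (\ref{eq:relation1T2}): applying (\ref{eq:relation1T2}) once to each side shows both are cobordant to the same expression $\{b\}\times S^1 - \{b+\theta\}\times S^1$.
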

\begin{proof}  
    The first relation is equivalent to $S^1\x\{a\} + \{b+\theta\}\x S^1 \sim S^1\x\{a+\theta\} + \{b\}\x S^1$, or using surgery to $(S^1\x\{a\}) \# (\{b+\theta\}\x S^1) \sim (S^1\x\{a+\theta\}) \# (\{b\}\x S^1)$.
    These two Lagrangians can be easily seen to be Hamiltonian isotopic by flux considerations.
    The second relation is an immediate corollary of the first: both sides are cobordant to $\{b\}\x S^1 - \{b+\theta\}\x S^1$ for any $b\in S^1$.
\end{proof}

With this one can then show the following:
\begin{prop}\label{prop:fluxisaniso}
    The map $\Phi$ of \Cref{lem:fluxintorus} is an isomorphism.
\end{prop}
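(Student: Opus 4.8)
The plan is to show $\Phi$ is both injective and surjective. Surjectivity is the easier direction: given any $r \in \R/(\Z\,\mathrm{area}(T^2))$, I would produce a homologically trivial pair of Lagrangians enclosing area $r$. The natural candidates are $S^1 \times \{0\} - S^1 \times \{\theta\}$ for varying $\theta \in S^1$; the region between these two parallel circles has area $\theta \cdot \mathrm{area}(T^2)$ (up to normalizing the circumference), so as $\theta$ ranges over $S^1$ the enclosed area sweeps out all of $\R/(\Z\,\mathrm{area}(T^2))$. This gives a section-level surjection onto $S^1$, hence $\Phi$ is onto.

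For injectivity, suppose $L_1 + \dots + L_k$ is a homologically trivial combination of oriented non-contractible Lagrangians with $\Phi(L_1+\dots+L_k) = 0$, i.e. it bounds a $2$-chain of zero symplectic area; I must show it is trivial in $\Cob(T^2)_{hom}$. First I would reduce to a normal form. Using the suspension (Hamiltonian isotopy) cobordism, each $L_i$ may be replaced by a straight geodesic circle of the same homology class, and using the surgery cobordisms of \Cref{pr:surgery}, \Cref{cor:multiplesurgery} together with the relations of \Cref{lem:relationsT2} I would combine circles of the same slope and split or merge classes so as to write the combination, modulo cobordism, as a sum of pairs of the form $S^1 \times \{a\} - S^1 \times \{b\}$ (all of one fixed slope, say horizontal), since homological triviality forces the total homology class to vanish and \Cref{lem:relationsT2} lets one transport any slope to the horizontal one at the cost of other cancelling pairs. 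Concretely, relation (\ref{eq:relation1T2}) exchanges a horizontal pair for a vertical pair with the same "offset" $\theta$, and (\ref{eq:relation2T2}) lets one translate such a pair freely; iterating, any homologically trivial configuration becomes cobordant to a single pair $S^1 \times \{0\} - S^1 \times \{\theta_0\}$ where $\theta_0$ is determined by additivity of the offsets.

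The final step is to compute the flux of this normal form and argue that $\Phi = 0$ forces it to be cobordant to the empty Lagrangian. The pair $S^1 \times \{0\} - S^1 \times \{\theta_0\}$ has $\Phi$-value $\theta_0 \cdot \mathrm{area}(T^2) \bmod \Z\,\mathrm{area}(T^2)$, so the hypothesis $\Phi = 0$ gives $\theta_0 \in \Z$, i.e. $\theta_0 = 0$ in $S^1$; then the two circles coincide (with opposite orientations) and the pair is cobordant to the empty set via an obvious "cap" cobordism — or one observes $L - L \sim 0$ directly since a Lagrangian and its orientation reversal bound a trivial cobordism. The main obstacle is the reduction-to-normal-form step: one must check carefully that every surgery and Hamiltonian move used is realized by an \emph{oriented embedded} cobordism (so that it is admissible for $\Cob(T^2)$ as defined here) and that $\Phi$ is invariant under all of these moves — which is exactly \Cref{lem:fluxintorus}, an application of Stokes, so the well-definedness of $\Phi$ already does most of this bookkeeping for us. Once the moves are known to be flux-preserving and the combinatorics of reducing to a single pair is done, injectivity follows immediately.
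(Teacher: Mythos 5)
Your proposal is correct and follows essentially the same route as the paper. Surjectivity is identical. For injectivity, you insert one extra intermediate step (Hamiltonian-isotoping each $L_i$ to a straight geodesic before decomposing) that the paper skips — it goes directly to writing each $L_i$ as an iterated surgery of $m_i$ horizontal and $n_i$ vertical lines — but from there the combinatorial core is the same: use (\ref{eq:relation1T2}) to trade vertical pairs for horizontal ones, use (\ref{eq:relation2T2}) to slide and cancel, and land on a single pair whose flux must vanish. You are also slightly more explicit than the paper at the very end, where you note that $L$ and its orientation reversal cancel via a cap cobordism; the paper states only that "all the Lagrangians cancel," which implicitly uses the same observation.
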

\begin{proof}
    Surjectivity is obvious: for any $\theta\in S^1$, we have $S^1\x\{0\}-S^1\x\{\theta\}=\partial(S^1\x[0,\theta])$ and $\text{area }(S^1\x[0,\theta])=\theta$. 
    To prove injectivity we proceed as follows.
    Let $L_1+\dots+L_k\in \ker \Phi \subset \Cob(T^2)_{hom}$; we must prove that $L_1+\dots+L_k=0$ in $\Cob(T^2)$.
    We first write each $L_i$ as an iterated surgery of $m_i$ horizontal lines and $n_i$ vertical lines, where $(m_i,n_i) = [L_i] \in H_1(T^2;\Z)$ (such surgery yields a Lagrangian in the same homology class as $L_i$, and by sliding one of the horizontal or vertical Lagrangians we can arrange the surgery to be Hamiltonian isotopic to $L_i$).
    Hence we may assume that all the $L_i$ are either horizontal or vertical straight lines.
    If $L^{\pm}_{i_1},\dots,L^{\pm}_{i_l}$ are vertical Lagrangians with homology class $[L^\pm_{i_j}]=(0,\pm1)$, using \Cref{eq:relation1T2} we may turn each pair $L^\pm_{i_j}$ into a cobordant pair of horizontal Lagrangians. 
    We have now reduced to the case where we have a collection of horizontal Lagrangians $L_1^\pm,\dots,L_n^\pm$ with homology class $[L_i^\pm]=(\pm1,0)$. 
    By \Cref{eq:relation2T2}, for each $i=2,\dots,n$ we may slide the pair $L_i^\pm$ so that $L_i^+$ lies over $L_{i-1}^-$, thus cancelling. 
    Doing this for all $i$ leads the two Lagrangians $L_1^+$ and $L_n^-$, and since they bound area $k\, area(T^2)$ for some $k \in \Z$ (they are in the kernel of $\Phi$) they must lie on top of eachother. 
    Thus all the Lagrangians cancel and we are left with the zero-sum on $\Cob(T^2)$, as wanted.
\end{proof}

\begin{cor}\label{pr:CobT2}
The Lagrangian cobordism group of closed, embedded, oriented and non-contractible Lagrangians of $T^2$ is
$$
\Cob(T^2)\cong \Z^2 \oplus S^1.
$$
\end{cor}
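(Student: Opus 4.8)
The plan is to combine \Cref{prop:fluxisaniso} with the cycle-class map $\Cob(T^2) \to H_1(T^2;\Z)$. First I would verify that this cycle-class map is well-defined and surjective: every class $(m,n) \in H_1(T^2;\Z) \cong \Z^2$ is realized by a straight line of that slope (or, when $\gcd(m,n) > 1$, by an iterated surgery of horizontal and vertical lines as in the proof of \Cref{prop:fluxisaniso}), and the map is well-defined because every oriented cobordism $V \subset T^2 \x \C$ gives $[L_1^+] + \dots + [L_k^+] = [L_1^-] + \dots + [L_s^-]$ in $H_1(T^2;\Z)$ — this is just the statement that $[V \cap (T^2 \x \{\mathrm{pt}\})]$ is independent of the point, applied to points on the various horizontal rays at infinity. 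So we get a short exact sequence
\begin{equation*}
0 \to \Cob(T^2)_{hom} \to \Cob(T^2) \to H_1(T^2;\Z) \to 0,
\end{equation*}
where by definition $\Cob(T^2)_{hom}$ is the kernel.

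Next I would identify $\Cob(T^2)_{hom} \cong S^1$ via \Cref{prop:fluxisaniso} (which states exactly that $\Phi : \Cob(T^2)_{hom} \to \R/(\Z\, area(T^2)) \cong S^1$ is an isomorphism). It then remains to see that the sequence splits, so that $\Cob(T^2) \cong H_1(T^2;\Z) \oplus S^1 \cong \Z^2 \oplus S^1$. Since $H_1(T^2;\Z) \cong \Z^2$ is free, the sequence splits automatically; concretely, a splitting $\Z^2 \to \Cob(T^2)$ is given by sending the standard generators to the classes of a fixed horizontal line $S^1 \x \{0\}$ and a fixed vertical line $\{0\} \x S^1$.

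I do not anticipate a serious obstacle here, as all the real content is already packaged in \Cref{prop:fluxisaniso} and \Cref{lem:relationsT2}. The one point requiring a small amount of care is checking that $\Cob(T^2)_{hom}$, as defined to be the kernel of the cycle-class map, really is generated by homologically trivial \emph{formal sums} of Lagrangians (rather than needing some more subtle description) — but this is immediate since any element of the kernel is by definition a $\Z$-combination whose image in $H_1$ vanishes, which is precisely what `homologically trivial formal sum' means. With that in hand, the corollary follows by reading off the extension.
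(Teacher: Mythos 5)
Your argument is correct and is essentially the paper's own: the paper likewise considers the short exact sequence $0 \to \Cob(T^2)_{hom} \to \Cob(T^2) \xrightarrow{cyc} H_1(T^2;\Z) \to 0$, notes that it splits, and invokes \Cref{prop:fluxisaniso} to identify the kernel with $S^1$. You merely spell out the well-definedness and surjectivity of $cyc$ and an explicit splitting, which the paper leaves implicit.
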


\begin{proof}
Consider the short exact sequence
$$
0\to \Cob(T^2)_{hom} \to \Cob (T^2) \xrightarrow{cyc} H_1(T^2;\Z) \to 0,
$$
which splits after a choice of base point $p\in T^2$. 
Using \Cref{prop:fluxisaniso} the result follows.
\end{proof}

For Lagrangians equipped with $G$-local systems, there is an extension of $\Phi$ to a map $\Cob^{loc}(T^2)_{hom} \to S^1 \oplus G$ that records the product of the monodromies of each of the local systems.
A similar argument to that of \Cref{prop:fluxisaniso} shows this map is an isomorphism, thus one obtains:
\begin{cor}\label{cor:cobt2locsys}
    For any abelian group $G$, the Lagrangian cobordism group of closed, embedded, oriented and non-contractible Lagrangians of $T^2$ equipped with $G$-local systems is
    $$
    \Cob(T^2)\cong \Z^2 \oplus (S^1 \oplus G).
    $$
\end{cor}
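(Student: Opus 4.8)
The statement to prove is \Cref{cor:cobt2locsys}: for any abelian group $G$, the Lagrangian cobordism group of closed, embedded, oriented, non-contractible Lagrangians in $T^2$ equipped with $G$-local systems is $\Cob(T^2) \cong \Z^2 \oplus (S^1 \oplus G)$. The strategy is to run the exact same argument as in \Cref{pr:CobT2}, keeping track of the additional local system data throughout. First I would set up the short exact sequence
$$
0 \to \Cob^{loc}(T^2)_{hom} \to \Cob^{loc}(T^2) \xrightarrow{cyc} H_1(T^2;\Z) \to 0,
$$
where $\Cob^{loc}(T^2)_{hom}$ denotes the kernel of the cycle-class map (which still makes sense, as forgetting the local system does not affect the homology class). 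This sequence splits after a choice of base point $p \in T^2$ and a choice of trivial local system on the chosen generators, so it suffices to identify $\Cob^{loc}(T^2)_{hom}$ with $S^1 \oplus G$.

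The next step is to extend the flux map $\Phi$ of \Cref{lem:fluxintorus} to a map
$$
\tilde\Phi : \Cob^{loc}(T^2)_{hom} \to S^1 \oplus G
$$
sending a homologically trivial collection $(L_1,\rho_1) + \dots + (L_k,\rho_k)$ to the pair consisting of the area $\omega(u)$ of a $2$-chain $u$ with $\partial u = L_1 + \dots + L_k$ (as before, well-defined modulo $\Z\cdot area(T^2)$), together with the product $\prod_i \mathrm{mon}(\rho_i)^{\pm 1} \in G$ of the monodromies of the local systems, with signs according to orientation. Well-definedness of the $G$-component follows because a cobordism $V \subset T^2 \times \C$ equipped with a $G$-local system restricts to local systems on its ends whose monodromies, read around the generator of $H_1$, must agree up to the relation imposed by $H_1(V)$; since a cobordism is connected on each component and its ends are homologous, the alternating product of boundary monodromies is trivial. (More carefully: the monodromy of the local system on $V$ around a loop bounding the annular region between two ends forces the product relation. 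This is exactly the argument of \cite{haug2015lagrangian}.) Surjectivity of $\tilde\Phi$ is immediate: the $S^1$-factor is hit as in \Cref{prop:fluxisaniso} by cylinders $S^1 \times [0,\theta]$, and the $G$-factor is hit by taking a pair $S^1 \times \{0\}$ and $S^1 \times \{0\}$ with opposite orientations and local systems with monodromies $g$ and $0$ respectively.

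For injectivity I would follow the proof of \Cref{prop:fluxisaniso} verbatim, with one addition: after writing each $L_i$ as an iterated surgery of horizontal and vertical straight lines (homologically trivial, so equal numbers with each orientation), I note that a $G$-local system on a straight line $S^1 \times \{a\}$ is determined up to isomorphism by its monodromy $g \in G$, and that the surgery and Hamiltonian-isotopy cobordisms of \Cref{pr:surgery} and \Cref{sec:cobconstructions} carry local systems compatibly (restriction of a local system on the cobordism to its ends; this is the "compatibility with restriction" of \Cref{rmk:restrictionofdata}). So I reduce to a collection of parallel horizontal lines with monodromies $g_i^\pm$. Relations \eqref{eq:relation1T2} and \eqref{eq:relation2T2} hold at the level of local-system-decorated Lagrangians provided one carries the monodromy along (the relevant cobordisms are suspension and surgery cobordisms, both of which propagate the local system). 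Sliding the pairs together as in \Cref{prop:fluxisaniso} cancels them in pairs once both the enclosed area and the monodromies match; being in $\ker\tilde\Phi$ guarantees both. Hence the element is zero in $\Cob^{loc}(T^2)$.

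**The main obstacle.** The routine part is the area/sliding argument, which is a direct transcription. The point requiring genuine care is the well-definedness of the $G$-component of $\tilde\Phi$ — i.e., verifying that a decorated cobordism really does impose the alternating-product-of-monodromies relation and nothing more. Concretely: a $G$-local system on a connected cobordism $V$ with $b$ cylindrical ends restricts to monodromies $g_1,\dots,g_b$ on those ends (reading around the $S^1$ common to all of them), and one must check that the only constraint among them coming from $\pi_1(V)$ is $\sum \epsilon_i g_i = 0$ (additively), where $\epsilon_i = \pm 1$ records which side of the cobordism the end sits on. This is where one invokes that the $S^1$-loop on end $i$ and the $S^1$-loop on end $j$ cobound an embedded cylinder inside $V$ (after capping off, using that the cobordism is planar and its projection to $\C$ connects the rays), forcing $g_i = g_j$ up to sign; combined with the two-ends-have-equal-monodromy fact this gives exactly the product relation. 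This is precisely the content worked out in \cite{haug2015lagrangian} for the local-system-decorated cobordism group of $T^2$, and I would cite it rather than reprove it.
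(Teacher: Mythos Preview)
Your proposal is correct and follows essentially the same approach as the paper, which simply states that ``there is an extension of $\Phi$ to a map $\Cob^{loc}(T^2)_{hom} \to S^1 \oplus G$ that records the product of the monodromies'' and that ``a similar argument to that of \Cref{prop:fluxisaniso} shows this map is an isomorphism.'' You in fact supply more detail than the paper does, particularly on the well-definedness of the $G$-component; the one place where you are slightly imprecise is the final cancellation step (being in $\ker\tilde\Phi$ only forces the \emph{total} monodromy product to vanish, so you must first redistribute monodromies via the surgery relations before cancelling in pairs), but this is easily handled by the same surgery-and-unsurgery trick and the paper does not spell it out either.
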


     \subsection{Extra data}\label{sec:branes}

Recall from \Cref{rmk:restrictionofdata} that one often equips Lagrangians and cobordisms with various decorations (this is essential to pass from a flexible to a rigid notion).
In this section we discuss the extra structure that we will consider on all our Lagrangians: gradings, Pin structures and local systems.
The first two are necessary to define a $\Z$-graded Fukaya category  with signs (i.e. away of  fields of characteristic $2$) \cite{seidel2008fukaya,fukaya2009lagrangian}.
The third decoration is a $G$-local system on each Lagrangian, where $G$ is some abelian group.
When $G = U_\Lambda$ is the unitary group of the Novikov field, it is expected that this data is needed to have a homological mirror symmetry equivalence between the Fukaya category (without taking the split closure) and the derived category of coherent sheaves of its mirror.
We will refer to Lagrangians with this extra structure---that is, a grading, a Pin structure and a $G$-local system---as {\it Lagrangian $G$-branes} (or simply Lagrangian branes).
\begin{rmk}
    We work with an arbitrary $G$ because our computation of the cobordism group works in that generality (see \Cref{th:cobK}). 
However, to compute the Grothendieck group and show that it agrees with the cobordism group (\Cref{th:isoK}), we need to apply homological mirror symmetry and thus we specialize to the case $G = U_\Lambda$.
\end{rmk}

We start with gradings.
This is some extra data on the Lagrangians that allows us to define a $\Z$-graded Fukaya category (meaning the morphism spaces are $\Z$-graded vector spaces). 
For a complete exposition on gradings see \cite[Section (11j)]{seidel2008fukaya} or Seidel's original paper \cite{seidel2000graded}.

Let $X^{2n}$ be a symplectic manifold such that $2c_1(X) = 0$. 
This means the complex line bundle $(\Omega^n)^{\otimes 2}$ of holomorphic quadratic volume forms is trivial.
Let $\eta^2: X \to (\Omega^n)^{\otimes 2}$ be a nowhere zero section (note this is the same data as a section of $\Omega^n$ defined up to a sign $\pm 1$, which we denote by $\pm \eta$). Given a Lagrangian submanifold $L \subset X$, there is an associated {\it (squared) phase map} $\alpha_L : L \to S^1$ given by
$$
\alpha_L(p) = \frac{\eta_p^2(vol_{L,p}^{\otimes 2})}{|\eta_p^2(vol_{L,p}^{\otimes 2})|}
$$
where $vol_{L,p}$ is a generator of  $\wdg^n T_p L$. 
The cohomology class $\mu_L \in H^1(L;\Z)$ determined by $\alpha_L$ is called the {\it Maslov class of $L$}.
By definition, $\mu_L$ is the obstruction to the existence of a lift $\tilde \alpha_L : L \to \R$ to the universal cover $\R \to S^1, t \mapsto e^{i2\pi t}$.
\begin{defn}\label{defn:gradedLagrangian}
    Let $L \subset X$ be a Lagrangian with $\mu_L = 0$.
    A pair $(L,\tilde\alpha_L)$ is called a {\it graded Lagrangian}.
\end{defn}

\begin{rmk}\label{rmk:choiceoftrivialisation}
    The phase map $\alpha_L$ depends on the homotopy class of the trivialization $\eta^2$. 
    Whether a Lagrangian admits a grading or not  will therefore also depend on such choice.
\end{rmk}

\begin{rmk}\label{rmk:relativeorientations}
    As explained in \cite[Remark 11.18]{seidel2008fukaya}, a graded Lagrangian is naturally oriented with respect to the $\Z$-local system $\xi \subset \wedge^n_\C TX$ where $\pm \eta$ takes integer values.
    Hence, every graded Lagrangian defines a class in the homology group $H_n(X;\xi)$.
\end{rmk}

Next we turn to (twisted) Pin structures, again following \cite{seidel2008fukaya}.
These are needed to orient the moduli-spaces that define the $A_\infty$ operations, so that one can define a Fukaya category over a field of characteristic different from two.
First recall that there is a group $Pin_n$, constructed explicitly as a subgroup of the Clifford algebra of $\R^n$, that admits a $2$-to-$1$ map $Pin_n \to O_n$.
A \emph{Pin structure} on an $n$-dimensional manifold $M$ is a principal $Pin_n$-bundle $P \to M$ together with an isomorphism $P \x_{Pin_n} \R^n \cong TM$, where $Pin_n$ acts on $\R^n$ via the composition $Pin_n \to O_n \into \GL(\R^n)$.
The obstruction to the existence of a Pin structure is the second Stiefel-Whitney class $w_2(M) \in H^2(M;\Z_2)$.
More generally, given some class $w \in H^2(M;\Z_2)$ and an oriented vector bundle $E \to M$ with $w_2(E) = w$, one can define a \emph{twisted Pin structure} as a Pin structure on $TM \oplus E$.
The obstruction to the existence of a Pin structure is now $w_2(E) - w_2(M)$.

If $M = X$ is a symplectic manifold and $E \to X$ is an oriented vector bundle, one can define a  Fukaya category whose objects are Lagrangians equipped with a Pin structure on the bundle $TL \oplus E\restr{L}$.
This structure is enough to orient the moduli-spaces defining the $A_\infty$-operations, and hence to define a Fukaya category away of characteristic $2$ \cite{seidel2008fukaya,fukaya2009lagrangian}.

Lastly, let us recall that, for an (abelian) group $G$, a $G$-local system on a manifold  $M$ is the data of a group homomorphism 
$$
\xi: \pi_1(M) \to G.
$$
Geometrically, one can think of a local system as a principal $G$-bundle with a \emph{flat} connection.
Namely, the fibers of a $G$-principal bundle give an assignment of principal homogenous $G$-space  $\xi_p$ to each $p \in M$.
The data of a flat connection gives, via parallel transport, isomorphisms $\xi_{\gamma(0)} \cong \xi_{\gamma(1)}$ for every path $\gamma:[0,1] \to M$.
Furthermore, these isomorphisms depend only on the homotopy class of $\gamma$ relative to its endpoints.
Considering isomorphisms associated to loops returns the map $\xi: \pi_1(M) \to G$.

\begin{ex}\label{rmk:branechoiceforfibration}
Suppose that $\pi:X \to B$ comes with a Lagrangian torus fibration.
In this case, there are natural choices of $\eta^2: X \to (\Omega^n)^{\otimes 2}$ and $w\in H^2(X;\Z_2)$ to define gradings and twisted Pin structures. 
Namely, we choose $\eta^2$ to be the complexification of a section of $(\wedge^n T^*B)^{\otimes 2}$, and $w = \pi^*(w_2(B))$ to be the pullback of the second Stiefel-Whitney class of $B$.
In full generality one can consider $E := \pi^*(TB \oplus (\det B)^{\oplus 3})$ (this choice ensures that $w_2(E) = \pi^*w_2(B)$ while making $E$ orientable), but when $w = 0$ we will simply take $E = 0$ to be the trivial vector bundle.
The obstruction to a twisted Pin structure (a Pin structure on $TL \oplus E\restr{L}$) is then $w_2(L) + (\pi^*w_2(B))\restr{L}$.
Note these choices ensure that fibers and sections admit a brane structure.
Furthermore, with this choice of $\eta^2$ the $\Z$-local system $\xi$ of \Cref{rmk:relativeorientations} is the pullback $\pi^*(\wedge^n T^*_\Z B)$ of the integral orientation bundle  of the base.
Hence every graded Lagrangian is oriented with respect to $\pi^*(\wedge^n T^*_\Z B)$.
\end{ex}

Summarizing, the Lagrangians we consider will be \emph{Lagrangian $G$-branes}---that is, they will carry the data of a grading, a twisted Pin structure and a $G$-local system.
A cobordism between such Lagrangians is then a Lagrangian cobordism $V \subset X \x \C$ carrying also a $G$-brane structure;\footnote{The almost complex structure on $X \x \C$ used to define gradings is chosen to be of the form $J_{X \x \C} = J_X \x j_\C$ at infinity, where $j_\C$ is the standard complex structure on $\C$.} it induces a relation between the ends equipped with the restricted brane structure.
The fact that we can restrict Pin structures and gradings follows from the product-type structure of the cobordism and complex structure at infinity, together with the fact that the ends are all horizontal (thus, the composition of the phase map of the cobordism with the inclusion of one of its ends coincides with the phase map of the end).

\section{The Lagrangian cobordism group of a bielliptic surface}\label{sec:computationcob}

    In this section we prove the main results of this paper.
    The first, see \Cref{th:cobK}, is a computation of  the subgroup $\Cob^{trop}(\mcal K) \subset \Cob(\mcal K)$ generated by Lagrangian sections and fibers of the projection $\pi:\mcal K \to K$, where  $\mcal K$ is a symplectic bielliptic surface as defined in \Cref{sec:biellipticsurfaces}. 
    Using this computation together with Abouzaid's homological mirror symmetry result  \cite{abouzaid2021homological}, we  show in \Cref{th:isoK} that $\Cob^{trop}(\mcal K)$ is naturally isomorphic to the Grothendieck group of the Fukaya category.

    \subsection{Algebraic and symplectic bielliptic surfaces}\label{sec:biellipticsurfaces}
We start by defining the algebraic varieties and symplectic manifolds that we call bielliptic surfaces. 
On the algebraic side, we always work over an algebraically closed field of characteristic zero.

In the Kodaira-Enriques classification of surfaces, a {\it bielliptic surface} (also known as a {\it hyperelliptic surface}) is a minimal surface $Y$ characterised by 
$$
\kappa (Y) = 0, \qquad
q (Y) = 1, \qquad 
p_g(Y) = 0
$$
where $\kappa$ denotes the Kodaira dimension, $q (Y) = h^1(Y,\mcal O_Y)$ is the irregularity and $p_g(Y) = h^2(Y,\mcal O_Y)$ is the geometric genus (this condition $p_g(Y) = 0$ can be omitted in characteristic zero).
Since $q(Y) = 1$ equals the dimension of the Albanese variety of $Y$, $\Alb(Y)$ is an elliptic curve; moreover, the Albanese morphism $\alb: Y \to \Alb(Y)$ is an elliptic fibration and all fibers are smooth \cite{bombieri1977enriques}.
One can show that there exists another elliptic fibration  $Y \to \P^1$ that is transverse to $\alb$.
This justifies the name \emph{bielliptic}, as these surfaces admit two (transverse) elliptic fibrations.
These two fibrations can be seen explicitly, as follows from the following structure theorem for bielliptic surfaces (see \cite[Theorem 4, §3]{bombieri1977enriques}):

\begin{thmdef}\label{def:biellipticalgebraic}
Let $Y$ be a bielliptic surface. 
There exist elliptic curves $E_1,E_2$ and a finite group $G \subset E_1$  of translations of $E_1$ acting also on $E_2$ such that $Y=(E_1\x E_2)/G$.
The quotient $E_1/G$ is again an elliptic curve and $E_2/G \cong \mathbb P^1$.
\end{thmdef}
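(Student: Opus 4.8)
The plan is to reduce $Y$ to a free finite quotient of an abelian surface and then classify such quotients. Write $E_1:=\Alb(Y)$, so that $\alb\colon Y\to E_1$ is an elliptic fibration with every fibre smooth. \textbf{Step 1 (isotriviality).} The $j$-invariant of the fibres defines a morphism $E_1\to\mathbb{A}^1$ from a complete curve to an affine one, hence a constant; so all fibres of $\alb$ are isomorphic to a fixed elliptic curve $E_2$, and $\alb$ is an (analytically) locally trivial bundle $E_2\to Y\to E_1$. In particular $Y$ is aspherical, its universal cover is a simply connected surface fibred in copies of $\C$ over $\C$ --- hence biholomorphic to $\C^2$ --- and $Y=\C^2/\Gamma$ with $\Gamma=\pi_1(Y)$ acting freely, holomorphically, cocompactly and preserving the projection to $\C$. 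Classifying such actions is one route; I would instead argue algebraically.

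\textbf{Step 2 (the canonical cover).} Since $Y$ is minimal with $\kappa(Y)=0$, the class $K_Y$ is numerically trivial, hence torsion by the abundance theorem in dimension two, and non-trivial since $p_g(Y)=0$, of order some $m>1$. The degree-$m$ cyclic \'etale cover $A:=\mathrm{Spec}_Y\bigl(\bigoplus_{i=0}^{m-1}K_Y^{-i}\bigr)\to Y$ has $K_A=\mathcal{O}_A$ and $\chi(\mathcal{O}_A)=m\,\chi(\mathcal{O}_Y)=0$, so by the Kodaira--Enriques classification $A$ is an abelian surface (a K3 surface has $\chi=2$). Thus $Y=A/H$ with $H:=\pi_1(Y)/\pi_1(A)\cong\Z/m$ acting freely; a generator acts as $x\mapsto\phi(x)+a$ with $\phi\in\mathrm{Aut}(A,0)$ of order $m$. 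The induced action on $H^0(\Omega^2_A)\cong\C$ is faithful (as $K_Y$ has order exactly $m$), so $\det\phi$ is a primitive $m$-th root of unity; and freeness forces $1$ to be an eigenvalue of $\phi$, since otherwise $\phi-\mathrm{id}$ is an isogeny and $x\mapsto\phi(x)+a$ has a fixed point. Hence $\phi$ has eigenvalues $1$ and $\zeta_m$, and integrality of its characteristic polynomial on $H_1(A;\Z)$ forces $2\cos(2\pi/m)\in\Z$, i.e. $m\in\{2,3,4,6\}$.

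\textbf{Step 3 (product structure and the $G$-action).} The $\langle\phi\rangle$-isotypic decomposition of $H_1(A)$ is by sub-Hodge-structures, realising $A$ up to isogeny as a product $E_1'\times E_2'$ with $\phi=(\mathrm{id},\psi)$ and $\psi$ of order $m$; Albanese functoriality for $A\to Y$ identifies $E_1'$ with $\Alb(Y)=E_1$. The extension class of $0\to E_2'\to A\to E_1\to 0$ is $\phi$-invariant, and as $\phi$ acts on it through multiplication by $\zeta_m$ on $\mathrm{Lie}(E_2')$ --- for which $\mathrm{id}-\zeta_m$ is invertible --- that class is torsion; passing to the corresponding product cover and conjugating away residual translations (using again that $\mathrm{id}-\psi$ is an isogeny) produces elliptic curves $E_1,E_2$ and a finite group $G$ with $Y=(E_1\times E_2)/G$, where $G$ acts by $(x_1,x_2)\mapsto(x_1+t_g,\psi_g(x_2))$ with $t_g\in E_1$ and $\psi_g\in\mathrm{Aut}(E_2,0)$. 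Freeness shows that $t_g=0$ forces $\psi_g=\mathrm{id}$ (an automorphism of an elliptic curve of order $>1$ has fixed points), so $g\mapsto t_g$ embeds $G$ as a finite subgroup of translations of $E_1$; and the image of $g\mapsto\psi_g$ is non-trivial, of order $m$. Finally $E_1/G$ is an elliptic curve, while $E_2/G$ --- the quotient of an elliptic curve by a group containing an automorphism of order $m\in\{2,3,4,6\}$ --- is a rational curve, hence $\cong\P^1$. Running through the possible pairs $(m,G)$ recovers the seven types of bielliptic surface.

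\textbf{Expected main obstacle.} The crux is Step 3: upgrading ``$A$ abelian, $Y=A/H$'' to an honest \emph{product} $E_1\times E_2$ with $G$ acting diagonally as stated. A priori $A$ could be a non-split extension of elliptic curves and $H$ could act by a translation-twisted automorphism, so one must (i) control the extension class --- this is where the geometry genuinely enters, via the Albanese forcing the action on the first factor to be by translations and $q(Y)=1\neq 2$ forcing the automorphism on the second factor to be non-trivial --- and (ii) carry out the descent and freeness bookkeeping that pins down the correct product cover and the correct group $G$ (which may be larger than $H$). Step (ii) is precisely the classical case analysis of Bagnera and de Franchis; the rest is formal.
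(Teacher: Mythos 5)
The paper does not actually prove this statement: it is a Theorem/Definition cited verbatim from Bombieri--Mumford, \cite[Theorem 4, \S3]{bombieri1977enriques}, with no argument supplied. So there is no ``paper's own proof'' to compare against; what can be assessed is whether your blind sketch is a viable route to the Bagnera--de Franchis structure theorem over an algebraically closed field of characteristic zero.

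Your sketch is correct in outline and follows a standard textbook route (essentially Beauville's): use abundance to see $K_Y$ is torsion, take the canonical cover $A\to Y$, observe $\chi(\mathcal O_A)=0$ so $A$ is abelian, and then analyse the free cyclic action on $A$ via the eigenvalues of the linear part $\phi$ together with the integrality of its characteristic polynomial on $H_1(A;\Z)$, giving $m\in\{2,3,4,6\}$. Steps 1 and 2 are fine (the isotriviality observation and the topological digression in Step 1 are not needed once you go the algebraic route), and the freeness/determinant bookkeeping is correct. You are also right that Step 3 is where all the real work lives. Two remarks there. First, the Ext-class computation is asserted rather than proved: one must identify in which group the class of $0\to E_2'\to A\to A/E_2'\to 0$ lives, show that $\phi_*$ literally acts there through the isogeny $\phi|_{E_2'}-\mathrm{id}$ (not merely ``by $\zeta_m$''), and then that invariance under $\phi_*$ forces the class to be torsion; this is true but needs more care than the sentence you wrote, and one also has to allow $A/E_2'$ to be an isogenous copy of $\Alb(Y)$ rather than $\Alb(Y)$ itself. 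Second, the passage ``to the corresponding product cover'' enlarges the acting group, and you should check that the enlarged group is still abelian and that the map $g\mapsto t_g$ to translations of $E_1$ is injective on the whole of it (not only on the original $H$); your freeness argument does give this, but it is worth saying.

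For contrast, the route actually implicit in the paper's cited source bypasses these descent issues: since $\alb:Y\to\Alb(Y)$ is a smooth isotrivial elliptic fibration (your Step 1), it is a holomorphic fibre bundle with fibre $E_2$; take the finite \'etale Galois cover $E_1\to\Alb(Y)$, with group $G$, that trivialises the bundle, so that $Y\times_{\Alb(Y)}E_1\cong E_1\times E_2$ and $Y=(E_1\times E_2)/G$ directly. Riemann--Hurwitz shows $E_1$ has genus $1$, a free automorphism of an elliptic curve is a translation, and the image of $G$ in $\mathrm{Aut}(E_2)$ is non-trivial because $q(Y)=1\neq 2$, which forces $E_2/G\cong\P^1$. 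This buys a product structure in one stroke, with no Ext-group or descent argument; your canonical-cover approach buys instead the a priori bound $m\in\{2,3,4,6\}$ at an early stage and a uniform treatment with the Enriques case. Both are legitimate; yours has a fillable but genuine gap in the Step 3 bookkeeping, which you correctly flagged as the obstacle.
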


It follows from Serre duality that $h^0(Y,\mcal K_Y) = 0$, so in particular the canonical bundle $\mcal K_Y$ is not trivial; nonetheless, it can be proved using the canonical bundle formula for elliptic fibrations and the structure theorem above that it is either two-, three-, four- or six-torsion. 

We remark that  the quotient $E_1/G$ is again an elliptic curve, and the projection $Y\to E_1/G$ is a fiber bundle (all  fibers being smooth elliptic curves isomorphic to $E_2$).
In fact, one can identify $E_1/G$ (together with the projection $\pi_1:Y\to E_1/G$) with the Albanese variety of $Y$.\footnote{This property actually provides a third (equivalent) definition of bielliptic surfaces: a bielliptic surface is an algebraic surface such that the Albanese morphism $Y\to \Alb(Y)$ is an elliptic fibration.} 
On the other hand, the projection $\pi_2: Y \to E_2 /G$ is  an elliptic fibration over $E_2/G \cong \mathbb P^1$.
This fibration is however not smooth: the generic fiber is isomorphic to $E_1$, but over the branching points of the projection $E_2 \to E_2 /G$ we have non-reduced fibers, their multiplicity being equal to that of the associated branching point. 
For more details on the algebraic geometry of bielliptic surfaces we refer the reader to \cite{bombieri1977enriques}. 

We now introduce symplectic bielliptic surfaces. 
In \Cref{sec:hmsbielliptic} we explain how symplectic and algebraic bielliptic surfaces are homologically mirror as an instance of Abouzaid's homological mirror symmetry result \cite{abouzaid2014family,abouzaid2017family,abouzaid2021homological}.
Recall Abouzaid's result is proved using family Floer theory, whose input is a symplectic manifold together with a Lagrangian torus fibration over some tropical affine manifold. 
In our case, we take the tropical affine manifold to be a Klein bottle (see \Cref{def:kleinbottle}) and construct a symplectic manifold $X(K)$ as explained in \Cref{sec:lagtorusfib}:

\begin{defn}\label{def:biellipticsymplectic}
Let $K$ be a tropical Klein bottle. 
The total space $\mcal K := X(K)$ of the Lagrangian torus fibration $X(K) \to K$ is called a {\it symplectic bielliptic surface}.
\end{defn}

The following Lemma shows this definition agrees with the symplectic of  \Cref{def:biellipticalgebraic},
hence the name symplectic bielliptic surface.
Namely, we have the following:

\begin{lem}\label{lem:biellipticisquotientoftori}
    Every symplectic bielliptic surface is the quotient of the product of two symplectic $2$-tori by the action of a finite group of symplectomorphisms.
\end{lem}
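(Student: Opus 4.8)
The plan is to unwind the definitions and track the covers explicitly. By \Cref{def:biellipticsymplectic} and \Cref{sec:lagtorusfib}, a symplectic bielliptic surface is $\mcal K = X(K) = T^*K / T^*_\Z K$ for a tropical Klein bottle $K = T^2_\Lambda / \vspan{\psi}$, and by \Cref{lem:classificationklein} we may take $K = K_L = ((\R^2,L)/\Z^2)/\Z_2$ with the standard $A = \mathrm{diag}(1,-1)$ and $c = (1/2,0)$, where $L$ is one of the two lattices $L_1, L_2$ of \Cref{rmk:Kleinfamilies}. First I would pass to the double cover $T^2_L := (\R^2,L)/\Z^2$, which is a tropical affine torus, and observe that the covering $T^2_L \to K$ is a local integral affine isomorphism, so it lifts to a symplectic double cover $X(T^2_L) \to X(K) = \mcal K$ with deck group $\Z_2$ acting by the symplectomorphism induced by $\psi$.

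Next I would identify $X(T^2_L) = T^*T^2_L / T^*_\Z T^2_L$ with a product of two symplectic $2$-tori. Since $T^2_L = \R^2/\Gamma$ for the lattice $\Gamma$ generated by the standard integral translations but with integral structure $L$ (equivalently, after an $\R$-linear change of coordinates making $L$ the standard lattice $\Z^2$, one gets a torus $\R^2/\Gamma'$ with standard integral affine structure), the cotangent bundle $T^*T^2_L$ is $\R^2 \times (\R^2)^*$ with the standard symplectic form, and quotienting by $\Gamma$ in the base and by the dual lattice $T^*_\Z T^2_L$ in the fiber yields $(\R^2/\Gamma') \times ((\R^2)^*/\Lambda')$ — a product of two $2$-tori. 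The symplectic form $dx_1 \wedge dy_1 + dx_2 \wedge dy_2$ splits as a sum over the two factors precisely because the base $T^2_L$ is a \emph{product} tropical torus in suitable coordinates; here one uses that $L = L_i$ has a basis of rational slope lying along the two coordinate axes (this is exactly the content of the last remark after \Cref{rmk:kleintypeIandII}), so the integral structure respects the coordinate splitting and the fiber lattice $T^*_\Z T^2_L$ splits accordingly. Hence $X(T^2_L) \cong T^2 \times T^2$ as symplectic manifolds.

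Finally, I would check that the $\Z_2$-action on $X(T^2_L) \cong T^2 \times T^2$ induced by $\psi$ is by symplectomorphisms: $\psi$ is an affine involution of $T^2_L$ with linear part $A \in \GL(2,\Z)$ preserving the product structure (it acts as $+1$ on the first axis and $-1$ on the second), and its natural lift to $T^*T^2_L$ is the cotangent lift, which is always symplectic; the translation part $c = (1/2,0)$ contributes translations, again symplectic. Passing to the quotient by the fiber lattice is equivariant. So $\mcal K = X(T^2_L)/\Z_2 \cong (T^2 \times T^2)/\Z_2$ with $\Z_2$ acting by symplectomorphisms, as claimed.

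The main obstacle I anticipate is the second step: making precise the claim that $X(T^2_L)$ is a \emph{product} of symplectic tori rather than merely a $4$-torus with some symplectic form. This requires genuinely using the classification in \Cref{rmk:Kleinfamilies} — namely that the $A$-invariant lattices $L_1, L_2$ admit a basis aligned with the coordinate axes (possibly after rescaling, and in the $L_2$ case this needs a small argument since the natural basis is $\{l_1, \tfrac12(l_1+l_2)\}$, not axis-aligned, so one must either pick a sublattice or a different primitive basis and track the resulting gluing). One must confirm that the resulting identification of $X(T^2_L)$ with $T^2 \times T^2$ is a symplectomorphism onto a product symplectic form, and that the $\psi$-lift acts diagonally with respect to this splitting — this is where the orientation-reversing, axis-respecting nature of $A$ is essential and must be spelled out carefully.
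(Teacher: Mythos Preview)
Your overall strategy is sound and matches the paper for Klein bottles of type $K_1$: there the lattice $L_1 = \Z\langle l_1, l_2\rangle$ is already axis-aligned, so $T^2_{L_1}$ is a product tropical torus, $X(T^2_{L_1}) \cong T^2 \times T^2$ symplectically, and the deck group of $X(T^2_{L_1}) \to \mcal K$ is indeed $\Z_2$.

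The gap is in the $K_2$ case, and it is exactly the obstacle you flag but do not resolve. The lattice $L_2 = \Z\langle l_1, \tfrac12(l_1+l_2)\rangle$ admits \emph{no} axis-aligned $\Z$-basis (if it did it would coincide with $L_1$), so ``a different primitive basis'' cannot help. More importantly, the symplectic $4$-torus $X(T^2_{L_2})$ is generically \emph{not} a product of two symplectic $2$-tori: the lattice $\Z^2 \times L_2^* \subset \R^4$ does not split as an $\omega$-orthogonal direct sum of two rank-$2$ summands when $|l_1|/|l_2| \notin \Q$, because the orthogonality equations mix the two periods. Hence your conclusion $\mcal K \cong (T^2\times T^2)/\Z_2$ is false for type $K_2$.

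Your other suggestion, ``pick a sublattice,'' is the correct move, but carrying it out changes the answer: passing to an axis-aligned lattice introduces an additional index-$2$ cover, and the finite group becomes $G = \Z_2 \oplus \Z_2$ rather than $\Z_2$. This is exactly what the paper does---it takes a product tropical torus $T \simeq S^1 \times S^1$ with a tropical morphism onto $T^2_\Lambda$ that is a diffeomorphism of the underlying smooth manifolds but not a tropical isomorphism in the $L_2$ case, so that $X(T) \to \mcal K$ is a finite cover with possibly larger deck group. The paper records immediately after the lemma that $G = \Z_2$ for type I and $G = \Z_2 \oplus \Z_2$ for type II, the second generator acting as a half-period translation $(q_1,p_1,q_2,p_2) \mapsto (q_1,p_1+\phi_1,q_2,p_2+\phi_2)$ in the fiber direction.
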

\begin{proof}
    Recall from \Cref{def:kleinbottle} that every tropical Klein bottle is the quotient of a tropical affine torus $T^2_\Lambda$ by a finite group.
    The discussion of \Cref{rmk:Kleinfamilies} shows that there is a tropical morphism $T \to T^2_\Lambda$ from a product tropical torus $T \simeq S^1 \x S^1$   onto $T^2_\Lambda$ which is a diffeomorphism of the underlying smooth manifolds (but might not be a tropical isomorphism).
    The composition $T \to T^2_\Lambda \to  K$ is a local diffeomorphism and also a tropical map, thus it induces a map 
    $$
    X(T) \simeq X(S^1) \x X(S^1) \to \mcal K
    $$
    which can be seen to be a projection $X(T) \to X(T) /G$.
    Hence every symplectic bielliptic surface can be expressed as a quotient
    $$
    \mcal K \simeq ((T^2,\omega_1) \x (T^2,\omega_2))/G
    $$
    of two symplectic $2$-tori by a finite group of symplectomorphisms.
\end{proof}

To be more precise, if $(q_i,p_i)$ are Arnold-Liouville coordinates on the Lagrangian torus fibration $(T^2,\omega_i) \to S^1_{q_i}$, then the group $G$ can either be:
\begin{enumerate}
    \item $G = \Z_2$, acting by 
    $$
    (q_1,p_1,q_2,p_2) \mapsto (q_1+\theta,p_1,-q_2,-p_2)
    $$
    and $2\theta =0\neq \theta$; or
    \item $G = \Z_2\oplus\Z_2$, where the first generator acts as before and the second as 
    $$
    (q_1,p_1,q_2,p_2) \mapsto (q_1,p_1+\phi_1,q_2,p_2+\phi_2),
    $$
    and again $2\phi_i =0 \neq \phi_1\phi_2$.
\end{enumerate}

\begin{rmk}
    Note that while the first action modifies the topology of the base of the Lagrangian torus fibration $T^2 \x T^2 \to S^1 \x S^1$, the second only modifies its tropical affine structure.
\end{rmk}

\begin{rmk}
    As mentioned in \Cref{rmk:kleintypeIandII}, these two symplectic bielliptic surfaces correspond to algebraic bielliptic surfaces of type I and II in the classification of algebraic bielliptic surfaces. 
    The classification of complex bielliptic  surfaces (see e.g. \cite[List VI.20]{beauville1996complex}) includes seven families, with more possible groups than $G = \Z_2$ or $G = \Z_2 \oplus \Z_2$.
    However, it is only in two of these families  that the complex structure can be deformed in a maximally degenerating way.
    Hence, these are the only two that make sense for mirror symmetry, where to a symplectic manifold one associates a {\it maximal degeneration} of an algebraic-geometric object. 
\end{rmk}

    \subsection{Homological mirror symmetry for bielliptic surfaces}\label{sec:hmsbielliptic}
Let $\mcal K$ be a symplectic bielliptic surface. 
Recall that, by definition, $\mcal K$ comes equipped with a Lagrangian torus fibration $\pi:\mcal K \to K$.
We observe that the fibration is smooth (i.e. there are no singular fibers) and that $\pi_2(K) = 0$.
This puts us in the setting to apply the results in \cite{abouzaid2021homological}, where a (rigid analytic) mirror is constructed and homological mirror symmetry is proved for such a pair.

Denote by $\check {\mcal K}$ the rigid analytic mirror of $\mcal K$.
Set-wise, $\check {\mcal K}$  is the disjoint union 
\begin{equation}\label{eq:mirrorconstruction}
    \check {\mcal K} := \bigsqcup_{p\in K} H^1(F_p; U_\Lambda),
\end{equation}
where $F_p := \pi\inv(p) \subset \mcal K$ is the Lagrangian torus fiber over $p\in \mcal K$ and $U_\Lambda \subset \Lambda^*$ is the unitary subgroup of the Novikov field $\Lambda$.
The space $\check {\mcal K}$ can be endowed with the structure of  a rigid analytic space \cite{abouzaid2014family}.
Examining \Cref{eq:mirrorconstruction} and using that a tropical Klein bottle is the quotient of a tropical $2$-torus (\Cref{def:kleinbottle}), one sees that the rigid analytic space $\check{\mcal K}$ is the quotient of a product $\mcal E_1 \x \mcal E_2$ of two Tate curves by the action of a finite group $G$, and each of these Tate curves is the analytification of an algebraic elliptic curve $E_i$ over the Novikov field.
Hence we have that
$$
\check{\mcal K} = (\mcal E_1 \x \mcal E_2) / G = (E_1^{an} \x E_2^{an}) / G = (E_1 \x E_2/G)^{an}
$$
is the analytification of an algebraic variety $Y := (E_1 \x E_2) / G$. 
Note that $Y$ is an algebraic bielliptic surface (see \Cref{def:biellipticalgebraic}).
One can then apply the rigid analytic GAGA principle to obtain an equivalence of categories
$$
D^b Coh_{an} (\check {\mcal K}) \cong D^b Coh (Y),
$$
so that combining this with the homological mirror symmetry statement of \cite{abouzaid2021homological} we obtain:

\begin{prop}\label{prop:HMSbielliptic}
    Let $\mcal K$ be a symplectic bielliptic surface and $D^\pi\mcal Fuk(\mcal K)$ the split-closed derived Fukaya category of $\mcal K$.
    Then there is an algebraic bielliptic surface $Y$ over the Novikov field $\Lambda$ and an equivalence of triangulated categories
    $$
        D^\pi\mcal Fuk(\mcal K) \cong D^b Coh(Y).
    $$
\end{prop}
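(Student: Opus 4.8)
The plan is to reduce Proposition~\ref{prop:HMSbielliptic} to Abouzaid's homological mirror symmetry theorem in \cite{abouzaid2021homological}, which applies to a closed symplectic manifold equipped with a smooth Lagrangian torus fibration over a tropical affine base with vanishing $\pi_2$. First I would verify that a symplectic bielliptic surface $\mcal K = X(K)$ satisfies these hypotheses: the fibration $\pi:\mcal K \to K$ is by construction smooth (the base $K$ is a genuine tropical affine manifold, with no discriminant locus), and $\pi_2(K) = 0$ since $K$ is a $K(\pi,1)$ (it is finitely covered by a $2$-torus). Granting these, Abouzaid's result produces a rigid analytic space $\check{\mcal K}$, built set-theoretically as in \eqref{eq:mirrorconstruction}, and an equivalence $D^\pi\mcal Fuk(\mcal K) \cong D^bCoh_{an}(\check{\mcal K})$ of triangulated categories.

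The second step is to identify $\check{\mcal K}$ as the analytification of an algebraic bielliptic surface. Here I would use the description of $K$ as a quotient $T^2_\Lambda/\langle\psi\rangle$ from \Cref{def:kleinbottle} together with \Cref{lem:biellipticisquotientoftori}: writing $K$ as a quotient of a product tropical torus $T \simeq S^1 \times S^1$ by the finite group $G$ (either $\Z_2$ or $\Z_2 \oplus \Z_2$), the mirror construction is functorial enough that $\check{\mcal K}$ becomes the quotient $(\mcal E_1 \times \mcal E_2)/G$, where each $\mcal E_i$ is the Tate curve associated to the corresponding $S^1$-factor; the Tate curve is the rigid analytification of an algebraic elliptic curve $E_i$ over $\Lambda$. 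Since $G$ acts algebraically on $E_1 \times E_2$ (by translations on $E_1$ and an induced action on $E_2$, matching the structure theorem \Cref{def:biellipticalgebraic}) and the quotient $Y := (E_1 \times E_2)/G$ is a smooth projective surface, one has $\check{\mcal K} = Y^{an}$, and $Y$ is by definition an algebraic bielliptic surface. Finally, the rigid analytic GAGA theorem for proper schemes over $\Lambda$ gives $D^bCoh_{an}(Y^{an}) \cong D^bCoh(Y)$; composing the three equivalences yields $D^\pi\mcal Fuk(\mcal K) \cong D^bCoh(Y)$.

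The main obstacle I anticipate is the second step: checking that the quotient presentation of $K$ is compatible with Abouzaid's mirror construction, i.e. that the family Floer mirror of a quotient torus fibration is the quotient of the mirror of the cover by the (dual) action of $G$. This requires knowing that the finite symplectomorphisms generating $G$ act on the Fukaya category in a way that matches their action on $D^bCoh$ of the product of Tate curves, and that taking $G$-invariants (or the quotient rigid analytic space) is compatible on both sides. One clean way to handle this is to invoke the functoriality of family Floer theory directly, or alternatively to observe that both sides of the desired equivalence are already known for the product $\mcal K$ (Abouzaid's theorem for abelian varieties / Tate curves applied to $T^2 \times T^2$) and then to descend along the finite quotient; the bookkeeping of brane structures (gradings, Pin structures, local systems) under this descent---the content of the appendices \Cref{ap:compatibility}---is precisely what makes the equivariant statement go through. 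The remaining steps (smoothness of $\pi$, vanishing of $\pi_2(K)$, and GAGA) are essentially formal citations.
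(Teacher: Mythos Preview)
Your approach is essentially identical to the paper's: verify the hypotheses of Abouzaid's theorem (smooth fibration, $\pi_2(K) = 0$), identify the rigid analytic mirror $\check{\mcal K}$ as the analytification of an algebraic bielliptic surface $Y$ via the quotient description, and apply rigid GAGA. The paper handles your ``main obstacle'' more casually than you anticipate: rather than invoking abstract functoriality of family Floer under finite quotients or an equivariant descent argument, it simply reads off from the set-theoretic description \eqref{eq:mirrorconstruction} that $\check{\mcal K}$ is fiberwise the quotient of the product of Tate curves by $G$, and asserts this suffices to identify the rigid analytic structure.

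There is, however, one genuine gap in your outline that the paper addresses explicitly in a remark following the proposition. You write that Abouzaid's result ``produces \ldots\ an equivalence $D^\pi\mcal Fuk(\mcal K) \cong D^bCoh_{an}(\check{\mcal K})$,'' but in fact \cite{abouzaid2021homological} only yields a \emph{fully faithful embedding} $D^\pi\mcal Fuk(\mcal K) \hookrightarrow D^bCoh_{an}(\check{\mcal K})$, with no discussion of essential surjectivity. The paper fills this by invoking Orlov's theorem that sufficiently many powers of an ample line bundle split-generate $D^bCoh(Y)$ for smooth projective $Y$, then observing that the family Floer functor sends a Lagrangian section $\Gamma(d\phi)$ with $\phi$ strictly convex to an ample line bundle, and finally noting (via \Cref{lem:classificationoftropicalsections}) that such a strictly convex section exists on $K$. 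You should add this step to your argument.
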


\begin{rmk}
    Strictly speaking, \cite{abouzaid2021homological} proves there is a fully faithful embedding $D^\pi\mcal Fuk(\mcal K) \into D^bCoh_{an}(\check{\mcal K})$ of the Fukaya category into the category of (twisted) coherent sheaves, with no discussion of essential surjectivity.
    However, for a smooth quasi-projective variety $Y$ we have that $D^bCoh(Y) \simeq Perf(Y)$ is idempotent-complete.
In this case, essential surjectivity is equivalent to showing that the image of the family Floer functor split-generates $D^bCoh(Y)$.
It is a result of Orlov \cite{orlov2008remarks} that sufficiently many powers of any ample line bundle are enough to split-generate $D^bCoh(Y)$, so it suffices to show such an ample line bundle exists in the image of the family Floer embedding.
    One can check that the construction in \cite{abouzaid2014family} of the complex of coherent sheaves mirror to a Lagrangian section is indeed a line bundle,
and that when the function defining the section is strictly convex such line bundle  is ample. 
    As $K$ admits a strictly convex Lagrangian section by \Cref{lem:classificationoftropicalsections}, we indeed have an equivalence $D^\pi\mcal Fuk(\mcal K) \simeq D^b Coh_{an}(\check{\mcal K})$.
\end{rmk}
\begin{rmk}
    In fact, for symplectic bielliptic surfaces we have that $D\mcal Fuk(\mcal K) \cong D^\pi\mcal Fuk(\mcal K)$---see \Cref{rmk:dbcongdpi}.
    Hence we obtain a homological mirror symmetry equivalence $D\mcal Fuk(\mcal K) \cong D^b Coh(Y)$ with no need of taking split-closure on the A-side.
\end{rmk}

\subsection{Lagrangian sections in bielliptic surfaces}\label{sec:admissible}
Let $\mcal K$ be a symplectic bielliptic surface (recall this is the total space of a Lagrangian torus fibration over a Klein bottle).
From now on  we fix $K = K_1 $ to be a tropical Klein bottle of type I (see \Cref{rmk:Kleinfamilies}). 
Hence $\mcal K = T^*K_1 / T^*_\Z K_1$ will always be a symplectic bielliptic surface mirror to an algebraic bielliptic surface of type I (see \Cref{rmk:kleintypeIandII}).
In this section we classify Lagrangian sections and compute their homology class.

Recall from \Cref{sec:lagtorusfib} that Lagrangian sections in $\mcal K = T^*K / T^*_\Z K$ are in one-to-one correspondence with elements in $H^0(C^\infty_K/\Aff_K)$.
Among these, global sections of $H^0(C^\infty_K)$ give Lagrangian sections that are Hamiltonian isotopic to the zero section. 
Thus, the group
\begin{equation}\label{eq:H1Affasquotient}
    H^1(\Aff) \cong H^0(C^\infty / \Aff) / H^0(C^\infty)    
\end{equation}
parametrizes Lagrangian sections up to Hamiltonian isotopy.
We now determine this group:

\begin{lem}\label{lem:classificationoftropicalsections}
    Let $K$ be a tropical Klein bottle. 
    Then $H^1(\Aff)\cong \Z^2\oplus\Z_2 \oplus S^1$, with an element $(m,n,[l],e^{2\pi i\theta})$ represented by the section  $f_{m\theta}^{nl} \in H^0(C^\infty_K / \Aff_K)$ defined by
    \begin{equation}\label{eq:fmnltheta}
        f_{m\theta}^{nl}(x,y) := 
        \frac{m}{2}x^2 + \frac{n}{2}y^2 + \frac{l}{2} y + \theta x.
    \end{equation}
\end{lem}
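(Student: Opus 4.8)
The plan is to compute $H^1(\Aff_K)$ for $K = K_1$ a tropical Klein bottle of type I directly from the Čech/sheaf-cohomology description, using the short exact sequence of sheaves on $K$
\begin{equation*}
    0 \to \R \to \Aff_K \to \Aff_K/\R \to 0,
\end{equation*}
where $\Aff_K/\R$ is naturally identified with the local system $T^*_\Z K$ of integral covectors (an affine function modulo constants is recorded by its differential, which is an integral $1$-form). The long exact sequence then reads $H^0(T^*_\Z K) \to H^1(\R) = H^1(K;\R) \to H^1(\Aff_K) \to H^1(T^*_\Z K) \to H^2(K;\R)$, and I would compute each term for the Klein bottle. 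Here one uses that $K_1$ is, topologically, the quotient of $T^2$ by the orientation-reversing free involution $(x,y) \mapsto (x+1/2,-y)$, so that $H^*(K;\R)$ and the cohomology of the (twisted) local systems $T^*_\Z K$ can be obtained by taking invariants under $\Z_2$ of the corresponding objects on $T^2$, where $\Z_2$ acts on the $y$-covector by $-1$.

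The key computational steps are: (i) identify $H^0(T^*_\Z K)$ — the global tropical $1$-forms. On $T^2$ these are spanned by $dx, dy$; the involution fixes $dx$ and sends $dy \mapsto -dy$ (the pullback acts on covectors, and $\partial_y \mapsto -\partial_y$ while the translation in $x$ is trivial on covectors), so only $dx$ survives, giving $H^0(T^*_\Z K) \cong \Z$, spanned by $dx$. (ii) Compute $H^1(K;\R)$: the invariant part of $H^1(T^2;\R) = \R dx \oplus \R dy$ under the same action is $\R dx$, but one must be careful — de Rham cohomology of the Klein bottle with $\R$ coefficients is $1$-dimensional — and indeed $H^1(K;\R) \cong \R$, generated by $dx$. (iii) Compute $H^1(T^*_\Z K)$: this is where the $S^1$ and the $\Z_2$ appear. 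Writing $T^*_\Z K$ as an extension and using the universal-coefficient/local-system computation on the Klein bottle, one finds a free part contributing the factor that, together with the failure of the connecting map $H^0(T^*_\Z K) \to H^1(K;\R)$ to be surjective, produces an $S^1 = \R/\Z$ (the ``area/flux'' parameter $\theta$, exactly as in \Cref{lem:fluxintorus} and \Cref{prop:fluxisaniso}), plus a $\Z_2$ summand coming from the torsion in $H^1$ of the Klein bottle with coefficients in the twisted $\Z$-local system (the class $[l]$). Assembling the long exact sequence then gives $H^1(\Aff_K) \cong \Z^2 \oplus \Z_2 \oplus S^1$.

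Finally I would exhibit explicit representatives: check that each $f_{m\theta}^{nl}(x,y) = \tfrac m2 x^2 + \tfrac n2 y^2 + \tfrac l2 y + \theta x$ is well-defined as a section of $C^\infty_K/\Aff_K$, i.e. that it transforms by an affine function under both the lattice $L_1$ and the involution $\psi$. The quadratic term $\tfrac m2 x^2$ is globally defined modulo affine functions on any tropical torus quotient; $\tfrac n2 y^2$ requires $n \in \Z$ and is $\psi$-invariant up to affine functions since $y \mapsto -y$ fixes $y^2$ up to sign and the translation by $1/2$ in $x$ is harmless; the linear term $\tfrac l2 y$ changes sign under $\psi$, so it only descends modulo $2$, producing the $\Z_2$; and $\theta x$ contributes the $S^1$ since $\theta$ and $\theta+1$ give Hamiltonian-isotopic (in fact equal modulo $\Aff$) sections — more precisely $\theta x$ is genuinely affine when $\theta \in \Z$, so only $\theta \in \R/\Z$ matters, matching the $H^0(C^\infty)$ quotient in \eqref{eq:H1Affasquotient}. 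One then matches the four parameters $(m,n,[l],e^{2\pi i\theta})$ with the four summands above, checking the identification is a group isomorphism by linearity of $f$ in the parameters.

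\textbf{Main obstacle.} The delicate point is the computation of $H^1$ of the twisted integer local system $T^*_\Z K$ on the Klein bottle and the behavior of the connecting homomorphism — disentangling which part of $H^1(\Aff_K)$ is free ($\Z^2$, from $m$ and $n$), which is the continuous ``flux'' circle $S^1$ (from $\theta$, exactly the cokernel of $H^0(T^*_\Z K)\to H^1(K;\R)$ being all of $\R$ modulo the image $\Z\cdot[dx]$), and which is the $\Z_2$ torsion (from $[l]$, the orientation-twist in the Klein bottle direction). Getting the bookkeeping of invariants-versus-coinvariants right under the non-free-on-cohomology action, and confirming that no extra torsion or rank slips in, is the part that requires genuine care rather than routine calculation.
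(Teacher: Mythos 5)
Your proposal is correct and takes a genuinely different route from the paper. The paper works with the identification $H^1(\Aff_K)\cong H^0(C^\infty_K/\Aff_K)/H^0(C^\infty_K)$, lifts a section to a smooth function on the universal cover $\R^2$, and shows by a cocycle/compatibility argument that every class is represented by a quadratic polynomial; it then solves the periodicity constraints on the coefficients directly. You instead use the sheaf short exact sequence $0\to\R\to\Aff_K\to T^*_\Z K\to 0$ and its long exact sequence, which yields $0\to \mathrm{coker}\bigl(H^0(T^*_\Z K)\to H^1(K;\R)\bigr)\to H^1(\Aff_K)\to H^1(K;T^*_\Z K)\to H^2(K;\R)=0$. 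This is sound: $H^0(T^*_\Z K)=\Z\langle dx\rangle$ maps into $H^1(K;\R)\cong\R\langle[dx]\rangle$ with cokernel $S^1$, and the local system $T^*_\Z K$ splits as $\Z\oplus\Z_{\mathrm{tw}}$ (trivial on $dx$, sign on $dy$), so $H^1(K;T^*_\Z K)\cong H^1(K;\Z)\oplus H^1(K;\Z_{\mathrm{tw}})\cong \Z\oplus(\Z\oplus\Z_2)=\Z^2\oplus\Z_2$, the latter by twisted Poincar\'e duality. You can also shortcut the splitting question, since $S^1$ is divisible hence injective as a $\Z$-module, so the extension splits automatically. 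What the paper's approach buys is that the explicit quadratic representatives $f_{m\theta}^{nl}$ fall out of the computation for free, and the rest of the paper (e.g.\ \Cref{lem:linearapproximation} and \Cref{lem:cobsectionstogenerators}) leans heavily on having these explicit formulas; your sheaf-theoretic route gives the abstract group more cleanly but you still have to verify separately that the $f_{m\theta}^{nl}$ exhaust all classes (equivalently, that they surject onto $H^1(K;T^*_\Z K)$ and onto the flux $S^1$), which essentially amounts to redoing the paper's periodicity check on quadratics. The one place to be careful in your write-up is the $H^1(K;\Z_{\mathrm{tw}})$ computation: the torsion $\Z_2$ there (which produces the $[l]$ parameter) and the free $\Z$ (which produces $n$) both live in the twisted piece, while the $m$ parameter comes from $H^1(K;\Z)\cong\Z$ in the untwisted piece — your sketch nearly says this but slightly blurs which summand of $H^1(T^*_\Z K)$ the $\Z_2$ belongs to.
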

\begin{proof}
    Take an element in $H^0(C^\infty_K/\Aff_K)$.
    Pulling it back to $H^0(C^\infty_{\R^2}/\Aff_{\R^2})$ and lifting it to a smooth  function, we are looking at functions $f\in H^0(C^\infty_{\R^2})$ satisfying the periodicity conditions 
        \begin{align}
         f(x,y) -  f(x+1,-y) &= g_1(x,y) \label{eq:periodicity1}\\
         f(x,y) -  f(x,y+1) &= g_2(x,y)\label{eq:periodicity2}
    \end{align}
    for some affine functions $g_1,g_2\in \Aff(\R^2)$.
    Now note that 
    $$
    g_1(x,y) + g_2 (x +1, -y) = f(x,y) - f(x+1,-y+1) = g_1(x+1, y-1) - g_2(x, y-1),
    $$
    and this compatibility guarantees the existence of a quadratic function $f_q$ with the same periodicity as $f$.
    In particular, the difference $f - f_q$ descends to a well-defined smooth function on $K$, hence $f$ and $f_q$ define the same class in $H^1(\Aff)$.
    Thus, we may assume that the element in $H^0(C^\infty_K/\Aff_K)$ is given by a quadratic polynomial $f$, and similarly that it vanishes at a preferred point $0\in K$.
    Then we want to find  quadratic polynomials
    $$
     f(x,y) = \sum_{\substack{i,j=0\\ 0<i+j\leq 2}}^2 a_{ij} x^i y^j
    $$
    that satisfy the periodicity conditions (\ref{eq:periodicity1}) and (\ref{eq:periodicity2}).
    These imply $2a_{20},2a_{02}, 2a_{01}  \in \Z$ and $a_{11} = 0$.
    Since adding an affine function does not change the sections in $H^0(C^\infty_K/\Aff_K)$, we mod out by $(a_{01},a_{20})\in \Z^2$.
    This proves that $H^1(\Aff) \cong \Z^2 \oplus \Z_2 \oplus S^1$, with an element $(m,n,[l],e^{2\pi i\theta})$ represented by the function $f_{m\theta}^{nl}$ of \Cref{eq:fmnltheta}.
\end{proof}

We will now show that $f_{m\theta}^{n[l]}$ can be approximated by (a smoothing of) a piecewise-linear function with integer slope, in the sense that they define the same class in $H^1(\Aff)$.
We first construct the approximation and then prove in \Cref{lem:linearapproximation} that it defines the same class as $f_{m\theta}^{n[l]}$.

Let $\mathfrak f$ and $\mathfrak f_\phi$  be the unique (up to a constant) piecewise linear functions with integer slope $\R \to \R$ such that:
\begin{itemize}
    \item if $x$ is a smooth point of $\mathfrak f$, then $\mathfrak f'(x + 1) = \mathfrak f'(x) + 1$, and $\mathfrak f'(x) = 0$ for $x \in (-\frac{1}{2}, \frac{1}{2})$;
    \item $\mathfrak f_\phi(x+1) = \mathfrak f_\phi(x) + \phi$, $\mathfrak f_\phi'(x) = 0$ for $x \in (0,\phi)$ and $\mathfrak f_\phi'(x) = 1$ for $x \in (1-\phi,1)$.
\end{itemize}
From these we define a new piecewise linear function $\mathfrak f^{nl}_{m\theta}: \R^2 \to \R$ by
\begin{equation}\label{eq:piecewiselinearapproximation}
\mathfrak f^{nl}_{m\theta}(x,y) := (m\mathfrak f + \mathfrak f_\theta)(x) + (n\mathfrak f + \mathfrak f_l)(y). 
\end{equation}
It follows from the definition of $\mathfrak f$ and $\mathfrak f_\phi$ that  $\mathfrak f^{nl}_{m\theta}$ satisfies periodicity equations as in (\ref{eq:periodicity1})-(\ref{eq:periodicity2}).
Hence---after a suitable smoothing as in \cite[Section 3.2]{hicks2020tropical}---it gives a well-defined element in  $H^0(C^\infty_K / \Aff_K)$.

\begin{lem}\label{lem:linearapproximation}
    The functions $f^{nl}_{m\theta}$ and $\mathfrak f^{nl}_{m\theta}$ have the same class in $H^1(\Aff)$.
\end{lem}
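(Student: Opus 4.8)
The goal is to show that $f^{nl}_{m\theta}$ and the piecewise-linear-then-smoothed function $\mathfrak{f}^{nl}_{m\theta}$ define the same class in $H^1(\Aff)$. Since both sides are built additively out of a one-variable $x$-piece and a one-variable $y$-piece, and the group $H^1(\Aff) \cong \Z^2 \oplus \Z_2 \oplus S^1$ decomposes accordingly, the first move is to reduce to the one-dimensional situation: it suffices to compare, on $S^1 = \R/\Z$ with the appropriate integral affine structure, the quadratic function $\frac{m}{2}x^2 + \theta x$ with the piecewise linear $m\mathfrak{f}(x) + \mathfrak{f}_\theta(x)$, and likewise $\frac{n}{2}y^2 + \frac{l}{2}y$ with $n\mathfrak{f}(y) + \mathfrak{f}_l(y)$; the two summands live in independent factors so matching them separately matches the full class. (One should be slightly careful here that the $\Z_2$ factor comes from the $y$-coordinate because of the orientation-reversal in the gluing, but this is exactly the asymmetry already recorded in \Cref{lem:classificationoftropicalsections}.)

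\textbf{Key steps.} Having reduced to one variable, I would argue as in the proof of \Cref{lem:classificationoftropicalsections}: two elements of $H^0(C^\infty/\Aff)$ coincide in $H^1(\Aff)$ precisely when their difference, lifted to $\R$, differs from a genuinely $\Aff$-periodic function by something descending to $C^\infty$ of the quotient — equivalently, when their lifts have the same ``monodromy defect'', i.e. the same value of $\tilde h(x+1) - \tilde h(x)$ up to the ambiguity of adding an integer-slope affine function. For the quadratic $\frac{m}{2}x^2 + \theta x$ one computes this defect to be $m x + (\tfrac{m}{2} + \theta)$; for $m\mathfrak{f} + \mathfrak{f}_\theta$ the defining properties $\mathfrak{f}'(x+1) = \mathfrak{f}'(x) + 1$ and $\mathfrak{f}_\theta(x+1) = \mathfrak{f}_\theta(x) + \theta$ give, after one integration, a defect of the form $mx + c$ for some constant $c$. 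Since both defects are affine functions of $x$ with the same (integral) linear coefficient $m$, they differ by a constant, and constants are affine — hence absorbed in the quotient by $\Aff$. Therefore the two functions represent the same class. Repeating verbatim for the $y$-variable (now the $\mathfrak{f}_l$ piece carries the half-integer $l/2$, accounting for the $\Z_2 \oplus S^1$), and re-adding, gives the claim. Throughout one invokes the smoothing of \cite[Section 3.2]{hicks2020tropical} only to make $\mathfrak{f}^{nl}_{m\theta}$ an honest element of $H^0(C^\infty_K/\Aff_K)$; the smoothing changes the function by something $C^\infty$ on $K$, hence not the class in $H^1(\Aff)$.

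\textbf{Main obstacle.} The computations above are routine once set up, so the real care is bookkeeping: making sure the monodromy/periodicity conditions are transported correctly through the orientation-reversing gluing $(x,y) \mapsto (x+1,-y)$ that defines $K_1$, so that the $x$- and $y$-summands genuinely sit in orthogonal subgroups of $H^1(\Aff)$ and can be matched independently; and checking that the constant discrepancy $c$ produced by the piecewise-linear model really is a constant and not, say, a nonzero-slope affine term that would land in the $\Z^2$ part and change the class. In other words, the substance of the proof is confirming that the only difference between the smooth quadratic model and the piecewise-linear model is an $\Aff$-exact term — the ``difficulty'' is entirely in being honest about the affine ambiguities, not in any hard analysis or geometry.
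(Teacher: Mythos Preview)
Your overall strategy---split into one-variable pieces and compare the periodicity defects $\tilde h(x+1)-\tilde h(x)$---is exactly the paper's. The gap is in the last step. You correctly state that the class in $H^1(\Aff)$ is determined by the defect \emph{up to adding an integer-slope affine function to the lift}; but adding such an $a(x)=kx+b$ with $k\in\Z$ changes the defect by the \emph{integer} constant $k$, not by an arbitrary constant. So from ``both defects equal $mx+(\text{const})$'' you may only conclude equality of classes once you know the two constants differ by an element of $\Z$. Your sentence ``constants are affine --- hence absorbed in the quotient by $\Aff$'' is therefore a non sequitur: a constant discrepancy $c_1-c_2$ in the defect forces you to add $(c_2-c_1)x$ to the difference $h-\mathfrak h$ to make it periodic, and that correction lies in $\Aff$ only if $c_1-c_2\in\Z$.

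The fix is to actually compute the constant. From the normalization $\mathfrak f'\equiv 0$ on $(-\tfrac12,\tfrac12)$ one gets $\mathfrak f(x+1)-\mathfrak f(x)=x+\tfrac12$, and $\mathfrak f_\theta(x+1)-\mathfrak f_\theta(x)=\theta$ by definition, so the defect of $m\mathfrak f+\mathfrak f_\theta$ is \emph{exactly} $mx+\tfrac{m}{2}+\theta$, matching the quadratic on the nose. This is what the paper does: it checks that $\mathfrak f$ and $\mathfrak f_\phi$ satisfy the \emph{same} periodicity identities as $\tfrac12 x^2$ and $\phi x$ respectively, and then invokes linearity. You should also run the analogous check for the second periodicity (the one coming from $(x,y)\mapsto(x+1,-y)$, i.e.\ $\phi=1/2$ on the $y$-piece); you flag it as ``bookkeeping'' but never carry it out, and it is part of what pins down the $\Z_2$ factor.
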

\begin{proof}
    Recall that  $H^1(\Aff) \cong H^0(C^\infty / \Aff) / H^0(C^\infty)$, so we must show that the difference $f^{nl}_{m\theta} - \mathfrak f^{nl}_{m\theta}$ gives a well-defined smooth function on $K$.
    This is equivalent to showing that the periodicity functions $g_i$ of $f^{nl}_{m\theta}$ are the same as those of $\mathfrak f^{nl}_{m\theta}$.
    Note that $f^{nl}_{m\theta}(x,y) = h_{m\theta}(x) + g^{nl}(y)$ decomposes as a sum of one-variable functions, and that the periodicity conditions (\ref{eq:periodicity1})-(\ref{eq:periodicity2}) split for each summand. 
    Therefore, it is sufficient to prove that for $\mathfrak f_{m\phi} := m \mathfrak f + \mathfrak f_\phi$ and $h_{m\phi}(x) := \frac{m}{2} x^2 + \phi x$ one has
    \begin{align}
        \mathfrak f_{m\phi}(x) - \mathfrak f_{m\phi}(x+1) &=  h_{m\phi}(x) -  h_{m\phi}(x+1) \label{eq:copyperiodicity1}\\
        \mathfrak f_{m\phi}(x) - \mathfrak f_{m\phi}(-x) &= h_{m\phi}(x) - h_{m\phi}(-x) \quad \text{(if }\phi = 1/2).\label{eq:copyperiodicity2}
    \end{align}
Since the function $\mathfrak f$ (resp. $\mathfrak f_\phi$) satisfies \Cref{eq:copyperiodicity1,eq:copyperiodicity2} whenever $m=1,l=0$ (resp. $m=0,l=1$), the result follows by linearity.
\end{proof}

\begin{cor}\label{cor:Hamiltonianisotopicsections}
    The Lagrangian sections $\Gamma(df_{m\theta}^{nl})$ and $\Gamma(d \mathfrak f^{nl}_{m\theta})$ are Hamiltonian isotopic.
\end{cor}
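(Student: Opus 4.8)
The plan is to deduce this immediately from \Cref{lem:linearapproximation}, using only the general principle that two Lagrangian sections of $\pi:\mcal K\to K$ are Hamiltonian isotopic as soon as the functions defining them differ by a \emph{globally defined} smooth function on the base $K$ (as opposed to one defined only modulo $\Aff_K$). So the argument has two ingredients: first the general principle, then the observation that \Cref{lem:linearapproximation} verifies its hypothesis in our situation.

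For the general principle, I would proceed as follows. Suppose $f_1,f_2\in H^0(C^\infty_K/\Aff_K)$ represent the same class in $H^1(\Aff)\cong H^0(C^\infty/\Aff)/H^0(C^\infty)$, and pick a smooth function $g\in H^0(C^\infty_K)$ with $f_2=f_1+g$ (as elements of $H^0(C^\infty_K/\Aff_K)$). Consider the path of Lagrangian sections $L_t:=\Gamma(df_1+t\,dg)$, $t\in[0,1]$, interpolating between $\Gamma(df_1)$ and $\Gamma(df_2)$. In Arnold--Liouville coordinates on $T^*K/T^*_\Z K$ the point of $L_t$ over $q$ is $(q,\,d(f_1)_q+t\,dg_q)$, so the infinitesimal generator of this isotopy is the vertical vector field $(0,dg_q)$; a one-line computation with $\omega=\sum dq_i\wedge dp_i$ shows this is the Hamiltonian vector field of $H:=-\pi^*g$. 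Since $g$ is a genuine single-valued function on $K$, its pullback $\pi^*g$ is a well-defined smooth function on $\mcal K$, and its time-$t$ flow sends $(q,p)\mapsto(q,p+t\,dg_q)$, hence carries $L_0$ to $L_t$. Therefore $\Gamma(df_1)$ and $\Gamma(df_2)$ are Hamiltonian isotopic.

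It then remains only to apply \Cref{lem:linearapproximation}, which states precisely that $f^{nl}_{m\theta}$ and (the smoothing of) $\mathfrak f^{nl}_{m\theta}$ define the same class in $H^1(\Aff)$; feeding this into the previous paragraph gives the corollary. The only point that needs a careful word is the smoothing: $\Gamma(d\mathfrak f^{nl}_{m\theta})$ literally means the graph of $d\tilde{\mathfrak f}^{nl}_{m\theta}$ for a choice of smoothing $\tilde{\mathfrak f}^{nl}_{m\theta}$ as in \cite[Section 3.2]{hicks2020tropical}, chosen compatibly with the periodicity equations \eqref{eq:periodicity1}--\eqref{eq:periodicity2} so that it descends to $H^0(C^\infty_K/\Aff_K)$ (this is exactly the setup before \Cref{lem:linearapproximation}, and that lemma is phrased for the smoothed function, so nothing new is required); any two such smoothings differ by a smooth function on $K$, hence by the general principle above yield Hamiltonian isotopic sections, so the statement is independent of this choice. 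Beyond this bookkeeping there is no real obstacle — the content is entirely in \Cref{lem:linearapproximation}.
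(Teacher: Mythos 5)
Your proposal is correct and follows the same route as the paper: the paper leaves the corollary as an immediate consequence of Lemma~\ref{lem:linearapproximation} together with the observation (made just before \Cref{lem:classificationoftropicalsections}, around \Cref{eq:H1Affasquotient}) that $H^1(\Aff)\cong H^0(C^\infty/\Aff)/H^0(C^\infty)$ parametrizes Lagrangian sections up to Hamiltonian isotopy. You have simply made the implicit mechanism explicit by exhibiting the Hamiltonian $\pi^*g$ generating the isotopy, which is a fine (and harmless) elaboration.
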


Next we compute the homology class of an arbitrary section (recall from \Cref{rmk:relativeorientations} and \Cref{rmk:branechoiceforfibration} that we take homology with coefficients in the $\Z$-local system $\xi = \pi^*\det_\Z K$, where $\det_\Z K = \wedge^2 T_\Z K$ is the integral orientation bundle).
For this we first prove an important cobordism relation, which will also be useful later on.
In the remainder of this section we work with undecorated cobordisms, as these are enough to give homology relations.
Nonetheless, we show in \Cref{ap:compatibility} that the cobordisms we consider admit brane structures and work out the restriction of the brane structures to the ends.
This will be essential for the results in \Cref{cor:generatorscobhom}.

Write $f_{m\theta} := \mathfrak f_{m\theta}^{00}$ and $f^{nl}:= \mathfrak f_{00}^{nl}$ for (smoothings of) the functions defined in \Cref{eq:piecewiselinearapproximation}.
Writing $\Gamma_0$ for the zero-section and  $L_{f_{m\theta}}$ for the Lagrangian lift of the non-smooth locus of $f_{m\theta}$, we can use \Cref{prop:jeffsurgery} to obtain a cobordism relation\footnote{In \Cref{prop:jeffsurgery} the functions defining the Lagrangian sections are assumed to be convex. 
The functions $f_{m\theta}$ we consider are only convex when $m \geq 0$. However, it is easy to see that the claimed cobordism relation still holds if we revert the orientation of $L_{f_{m\theta}}$ when $m <0$.}
\begin{equation}\label{eq:jeffsurgerysections}
    \Gamma(df_{m\theta}) \sim L_{f_{m\theta}} + \Gamma_0
\end{equation}
and similarly for $\Gamma^{n[l]}$.
Now consider $\Gamma_{m\theta}^{n[l]} = \Gamma(df_{m\theta}^{nl})$.
We write $\otimes$ for the fiber-wise addition operation defined in \cite{subotic} (see \Cref{ap:compatibility} for more details on this operation).
Note that the operation $\otimes$ has the property
$$
\Gamma(d\varphi_1)\otimes \Gamma(d\varphi_2) = \Gamma(d(\varphi_1 + \varphi_2))
$$
for $\varphi_i \in H^0(C^\infty/\Aff)$. 
In particular, writing $\Gamma_{m\theta}:=\Gamma_{m\theta}^{0[0]}$ and  $\Gamma^{n[l]}:=\Gamma_{00}^{n[l]}$, we can decompose $\Gamma_{m\theta}^{n[l]}$ as a product 
\begin{equation}\label{eq:tensorofsections}
    \Gamma_{m\theta}^{n[l]}=\Gamma_{m\theta}\otimes \Gamma^{n[l]}.
\end{equation}
\begin{lem}\label{lem:cobsectionstogenerators}
    There is a cobordism relation $\Gamma_{m\theta}^{n[l]} \sim L_{ f_{m\theta}}\otimes L_{f^{nl}} + L_{f_{m\theta}} + L_{f^{nl}} + \Gamma _0$.
\end{lem}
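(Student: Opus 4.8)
The plan is to combine the multiplicative decomposition \eqref{eq:tensorofsections} with the one-variable surgery relation \eqref{eq:jeffsurgerysections} and a distributivity property of the fiber-wise sum $\otimes$ over the cobordism relation. First I would record the behavior of $\otimes$ on the relevant Lagrangians: since $f_{m\theta}^{nl}(x,y)=h_{m\theta}(x)+g^{nl}(y)$ splits as a sum of one-variable functions, the tropical hypersurface $V(\mathfrak f_{m\theta}^{nl})$ is a disjoint union of (preimages of) points in the two coordinate directions, so $L_{f_{m\theta}}$ is a union of fibers of $\pi$ over the $x$-direction and $L_{f^{nl}}$ a union of fibers over the $y$-direction; their fiber-wise sum $L_{f_{m\theta}}\otimes L_{f^{nl}}$ is a product-type tropical Lagrangian lift of the codimension-zero (i.e.\ ambient) stratum meeting of the two, matching the ``bending'' of $\mathfrak f_{m\theta}^{nl}$ at the corner locus. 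The key algebraic fact I would isolate and prove (in $\Cref{ap:compatibility}$, where $\otimes$ is discussed) is that $\otimes$ is compatible with cobordism in each variable: if $A\sim A'$ via a cobordism $V$ and $B$ is any fixed Lagrangian, then $A\otimes B\sim A'\otimes B$, via the cobordism $V\otimes B$ obtained by applying $\otimes$ fiberwise in the $\C$-family (this uses that all ends are horizontal and product-type at infinity, so $\otimes$ can be carried out uniformly).

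\textbf{The computation.} Granting this, I would argue as follows. Apply \eqref{eq:jeffsurgerysections} in the $x$-variable: $\Gamma_{m\theta}\sim L_{f_{m\theta}}+\Gamma_0$. Now $\otimes$ both sides (as a relation, using distributivity of $\otimes$ over the formal sum, which follows from the disjointness of the supports and bilinearity of the fiberwise-sum construction) with $\Gamma^{n[l]}$:
\begin{equation}\label{eq:firstotimes}
\Gamma_{m\theta}^{n[l]}=\Gamma_{m\theta}\otimes\Gamma^{n[l]}\sim L_{f_{m\theta}}\otimes\Gamma^{n[l]}+\Gamma_0\otimes\Gamma^{n[l]}=L_{f_{m\theta}}\otimes\Gamma^{n[l]}+\Gamma^{n[l]},
\end{equation}
using $\Gamma_0\otimes\Gamma^{n[l]}=\Gamma^{n[l]}$ since $\Gamma_0$ is the identity for $\otimes$ (it is $\Gamma(d\,0)$). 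Next apply \eqref{eq:jeffsurgerysections} in the $y$-variable, namely $\Gamma^{n[l]}\sim L_{f^{nl}}+\Gamma_0$, and substitute into the two occurrences of $\Gamma^{n[l]}$ on the right of \eqref{eq:firstotimes}: the bare $\Gamma^{n[l]}$ becomes $L_{f^{nl}}+\Gamma_0$, and $L_{f_{m\theta}}\otimes\Gamma^{n[l]}\sim L_{f_{m\theta}}\otimes(L_{f^{nl}}+\Gamma_0)=L_{f_{m\theta}}\otimes L_{f^{nl}}+L_{f_{m\theta}}$ (again using $L_{f_{m\theta}}\otimes\Gamma_0=L_{f_{m\theta}}$ and distributivity). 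Collecting terms yields
\begin{equation*}
\Gamma_{m\theta}^{n[l]}\sim L_{f_{m\theta}}\otimes L_{f^{nl}}+L_{f_{m\theta}}+L_{f^{nl}}+\Gamma_0,
\end{equation*}
which is exactly the claimed relation.

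\textbf{Main obstacle.} The routine parts are the geometric identifications of the tropical hypersurfaces as disjoint fibers and the identity/distributivity bookkeeping for $\otimes$ on supports that meet transversally. The genuine work is justifying that the fiber-wise addition $\otimes$ descends to a well-defined operation \emph{on cobordism classes}, i.e.\ that $V\otimes B$ really is an admissible (tautologically unobstructed, $G$-brane-decorated) Lagrangian cobordism whenever $V$ is, with the correct ends. Here one must check: (i) that the product-at-infinity and horizontal-ends structure of $V$ is preserved by $\otimes$ so that $V\otimes B$ is again a planar cobordism; (ii) that the tautological unobstructedness condition is inherited, which should follow because $\otimes$ is realized by a fiber-connected-sum/Lagrangian-surgery type construction whose holomorphic-disk count is controlled — or alternatively because in our situation all the Lagrangians in sight are honest tropical lifts for which one can write down explicit tame almost complex structures bounding no disks; and (iii) that the brane structures (grading, twisted Pin structure, $G$-local system) are compatible with $\otimes$, which is the content of $\Cref{ap:compatibility}$ and must be invoked here to make the relation hold in the \emph{decorated} cobordism group rather than just at the level of homology. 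I would structure the writeup so that these compatibility claims are cited from $\Cref{ap:compatibility}$, keeping the proof of $\Cref{lem:cobsectionstogenerators}$ itself to the short algebraic manipulation above.
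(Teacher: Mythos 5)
Your argument is essentially the paper's proof: the paper likewise writes $\Gamma_{m\theta}^{n[l]}=\Gamma_{m\theta}\otimes\Gamma^{n[l]}\sim(L_{f_{m\theta}}+\Gamma_0)\otimes(L_{f^{nl}}+\Gamma_0)=L_{f_{m\theta}}\otimes L_{f^{nl}}+L_{f_{m\theta}}+L_{f^{nl}}+\Gamma_0$, which is your two-step manipulation collapsed into a single chain, and it too delegates the compatibility of $\otimes$ with the decorated cobordism relation to \Cref{ap:compatibility}. One small remark on your ``main obstacle'' discussion: the paper's justification for points (i)--(iii) is more concrete than the alternatives you hedge between --- the surgery cobordisms pull back to $T^2\times T^2\times\C$ as products $V\times\Gamma_0$ (or $\Gamma_0\times V$) of a one-variable surgery cobordism with the zero section (cf.\ the remark after \Cref{prop:jeffsurgery}), and fiberwise addition with a Lagrangian \emph{section} is a global symplectomorphism of $\mcal K$, so horizontal ends, unobstructedness after pullback, and brane data are all inherited essentially for free.
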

\begin{proof}
Recalling that $\Gamma_{m\theta}$ (resp. $\Gamma^{n[l]}$) is Hamiltonian isotopic to $\Gamma(df_{m\theta})$ (resp. $\Gamma(df^{nl})$), we can combine \Cref{eq:tensorofsections} with \Cref{eq:jeffsurgerysections} to get cobordism relations
\begin{align*}
    \begin{split}
        \Gamma_{m\theta}^{n[l]} &= \Gamma_{m\theta}\otimes \Gamma^{n[l]}\\
        &\sim ( L_{ f_{m\theta}} + \Gamma_0 )\otimes ( L_{ f^{nl}} + \Gamma_0 )\\
        &=L_{ f_{m\theta}}\otimes L_{f^{nl}} + L_{f_{m\theta}} + L_{f^{nl}} + \Gamma _0.
    \end{split}
\end{align*}
\end{proof}

\begin{rmk}
    We are using implicitly that Lagrangian correspondences induce homomorphisms between Lagrangian cobordism groups \cite[Appendix C]{hanlon2022aspects}.
    In full generality, this is only true if either all the Lagrangians are undecorated (so that one can apply surgeries to resolve possible self-intersections arising from geometric composition, see \Cref{rmk:hprinciple}) or if one is content with working with immersed cobordisms. 
    However, homomorphisms of cobordism groups obtained by fiberwise summing with a Lagrangian section are actually induced by global  symplectomorphisms, thus giving well-defined group homomorphisms even at the embedded level. 
A detailed study of the transformation of the brane structures under fiberwise addition can be found in \Cref{ap:compatibility}.
\end{rmk}

We can now compute the homology class of the sections $\Gamma^{n[l]}_{m\theta}$.
In \Cref{ap:homology} we show that $H_2(\mcal K;\xi) \cong \Z^4 \oplus \Z_2$ and give explicit generators. 
Let us recall what these are.
Consider the tropical submanifolds 
\begin{equation}\label{eq:tropicalsubmanifolds}
    \{0\}, K, C_1 = \{ x = 0 \}, C_2 = \{ y = 1/2\}, C_3 = \{ y = 0\}
\end{equation}
inside $K$.
As in \Cref{sec:lagtorusfib}, for a tropical submanifold $V \subset K$ we denote by $L_V = N^*V / N^*_\Z V$ the Lagrangian in $\mcal K$ living over $V$.
Then the $\Z$-factors in $H_2(\mcal K;\xi)$ are generated by  $L_{\{0\}}$ (a fiber), $L_{K}$ (the zero-section) and the Lagrangians $L_{C_1}$ and $L_{C_2}$, whereas the $\Z_2$-factor is generated by the difference $L_{C_2} - L_{C_3}$.
Using this, we prove the following:

\begin{cor}\label{lem:homologyofsections}
    The homology class of $\Gamma_{m\theta}^{n[l]}$ is
    $$
    [\Gamma_{m\theta}^{n[l]}] = (mn,1,m,n,[   l])\in \Z^4 \oplus \Z_2 \cong H_2(\mcal K;\xi).
    $$
\end{cor}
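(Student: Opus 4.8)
The plan is to compute $[\Gamma_{m\theta}^{n[l]}]$ by combining the cobordism relation of \Cref{lem:cobsectionstogenerators} with the fact that the cycle-class map $\Cob(\mcal K) \to H_2(\mcal K;\xi)$ kills cobordism relations, so that
$$
[\Gamma_{m\theta}^{n[l]}] = [L_{f_{m\theta}} \otimes L_{f^{nl}}] + [L_{f_{m\theta}}] + [L_{f^{nl}}] + [\Gamma_0]
$$
in $H_2(\mcal K;\xi)$. Thus the task reduces to identifying each of the four summands in terms of the explicit generators $L_{\{0\}}, L_K, L_{C_1}, L_{C_2}$ and $L_{C_2} - L_{C_3}$ recalled in \Cref{eq:tropicalsubmanifolds}. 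First I would handle $[\Gamma_0] = [L_K]$, which by definition is the class $(0,1,0,0,[0])$ of the zero-section. Next I would identify the Lagrangian lift $L_{f_{m\theta}}$: by \Cref{cor:Hamiltonianisotopicsections} and the discussion preceding \Cref{lem:cobsectionstogenerators}, $f_{m\theta}$ depends only on the $x$-coordinate and its non-smooth locus is a union of $|m|$ parallel copies of the tropical hypersurface $\{x = \text{const}\}$ (each of weight $1$), together with the contribution of $\mathfrak f_\theta$, which is a single such copy but with a phase/area shift controlled by $\theta$ — topologically this locus is homologous to $m$ copies of $C_1 = \{x=0\}$ (up to reversing orientation when $m<0$, as in the footnote to \Cref{eq:jeffsurgerysections}). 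Hence $[L_{f_{m\theta}}] = (0,0,m,0,[0])$. Symmetrically, $f^{nl}$ depends only on $y$, its non-smooth locus is $n$ copies of the hypersurface $\{y = 0\}$ plus, when $l$ is odd, one copy of $\{y = 1/2\}$ coming from $\mathfrak f_l$; so $[L_{f^{nl}}] = n[L_{C_3}] + [l][L_{C_2}]$, and using the $\Z_2$-relation $L_{C_2} - L_{C_3}$ being $2$-torsion this rewrites as $(0,0,0,n,[l])$ in the chosen basis (after absorbing $n[L_{C_3}]$ into $n[L_{C_2}]$ modulo $2$-torsion — I will need to be careful about whether the coordinate is $n$ or uses the $C_3$ generator, but the $\Z$-part is unambiguously $n$ in the $L_{C_2}$-direction once one fixes the basis of \Cref{ap:homology}).

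The main obstacle is computing $[L_{f_{m\theta}} \otimes L_{f^{nl}}]$, the class of the fiberwise sum. Here I would argue that $L_{f_{m\theta}} \otimes L_{f^{nl}}$ is (a smoothing of) the Lagrangian lift of a tropical hypersurface which, projected to $K$, is a union of $|mn|$ translates of $C_1 \cap$-type loci — more precisely, fiberwise addition of a Lagrangian living over $\{x = \text{pt}\}$ with one living over $\{y = \text{pt}\}$ produces a Lagrangian living over the intersection points $\{x = \text{pt}\} \cap \{y = \text{pt}\}$, each a fiber; the weights multiply, giving $mn$ fibers. Thus $[L_{f_{m\theta}} \otimes L_{f^{nl}}] = (mn, 0, 0, 0, [0])$. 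To make this rigorous I would invoke the description of $\otimes$ via Lagrangian correspondences and the homomorphism property it induces on cobordism (hence homology) groups (as in the remark following \Cref{lem:cobsectionstogenerators} and \Cref{ap:compatibility}), reducing the computation to the product situation $T^2 \times T^2$: there, fiberwise sum of a section over one factor corresponds to a shear symplectomorphism, and one can compute the homology class of $\Gamma(df_{m\theta}^{nl})$ directly on the cover $T^2\times T^2 \to \mcal K$ by intersection-number / pushforward considerations, then descend.

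Assembling the four contributions,
$$
[\Gamma_{m\theta}^{n[l]}] = (mn,0,0,0,[0]) + (0,0,m,0,[0]) + (0,0,0,n,[l]) + (0,1,0,0,[0]) = (mn,1,m,n,[l]),
$$
which is the claim. The one point requiring genuine care, beyond bookkeeping, is verifying that the smoothing and the orientation conventions (the sign subtleties when $m<0$ or $n<0$, and the interaction of the $\mathfrak f_\theta, \mathfrak f_l$ terms with the $\xi$-twisted orientation) do not alter the coefficients — I expect this to follow from the linearity already exploited in the proof of \Cref{lem:linearapproximation} together with the explicit generators of \Cref{ap:homology}, but it is where the real content lies.
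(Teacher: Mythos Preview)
Your approach is essentially the paper's: apply \Cref{lem:cobsectionstogenerators}, use that cobordism implies equality in $H_2(\mcal K;\xi)$, and identify each of the four summands with the explicit generators of \Cref{ap:homology}, arriving at exactly the same sum $(mn,0,0,0,[0]) + (0,0,m,0,[0]) + (0,0,0,n,[l]) + (0,1,0,0,[0])$. One small correction in your bookkeeping: the non-smooth locus of $n\mathfrak f$ in the $y$-variable lies at half-integers, so $V(f^{nl})$ consists of $n$ copies of $C_2 = \{y = 1/2\}$ (not $C_3 = \{y=0\}$) together with $l$ copies of $C_2 - C_3$; you flagged this uncertainty yourself and it does not affect the final coordinates.
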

\begin{proof}
    Using \Cref{lem:cobsectionstogenerators} it is enough to compute the homology class of the terms appearing on the right-hand side.
    By construction of $f_{m\theta}$, the tropical submanifold $V(f_{m\theta})$---the locus where $f_{m\theta}$ is not smooth, each component equipped with a multiplicity as explained before \Cref{prop:jeffsurgery}---is homologous to $m$ copies of $C_1$, whereas $V(f^{nl})$ is a combination of $n$ copies of $C_2$ and $l$ copies of the difference $C_2 - C_3$.
In particular, the  term $L_{ f_{m\theta}}\otimes L_{f^{nl}}$ in \Cref{lem:cobsectionstogenerators}---which lives over the intersection $V(f_{m\theta}) \cap V(f^{nl})$---is a collection of $m \cdot n$ fibers.
It then follows that
\begin{align*}
[\Gamma_{m\theta}^{n[l]}] &= (mn,0,0,0,[0]) + (0,0,m,0,[0]) + (0,0,0,n,[l]) + (0,1,0,0,[0])\\
    &= (mn,1,m,n,[l]),
\end{align*}
which proves the Corollary.
\end{proof}

\subsection{The cycle class map}\label{sec:cycleclassmap}
We start by defining  the precise cobordism group that we will consider (namely, the classes $\mcal L$ and $\mcal L_{cob}$ in \Cref{def:lagcobgroup}).
Given a symplectic manifold $X$, we say a Lagrangian $L \subset X$ is {\it weakly exact} if $\omega(\pi_2(\mcal K,L)) = 0$, and we say it is {\it tautologically unobstructed} or {\it quasi-exact} if there exists some almost complex structure $J$ on $X$ such that $L$ bounds no non-constant $J$-holomorphic disks.
We denote by $(L,\alpha,P,\eta)$ the data of a Lagrangian $G$-brane, where $L\subset \mcal K$ is a Lagrangian, $\alpha: L \to \R$ its grading, $P \to L$ a principal $Pin_2$-bundle giving the Pin structure and $\eta:\pi_1(L) \to G$ a local system.
Here, gradings and twisted Pin structures  are defined with respect to the natural choices explained in \Cref{rmk:branechoiceforfibration}; that is, we grade Lagrangians with respect to a quadratic volume form given by complexifying a section of $(\wedge^2 T^* K)^{\otimes 2}$, and a twisted Pin structure is in this case a regular Pin structure (as $w_2(K) = 0$).
The same notation is used for cobordisms.
\begin{defn}\label{def:cobgroupKk}
    Let $\mcal L =\{(L,\alpha,P,\eta)\}$ be the collection of weakly-exact Lagrangian $G$-brane, and $\mcal L_{cob} =\{(V,\alpha,P,\eta)\}$ be the collection of  Lagrangian cobordisms $V\subset \mcal K \x \C$ carrying a $G$-brane structure,  all of whose ends are  weakly-exact, and such that their pullback to the cover $T^4 \x \C \to \mcal K \x \C$ is tautologically unobstructed.
    The {\it Lagrangian cobordism group} of $\mcal K$ is the group
    $$
    \Cob(\mcal K) := \Cob(\mcal K;\mcal L,\mcal L_{cob}) / \sim,
    $$
    where $\sim$ is the equivalence relation generated by expressions of the form 
    $$
    (L,\alpha,P,\eta) \sim (L,\alpha, P \otimes \beta, \eta \otimes \beta)
    $$
    for any $\Z_2$-local system $\beta$ on $L$.
\end{defn}
\begin{rmk}
    The weaker condition that the pullback to $T^4 \x \C$ (instead of the cobordism itself) is tautologically unobstructed is explained further in \Cref{sec:isoK}.
\end{rmk}
\begin{rmk}
    Let us briefly justify the additional relation imposed in \Cref{def:cobgroupKk}.
    Given any Lagrangian brane $(L,\alpha, P, \eta)$, one can tensor both the $Pin_2$-bundle $P$ and the local system with a $\Z_2$-local system $\beta$ to obtain a new Lagrangian brane $(L,\alpha,P\otimes\beta,\eta\otimes\beta)$. These Lagrangian branes are naturally isomorphic in the Fukaya category, whereas there is no reason to expect they should be related by a Lagrangian cobordism.
    Hence, to hope for an isomorphism between the cobordism group and the Grothendieck group of the Fukaya category, we impose this additional relation in the cobordism group.
\end{rmk}

We now introduce the first invariant of cobordism classes.
In  \Cref{rmk:relativeorientations} and \Cref{rmk:branechoiceforfibration} we argued that every graded Lagrangian in $\mcal K$ is oriented with respect to the local system $\pi^*(\det_\Z K)$.
Since cobordisms are also graded and compatible with restriction of gradings, there is a  natural \emph{cycle class map} 
$$
cyc:\Cob(\mcal K) \to H_2(\mcal K;\xi)
$$
that assigns to each Lagrangian its twisted fundamental class.
This map is compatible with the \emph{open-closed} map $\mcal{OC}: HH_0(\mcal Fuk(\mcal K)) \to H_2(\mcal K;\xi)$,\footnote{This version of the open-closed map (mapping to twisted homology) has not appeared in the literature before. It is not needed for our proofs and we just include it for context.} in the sense that there is a commutative diagram
$$
    \xymatrix{
                \Cob(\mcal K)\ar[r] \ar[drr]_{cyc} & K_0(\mcal Fuk(\mcal K))  \ar[r]^{\mcal T} & HH_0(\mcal Fuk(\mcal K)) \ar[d]^{\mcal {OC}}\\
                    &  & H_2(\mcal K;\xi)
            }.
$$
Here $\mcal T$ denotes the Dennis trace map, which exists for any triangulated category and sends an object to its identity endomorphism, and the map $\Cob(\mcal K) \to K_0(\mcal Fuk(\mcal K))$ is a Biran-Cornea type map as in \Cref{eq:BCmap} which is further discussed in \Cref{sec:isoK}.

\subsection{Tropical cobordism group and the refined cycle class map}\label{sec:tropcobandrefinedcyc}
The computation in \Cref{ap:homology} shows that $H_2(\mcal K;\xi) \cong \Z^4 \oplus \Z_2$ and reflects an interesting phenomenon. 
We showed in \Cref{sec:hmsbielliptic} that $\mcal K$ is mirror to an algebraic bielliptic surface over the Novikov field. 
The (co)homology of bielliptic surfaces has been studied (see for instance \cite{serrano1990divisors} for the complex case), and one finds it does {\it not} match our computation of $H_2(\mcal K;\xi)$ (it does up to torsion). 
Namely, for an algebraic bielliptic surface of type I one finds $H^{\text{even}}(Y) \cong \Z^4 \oplus \Z_2^2$---an extra $\Z_2$ factor appearing with respect to $H_2(\mcal K;\xi)$.
Moreover, in \cite{bergstrom2019brauer} the authors exhibit explicit generators for the torsion part of $H^{\text{even}}(Y)$, and these happen to be (linear combinations of) lifts of the tropical divisors $\set{y=0}$ and $\set{y=1/2}$.
By mirror symmetry, these torsion divisors correspond to Lagrangian lifts equipped with torsion local systems.
These Lagrangians have the same underlying Lagrangian submanifold, hence are not distinguished by the naive map $\Cob(\mcal K) \to H_2(\mcal K;\xi)$. 
We will therefore use a refined cycle class map that distinguishes these two Lagrangians, just as the mirror cycle class map distinguishes the corresponding divisors.

Although it is possible to define a refinement of $cyc$ on the whole cobordism group, for convenience we will consider a subgroup $\Cob^{trop}(\mcal K) \subset \Cob(\mcal K)$ and define the refinement only on this subgroup.
Note that, after choosing a Pin structure on $K$, Lagrangian fibers and sections of the projection $\mcal K \to K$ come equipped with a preferred Pin structure: for  a section $\Gamma$ there is a natural diffeomorphism $\Gamma \cong K$ given by the projection, whereas the tangent space to the fibers is naturally isomorphic to the (co)tangent space of $K$.
Furthermore, fibers and sections are weakly exact, since their pullback to the universal cover $\R^2 \to \mcal K$ are exact Lagrangians.\footnote{In fact, for $L$  a fiber or a section we get an even stronger property, sometimes referred to as \emph{topological unobstructedness}: the relative homotopy groups $\pi_2(\mcal K,L)$ vanish.}
To summarize, fibers and sections are part of $\mcal L$, and they come with a preferred Pin structure.
\begin{defn}
    The \emph{tropical cobordism group} is the subgroup $\Cob^{trop}(\mcal K) \subset \Cob(\mcal K)$ generated by Lagrangian fibers and Lagrangian sections of $\pi:\mcal K \to K$ equipped with the preferred Pin structure.
\end{defn}

We now introduce the following construction to refine the cycle class map.
Let $\iota: L \into \mcal K$ be a Lagrangian brane (in particular, $L$ is oriented with respect to $\xi = \pi^*(\wedge^2 T_\Z T^*K)$ by \Cref{rmk:relativeorientations}).
Recall that the twisted version of Poincaré duality gives an isomorphism $H_i(M;\zeta) \cong H^{n-i}(M;\zeta^{\vee}\otimes or_M)$ for any local system $\zeta$ on a manifold $M$.
We consider the composition
$$
H_3(\mcal K;\xi) \cong H^1(\mcal K;\xi^{\vee})\xra{i^*} H^1(L;\xi^{\vee}) \cong H^1(L;or_L),
$$
where in the first isomorphism we have used that $or_{\mcal K} \cong \Z$ (as for any symplectic manifold), the second isomorphism follows from $L$ being $\xi$-oriented and the third isomorphism is twisted Poincaré duality for $L$.
The above composition gives an assignment 
\begin{align*}
    H_3(\mcal K;\xi) &\to H_1(L;\Z)\\
    C & \mapsto C_L.
\end{align*}
Now suppose $L$ carries a $G$-local system, i.e. a homomorphism $\eta: \pi_1(L) \to G$.
\begin{lem}
    There is a well-defined pairing
    \begin{align}\label{eq:collectingmonodromy}
        \begin{split}
          \Cob^{trop}(\mcal K)\otimes H_3(\mcal K;\xi) &\to G\\
        (L,\eta)\otimes C &\mapsto \eta(C_L)  
        \end{split} 
    \end{align}
    where $\eta(C_L)$ is the product of the monodromies of the local system $\eta$ around the connected components of the $1$-cycle $C_L \subset L$.
\end{lem}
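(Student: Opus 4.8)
The plan is to verify well-definedness in three stages: first that $\bar\eta(C_L)$ makes sense for the \emph{homology class} $C_L$, then that $C\mapsto C_L$ is natural enough to yield a bilinear pairing on the free group on tropical branes, and finally—the main point—that this pairing annihilates the relations defining $\Cob^{trop}(\mcal K)$.

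For the first stage, since $G$ is abelian the local system $\eta:\pi_1(L)\to G$ factors through a homomorphism $\bar\eta:H_1(L;\Z)\to G$, so the ``product of monodromies around the connected components of a $1$-cycle representing $C_L$'' is precisely $\bar\eta(C_L)$, which depends only on the class $C_L\in H_1(L;\Z)$ and not on the chosen cycle. For the second stage, the assignment $C\mapsto C_L$ is the composite of twisted Poincaré duality on $\mcal K$, the restriction map $i_L^*$, the identification $\xi^\vee|_L\cong or_L$ (canonical once the grading on $L$ is fixed, by \Cref{rmk:relativeorientations} and \Cref{rmk:branechoiceforfibration}), and twisted Poincaré duality on $L$; each of these is a natural group homomorphism, so $C\mapsto\bar\eta(C_L)$ is a homomorphism $H_3(\mcal K;\xi)\to G$ for each decorated fiber or section, and hence extends $\Z$-bilinearly to a pairing on the free abelian group on tropical branes.

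The heart of the argument is that this bilinear map kills the cobordism relations; the point I expect to be the main obstacle is setting this up cleanly. Let $V\subset\mcal K\x\C$ be a cobordism in $\mcal L_{cob}$ carrying a $G$-brane structure, with positive ends $L_1^+,\dots,L_k^+$ and negative ends $L_1^-,\dots,L_s^-$, local system $\eta_V$, and induced local systems $\eta_i^\pm$ on the ends. Choose a closed disk $D\subset\C$ whose interior contains $\pi_\C(K)$; then $V^D:=V\cap(\mcal K\x\overline D)$ is a compact $3$-manifold with $\partial V^D=\bigsqcup_i L_i^+\sqcup\bigsqcup_j L_j^-$, each end sitting in a fiber $\mcal K\x\{p\}$ and mapping to the corresponding Lagrangian under $p_1:\mcal K\x\C\to\mcal K$; moreover $V^D\into V$ is a deformation retract. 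Because $V$ is a graded Lagrangian in $\mcal K\x\C$ with respect to the pulled-back quadratic volume form, which is of product type at infinity (see the discussion in \Cref{sec:branes}), $V$ is oriented with respect to $(p_1^*\xi)|_V$, and therefore $i_V^*p_1^*\,\mathrm{PD}_{\mcal K}(C)$ lies in $H^1(V;(p_1^*\xi^\vee)|_V)\cong H^1(V^D;or_{V^D})$, which Poincaré--Lefschetz duality identifies with a class $\alpha_C\in H_2(V^D,\partial V^D;\Z)$. By naturality of Poincaré duality under restriction to a boundary component—and since the inclusions $L_i^\pm\into V^D$ compose with $p_1$ to the inclusions $L_i^\pm\into\mcal K$ used to define $C_{L_i^\pm}$—the image of $\alpha_C$ under $\partial:H_2(V^D,\partial V^D;\Z)\to H_1(\partial V^D;\Z)$ is, componentwise, $C_{L_i^+}$ on the positive ends and $-C_{L_j^-}$ on the negative ends, the signs being the usual orientation convention for the ends of a cobordism (the same convention that makes $cyc:\Cob(\mcal K)\to H_2(\mcal K;\xi)$ well defined). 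Since the composite of $\partial$ with $H_1(\partial V^D;\Z)\to H_1(V^D;\Z)$ vanishes in the long exact sequence of the pair, applying $\bar\eta_V:H_1(V^D;\Z)\to G$ and using that $\bar\eta_V$ restricts to $\bar\eta_i^\pm$ on each end yields
\[
\sum_{i}\bar\eta_i^+\bigl(C_{L_i^+}\bigr)-\sum_{j}\bar\eta_j^-\bigl(C_{L_j^-}\bigr)=0\qquad\text{in }G,
\]
which is exactly the statement that the pairing respects the relation induced by $V$.

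It remains to check the auxiliary relation $(L,\alpha,P,\eta)\sim(L,\alpha,P\otimes\beta,\eta\otimes\beta)$ of \Cref{def:cobgroupKk}. As $C_L$ depends only on the grading and not on the Pin structure, this reduces to $(\eta\otimes\beta)(C_L)=\eta(C_L)$, i.e.\ to the vanishing of the mod-$2$ pairing $\langle\beta,C_L\rangle$ for every $\Z_2$-local system $\beta$ on a fiber or section $L$—equivalently, that the class $C_L$ is divisible by $2$ modulo torsion. This last point I would verify directly on the (explicit) fibers and sections, using the description of the map $H_3(\mcal K;\xi)\to H_1(L;\Z)$ coming from the computation of $H_*(\mcal K;\xi)$ in \Cref{ap:homology}. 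With all three stages in place the pairing descends to $\Cob^{trop}(\mcal K)\otimes H_3(\mcal K;\xi)\to G$, and the whole argument is a refinement—via the connecting homomorphism and the twisted coefficients $p_1^*\xi$—of the standard proof that the cycle class map is well defined.
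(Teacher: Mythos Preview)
Your treatment of the cobordism relation is correct and is the Poincar\'e--Lefschetz dual of what the paper does: where you build a relative class $\alpha_C\in H_2(V^D,\partial V^D;\Z)$ via twisted duality and read off $\partial\alpha_C=\sum_i C_{L_i^+}-\sum_j C_{L_j^-}$, the paper simply takes the transverse intersection $(C\times\C)\pitchfork V$, which is a $2$-chain with exactly that boundary, and observes that the product of the monodromies of $\eta_V$ around its boundary components is trivial. The two arguments are equivalent; yours is more homological, the paper's more geometric.

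The gap is in your handling of the auxiliary relation $(L,\alpha,P,\eta)\sim(L,\alpha,P\otimes\beta,\eta\otimes\beta)$. You reduce this to showing $\beta(C_L)=0$ for every $\Z_2$-local system $\beta$ on a fiber or section, and then assert this amounts to $C_L$ being divisible by $2$ modulo torsion. This fails already for fibers: if $L=F$ is a torus fiber and $C=[K]\times\alpha\in H_3(\mcal K;\xi)$ is the generator coming from the $E^\infty_{2,1}$-piece of the Leray spectral sequence (cf.\ \Cref{lem:projectionH3H1}), then $C_F=\alpha$ is a \emph{primitive} class in $H_1(F;\Z)\cong\Z^2$, so there exists $\beta$ with $\beta(C_F)\neq 0$. (Incidentally, ``$\beta(C_L)=0$ for all $\beta$'' is equivalent to $C_L\in 2H_1(L;\Z)$, not to divisibility modulo torsion---for a section $L\cong K$ with $H_1(L)\cong\Z\oplus\Z_2$ these differ.) So the pairing does \emph{not} respect the auxiliary relation on arbitrary branes.

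The paper's way out is much simpler: $\Cob^{trop}(\mcal K)$ is generated by fibers and sections carrying the \emph{preferred} Pin structure, so the auxiliary relation with nontrivial $\beta$ sends a tropical generator to a brane with a different Pin structure, which is no longer a tropical generator. Hence the auxiliary relation imposes no relation among tropical generators, and compatibility with it is vacuous on $\Cob^{trop}(\mcal K)$. You should replace your divisibility check by this observation.
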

\begin{proof}
    We show that the map does not depend on the cobordism class.
    First note that the extra relation imposed in \Cref{def:cobgroupKk} is trivial on the subgroup $\Cob^{trop}(\mcal K)$, so that compatibility with this relation is immediate.
    Let $(L_i,\eta_i), i = 1,\dots,k$ be the ends of a cobordism $(V,\eta_V)$ and let $C \in H_3(\mcal K;\xi)$.
    Consider the transverse intersection $(C\x\C ) \pitchfork V \subset \mcal K \x \C$. 
    This is a $2$-cycle with boundary $C_{L_1} \cup \dots \cup C_{L_k}$, and we equip it with a local system given by restricting $\eta_V$.
    The product of the monodromies along the boundary of this cycle must vanish, and since $\eta_V\restr{L_i} = \eta_i$ we get
    $$
    1 = \prod_i \eta_V(C_{L_i}) = \prod_i \eta_i(C_{L_i}).
    $$
    This proves that the map (\ref{eq:collectingmonodromy}) does not depend on a representative of the cobordism class, hence it is well-defined.
\end{proof}

Denote by $H_3(\mcal K;\xi)_{(2)} \subset H_3(\mcal K;\xi)$ the  subgroup of $2$-torsion elements.
A similar computation to that of \Cref{ap:homology} shows that $H_3(\mcal K;\xi)_{(2)} \cong \Z_2$.
Restricting the map (\ref{eq:collectingmonodromy}) to $H_3(\mcal K,\xi)_{(2)}$, we obtain a map 
\begin{equation}\label{eq:recordingtwotorsionmonodromy}
    \Psi: \Cob^{trop}(\mcal K) \to \Hom(H_3(\mcal K;\xi)_{(2)}, G) \cong G_{(2)}.
\end{equation}

\begin{defn}\label{def:refinedcyc}
    We call the map 
    $$
    \widetilde{cyc} = cyc \oplus \Psi : \Cob^{trop}(\mcal K) \to H_2(\mcal K;\xi) \oplus G_{(2)}$$
    the {\it refined cycle class map}.
\end{defn}
\begin{rmk}
    Intersecting Lagrangians with other classes in $H_3(\mcal K;\xi)$ records monodromies of the local systems around different $1$-cycles.
    These will be detected by an Albanese-type map defined in \Cref{sec:symplecticalbanese}, hence we forget them here.
\end{rmk}
Denote by  $\Cob^{trop}(\mcal K)_{0} \subset \Cob^{trop}(\mcal K)$ the kernel of $\widetilde{cyc}$.
It consists of the following Lagrangians:

\begin{prop}\label{cor:generatorscobhom}
    The subgroup $\Cob^{trop}(\mcal K)_{0} \subset \Cob^{trop}(\mcal K)$ is generated by Lagrangian branes supported on the differences $F_{b^+} - F_{b^-}$ and $L_{f_{m^+\theta^+}} - L_{f_{m^-\theta^-}}$.
    Here, all Lagrangians are equipped with the preferred Pin structure.\footnote{We already argued that fibers and sections have a preferred Pin structure. Given a cobordism between $\Gamma_0,\Gamma_{m\theta}$ and $L_{f_{m\theta}}$ together with a Pin structure extending the preferred one on $\Gamma_0$ and $\Gamma_{m\theta}$, one can simply define the preferred Pin structure on $L_{f_{m\theta}}$ to be that induced by the cobordism. The fact that the surgery cobordism admits a unique Pin structure restricting to the preferred one on $\Gamma_0$ and $\Gamma_{m\theta}$ is proved in \Cref{lem:matchingPin}.}
\end{prop}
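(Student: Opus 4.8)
The plan is to reduce every class in $\Cob^{trop}(\mcal K)_0$ to a standard form by surgery cobordisms, to split off the image of $\widetilde{cyc}$, and then to recognise what remains as a combination of the two asserted families.

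First I would use \Cref{prop:jeffsurgery}, in the forms \Cref{eq:jeffsurgerysections} and \Cref{lem:cobsectionstogenerators}, to cut down the generators. Modulo cobordism, an arbitrary section $\Gamma_{m\theta}^{n[l]}$ carrying any $G$-local system becomes a sum of $mn$ fibers together with the lifts $L_{f_{m\theta}}$, $L_{f^{nl}}$ and the zero section $\Gamma_0$, each inheriting a local system determined by the topology of the cobordism and the preferred Pin structure; moreover $L_{f_{m\theta}}$ and $L_{f^{nl}}$ are themselves disjoint unions of unit lifts of $C_1$, respectively of $C_2$ and of the torsion piece supported on $C_2\cup C_3$. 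Making this precise requires the brane-theoretic bookkeeping of \Cref{ap:compatibility}: one must verify the surgery cobordisms of \Cref{prop:jeffsurgery} admit brane structures, record how a grading, a Pin structure and a local system restrict to each end, and (for the preferred Pin structure on the lifts) invoke \Cref{lem:matchingPin}. The upshot is that $\Cob^{trop}(\mcal K)$ is generated, as an abelian group, by fibers $F_b$, the zero section $\Gamma_0$, the unit $x$-lifts $L_{C_1}$, and the $y$-lifts $L_{C_2}$ and $L_{C_3}$, all carrying an arbitrary $G$-local system and the preferred Pin structure.

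Next I would split off $\widetilde{cyc}$. Computing $\widetilde{cyc}$ on these generators via \Cref{lem:homologyofsections} and the explicit description of $\Psi$ --- noting that $\Psi$ is already onto $G_{(2)}$ on local systems supported on a single fixed fiber --- one chooses explicit lifts of a generating set of $H_2(\mcal K;\xi)\oplus G_{(2)}$, namely $F_{b_0}$, $\Gamma_0$, $L_{C_1}$, $L_{C_3}$ and (for the $2$-torsion class of $H_2$) the difference $L_{C_2}-L_{C_3}$, each decorated by a local system realising any prescribed element of $G_{(2)}$. This defines a homomorphic section $s$ of $\widetilde{cyc}$: the relations in $H_2(\mcal K;\xi)\oplus G_{(2)}$ that $s$ must respect lift to relations in $\Cob^{trop}(\mcal K)$, the crucial one being $2(L_{C_2}-L_{C_3})\sim 0$, which follows from a cobordism merging two parallel copies of $L_{C_2}$ and splitting them off as two copies of $L_{C_3}$. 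Consequently $\Cob^{trop}(\mcal K)_0$ is generated by the elements $\sigma - s(\widetilde{cyc}(\sigma))$ as $\sigma$ ranges over the generators listed above.

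Finally, each $\sigma - s(\widetilde{cyc}(\sigma))$ must be rewritten as a sum of differences $F_{b^+}-F_{b^-}$ and $L_{f_{m^+\theta^+}}-L_{f_{m^-\theta^-}}$. For $\sigma$ a fiber and for $\sigma$ a unit $x$-lift this is immediate. The substance of the proof is in the remaining cases, $\sigma=(\Gamma_0,\eta)$ and $\sigma$ a $y$-lift $(L_{C_2},\eta)$ or $(L_{C_3},\eta)$: here I would show the resulting kernel class lies in the span of fiber-differences and $x$-lift-differences by constructing, in the torus cover $T^2\times T^2\to\mcal K$, surgery cobordisms of the zero section and of the $y$-lifts with fibers and with $x$-lifts --- a flux-type argument in the spirit of \Cref{lem:relationsT2} and \Cref{prop:fluxisaniso} --- and then equipping these cobordisms with brane structures inducing the prescribed local systems, once more using \Cref{ap:compatibility}. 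I expect this last step to be the main obstacle: producing enough cobordisms relating $\Gamma_0$ and the $y$-direction lifts to fibers and $x$-lifts with the correct monodromies, and checking the compatibility of all the decorations, is precisely the delicate content packaged into \Cref{ap:compatibility}.
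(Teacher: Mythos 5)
Your plan diverges from the paper's argument in a way that produces a genuine gap at the final step, and a potential circularity at the middle step.

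The paper does not construct a section $s$ of $\widetilde{cyc}$ inside the proof of this proposition (that splitting is built later, in the proof of \Cref{th:cobK}, and actually \emph{uses} this proposition). Instead it writes a general element of $\ker(cyc)$ as a sum of fibers and sections subject to the homology constraints \Cref{eq:sumfibers}--\Cref{eq:sumnl}, applies the brane-enhanced surgery decomposition of \Cref{lem:cobsectionstogenerators} to every section, and observes that the resulting $L_{f^{nl}}$ terms all carry the \emph{same} (trivial) local system and preferred Pin structure by \Cref{lem:matchinglocsyst}, so that constraint \Cref{eq:sumnl} makes them cancel \emph{algebraically} --- not by any cobordism. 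What does not cancel algebraically is a residue $\mathbb L=\sum_i((\Gamma_0,\eta^+_{i\Z_2})-(\Gamma_0,\eta^-_{i\Z_2}))$ with $\Z_2$-local systems, and the key move you miss is the iterated surgery $\Gamma_0\#\cdots\#\Gamma_0\# L_{f^{10}}$, which combines all those zero sections with one $L_{f^{10}}$ into a single Lagrangian whose local system only remembers the product $\prod_i\eta^{\pm}_{i\Z_2}$; under the condition $\Psi(\mathbb L)=0$ the two products agree, giving $\mathbb L\sim 0$.

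Your proposed final step --- ``constructing, in the torus cover $T^2\times T^2\to\mcal K$, surgery cobordisms of the zero section and of the $y$-lifts with fibers and with $x$-lifts'' --- is not the right mechanism and, taken literally, cannot work: $\Gamma_0$, the $y$-lifts $L_{C_2},L_{C_3}$, fibers, and $x$-lifts live in pairwise distinct summands of $H_2(\mcal K;\xi)$, so no cobordism can trade one for the other. What actually happens is cancellation (for the $y$-lifts and for the $\Z$-part of the local systems on $\Gamma_0$, both arranged by the bookkeeping in \Cref{lem:matchinglocsyst}) together with the iterated surgery trick for the $\Z_2$-part. Separately, the section $s$ you want to build must realise the $G_{(2)}$-factor as something like $(L_{C_2},g)-L_{C_2}$, and verifying $s$ is a homomorphism requires exactly the iterated-surgery relation $\sum_i((L_{C_2},g_i)-L_{C_2})\sim(L_{C_2},\prod_i g_i)-L_{C_2}$; since you would need this before the proposition rather than deducing it from the proposition as the paper does, you should prove it first, and it is the same idea you are missing above. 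Once the iterated surgery relation is in hand, the direct simplification of a general kernel element (without a section) is both shorter and avoids the pitfalls.
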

\begin{proof}
    We first analyze the kernel of $cyc : \Cob^{trop}(\mcal K) \to H_2(\mcal K;\xi)$.
    By \Cref{lem:homologyofsections}, a general element of $\ker(cyc)$ will be a finite sum of the form
    \begin{equation}\label{eq:kercyc}
        \sum_{i=1}^{N^+} F_{b^+_i} 
        - \sum_{i=1}^{N^-} F_{b^-_i}
        + \sum_{(m_i^\pm,n_i^\pm,l_i^\pm\theta_i^\pm)}(\Gamma_{m_i^+\theta_i^+}^{n_i^+[l_i^+]}
        - \Gamma_{m_i^-\theta_i^-}^{n_i^-[l_i^-]})
    \end{equation}
    for some $b_i^{\pm}\in K$ and $(m_i^\pm,n_i^\pm,l_i^\pm,\theta_i^\pm)\in \Z^2\oplus\Z_2\oplus S^1$ satisfying:
    \begin{align}
        \sum(m_i^+n_i^+ - m_i^-n_i^-)&=N^+ - N^-.\label{eq:sumfibers}\\
        \sum(m_i^+-m_i^-)&=0 \label{eq:summ}\\ 
        \sum(n_i^+ - n_i^-) & = 0 \label{eq:sumn}\\
        \sum(l_i^+ -l_i^-)&=0 \quad (\text{mod } 2)\label{eq:sumnl} 
    \end{align}

    We want to apply cobordism relations as those in \Cref{lem:cobsectionstogenerators}.
    However, recall that this Lemma was proved for undecorated cobordisms (as we were only interested in the homology classes of the underlying Lagrangians), whereas relations in $\Cob^{trop}(\mcal K)$ involve cobordisms carrying $G$-brane structures and whose pullback to $T^4 \x \C$ is tautologically unobstructed (cf. \Cref{def:cobgroupKk}). 
    Once we equip our Lagrangians with this extra structure---and fix it on the end $\Gamma^{n[l]}_{m\theta}$---there might be some obstruction to the existence of a Lagrangian cobordism (recall the Lagrangian cobordism itself must carry such extra structure, and that the restriction to the ends of the cobordism must agree with the extra structure specified on the Lagrangians). 
    The key to prove \Cref{cor:generatorscobhom} will then be to show that, given a grading, the preferred Pin structure and a local system on $\Gamma^{n[l]}_{m\theta}$, the cobordism relations of \Cref{lem:cobsectionstogenerators} hold in $\Cob(\mcal K)$.
    
    In \Cref{ap:compatibility} we show that \Cref{lem:cobsectionstogenerators} can be enhanced to take into account this extra structure.
    Namely, Lemmas \ref{lem:gradingfiberwisesum}-\ref{lem:pinfiberwisesum} show that, given any grading on $\Gamma^{n[l]}_{m\theta}$, there are gradings on $\Gamma_{m\theta}$ and $\Gamma^{n[l]}$ such that \Cref{eq:tensorofsections} holds when we put the preferred Pin structure on all three sections, whereas \Cref{prop:admissibilitysurgery} shows that the surgery cobordism of \Cref{eq:jeffsurgerysections} is tautologically unobstructed when pulled back to $T^4\x\C$ and can be equipped with a $G$-brane structure restricting appropriately to the ends.
The upshot is that, given a $G$-brane structure on $\Gamma^{n[l]}_{m\theta}$, \Cref{lem:cobsectionstogenerators} holds when we equip all the Lagrangians in the right-hand side with their natural Pin structures.
    Furthermore, one can choose the local system on $L_{f^{nl}}$ to be trivial (see \Cref{lem:matchinglocsyst}).
    
    With this in mind, one can conclude as follows.
    Apply the cobordism relation of \Cref{lem:cobsectionstogenerators} to all the sections in \Cref{eq:kercyc},  and note that, since the local system and Pin structure on the  Lagrangians $L_{f^{n[l]}}$ are all the same, \Cref{eq:sumnl} translates into an algebraic cancellation of these Lagrangians.
    Hence any null-homologous combination as in \Cref{eq:kercyc} consists of fibers,  tropical Lagrangians $L_{f_{m\theta}}$ plus a cobordism class  of the form
    $$
        \mathbb L= \sum_i ((\Gamma_0,\eta^+_{i\Z_2})- (\Gamma_0,\eta^-_{i\Z_2 })).
    $$
    Now note that $\ker (\widetilde{cyc}) = \ker (cyc) \cap \ker (\Psi)$, where $\Psi$ is the map defined in \Cref{eq:recordingtwotorsionmonodromy}.
    We have that $\Psi(\mathbb L) = 0$ if and only if $\prod_i \eta^+_{i\Z_2} = \prod_i \eta^-_{i\Z_2}$.
    Under this condition, one can iteratively surger copies of $\Gamma_0$ with $L_{f^{10}}$ to obtain a Lagrangian $L := \Gamma_0 \#\dots \# \Gamma_0 \# L_{f^{10}}$ and a relation
    \begin{align*}
        \sum_i (\Gamma_0,\eta^+_{i\Z_2}) + (L_{f^{10}},\mathbf 1) 
        &\sim
        (L, \prod_i \eta^+_{i\Z_2})\\
        &=
        (L, \prod_i \eta^-_{i\Z_2})\\
        &\sim
        \sum_i (\Gamma_0,\eta^-_{i\Z_2}) + (L_{f^{10}},\mathbf 1).
    \end{align*}
    This is equivalent to $\mathbb L \sim0$, which concludes the proof.
\end{proof}

\subsection{Symplectic Albanese map}\label{sec:symplecticalbanese}
We will need one more tool to prove \Cref{th:theoremA}, namely a modification of an Abel-Jacobi type map introduced by  Sheridan-Smith in \cite[Lemma 2.10]{sheridan2021lagrangian}.
They define, for any tropical affine torus $B$, a map 
\begin{align}\label{eq:sheridansmithmap}
    \begin{split}
        \alb_{fib}:\Cob_{fib}(X(B))_{\hom} &\to \Alb(B)\\
        F_{b^+} - F_{b^-} &\mapsto \alb(b^+ - b^-)
    \end{split}
\end{align}
by looking at the flux swept by a homologically trivial collection of fibers.
Here, $\alb$ is the map of \Cref{eq:albz0}; $\Cob_{fib}(X(B))$ is a cobordism group generated by fibers of the projection $X(B) \to B$ and whose relations come from cobordism all of whose ends are fibers; and $\Cob_{fib}(X)_{\hom}$ denotes the kernel of the degree map $\Cob_{fib}(X) \to \Z$. 
The following Proposition provides two generalizations: it shows their map is also well-defined for a symplectic bielliptic surface (which fibers over a \emph{quotient} of a tropical affine torus), and it extends their map to the whole kernel of the cycle class map (not just the part generated by fibers).
To state it, let us recall from \Cref{cor:generatorscobhom} that $\Cob(\mcal K)_0$ is generated by Lagrangians of the form $F_{b^+} - F_{b^-}$ and $L_{f_{m^+\theta^+}} - L_{f_{m^-\theta^-}}$.

\begin{prop}\label{lem:extensionalbanesemap}
    There is a well-defined map
    \begin{align}\label{eq:albanesebielliptic}
        \begin{split}
        \alb:\Cob^{trop}(\mcal K)_{0} &\to \Alb(K)\\
        \sum_i(F_{b_i^+} - F_{b_i^-}) &\mapsto \alb(\sum(b_i^+ - b_i^-))\\
        \sum_i(L_{f_{m_i^+\theta_i^+}} - L_{f_{m_i^-\theta_i^-}}) &\mapsto 0.
        \end{split}
    \end{align}
\end{prop}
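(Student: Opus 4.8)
The plan is to show that the assignment in (\ref{eq:albanesebielliptic}) is independent of the choice of representative, i.e.\ that it kills every cobordism relation between elements of $\Cob^{trop}(\mcal K)_0$. The key point is that both types of generators sweep out a flux that is recorded by integrating tropical $1$-forms, and a cobordism in $\mcal K \x \C$ produces a $2$-chain whose boundary is the collection of ends; Stokes' theorem then forces the total flux to vanish. Concretely, given a tropical $1$-form $\alpha \in H^0(T^*_\Z K)$, pull it back to $\pi^*\alpha$ on $\mcal K$ (a closed $1$-form), and further to $(\pi^*\alpha) \boxplus 0$ on $\mcal K \x \C$. If $V \subset \mcal K \x \C$ is a cobordism whose negative/positive ends are among the generators of $\Cob^{trop}(\mcal K)_0$, then one integrates $\pi^*\alpha$ over the appropriate $2$-chains (the fibers $F_{b}$, resp.\ the Lagrangians $L_{f_{m\theta}}$, swept along paths in the base) and uses $d(\pi^*\alpha)=0$ together with properness of $V$ to conclude the integrals over the ends sum to something in the period lattice $H_1(K;\Z)$, hence vanish in $\Alb(K)$.

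First I would treat the fiber part, where the argument is essentially that of Sheridan--Smith \cite[Lemma 2.10]{sheridan2021lagrangian}: a homologically trivial sum $\sum_i (F_{b_i^+}-F_{b_i^-})$ bounds a $2$-chain $u$ in $\mcal K$, whose projection $\pi(u)$ is a $1$-chain in $K$ with $\partial \pi(u) = \sum_i (b_i^+ - b_i^-)$, and $\int_u \pi^*\alpha = \int_{\pi(u)}\alpha$ by definition recovers $\alb(\sum_i(b_i^+-b_i^-))$ applied to $\alpha$; one must check this is well-defined modulo $H_1(K;\Z)$, which is exactly the quotient defining $\Alb(K)$. The mild novelty here is that $K$ is a Klein bottle, not a torus, so $H^0(T^*_\Z K)$ may be smaller than in the torus case; but this only makes $\Alb(K)$ a quotient of a smaller-dimensional space, and the pairing still descends. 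Then I would check the generators $L_{f_{m^+\theta^+}} - L_{f_{m^-\theta^-}}$ map to $0$: since $L_{f_{m\theta}}$ is the Lagrangian lift of the tropical hypersurface $V(f_{m\theta})$, which is homologous to $m$ copies of $C_1 = \{x=0\}$, the difference $L_{f_{m^+\theta^+}} - L_{f_{m^-\theta^-}}$ lies in the kernel of the cycle class map only when $m^+=m^-$ (by (\ref{eq:summ})), and in that case any $2$-chain bounding it projects to a $1$-cycle in the fiber direction, over which every $\pi^*\alpha$ integrates to zero; alternatively one notes these sections differ by the flux parameter $\theta$, which lives in the fiber $H^1(F_p;\mathbb R)$ direction transverse to the base and is thus not detected by $\alb$.

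The main obstacle I anticipate is showing the map is well-defined across a cobordism relation that \emph{mixes} the two types of generators — i.e.\ a cobordism one of whose ends is a fiber $F_b$ and another of whose ends is a section $L_{f_{m\theta}}$ (or, after applying \Cref{lem:cobsectionstogenerators}, a cobordism involving both). One must verify that the flux contributions are additive and that the section-type ends genuinely contribute nothing, so that the fiber-type ends alone are constrained to have total albanese class in the lattice. I would handle this by using the presentation from \Cref{cor:generatorscobhom}: since $\Cob^{trop}(\mcal K)_0$ is generated by the two listed families and since the relations among them can be reduced (via \Cref{lem:cobsectionstogenerators} and the surgery relations of \Cref{prop:jeffsurgery}) to relations supported on fibers plus relations supported on the $L_{f_{m\theta}}$'s separately, the well-definedness reduces to the two separate cases above. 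The cleanest formulation is to observe that $\alb$ factors as the composition of the flux map $\Cob^{trop}(\mcal K)_0 \to H^1(\mcal K;\mathbb R)/(\text{periods})$ (well-defined by Stokes as in \Cref{lem:fluxintorus}) with the restriction $H^1(\mcal K;\mathbb R) \to \Hom(H^0(T^*_\Z K),\mathbb R)$ dual to $\pi^* : H_1(K;\mathbb Z) \to H_1(\mcal K;\mathbb Z)$; the section-type generators lie in the kernel of this restriction because their flux is purely vertical, and the fiber-type generators recover $\alb$ of the projected $0$-cycle. Establishing that this last restriction map sends the flux of $L_{f_{m^+\theta^+}}-L_{f_{m^-\theta^-}}$ to zero — carefully accounting for the orientation reversal when $m<0$ flagged after (\ref{eq:jeffsurgerysections}) — is the one computation I would not want to skip.
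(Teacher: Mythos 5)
Your high-level strategy---close up the cobordism, integrate tropical $1$-forms, and exploit the fact that $H^0(T^*_\Z K)$ is small for a Klein bottle---is the right one and matches the paper's outline. However, there is a genuine gap in the way you handle the mixed case, which is precisely the content of the Proposition.

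You propose to reduce well-definedness to two separate cases (cobordisms among fibers alone, and cobordisms among the $L_{f_{m\theta}}$ alone) by invoking \Cref{cor:generatorscobhom} and the surgery relations of \Cref{prop:jeffsurgery}. This is circular: those results give you \emph{a supply of specific cobordisms}, not a theorem that every cobordism relation among the generators of $\Cob^{trop}(\mcal K)_0$ decomposes into a fiber-only relation plus a section-only relation. A single cobordism $V\subset\mcal K\times\C$ may perfectly well have some ends that are fibers and others of the form $L_{f_{m\theta}}$, and such $V$ does not factor into two separate cobordisms. The paper handles this directly: given such a $V$ with ends $\mathbb F$ and $\mathbb L$, it caps off $\mathbb F$ with a $3$-chain $W_1$ (preimages of paths in $K$), caps off $\mathbb L$ with a $3$-chain $W_2$ (an isotopy of conormal tori in the $x$-direction), and works with the single closed $\xi$-twisted $3$-cycle $V+W_1+W_2\in H_3(\mcal K;\xi)$. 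There is no splitting step.

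Two further points you gloss over would need to be filled in. First, your assertion that the section-type contributions vanish because ``their flux is purely vertical'' is not the argument the paper uses, and it is not quite the right reason: moving $L_{\{x=\theta\}}$ in the $\theta$-direction sweeps out flux in a base direction. The actual reason the $W_2$-contribution drops is that the only global tropical $1$-form on $K$ is (a multiple of) $dx$, and the intersection $W_2\pitchfork([K]\times\alpha\times\C)$ then lands inside the Lagrangian zero-section, so its symplectic area is zero. This depends on $K$ being a Klein bottle (on a torus base there would be another global $1$-form $dy$, and the argument would fail, which is exactly why the cobordism group of an abelian surface is infinite-dimensional). Second, to produce the required class $\beta\in H_1(K;\Z)$ from the $3$-cycle $[V+W_1+W_2]\in H_3(\mcal K;\xi)$ one needs a projection $H_3(\mcal K;\xi)\to H_1(K;\Z)$; because the Lagrangian torus fibration over $K$ is not trivial and the coefficients are twisted, this is not a K\"unneth projection and requires a separate Leray spectral sequence computation (\Cref{lem:projectionH3H1} in the paper). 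Your proposal does not address either issue, and the first one in particular cannot be waved away, since identifying which $1$-forms survive on $K$ is the mechanism by which the section-type ends decouple.
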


\begin{proof}
    We first recall the basic idea of the proof of \cite[Lemma 2.10]{sheridan2021lagrangian} that proves the existence of the map (\ref{eq:sheridansmithmap}).
    For such a map to be well-defined, one must show that if $\set{F_{b_i}-F_{b_i'}}$ are the ends of a Lagrangian cobordism $V$, then the homomorphism 
    $$
    \alb\left(\sum(b_i^+ - b_i^-)\right): H^0(T^*_\Z K) \to \R
    $$
    coincides with integration over some class $\beta \in H_1(B;\Z)$. 
    Sheridan-Smith construct such class by `closing up' the Lagrangian cobordism to an $(n+1)$-cycle in $X(B) \x \C$ (attaching cylinders fibering over paths from $b_i$ to $b_i'$), and then projecting the homology class of this cycle to $H_1(B)$ via the K\"unneth map $H_{n+1}(X(B)) \to H_1(B)\otimes H_n(F) \cong H_1(B)$.
    An application of Stokes' theorem  and the fact that $V$ is Lagrangian then yield the result.

    For the map (\ref{eq:albanesebielliptic}) to be well-defined, we must show that if $V \subset \mcal K \x \C$ is a cobordism with ends $\mathbb F = \{F_{b_i^+} - F_{b_i^-}\}$ and $\mathbb L = \{L_{f_{m^+\theta^+}} - L_{f_{m^-\theta^-}}\}$, then $\alb(\sum(b_i^+ - b_i^-)) = 0$.
    In other words, there exists a class $\beta \in H_1(K;\Z)$ such that $\alb(\sum(b_i^+ - b_i^-)) = \int_\beta$. 
    We first close up the cobordism to obtain a ($\xi$-oriented) $3$-cycle.
    Choose paths in $K$ from $b_i^+$ to $b_i^-$; taking their  preimage we get a $3$-chain $W_1\subset \mcal K$  whose boundary is $\mathbb F$.
    On the other hand, the collection $\mathbb L$ is a linear combination of Lagrangian lifts of tropical circles $\set{x = \theta}$, and since $\mathbb L$ is homologically trivial there are as many positive as negative ones. 
    We group them in pairs $\theta_i^\pm$ and
    choose isotopies $\nu_i^s = \set{x = \theta_i^s}$ between the curves $\set{x = \theta_i^+}$ and $\set{x = \theta_i^-}$.
    Writing $W_2^i = \cup_s L_{\eta^i_s}$ for the union of the Lagrangian lifts of the curves $\eta^s_i$, we obtain a $3$-chain $W_2 = \cup_i W_2^i$ whose boundary is $\mathbb L$.
    The union $V + W_1 + W_2$ is a $\xi$-oriented $3$-cycle.
    We will use this $3$-cycle to show that the extension by zero of \Cref{eq:albanesebielliptic} gives a well-defined map $\Cob^{trop}(\mcal K)_{0} \to \Alb(K)$.
    
    Let $\alpha \in H^0(T^*_\Z K)$ be a globally defined tropical $1$-form, which corresponds to some global section of the local system given by the homology of the fibers. 
    Under this correspondence, the value of $\alb(\sum_i(b_i^+ - b_i^-))(\alpha)$ can be computed as the symplectic area of the $2$-chain $W_1 \pitchfork ([K] \x \alpha \x \C)$ (note this is now a chain with $\Z$-coefficients).
    Here, we use the notation $[K] \x \alpha \in H_3(\mcal K;\xi)$ to denote the submanifold of $\mcal K$ obtained as the product of the $0$-section and a geodesic representative of $\alpha$ passing through the origin in the fiber; this is a trivial $S^1$-bundle over the $0$-section, hence a $\xi$-cycle.
    We make the following observation:
    \begin{center}
         \emph{The only global tropical $1$-form in $K$ are $\alpha = dx$ and its multiples, and the corresponding homology class in the fibers intersects $L_{\eta_s^i}$ along the $0$-section. Hence $\omega_{ \mcal K \x \C}(W_2 \pitchfork ( [K] \x \alpha \x \C)) = 0$.}
    \end{center}
    With these in mind, one can compute as in \cite[Lemma 2.10]{sheridan2021lagrangian} to obtain
    $$
    \alb(\sum_i(b_i^+ - b_i^-))(\alpha) = \int_{(\pi_{\mcal K})_*(V + W_1 + W_2)\pitchfork ([K]\x\alpha)} \omega_{\mcal K}
    $$
    (the key being that the extra $2$-chain $W_2 \pitchfork ( [K] \x \alpha\x \C)$ that appears with respect to their proof is contained in the zero-section, which is Lagrangian). 
    
    The last step is to obtain a $1$-cycle $\beta\in H_1(K;\Z)$ from the $3$-cycle $(\pi_{\mcal K})_*(V + W_1 + W_2)\in H_3(\mcal K;\xi)$.
    This is not as straightforward as in \cite[Lemma 2.10]{sheridan2021lagrangian}, since (i) the cycle lives in $H_2(\mcal K;\xi)$ (instead of $H_2(\mcal K;\Z)$) and (ii) the fibration is not trivial, hence there is not a K\"unneth decomposition or a projection to $H_1(K;\Z)$.
    Nonetheless, by analyzing the Leray spectral sequence $E^2_{pq} = H_p(K;\mcal H_q(F;\Z)\otimes \xi) \Rightarrow H_{p+q}(\mcal K; \xi)$ we show in \Cref{lem:projectionH3H1} that there is a projection map $pr:H_3(\mcal K;\xi) \to H_1(K;\Z)$.
    We define the cycle 
    $$
    \beta := pr((\pi_{\mcal K})_* (V+ W_1 + W_2))     \in H_1(K; \Z). 
    $$

Lastly, consider the SES
    $$
    0 \to H_0(K; \mcal H_2(F)) \to H_2(\mcal K;\Z) \to H_1(K;\mcal H_1(F)) \to 0
    $$
    coming from the Leray spectral sequence.
    It shows that the class $(\pi_{\mcal K})_* (V+ W_1 + W_2) \pitchfork ([K]\x\alpha) \in H_2(\mcal K;\Z)$ is of the form
    $$
    (\pi_{\mcal K})_* (V+ W_1 + W_2) \pitchfork ([K]\x\alpha) = \beta \otimes \alpha + \gamma,
    $$
    where $\gamma \in H_0(K;\mcal H_2(F)) \subset H_2(\mcal K;\Z)$. 
    The class $\gamma$ is Lagrangian (it is supported on a fiber), hence one concludes
    $$
    \alb(\sum_i(b_i^+ - b_i^-))(\alpha) = \int_{(\pi_{\mcal K})_*(V + W_1 + W_2)\pitchfork ([K]\x\alpha)} \omega_{\mcal K}
    = \int_{\beta\x\alpha}\omega_{\mcal K}
    = \int_\beta \alpha,
    $$
    completing the proof.
\end{proof}

\begin{lem}\label{lem:projectionH3H1}
    There is a projection map $H_3(\mcal K;\xi) \to H_1(K;\Z)$.
\end{lem}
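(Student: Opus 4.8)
The plan is to read the statement off the Leray--Serre spectral sequence of the torus fibration $\pi: \mcal K \to K$ with coefficients in the local system $\xi = \pi^*\det_\Z K$, which has the shape
\[
E^2_{p,q} = H_p\bigl(K;\, \mcal H_q(F;\Z)\otimes \det_\Z K\bigr) \;\Longrightarrow\; H_{p+q}(\mcal K;\xi),
\]
where $F\cong T^2$ is a fibre. Since $\dim K = 2$ and $\dim F = 2$, the only bidegrees with $p+q=3$ that can be nonzero are $(p,q)=(1,2)$ and $(p,q)=(2,1)$: the corner $(0,3)$ vanishes because $\mcal H_3(F)=0$, and $(3,0)$ vanishes because $H_3(K)=0$. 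Thus $H_3(\mcal K;\xi)$ carries a two-step filtration, and the task is to identify the two graded pieces and show the extension splits.

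First I would pin down the fibrewise local systems. Writing $F_b = T^*_bK / T^*_{\Z,b}K$ one has $\mcal H_1(F)\cong T^*_\Z K$ and $\mcal H_2(F)\cong \wedge^2 T^*_\Z K$; the latter is $(\det_\Z K)^{-1}$, which is isomorphic as a $\Z$-local system to $\det_\Z K$ itself (every rank-one $\Z$-local system is self-dual, being classified by its first Stiefel--Whitney class in $H^1(K;\Z/2)$). Hence $E^2_{1,2} = H_1\bigl(K;(\det_\Z K)^{\otimes 2}\bigr)=H_1(K;\Z)$, as $(\det_\Z K)^{\otimes 2}$ is trivial. No differential can enter or leave the spot $(1,2)$ (its sources and targets lie in rows $q\ge 3$ or columns $p\ge 3$, all of which vanish), so $E^\infty_{1,2}=H_1(K;\Z)$. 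For the other piece, combining $(T_\Z K)^*\cong T_\Z K\otimes(\det_\Z K)^{-1}$ with $(\det_\Z K)^{-1}\cong\det_\Z K$ yields $T^*_\Z K\otimes\det_\Z K\cong T_\Z K$, so $E^2_{2,1}=H_2(K;T_\Z K)$; by twisted Poincaré duality on the closed surface $K$ this equals $H^0(K;T_\Z K\otimes\det_\Z K)\cong H^0(K;T^*_\Z K)$, the group of globally defined integral $1$-forms, which is a subgroup of the free stalk $T^*_{\Z,b}K\cong\Z^2$ and hence free abelian. Only the differential $d^2: E^2_{2,1}\to E^2_{0,2}$ can be nonzero, so $E^\infty_{2,1}$ is a subgroup of $E^2_{2,1}$, again free abelian.

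Since $E^\infty_{0,3}=E^\infty_{3,0}=0$, the filtration on $H_3(\mcal K;\xi)$ collapses to a short exact sequence
\[
0 \longrightarrow H_1(K;\Z) = E^\infty_{1,2} \longrightarrow H_3(\mcal K;\xi) \longrightarrow E^\infty_{2,1} \longrightarrow 0 .
\]
Because the cokernel $E^\infty_{2,1}$ is free abelian, this sequence splits, and projection onto the first summand is the desired map $pr: H_3(\mcal K;\xi)\to H_1(K;\Z)$ (morally, integration along the fibre; the splitting, hence $pr$, is not canonical, but only its existence is used). I expect the only genuine work to be the bookkeeping with local systems --- verifying $\mcal H_2(F)\cong\det_\Z K$ and $T^*_\Z K\otimes\det_\Z K\cong T_\Z K$, and running twisted Poincaré duality on the Klein bottle to see that $E^\infty_{2,1}$ is torsion-free --- rather than anything in the spectral sequence, which degenerates almost immediately in this range of degrees.
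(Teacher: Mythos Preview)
Your proof is correct and follows essentially the same route as the paper: both read off the short exact sequence $0 \to E^\infty_{1,2} \to H_3(\mcal K;\xi) \to E^\infty_{2,1} \to 0$ from the Leray spectral sequence, identify $E^\infty_{1,2}\cong H_1(K;\Z)$ via $\mcal H_2(F)\otimes\det_\Z K\cong\Z$, and split the sequence using freeness of $E^\infty_{2,1}$. The only minor difference is that the paper computes $d_2=0$ explicitly (citing the appendix) and obtains a \emph{canonical} splitting via the geometric cycle $[K]\times\alpha$, whereas you argue more abstractly that $E^2_{2,1}\cong H^0(K;T^*_\Z K)$ is free by twisted Poincar\'e duality, so any subgroup $E^\infty_{2,1}$ is automatically free --- slightly slicker, though the canonical splitting is what the paper actually uses in the subsequent proof of \Cref{lem:extensionalbanesemap}.
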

\begin{proof}
    Let us consider the Leray spectral sequence
    $$
    E^2_{pq} = H_p(K;\mcal H_q(F;\Z)\otimes \xi) \Rightarrow H_{p+q}(\mcal K; \xi).
    $$
    For degree reasons one has $E^\infty_{11} \cong E^2_{11} = H_1(K;\mcal H_1(F;\Z)\otimes \xi)$, hence there is a short exact sequence
    \begin{equation}\label{eq:sesH3}
        0 \to H_1(K;\mcal H_2(F;\Z)\otimes \xi) \to H_3(\mcal K;\xi) \to E^\infty_{21} \to 0.
    \end{equation}
    It follows from the computations in \Cref{ap:homology} that $\im(E^2_{21} \to E^2_{02}) = 0$, hence  $E^\infty_{21} \cong E^2_{21} = H_2(K;\mcal H_1(F)\otimes \xi)$.
    This group is isomorphic to $\Z$ and generated by $[K]\x \alpha$, thus \Cref{eq:sesH3} splits canonically giving a decomposition 
    $$
    H_3(\mcal K;\xi) \cong H_2(K;\mcal H_1(F;\Z)\otimes \xi) \oplus H_1(K;\mcal H_2(F;\Z)\otimes \xi).
    $$
    The geometry of Lagrangian torus fibrations shows that $\mcal H_2(F;\Z) \cong \wedge^2 T^*_\Z K = \xi^\vee$, and hence the local system $\mcal H_2(F;\Z) \otimes \xi$ is trivial.
    Putting everything together, we get the desired projection $pr:H_3(\mcal K;\xi) \to H_1(K;\Z)$.
\end{proof}
As mentioned in \cite[Remark 2.11]{sheridan2021lagrangian}, there is an extension of the fibered albanese map that incorporates the local systems on the Lagrangians.
In the present case, if $\lambda = \lambda_x \otimes \lambda_y$ is a local system on $F$ with monodromies $\lambda_x,\lambda_y \in G$ along the homology classes corresponding to $dx$ and $dy$, then this map takes the form
\begin{align}\label{eq:alblocsys}
    \begin{split}
        \alb^{loc}_{fib}:\Cob_{fib}(\mcal K)_{\hom} &\to \Alb(K) \oplus G\\
        \sum ((F_i^+,\lambda_i^+) - (F_i^-,\lambda_i^-)) & \mapsto (\alb(\sum_i(F_i^+ - F_i^-)), \prod_i\lambda_{i,x}^+ (\lambda_{i,x}^-)^{-1}).
    \end{split}
\end{align}
The fact that this map is well-defined follows from a similar argument to that of \Cref{lem:extensionalbanesemap}.
Namely, given $(V,\lambda)$ a cobordism with local system between a linear combination of fibers, its intersection with the $5$-cycle $K \x [\alpha] \x \C$ is a $2$-dimensional surface with boundary inside $V$.
It follows that the product of the monodromies around the boundary components of this surface of any local system must vanish.
In particular, for the local system $\lambda$---which restricts to $\lambda_i^\pm$ on the ends of $V$---we have that $\prod_i\lambda_{i,x}^+ (\lambda_{i,x}^-)^{-1} = 1$.

\begin{prop}\label{prop:fiberedalbaneseiso}
    The fibered albanese map $\alb_{fib}^{loc}:\Cob_{fib}(\mcal K)_{\hom} \to \Alb(K) \oplus G$ is an isomorphism.
\end{prop}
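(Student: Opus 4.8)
The plan is to verify that $\alb^{loc}_{fib}$ is well defined, surjective, and injective. Well-definedness was already observed just before the statement: the argument is the verbatim analogue of the proof of \Cref{lem:extensionalbanesemap}, intersecting a fibered cobordism with the $5$-cycles $K\times[\alpha]\times\C$ and using that the additional $2$-chain which appears lies in the (Lagrangian) zero section. For bijectivity, recall from the remark preceding \Cref{lem:projectionH3H1} that the only global tropical $1$-forms on $K$ are multiples of $dx$, so $H^0(T^*_\Z K)$ has rank one and $\Alb(K)\cong S^1$; in particular $\alb\colon K\to\Alb(K)$ is onto.

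\emph{Surjectivity.} Fix a base fiber $F_{b_0}$. Given $(a,g)\in\Alb(K)\oplus G$, choose $b\in K$ with $\alb(b-b_0)=a$ and equip $F_b$ with the local system $\lambda_g$ whose monodromy is $g$ along the $dx$-cycle of the fiber and trivial along the $dy$-cycle. Then $(F_b,\lambda_g)-(F_{b_0},\mathbf 1)$ lies in $\Cob_{fib}(\mcal K)_{\hom}$ and is sent to $(a,g)$.

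\emph{Injectivity.} This is the substantive part, and I would prove it by reducing an element of the kernel to a normal form. Let $c=\sum_i\big((F_{b_i^+},\lambda_i^+)-(F_{b_i^-},\lambda_i^-)\big)$ satisfy $\alb^{loc}_{fib}(c)=0$. The first step is to produce cobordism relations among fibers: using the product presentation $\mcal K\simeq(T^2\times T^2)/G$ of \Cref{lem:biellipticisquotientoftori}, one multiplies a cobordism among fibers in one $T^2$-factor by a fixed fiber of the other factor, makes the result $G$-invariant, and descends it to $\mcal K\times\C$; the input in the $T^2$-factor is the fiber-to-fiber form of the relations of \Cref{lem:relationsT2} and \Cref{cor:cobt2locsys}, that is, the analogue of \Cref{eq:relation2T2} with local systems. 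Combining these with the $\Z_2$-twist relation of \Cref{def:cobgroupKk}, I would slide and fuse the fibers appearing in $c$ until $c$ is rewritten as a single difference $(F_b,\lambda)-(F_b,\mathbf 1)$ over a chosen base point, where $\lambda$ has $dx$-monodromy $\prod_i\lambda^+_{i,x}(\lambda^-_{i,x})^{-1}$ and trivial $dy$-monodromy. Here one uses crucially that transporting a fiber once around the $a$-loop of $K$ acts on its local system through the monodromy $\mathrm{diag}(1,-1)$ of $T_\Z K$, which, together with the $\Z_2$-twist relation, disposes of the $dy$-monodromies without introducing an extra $G$-summand into the target. The hypothesis $\alb^{loc}_{fib}(c)=0$ then forces $\prod_i\lambda^+_{i,x}(\lambda^-_{i,x})^{-1}=1$, hence $\lambda=\mathbf 1$ and $c=0$; this last reduction parallels, for fibers, the argument in the proof of \Cref{prop:fluxisaniso}.

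The hardest step is the production of cobordism relations among fibers. The relations of \Cref{lem:relationsT2} are proved by surgeries and Hamiltonian isotopies that pass through Lagrangians transverse to the fibration, whereas $\Cob_{fib}$ only admits cobordisms all of whose ends are fibers --- and Lagrangian suspension does not help, as the flux obstruction prevents distinct fibers of $\pi$ from being Hamiltonian isotopic. One must therefore exhibit honest fiber-ended cobordisms realizing $F_{c_1}+F_{c_2}\sim F_{c_3}+F_{c_4}$ whenever $c_1+c_2\equiv c_3+c_4$ on the base circle of $X(S^1)=T^2$ --- for instance from Arnold's classification of Lagrangians in $T^*S^1$ by homology class and enclosed area \cite{arnol1980lagrange2}, or from a suitable truncation of the tropical-curve cobordisms of Sheridan--Smith \cite{sheridan2021lagrangian} --- and then check that these descend $G$-equivariantly to $\mcal K\times\C$ and carry brane structures compatible with \Cref{def:cobgroupKk}. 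Controlling the non-trivial monodromy of $T_\Z K$ throughout is the one genuinely new feature relative to the torus case treated in \Cref{sec:lagcobT2}.
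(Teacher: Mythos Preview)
Your overall strategy—pulling back to $T^2\times T^2$, building product-type cobordisms from the $T^2$-relations, and descending—matches the paper's, as does the final reduction to \Cref{cor:cobt2locsys} once every fiber has been moved to the line $\{y=1/2\}$ with trivial $dy$-monodromy.

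The gap is precisely in that last clause: your mechanism for killing $\lambda_y$ does not work. Transporting a fiber around the $a$-loop is a Lagrangian isotopy with non-zero flux and so yields no cobordism; it only explains why the two lifts of $F_p$ carry $\lambda_y$ and $\lambda_y^{-1}$, which is an observation, not a relation. The $\Z_2$-twist of \Cref{def:cobgroupKk} changes $\eta$ only by a sign (and simultaneously the Pin structure), so it cannot trivialize an arbitrary $\lambda_y\in G$. The paper's argument is quite different. It first shows $\mathbb F_1=(F_p,\lambda_x\otimes\lambda_y)-(F_{\iota(p)},\lambda_x\otimes\lambda_y^{-1})$ is $2$-torsion, because its pullback factors as $(F^1_\theta-F^1_{\theta+1/2})\times(F^2_t+F^2_{1-t})$ with the first factor having $\Phi$-image $\tfrac12$; then that $\mathbb F_2=(F_p,\lambda_x\otimes\lambda_y)-(F_{pr(p)},\lambda_x)$ is $4$-torsion via the analogous computation with $2F^2_t+2F^2_{1-t}-4F^2_{1/2}$ in the second factor. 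The torsion is then removed by divisibility: pick $q$ on the same vertical circle with $4(q-pr(q))=p-pr(p)$ in $\Alb(\pi_x^{-1}(\theta))$, so $\mathbb F_2\sim 4\big((F_q,\lambda_x\otimes\lambda_y)-(F_{pr(q)},\lambda_x)\big)=0$. This torsion-plus-divisibility step is the missing idea. By contrast, the difficulty you flag in your last paragraph is not one: the $T^2$-cobordisms behind \Cref{lem:relationsT2} concatenate along their shared non-fiber ends into cobordisms with only fiber ends, which then cross with a constant fiber and descend to $\mcal K\times\C$ as required; no appeal to Arnold's classification or to \cite{sheridan2021lagrangian} is needed.
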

\begin{proof}
    The map is clearly surjective (recall $H^0(T^*_\Z K) \cong \Z \vspan{dx}$). 
To prove injectivity we proceed as follows.
    Denote by $pr: K \to S^1$ the projection to the line $\{y = 1/2\}$. 
    Consider a Lagrangian fiber with local system $(F_p, \lambda = \lambda_{x} \otimes \lambda_{y})$.
    We will show that
    \begin{equation}\label{eq:cobordismtozerosection}
        (F_p,\lambda) \sim (F_{pr(p)},\lambda_x).
    \end{equation}
    Given this, injectivity follows from injectivity of $\alb_{fib}^{loc}: \Cob_{fib}(T^2)_{\hom} \to S^1 \x G$ (a consequence of \Cref{cor:cobt2locsys}).

    Write $\pi_x:K \to S^1$ and $\pi_y:K \to I = [0,1]$ for the projections to the (equivalence class of the) $x$ and $y$ coordinates of $K$.
    Note both of these maps give {\it tropical} $S^1$-fibrations of $K$, meaning the fibers are embedded tropical circles.
    Let $\theta\in S^1$ and $t \in I$ be the unique points with $p \in \pi\inv_x(\theta) \cap \pi_y\inv (t)$.
    
    We first claim that if $\iota: K \to K$ is the map induced by $(x,y) \mapsto (x,-y)$, then the cobordism class 
    $$
    \mathbb{F}_1 = (F_p,\lambda_x\otimes \lambda_y) - (F_{\iota(p)},\lambda_x \otimes \lambda_y\inv)
    $$
    is $2$-torsion.
    Indeed, let $p: T^2 \x T^2 \to \mcal K$ be the double cover of $\mcal K$ (cf. \Cref{lem:biellipticisquotientoftori}) and write $T^2 \x T^2 = (\R /\Z \x \R / l_1\Z) \x (\R/ \Z \x \R / l_2 \Z)$.
    We denote by $F^i_t$ the fibers of the projection $\R / \Z \x \R/  l_i\Z \to \R/\Z$.
    Then the pull back of the class $\mathbb F_1$ under $p$ is a linear combination of Lagrangian torus fibers of the form (see \Cref{fig:relationsCobfib}):
    $$
        p\inv(\mathbb F_1) = (F^1_\theta - F^1_{\theta + 1/2}) \x (F^2_t + F^2_{1-t}).
    $$
    Now note that the linear combination $F^1_\theta - F^1_{\theta + 1/2}$ is a null-homologous combination of fibers in $\Cob(T^2)$, and that its image under the map $\Phi$ of \Cref{lem:fluxintorus} is $2$-torsion.
    It then follows from \Cref{prop:fluxisaniso} that this class is $2$-torsion in $\Cob(T^2)$, and hence---by crossing the cobordism with the constant Lagrangian $F^2_t + F^2_{1-t}$ in the other $T^2$-factor---that $p\inv(\mathbb F_1)$ is $2$-torsion in $\Cob(T^2 \x T^2)$.
    The cobordism is furthermore $p$-invariant, hence descends to give the relation $2 \mathbb{F}_1 \sim 0 \in \Cob(\mcal K)$.

    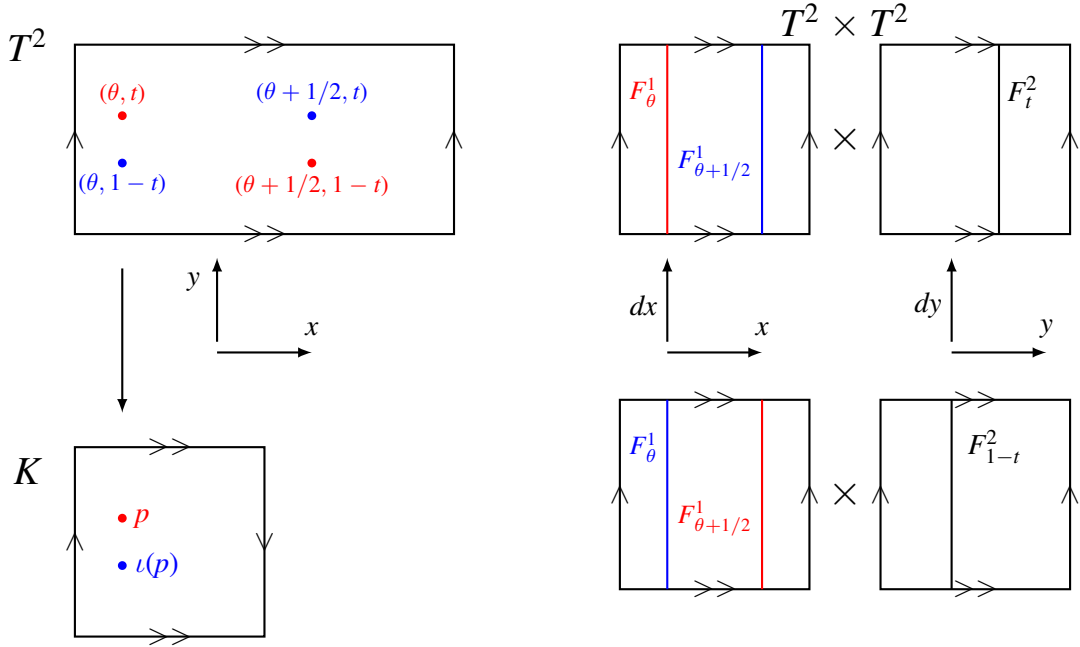
\begin{figure}
        \centering
            \begin{tikzpicture}[scale = 0.63, thick]
\begin{scope}[shift={(-0.5,-10)}]
                    \draw  (-3.5,9) rectangle (0.5,5);
                    \node at (-1.5,9) {$>>$};
                    \node at (-1.5,5) {$>>$};
                    \node[rotate = 90] at (-3.5,7) {$>$};
                    \node[rotate = 90] at (0.5,7) {$<$};
                \end{scope}
\begin{scope}[shift={(-0.5,-10)}]
                    \filldraw[red]  (-2.5,7.5)  circle (2pt) node [anchor=west] {$p$};
                    \filldraw[blue]  (-2.5,6.5)  circle (2pt) node [anchor=west] {$\iota(p)$};
                \end{scope}
                
\node (v1) at (-3,3) {};
                \node (v2) at (-3,-0.5) {};
                \draw [-latex] (v1) edge (v2);
                
\begin{scope}[shift={(3,-1.5)}, xscale=2]
                    \draw  (-3.5,9) rectangle (0.5,5);
                    \node at (-1.5,9) {$>>$};
                    \node at (-1.5,5) {$>>$};
                    \node[rotate = 90] at (-3.5,7) {$>$};
                    \node[rotate = 90] at (0.5,7) {$>$};
                \end{scope}
\begin{scope}[shift={(-0.5,-1.5)}]
                    \filldraw[red]  (-2.5,7.5)  circle (2pt) node [font = \footnotesize, anchor=south] {$(\theta,t)$};
                    \filldraw[blue]  (-2.5,6.5)  circle (2pt) node [font = \footnotesize, anchor=north] {$(\theta,1-t)$};
                \end{scope}
                
                \begin{scope}[shift={(3.5,-1.5)}]
                    \filldraw[blue]  (-2.5,7.5)  circle (2pt) node [font = \footnotesize, anchor=south] {$(\theta+1/2,t)$};
                    \filldraw[red]  (-2.5,6.5)  circle (2pt) node [font = \footnotesize, anchor=north] {$(\theta+1/2,1-t)$};
                \end{scope}

                \node[scale = 1.5, font = \small] at (-5,-1.5) {$K$};
                \node[scale = 1.5, font = \small] at (-5,7.5) {$T^2$};

                \begin{scope}[shift={(1.5,-0.5)}]
                    \draw [-latex](-2.5,1.5) node (v8) {} -- (-0.5,1.5);
                    \draw [-latex](v8) -- (-2.5,3.5);
                    \node at (-0.5,2) {$x$};
                    \node at (-3,3) {$y$};
                \end{scope}

\begin{scope}[shift={(-2,0)}]
                    \begin{scope}[]
                        \begin{scope}[shift={(13,-1.5)}]
                            \draw  (-3.5,9) rectangle (0.5,5);
                            \node at (-1.5,9) {$>>$};
                            \node at (-1.5,5) {$>>$};
                            \node[rotate = 90] at (-3.5,7) {$>$};
                            \node[rotate = 90] at (0.5,7) {$>$};
                        \end{scope}
                        
                        \node[scale = 1.5] at (14.25,5.5) {$\times$};
                        
                        \begin{scope}[shift={(18.5,-1.5)}]
                            \draw  (-3.5,9) rectangle (0.5,5);
                            \node at (-1.5,9) {$>>$};
                            \node at (-1.5,5) {$>>$};
                            \node[rotate = 90] at (-3.5,7) {$>$};
                            \node[rotate = 90] at (0.5,7) {$>$};
                        \end{scope}
                        
\begin{scope}[]
                            \draw [red](10.5,7.5) -- (10.5,3.5);
                            \node[red, font = \small] at (10,6.5) {$F^1_{\theta}$};
                            \draw [blue](12.5,7.5) -- (12.5,3.5);
                            \node[blue, font = \small] at (11.5,5) {$F^1_{ \theta +1/2}$};
                        \end{scope}
                        
                        \draw(17.5,7.5) -- (17.5,3.5);
                        \node at (18,6.5) {$F^2_{t}$};

\begin{scope}[shift={(0,-7.5)}]
                            \begin{scope}[shift={(13,-1.5)}]
                                \draw  (-3.5,9) rectangle (0.5,5);
                                \node at (-1.5,9) {$>>$};
                                \node at (-1.5,5) {$>>$};
                                \node[rotate = 90] at (-3.5,7) {$>$};
                                \node[rotate = 90] at (0.5,7) {$>$};
                            \end{scope}
                        
                        \node[scale = 1.5] at (14.25,5.5) {$\times$};
                        
                        \begin{scope}[shift={(18.5,-1.5)}]
                            \draw  (-3.5,9) rectangle (0.5,5);
                            \node at (-1.5,9) {$>>$};
                            \node at (-1.5,5) {$>>$};
                            \node[rotate = 90] at (-3.5,7) {$>$};
                            \node[rotate = 90] at (0.5,7) {$>$};
                        \end{scope}
                        
\begin{scope}[]
                            \draw [blue](10.5,7.5) -- (10.5,3.5);
                            \node[blue, font = \small] at (10,6.5) {$F^1_\theta$};
                            \draw [red](12.5,7.5) -- (12.5,3.5);
                            \node[red, font = \small] at (11.5,5) {$F^1_{\theta +1/2}$};
                        \end{scope}
                        
                        \draw(16.5,7.5) -- (16.5,3.5);
                        \node at (17.4,6.5) {$F^2_ {1- t}$};
                        \end{scope}

                        \begin{scope}[shift={(0,-0.5)}]
                            \draw [-latex](10.5,1.5) node (v3) {} -- (12.5,1.5);
                            \draw [-latex](v3) -- (10.5,3.5);
                            \node at (12.5,2) {$x$};
                            \node at (10,2.5) {$dx$};
                        \end{scope}

                        \begin{scope}[shift={(6,-0.5)}]
                            \draw [-latex](10.5,1.5) node (v3) {} -- (12.5,1.5);
                            \draw [-latex](v3) -- (10.5,3.5);
                            \node at (12.5,2) {$y$};
                            \node at (10,2.5) {$dy$};
                        \end{scope}
                    \end{scope}

                    \node[scale = 1.5, font = \small] at (14.25,8) {$T^2 \times T^2$};
                \end{scope}
        \end{tikzpicture}
        \caption{Schematic picture to show that $2\mathbb F_1 \sim 0$. The left picture shows the (projection to $K$ of the) class $\mathbb F_1 \in \Cob_{fib}(\mcal K)_{\hom}$ as well as its preimage $p\inv(\mathbb F_1) \in \Cob_{fib}(T^2 \x T^2)_{\hom}$. The right picture shows the decomposition of  $p\inv(\mathbb F_1)$ in the product $T^2 \x T^2$; namely, the top part shows the class $(F_\theta^1 - F_{\theta + 1/2}^1)\x F^2_t$, whereas the bottom part shows $(F_\theta^1 - F_{\theta + 1/2}^1)\x F^2_{1-t}$. Note that while the torus in the left picture represents a tropical affine base, the $2$-tori in the right picture represent symplectic tori living above the tropical base (that is, the symplectic manifold $T^2 \x T^2$ in the right picture is the total space of the Lagrangian torus fibration over the tropical $2$-torus $T^2$ in the left picture).}
        \label{fig:relationsCobfib}
    \end{figure}
    Next, we claim that the cobordism class 
    $$
    \mathbb{F}_2 = (F_p,\lambda_x\otimes \lambda_y) - (F_{pr(p)},\lambda_x)
    $$
    is $4$-torsion.
    By the previous claim we have that 
    $$
    4(F_p,\lambda_x\otimes \lambda_y) \sim 2(F_p,\lambda_x\otimes \lambda_y) + 2(F_{\iota(p)},\lambda_x\otimes \lambda_y\inv).
    $$
    As before, we can pull back the class $4\mathbb{F}_2$ to $T^2 \x T^2$ and obtain a linear combination of fibers of the form
    $$
        p\inv(\mathbb F_2) = (F^1_\theta + F^1_{\theta + 1/2}) \x (2F^2_t + 2F^2_{1-t} - 4F^2_{1/2}).
    $$
    The linear combination $2F^2_t + 2F^2_{1-t} - 4F^2_{1/2}$ is null-homologous and maps to $0$ under $\Phi$, hence vanishes in $\Cob(T^2)$ by \Cref{prop:fluxisaniso}.
    Crossing with the constant Lagrangian $F^1_\theta + F^1_{\theta + 1/2}$ in the other $T^2$ factor yields a cobordism in $T^2 \x T^2 \x \C$, and descending it to $\mcal K \x \C$ we get the desired relation $4\mathbb{F}_2 \sim 0 \in \Cob(\mcal K)$.
    
    We have now proved  \Cref{eq:cobordismtozerosection} up to torsion.
    The complete result follows from noting that 
    $$
    (F_p,\lambda_x\otimes \lambda_y)   - (F_{pr(p)},\lambda_x) \sim 4((F_q,\lambda_x\otimes \lambda_y)   - (F_{pr(p)},\lambda_x))
    $$
    for any $q \in \pi_x\inv(\theta)$ such that $4(q-pr(q)) = p -pr(p) \in \Alb(\pi_x\inv(\theta))$ (again, a consequence of \Cref{prop:fluxisaniso}).
\end{proof}

A similar argument to the proof of \Cref{lem:extensionalbanesemap} shows that $\alb^{loc}_{fib}$  extends to the whole tropical cobordism group, giving a map 
\begin{equation}\label{eq:albloc}
    \alb^{loc}: \Cob^{trop}(\mcal K)_0 \to \Alb(K) \oplus G.
\end{equation}

\subsection{Computation and dimensionality}\label{sec:finitedimensionality}
The following is one of the two main results of this paper:

\begin{thm}\label{th:cobK}
The short exact sequence
\begin{equation}\label{eq:cobKses}
0\to \Cob^{trop}(\mcal K)_{0}\to \Cob^{trop}(\mcal K)\xra{\widetilde{cyc}} H_2(\mcal K;\xi) \oplus G_{(2)}\to 0
\end{equation}
is split, and $\Cob^{trop}(\mcal K)_{0} \cong (S^1 \oplus G)^2$.
\end{thm}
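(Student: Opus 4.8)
The plan is to determine $\Cob^{trop}(\mcal K)_{0}$ first and then bootstrap a splitting of \eqref{eq:cobKses} from that description. Throughout, the grading, Pin structure and local system are carried along exactly as prescribed in \Cref{ap:compatibility}, and I suppress them from the notation.

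\emph{Exactness and the kernel.} By \Cref{def:refinedcyc} the kernel of $\widetilde{cyc}$ is $\Cob^{trop}(\mcal K)_{0}$, so only surjectivity of $\widetilde{cyc}$ needs comment: the classes of $F_{b_0}$, $\Gamma_0$, $\Gamma_{10}-\Gamma_0$, $\Gamma^{1[0]}-\Gamma_0$, $\Gamma^{0[1]}-\Gamma_0$ span $H_2(\mcal K;\xi)\cong\Z^4\oplus\Z_2$ by \Cref{lem:homologyofsections} and the generators recalled in \Cref{sec:admissible}, while $(\Gamma_0,\eta_g)-(\Gamma_0,\mathbf 1)$ has $cyc$-image $0$ and $\Psi$-image $g$ for the local system $\eta_g\colon\pi_1(K)\to G$ which is trivial on the free part of $H_1(K;\Z)$ and equals $g\in G_{(2)}$ on the torsion generator. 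Next, by \Cref{cor:generatorscobhom}, $\Cob^{trop}(\mcal K)_{0}$ is generated by the fibre differences and by the section differences $L_{f_{m^+\theta^+}}-L_{f_{m^-\theta^-}}$; write $A$ and $B$ for the subgroups they generate. The fibered Albanese map \eqref{eq:albloc} restricts on $A$ to the isomorphism $A\xra{\sim}\Alb(K)\oplus G$ of \Cref{prop:fiberedalbaneseiso}, and since $H^0(T^*_\Z K)\cong\Z\vspan{dx}$ has period lattice $\tfrac12\Z\subset\R$ one has $\Alb(K)\cong S^1$, so $A\cong S^1\oplus G$; on the other hand \Cref{lem:extensionalbanesemap} together with its decorated form \eqref{eq:albloc} shows $\alb^{loc}$ kills the generators of $B$. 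A short diagram chase then yields $A\cap B=0$, $\Cob^{trop}(\mcal K)_{0}=A\oplus B$ and $B=\ker(\alb^{loc})$. To identify $B$, rewrite it using $L_{f_{m\theta}}\sim\Gamma_{m\theta}-\Gamma_0$ (that is, \eqref{eq:jeffsurgerysections} with $f_{00}=0$) as the subgroup generated by the differences of $x$-sections $\Gamma_{m^+\theta^+}-\Gamma_{m^-\theta^-}$ with $\sum m^+=\sum m^-$. Each $\Gamma_{m\theta}$ depends only on $x_1$, so pulling back to the double cover $T^2\x T^2\to\mcal K$ turns such a difference into the product of a $\xi$-null-homologous combination of Lagrangian sections in the $T^2_{x_1,p_{x_1}}$-factor with a fixed Lagrangian in the complementary factor; applying \Cref{cor:cobt2locsys}, which identifies $\Cob(T^2)_{\hom}\cong S^1\oplus G$ via flux and monodromy, in that factor and descending shows the flux-and-monodromy invariant is an isomorphism $B\cong S^1\oplus G$. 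Hence $\Cob^{trop}(\mcal K)_{0}\cong(S^1\oplus G)^2$.

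\emph{Splitting.} Send the four free generators of $H_2(\mcal K;\xi)$ to $F_{b_0},\ \Gamma_0,\ \Gamma_{10}-\Gamma_0,\ \Gamma^{1[0]}-\Gamma_0$; freeness of $\Z^4$ makes this a homomorphism. Send the torsion generator of $H_2(\mcal K;\xi)$ to $\Gamma^{0[1]}-\Gamma_0$ and each $g\in G_{(2)}$ to $(\Gamma_0,\eta_g)-(\Gamma_0,\mathbf 1)$. It remains to check that these two torsion lifts are $2$-torsion in $\Cob^{trop}(\mcal K)$ (and that $g\mapsto(\Gamma_0,\eta_g)-(\Gamma_0,\mathbf 1)$ is additive). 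For $g\in G_{(2)}$ this is the surgery relation from the proof of \Cref{cor:generatorscobhom}: surgering the relevant copies of $\Gamma_0$ with a single copy of $L_{f^{1[0]}}$ gives $(\Gamma_0,\eta_g)+(\Gamma_0,\eta_h)\sim(\Gamma_0,\eta_{g+h})+(\Gamma_0,\mathbf 1)$, which is additivity and kills $2g$. For the $\Z_2$-generator, \eqref{eq:jeffsurgerysections} gives $\Gamma^{0[1]}-\Gamma_0\sim L_{f^{0[1]}}$, so $2(\Gamma^{0[1]}-\Gamma_0)$ lies in $\Cob^{trop}(\mcal K)_{0}$, has vanishing $\alb^{loc}$, hence lies in $B\cong S^1\oplus G$; there it is detected purely by the flux that $\Gamma^{0[1]}$ sweeps against $\Gamma_0$, and this flux is a half-period because $2f^{0[1]}$ is integral affine (equivalently $\Gamma^{0[2]}$ is Hamiltonian isotopic to $\Gamma_0$), so its double vanishes. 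Thus all torsion lifts are $2$-torsion, the assignment above defines a homomorphism $H_2(\mcal K;\xi)\oplus G_{(2)}\to\Cob^{trop}(\mcal K)$ splitting $\widetilde{cyc}$, and \eqref{eq:cobKses} is split.

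\emph{The main obstacle} is the injectivity half of the identification $B\cong S^1\oplus G$: it requires reducing an arbitrary $\xi$-null-homologous combination of the tropical Lagrangians $L_{f_{m\theta}}$ to a computation in $\Cob(T^2)$ through the relations $L_{f_{m\theta}}\sim\Gamma_{m\theta}-\Gamma_0$ and the pullback to $T^2\x T^2$, all while keeping track of the brane structures --- exactly the bookkeeping that \Cref{ap:compatibility} is designed to supply. The remaining ingredients (the diagram chase producing $A\oplus B$, and the surgery relations upgrading the torsion lifts to $2$-torsion) are comparatively routine once the cited results are in hand.
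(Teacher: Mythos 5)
Your overall architecture matches the paper's: decompose $\Cob^{trop}(\mcal K)_{0}$ into a ``fibre'' part $A$ and a ``section'' part $B$, identify $A$ via \Cref{prop:fiberedalbaneseiso} and the fibered Albanese map, then exhibit an explicit set-theoretic section of $\widetilde{cyc}$. The diagram chase establishing $A\cap B=0$, $A\oplus B=\Cob^{trop}(\mcal K)_0$, and $B=\ker\alb^{loc}$ is fine, and the section you construct is essentially the one in the paper. Where you and the paper genuinely diverge is in how $B\cong S^1\oplus G$ is established, and that is where the gap sits.

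The paper does \emph{not} re-run the $\Cob(T^2)$ argument for $B$. Instead, it introduces a symplectomorphism $\Psi:\mcal K\to\mcal K$ swapping the $y$-coordinate of the base with its cotangent coordinate. This $\Psi$ produces a \emph{second} Lagrangian torus fibration $\pi'=\pi\circ\Psi:\mcal K\to K'$ over a Klein bottle, and the key observation is that $\Psi$ carries the Lagrangians $L_{f_{m\theta}}$ to fibres of $\pi'$. Consequently, \Cref{prop:fiberedalbaneseiso} applies \emph{verbatim} to the second Albanese map $\alb'^{,loc}$, yielding $B\cong\Alb(K')\oplus G$ with no new analysis, and $\alb^{loc}\oplus\alb'^{,loc}$ is immediately an isomorphism. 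You instead try to prove the $B$-statement from scratch by pulling back to $T^2\times T^2$ and invoking \Cref{cor:cobt2locsys} ``in that factor and descending.'' This is precisely the delicate step in the proof of \Cref{prop:fiberedalbaneseiso}: a cobordism $W\subset T^2\times\C$ produced by \Cref{cor:cobt2locsys} need not be invariant under the deck transformation $q_1\mapsto q_1+\tfrac12$, so $W\times\Gamma_0$ need not descend to $\mcal K\times\C$, and the generators of the cobordism group in \Cref{def:cobgroupKk} are required to live in $\mcal K\times\C$. The paper's proof of \Cref{prop:fiberedalbaneseiso} handles this with a sequence of ad-hoc $2$- and $4$-torsion relations, engineered precisely so that $p$-invariant representatives of the needed cobordisms exist. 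Your sketch quotes none of this; citing \Cref{ap:compatibility} does not help, since that appendix handles brane-structure bookkeeping for the surgery cobordisms of \Cref{eq:jeffsurgerysections}, not equivariance of cobordisms in $T^4\times\C$. So the identification $B\cong S^1\oplus G$ is a real gap, and it is exactly the content the symmetry $\Psi$ is designed to make free.

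Two smaller remarks. First, your claim that $\alb^{loc}(2(\Gamma^{0[1]}-\Gamma_0))=0$ is not immediate: $2L_{f^{0[1]}}$ lives over $\{y=c\}$, which is neither a fibre nor an $L_{f_{m\theta}}$, so you would need to first express it in terms of the generators of \Cref{cor:generatorscobhom} before reading off $\alb^{loc}$. Second, the surviving half-period flux computation (``this flux is a half-period because $2f^{0[1]}$ is integral affine'') is the right heuristic, but it relies on the isomorphism $B\cong S^1\oplus G$ that is itself the weak point of your argument. Both issues would evaporate if you imported the symmetry $\Psi$ and, with it, a second application of \Cref{prop:fiberedalbaneseiso}.
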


\begin{proof}

    Let $\Psi : \mcal K \to \mcal K$ be the  involution swapping the $y$-coordinate on $K$ with its cotangent coordinate (recall $\mcal K = T^\ast K / T^*_\Z K$ and $K$ is as in \Cref{lem:classificationklein}). 
    That is, $\Psi$ is given in local coordinates by
    $$
    (x_1,y_1,x_2,y_2) \mapsto (x_1,y_1,y_2,x_2),
    $$
    where we are identifying $\mcal K$ with the quotient of $(\R^4,dx_1\wdg dy_1 + dx_2 \wdg dy_2)$ by the symplectic transformations
    \begin{align*}
        (x_1,y_1,x_2,y_2) &\mapsto (x_1,y_1,x_2,y_2) + e_i,  \quad i=1,\dots,4\\
        (x_1,y_1,x_2,y_2) &\mapsto  (x_1 + p_1  ,y_1 + p_2,-x_2,-y_2),  \quad 2(p_1,p_2) = 0.
    \end{align*}
    The composition $\pi' = \pi\circ \Psi:\mcal K \to K$ is still a Lagrangian torus fibration; let $K'$ be the Klein bottle whose tropical affine structure is that induced by $\pi'$.
    
    Consider the map $\alb^{loc}: \Cob^{trop}(\mcal K)_{0} \to \Alb(K) \x G$ of \Cref{eq:albloc}. 
    There is an analogous map $\alb'^{,loc}:\Cob^{trop}(\mcal K)_{0} \to \Alb(K') \x G$ arising from the Lagrangian torus fibration $\pi'$.
    We claim the map
    \begin{equation}\label{eq:cobhomisoalb}
        \alb^{loc}\oplus\alb'^{,loc}: \Cob^{trop}(\mcal K)_{0} \to (\Alb(K) \oplus G) \oplus (\Alb(K') \oplus G)
    \end{equation}
    is an isomorphism.
    Indeed, note that $\Psi$ interchanges fibers with the tropical Lagrangians $L_{f_{m\theta}}$, so that $\alb'$ is the map 
    \begin{align*}
        \begin{split}
        \alb':\Cob^{trop}(\mcal K)_{0} &\to \Alb(K')\\
        \sum_i(F_{b_i^+} - F_{b_i^-}) &\mapsto 0\\
        \sum_i(L_{f_{m_i^+\theta_i^+}} - L_{f_{m_i^-\theta_i^-}}) &\mapsto \alb'(p_i^+ - p_i^-)
        \end{split}
    \end{align*}
    where $p_i^\pm = \pi'(L_{f_{m_i^\pm\theta_i^\pm}}) \in K'$.
    The fact that (\ref{eq:cobhomisoalb}) is an isomorphism follows now immediately from \Cref{prop:fiberedalbaneseiso} applied to both $\alb^{loc}$ and $\alb'^{,loc}$.

    To show that \Cref{eq:cobKses} splits, we consider the map $H_2(\mcal K;\xi) \oplus G_{(2)} \cong \Z^4 \oplus \Z_2 \oplus G_{(2)} \to \Cob^{trop}(\mcal K)_0$ that sends an element $(N_f,N_0,m,n,[l],g)$ to the cobordism class
    $$
    N_f F_{(0,1/2)} + N_0 \Gamma_0 + m L_{f_{10}} + n L_{f^{10}} + l L_{f^{01}} +  ((L_{f^{10}},g) - L_{f^{10}}).
    $$
    Here, all the Lagrangians are equipped with the preferred Pin structure and trivial local system, except $(L_{f^{10}},g)$ which carries a local system whose monodromy around the $dy$-coordinate is $g$.
    The argument at the end of \Cref{cor:generatorscobhom} shows that
    $$
    \sum_i((L_{f^{10}},g_i) - L_{f^{10}}) \sim (L_{f^{10}},\prod_i g_i) - L_{f^{10}},
    $$
    hence this map is a group homomorphism, and it is clearly a section by \Cref{ap:homology}.
    It follows that the short exact sequence (\ref{eq:cobKses}) is split.
\end{proof}

Note that, in particular, 
$$
\Cob^{trop}(\mcal K) \cong H_2(\mcal K;\xi) \oplus G_{(2)} \oplus (S^1 \oplus G)^2
$$
is \emph{$2$-dimensional} in the sense of \Cref{def:infinitedimensionality}.
As discussed in the introduction, this should be compared with the work of Sheridan-Smith \cite{sheridan2020rational}, who show that a symplectic $4$-manifold with trivial canonical bundle containing a Lagrangian of genus at least 1 has infinite-dimensional cobordism group.
The crucial difference in \Cref{th:cobK} is that the canonical bundle of a bielliptic surface is torsion but non-trivial, thus a graded Lagrangian is not necessarily oriented (see \Cref{sec:results} for more details).

\begin{rmk}
    One would expect---based on the analogous result for the mirror bielliptic surface---that a computation of the cobordism group of a symplectic bielliptic surface living over $K = K_2$ (see \Cref{rmk:Kleinfamilies}) would follow the same argument.
    However, in the symplectic setting some difficulties arise.
    While we do have a short exact sequence as in \Cref{eq:cobKses}, the definition of the Abel-Jacobi type map $\Cob^{trop}(\mcal K)_0 \to \Alb(K)$ is not so clear in this case. 
\end{rmk}

    \subsection{The Lagrangian cobordism group and the Grothendieck group}\label{sec:isoK}
We now turn to the comparison between the cobordism group $\Cob^{trop}(\mcal K)$ and the Grothendieck group of the Fukaya category of $\mcal K$.
We do this via intermediate comparison with the Chow groups of the mirror $Y$ (see \Cref{sec:hmsbielliptic}).
Thus, from now on we fix $G = U_\Lambda$ to be the unitary group of the Novikov field.

Recall from \Cref{ap:homology} that there is an isomorphism $H_2(\mcal K;\xi)\cong \Z^4\oplus\Z_2$.
Putting this together with Theorem \ref{th:cobK}, one obtains an explicit presentation 
$$
\Cob^{trop}(\mcal K) \cong \Z^4\oplus \Z_2^2\oplus E^2
$$
for $E$  the closed points of an elliptic curve over the Novikov field.
We now sketch how to compute the Chow groups of the mirror $Y$ and exhibit a specific isomorphism between $\Cob^{trop}(\mcal K)$ and $\CH^*(Y)$:
\begin{itemize}
    \item $\CH_2(Y)\cong \Z$ is generated by the bielliptic surface $Y$ itself.
    This factor is mirror to a $\Z$-factor in $\Cob^{trop}(\mcal K)$ corresponding to the zero-section.

    \item For complex bielliptic surfaces $\CH_0(Y)\cong \Z\oplus \Alb(Y)$: as follows from the results in \cite{bloch1976zero}.
    The proof is based on analyzing the kernel of the Albanese map $\alb:\CH_0(Y)_{\hom} \to \Alb(Y)$. 
    The Albanese map exists for fields other than the complex numbers, and one can run the arguments in \cite{bloch1976zero} to show that $\ker \alb = 0$ still holds.
    Thus $\CH_0(Y) \cong \Z \oplus \Alb(Y)$,  where the $\Z$ factor is generated by a fixed reference point and the $\Alb(Y)$ factor is generated by differences $p^+ - p^- \in \CH_0(Y)_{hom}$ for points $p^\pm \in Y$.
    This is mirror to $\Cob_{\text{fib}}(\mcal K)\cong \Z\oplus (\Alb(K) \oplus U_\Lambda)$ (an explicit isomorphism is given by the map $\deg \oplus \alb_{\text{fib}}^{loc}:\Cob_{\text{fib}}(\mcal K) \to \Z \oplus (\Alb(K) \oplus U_\Lambda)$, where $\alb_{\text{fib}}^{loc}$ is constructed from the map in \Cref{prop:fiberedalbaneseiso} after choosing a reference fiber).
    
    \item $\CH_1(Y)\cong \Pic(Y)$ fits into a short exact sequence
    $$
    0 \to \Pic^0(Y) \to \Pic(Y) \to NS(Y) \to 0
    $$
    where $\Pic^0(Y)$ is the Picard variety of $Y$ (the connected component of the identity in $\Pic(Y)$) and $NS(Y) = \Pic(Y)/\Pic^0(Y)$ is the so-called Neron-Severi group.
    For a complex bielliptic surface, the group $NS(Y) \cong H^2(Y;\Z)$ has been fully computed in \cite{serrano1990divisors}, and this computation applies to bielliptic surfaces over algebraically closed fields of characteristic zero by the Lefschetz principle. 
    Since $\Pic^0(Y)\cong (\Alb(Y))^\vee \cong E$,\footnote{Recall that $\Alb(Y)$ is an abelian variety and that its dimension is equal to $h^1(Y,\mcal O_Y) = 1$. It follows that $\Alb(Y)$ is an elliptic curve, hence isomorphic to its dual.} we get
    $$
    \CH_1(Y)\cong NS(Y) \oplus E
    $$
    where $E$ is an elliptic curve over the Novikov field.
    The group $NS(Y) \cong \Z^2 \oplus \Z_2^2$ is generated by two general fibers of the projections $\pi_1 : Y \to E_1/\Z_2$ and $\pi_2 : Y \to E_2/Z_2 \cong \P^1$ (cf. \Cref{sec:biellipticsurfaces}) and differences $D_1 - D_2, D_1 - D_3$ of the four reduced special fibers $D_i$ of the projection $\pi_2$ \cite{bergstrom2019brauer}; this is mirror to the subgroup $(\Z^2\oplus\Z_2)\oplus\Z_2\subset H_2(\mcal K;\xi) \oplus \Z_2$ generated by the tropical Lagrangians $L_{C_1}, L_{C_2}, L_{C_2} - L_{C_3}$ and $L_{C_3} - (L_{C_3},\eta)$.
    Here we denote by $L_{C_i}$ the tropical Lagrangians living over the tropical subvarieties $C_i$ of \Cref{eq:tropicalsubmanifolds}; they all carry the natural Pin structure and the trivial local system, except $(L_{C_3},\eta)$ that carries a $2$-torsion local system that is non-trivial in the $dy$-direction.
    The  $E$  factor is generated by differences of fibers of the projection $\pi_1$; it is mirror to $\Alb(K')\oplus U_\Lambda$ and is generated by differences of tropical Lagrangians $L_{C_1} - L_{\{x = a\}}$ equipped with  arbitrary local systems.
\end{itemize}

We now turn to the comparison between the cobordism group $\Cob^{trop}(\mcal K)$ and the Grothendieck group of the Fukaya category.
A homomorphism relating the Lagrangian cobordism group (without the extra data of a brane structure) and the Grothendieck group of the Fukaya category  as in \Cref{eq:BCmap} was first considered by Biran-Cornea  in \cite{biran2014lagrangian}.
Their work proves the existence of such homomorphism for a version of the cobordism group whose generators and relations are {\it monotone} Lagrangians and Lagrangian cobordisms, i.e. Lagrangians $L \subset M$ for which the homomorphisms
\begin{align*}
    \omega: \pi_2(M, L) &\to \R\\
    \mu:\pi_2(M, L) &\to \Z
\end{align*}
given by measuring the symplectic area and the Maslov index of topological discs with boundary on $L$ satisfy $\omega = \lambda_L \mu$ for some $\lambda_L >0$.
This monotonicity condition is imposed to exclude bubbling of pseudo-holomorphic disks, so that Floer homology is well-defined.\footnote{In that paper Biran-Cornea impose an additional condition on the fundamental groups of all Lagrangians and cobordism. This condition gives a bound on the area of holomorphic disks and is not needed when working over the Novikov field, so we will not treat it here.} 
In later work \cite{biran2021lagrangian}, Biran-Cornea-Shelukhin  relax the assumptions on cobordisms to include tautologically unobstructed  cobordisms (recall this means there exists a compatible almost complex structure such that $V$ bounds no non-constant holomorphic disks, a property to which they refer as `quasi-exactness').
This comes at the cost of strengthening the monotonicity condition imposed on the ends, where they must now be weakly exact Lagrangians (recall this means  $\omega (\pi_2(M,L))= 0$).
See their results in \cite[Chapter 4]{biran2021lagrangian}.

\begin{rmk}
    There is a stronger expectation---which is believed by experts but has not yet appeared in the literature---that the iterated cone decomposition results of \cite{biran2014lagrangian,biran2021lagrangian} hold for the broadest class of Lagrangians and cobordisms for which Floer theory can be defined, namely  {\it (Floer-theoretically) unobstructed} Lagrangians in the sense of \cite{fukaya2009lagrangian}.
    These are the most natural Lagrangians to consider in the context of mirror symmetry (for instance, to compare Lagrangian cobordism groups with Chow groups).
    We remark that the group $\Cob^{trop}(\mcal K)$ that we compute in this paper (a group generated by Lagrangian fibers and sections) does {\it not} depend on whether we consider it as a subgroup of $\Cob(\mcal K)$, as defined in \Cref{def:cobgroupKk}, or as a subgroup of a cobordism group built from Floer-theoretically unobstructed Lagrangians and cobordisms.
    However, since the existence of a map from the cobordism group to the Grothendieck group has only been proved to exist for a cobordism group of weakly exact Lagrangians and tautologically unobstructed cobordisms, we think of $\Cob^{trop}(\mcal K)$ as a subgroup of $\Cob(\mcal K)$ as defined in \Cref{def:cobgroupKk}.
\end{rmk}

Recall from  \Cref{def:cobgroupKk} that the generators of $\Cob(\mcal K)$ are weakly exact.
On the other hand, we imposed the weaker condition that cobordisms are tautologically unobstructed only when pulling them back under the covering map $T^4 \x \C \to \mcal K \x \C$.
We argue in \Cref{ap:multivaluedFuk} that cobordisms with the above property still induce iterated cone decompositions between their ends, see \Cref{prop:conesfromTOpullbacks}.
Hence we have a map
$$
\Cob(\mcal K) \to K_0(D\mcal Fuk(\mcal K)).
$$

The above matching of the tropical cobordism group and the Chow groups of the mirror together with the HMS statement of \Cref{sec:hmsbielliptic} suggest our second main result:
\begin{thm}\label{th:isoK}
    The natural  homomorphism
    \begin{equation}\label{eq:isoK}
        \Cob^{trop}(\mcal K) \to K_0(D^b \mcal Fuk(\mcal K))
    \end{equation}
    is an isomorphism.
\end{thm}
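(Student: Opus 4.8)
The plan is to prove that the map in \eqref{eq:isoK} is an isomorphism by factoring it through homological mirror symmetry and identifying both sides explicitly with $\CH^*(Y)$. By \Cref{rmk:dbcongdpi} and \Cref{prop:HMSbielliptic} we have an equivalence $D^b\mcal Fuk(\mcal K) \simeq D^\pi\mcal Fuk(\mcal K) \simeq D^bCoh(Y)$ for $Y$ the mirror algebraic bielliptic surface, and hence a ring isomorphism $K_0(D^b\mcal Fuk(\mcal K)) \cong K_0(D^bCoh(Y))$. Since $Y$ is smooth projective, the Grothendieck group $K_0(D^bCoh(Y)) = K_0(Y)$ carries its topological (codimension) filtration, whose graded pieces are, up to torsion issues controlled by Riemann--Roch without denominators, the Chow groups $\CH^*(Y)$. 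For a surface the filtration has only three steps, and standard arguments (the rank map $K_0(Y) \to \Z$, then the determinant map to $\Pic(Y) = \CH^1(Y)$, then the remaining piece $\CH^2(Y) = \CH_0(Y)$) give a (non-canonical, but explicit after choosing splittings) isomorphism $K_0(Y) \cong \CH_2(Y) \oplus \CH_1(Y) \oplus \CH_0(Y)$. Using the computations recalled in the bulleted list of \Cref{sec:isoK} — $\CH_2(Y) \cong \Z$, $\CH_1(Y) \cong NS(Y) \oplus E \cong \Z^2 \oplus \Z_2^2 \oplus E$, and $\CH_0(Y) \cong \Z \oplus \Alb(Y) \cong \Z \oplus (E \oplus U_\Lambda)$ via the Bloch-type vanishing of the kernel of the Albanese map — we get
$$
K_0(D^b\mcal Fuk(\mcal K)) \cong \Z^4 \oplus \Z_2^2 \oplus E^2,
$$
which by \Cref{th:cobK} (together with $H_2(\mcal K;\xi) \cong \Z^4 \oplus \Z_2$ from \Cref{ap:homology}) is abstractly isomorphic to $\Cob^{trop}(\mcal K)$.

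The substance of the proof is not this abstract matching but showing that the \emph{natural} map \eqref{eq:isoK} realizes it. First I would check surjectivity: the zero-section $\Gamma_0$, the tropical Lagrangians $L_{C_1}, L_{C_2}$, the $2$-torsion differences $L_{C_2} - L_{C_3}$ and $L_{C_3} - (L_{C_3},\eta)$, a Lagrangian fiber $F_p$, and differences of fibers/sections with arbitrary $U_\Lambda$-local systems are all in $\Cob^{trop}(\mcal K)$, and under HMS they map to the generators of $\CH_2$, $NS(Y)$, the two torsion classes, $[\mathcal O_p]$, and the continuous $E$-parameters respectively — this uses the explicit description in \cite{abouzaid2014family} of the sheaf mirror to a Lagrangian section (a line bundle, ample when the defining function is strictly convex), the identification of fibers with skyscraper sheaves, and the dictionary between torsion local systems and the torsion divisors $D_i$ exhibited in \cite{bergstrom2019brauer}. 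Then for injectivity I would argue that the composite $\Cob^{trop}(\mcal K) \to K_0(D^b\mcal Fuk(\mcal K)) \cong K_0(Y)$ is compatible with the filtrations on both sides: the cycle class map $cyc: \Cob^{trop}(\mcal K) \to H_2(\mcal K;\xi)$ of \Cref{sec:cycleclassmap} together with the refinement $\Psi$ of \eqref{eq:recordingtwotorsionmonodromy} matches the projection $K_0(Y) \to NS(Y) \oplus \CH_2(Y)_{tors\text{-part}}$ composed with the appropriate graded pieces, while the Albanese-type maps $\alb^{loc}$ and $\alb'^{,loc}$ of \Cref{sec:symplecticalbanese} match the Albanese maps on $\CH_0(Y)$ and on $\Pic^0(Y)$; these are precisely the maps used to prove injectivity in \Cref{th:cobK} and \Cref{prop:fiberedalbaneseiso}, so injectivity of \eqref{eq:isoK} reduces to the injectivity statements already established there together with the injectivity of the Albanese map on $\CH_0$ of a bielliptic surface (the Bloch argument).

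Concretely I would organize the proof as follows: (i) record the isomorphism $K_0(D^b\mcal Fuk(\mcal K)) \cong \CH^*(Y)$ coming from HMS plus the structure of $K_0$ of a surface; (ii) recall or re-derive the computation of $\CH^*(Y)$ as in the bulleted list, in particular $\ker(\alb: \CH_0(Y)_{\hom} \to \Alb(Y)) = 0$ following \cite{bloch1976zero} adapted to the Novikov field; (iii) set up a commutative square relating $\widetilde{cyc} \oplus \alb^{loc} \oplus \alb'^{,loc}$ on the symplectic side with the filtration-graded maps on $K_0(Y)$, checking commutativity generator by generator on fibers, sections, and the tropical Lagrangians $L_{C_i}$ (this is where the explicit mirror-sheaf computations of \cite{abouzaid2014family,bergstrom2019brauer} enter); (iv) conclude by a five-lemma / short-exact-sequence chase, using \Cref{th:cobK} on the source and the filtration short exact sequences on the target, that \eqref{eq:isoK} is an isomorphism. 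The main obstacle I anticipate is step (iii): verifying that the geometrically-defined maps on $\Cob^{trop}(\mcal K)$ (cycle class, refined monodromy, symplectic Albanese) are intertwined with the \emph{algebraically}-defined filtration maps on $K_0(Y)$ under Abouzaid's family Floer functor requires tracking the functor's effect on the relevant Lagrangians and local systems carefully — in particular pinning down which torsion line bundle on $Y$ corresponds to a given $2$-torsion local system on $L_{C_3}$, and checking that the family Floer mirror of a Lagrangian fiber is genuinely a structure sheaf of a closed point with the expected parameter, so that the two Albanese maps really do correspond. Everything else is either already proved in the excerpt or is a standard fact about $K$-theory of smooth projective surfaces.
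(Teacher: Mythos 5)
Your overall strategy coincides with the paper's: factor \eqref{eq:isoK} through Abouzaid's HMS equivalence, reduce to $K_0(D^bCoh(Y))$, and compare with $\CH^*(Y)$ via the Chow group computation and an explicit matching of generators. Both proofs also end by remarking that the embedding $D^b\mcal Fuk(\mcal K) \into D^\pi\mcal Fuk(\mcal K)$ induces an isomorphism on $K_0$ because the generators are honest Lagrangians. So the route is essentially the same.

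There is, however, a genuine gap in how you pass from $K_0(D^bCoh(Y))$ to $\CH^*(Y)$. You assert that the topological filtration on $K_0(Y)$, together with \emph{``standard arguments''} and \emph{``Riemann--Roch without denominators,''} gives a (non-canonical) isomorphism $K_0(Y) \cong \CH_2(Y)\oplus\CH_1(Y)\oplus\CH_0(Y)$, and describe this as ``a standard fact about $K$-theory of smooth projective surfaces.'' It is not. The graded pieces of the filtration are indeed $\CH^i(Y)$ (Fulton's Example 15.3.6, which the paper cites), but whether the extensions
$$
0 \to F_2 K_0(Y) \to F_1 K_0(Y) \to \Pic(Y) \to 0
$$
split by a \emph{group homomorphism} is precisely where the obstruction to integrality of $ch_2 = \tfrac12(c_1^2 - 2c_2)$ sits, and the natural candidate section $\mcal L \mapsto [\mcal L]-[\Oo]$ fails to be additive. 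The paper invests all of \Cref{ap:integralChern} in building an integral lift of the Chern character for a bielliptic $Y$, and this relies on the specific geometry: all the relevant self-intersections $D_i^2$ are either zero or $2$-torsion in $\CH_0(Y)$, which is divisible off its free rank-one part, so that the ``half-square'' map $H$ of \Cref{lem:quasilinear} can be defined. This is the analogue of Huybrechts's argument for K3 (where torsion-freeness of $\CH_0$ is what saves the day), and it does not come for free from generalities; your proposal as written skips it.

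Two smaller points. First, you invoke \Cref{rmk:dbcongdpi} to claim $D^b\mcal Fuk(\mcal K) \simeq D^\pi\mcal Fuk(\mcal K)$ at the start, but that remark explicitly says the isomorphism \emph{follows from the argument proving the theorem}, so using it as an input would be circular; the paper proves the result for $D^\pi$ and only afterwards deduces the $D^b$ statement. Second, your injectivity step via a five-lemma chase along the filtrations is a more roundabout organization than the paper's: once the integral Chern character isomorphism $K_0(D^bCoh(Y)) \cong \CH^*(Y)$ is in hand (\Cref{cor:generatorsK0Coh}), the paper simply matches the generators of $\Cob^{trop}(\mcal K)$ from \Cref{th:cobK} with those of $\CH^*(Y)$ under the family Floer functor and concludes directly, with no need for a separate injectivity argument. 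Your approach could in principle be made to work, but only after the splitting issue above is resolved, which is the real content.
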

\begin{proof}
    We first prove an isomorphism $\Cob^{trop}(\mcal K) \cong K_0(D^\pi \mcal Fuk(\mcal K))$ between the Lagrangian cobordism group and the Grothendieck group of the split-closed derived Fukaya category.
    Recall from \Cref{sec:hmsbielliptic} that we have an equivalence of triangulated categories $D^\pi\mcal Fuk(\mcal K) \simeq D^bCoh(\check {\mcal K})$, the latter category being isomorphic to the derived category of coherent sheaves $D^bCoh(Y)$ of an algebraic bielliptic surface over the Novikov field.
    Taking the Grothendieck group on both sides yields an isomorphism of abelian groups 
    $$
    K_0(D^\pi\mcal Fuk(\mcal K)) \cong K_0(D^bCoh(Y))
    $$
    which reduces the computation of $K_0(D^\pi\mcal Fuk(\mcal K))$ to algebraic geometry.
    Now recall that there is a Chern character map
    $$
    ch : K_0(D^bCoh(Y)) \to \CH^*(Y)_\Q
    $$
    which furthermore is a rational isomorphism.
    In the present case, one can show that $ch$ is well-defined as a map to $\CH^*(Y)$ (i.e. we can get rid of denominators), and that such map is an isomorphism---see \Cref{ap:integralChern}.
    Putting this together with the Chow group computations at the beginning of this section, we have
    $$
    K_0(D^\pi\mcal Fuk(\mcal K)) \cong K_0(D^bCoh(Y)) \cong \CH^*(Y) \cong \Z^4 \oplus \Z_2^2 \oplus E^2,
    $$
    showing that $K_0(D^\pi\mcal Fuk(\mcal K))$ and $\Cob^{trop}(\mcal K)$ are, at least abstractly, isomorphic groups.
    
    To see that the natural homomorphism $\Cob^{trop}(\mcal K) \to K_0(D^\pi \mcal Fuk(\mcal K))$ provides an isomorphism we will show that the composition 
    $$
    \Cob^{trop}(\mcal K) \to K_0(D^\pi\mcal Fuk(\mcal K)) \xrightarrow{\sim} K_0(D^bCoh(Y))
    $$
    is an isomorphism.
    For this we use that the mirror functor of \cite{abouzaid2014family,abouzaid2021homological} sends a tropical Lagrangian $L_V$ living over a tropical submanifold $V\subset K$ to the structure sheaf $\mcal O_{Z_V}$ of the corresponding algebraic subvariety $Z_V \subset Y$ living over $V$.
    Choosing a set of generators of $\Cob^{trop}(\mcal K)$ given by (linear combinations of) tropical Lagrangians as at the beginning of this section, the map
    $$
    \Cob^{trop}(\mcal K) \cong \Z^4 \oplus \Z_2^2 \oplus E^2 \to K_0(D^bCoh(Y))
    $$ 
    coincides with the map of \Cref{cor:generatorsK0Coh}, hence the result follows.

    To conclude the proof, note that the natural embedding $D^b\mcal Fuk(\mcal K) \into D^\pi \mcal Fuk(\mcal K)$ induces an isomorphism of Grothendieck groups: it is always an injection and the generators of $K_0(D^\pi\mcal Fuk(\mcal K))$ are honest Lagrangians (as opposed to direct summands), thus it is also surjective.
\end{proof}

     \subsection{Admissibility of the surgery cobordism}\label{ap:compatibility}
In this section we upgrade \Cref{lem:cobsectionstogenerators} to include all the extra structure that is present in the cobordism group ($G$-brane structures and unobstructedness properties).
It is mostly technical and can safely be skipped by readers on a first read.

There are two separate issues to treat.
The first is the transformation of gradings, Pin structures and local systems under fiberwise addition.
The second is  the admissibility of the Lagrangian cobordisms obtained as surgery of a section $\Gamma^{n[l]}$ (or $\Gamma_{m\theta}$) and the zero section $\Gamma_0$---that is, whether these cobordisms admit $G$-brane structures and whether their pullback to $T^4 \x \C$ is tautologically unobstructed.

Let us start with the transformation of gradings, Pin structures and local systems under fiberwise addition.
This is part of the more general story of how these properties transform under geometric composition of Lagrangian correspondences:
transformation of gradings is treated in \cite[Section 3]{wehrheim2010quilted}, transformation of Pin structures is discussed in \cite{wehrheim2015orientations} and transformation of local systems is detailed in \cite{subotic}.
To apply these results, recall that the Lagrangian correspondence giving rise to fiberwise addition is given by 
$$
\Sigma= \{ (q_1, p_1, q_2,p_2, q_3,p_3) \in (X(B) \x X(B))^- \x X(B) \st q_1 = q_2 = q_3,\, p_3 = p_1 + p_2 \}
$$
for $(q_i,p_i) \in X(B)$ canonical coordinates \cite{subotic}; one then defines 
$$
L_1 \otimes L_2 = \Phi_\Sigma (L_1 \x L_2) = \pi_{13}((L_1 \x L_2 \x X(B)) \cap \Sigma).
$$
\begin{lem}\label{lem:branestructureoncorrespondence}
    The Lagrangian correspondence $\Sigma$ admits a natural grading.
    Furthermore, any Pin structure on $B$ induces a natural Pin structure on $\Sigma$.
\end{lem}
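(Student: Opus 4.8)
The plan is to identify $\Sigma$ explicitly and then read off both statements. Fibrewise negation $p\mapsto -p$ on $X(B)$ is a symplectomorphism $X(B)^-\xra{\sim}X(B)$ that pulls the natural quadratic volume form back to its complex conjugate; applying it in the first two factors gives a symplectomorphism
$$
\Theta\colon (X(B)\x X(B))^-\x X(B)\ \xra{\sim}\ X(B^{\x 3})
$$
covering the identity on $B^{\x 3}$, hence carrying the natural quadratic volume form and the natural auxiliary bundle of \Cref{rmk:branechoiceforfibration} on the left to those on the right. Under $\Theta$ the correspondence $\Sigma$ goes to $\{(q;a_1),(q;a_2),(q;a_3)\st a_1+a_2+a_3=0\}$, which is exactly the tropical Lagrangian lift $L_{\Delta_B}=N^*\Delta_B/N^*_\Z\Delta_B$ of the small diagonal $\Delta_B\subset B^{\x 3}$ (a tropical submanifold, since $T\Delta_B$ is spanned by the integral diagonal vectors). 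So the Lemma reduces to producing a natural grading and a natural twisted Pin structure on $L_{\Delta_B}$, compatibly with the choices of \Cref{rmk:branechoiceforfibration}.

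For the grading I would work in affine coordinates $q=(q_1,\dots,q_n)$ on a chart of $B$, so that on each factor the natural holomorphic volume form is $\eta=dz_1\wdg\cdots\wdg dz_n$ with $z_j=q_j+ip_j$, and the quadratic volume form on $(X(B)\x X(B))^-\x X(B)$ is the square of $\overline\eta\boxtimes\overline\eta\boxtimes\eta$. At a point of $\Sigma$ the tangent space is spanned, for $j=1,\dots,n$, by the diagonal base directions $\partial_{q_j}^{(1)}+\partial_{q_j}^{(2)}+\partial_{q_j}^{(3)}$ together with the two families $\partial_{p_j}^{(1)}+\partial_{p_j}^{(3)}$ and $\partial_{p_j}^{(2)}+\partial_{p_j}^{(3)}$, and the matrix of pairings between the constituent covectors of $\overline\eta\boxtimes\overline\eta\boxtimes\eta$ and this basis is the block-diagonal matrix $M\otimes I_n$ with
$$
M=\begin{pmatrix}1&-i&0\\ 1&0&-i\\ 1&i&i\end{pmatrix},\qquad \det M=-3 .
$$
Thus the quadratic volume form evaluates on a generator of $\wdg^{3n}T\Sigma$ to the nonzero real constant $((-3)^n)^2=9^n>0$, so the squared phase map of $\Sigma$ is identically $1$; in particular $\mu_\Sigma=0$ and $\Sigma$ carries the canonical grading with $\tilde\alpha_\Sigma\equiv 0$. (Invariantly this just records that a conormal-type tropical Lagrangian over a connected affine submanifold has locally constant, hence constant, phase.)

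For the Pin structure I would use $TL_{\Delta_B}\cong\rho^*(T\Delta_B\oplus N^*\Delta_B)$, where $\rho\colon L_{\Delta_B}\to\Delta_B\cong B$ is the projection and $N^*\Delta_B\cong T^*B\oplus T^*B$ (dual to $N\Delta_B\cong TB^{\oplus 2}$). A choice of Riemannian metric on $B$ — the space of such being contractible, hence inessential up to homotopy — identifies this with $\rho^*\big((TB)^{\oplus 3}\big)$. The given Pin structure on $TB$, applied to each summand, then glues to a Pin structure on $(TB)^{\oplus 3}$: the obstruction is the cross term $w_1(TB)^2$, which equals $w_2(B)$ by the Wu formula on the closed surface $B$ and vanishes in our setting since $w_2(K)=0$. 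Pulling this structure back along $\rho$ and transporting it through $\Theta$ gives the desired Pin structure on $\Sigma$, and metric-independence gives naturality; the general case is handled identically with the auxiliary bundles $E$ of \Cref{rmk:branechoiceforfibration} inserted throughout.

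The main obstacle is this last point: tracking how the $\mathrm{Pin}$-group central extension pulls back along the block inclusion $O(n)^3\hookrightarrow O(3n)$ and through the auxiliary-bundle twists — so that three Pin structures on the summands really do combine to one on the sum — and making precise the sense in which the outcome is natural (independent of the contractible choices up to canonical homotopy). The grading, by contrast, is the one-line linear-algebra computation above.
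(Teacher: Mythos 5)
Your proof is correct and takes essentially the same approach as the paper: compute the squared phase map of $\Sigma$ in local coordinates to see it is constant (so $\Sigma$ is gradable), and observe $T\Sigma\cong TB\oplus T^*B\oplus T^*B$ to equip $\Sigma$ with a Pin structure from one on $B$. Your reorganization via fibrewise negation $\Theta$, identifying $\Sigma$ with the conormal lift $L_{\Delta_B}$, is a clean way to see the decomposition of $T\Sigma$, but it is not a fundamentally different route.

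Two points of comparison worth noting. For the grading, the paper asserts the phase is constant equal to $(-1)^{\dim B}$, while your block computation yields $\det M = -3$ and hence phase $\equiv 1$ for all $n$. These disagree for $\dim B$ odd; your computation looks right to me given the convention that the holomorphic form on the $(-)$ factors should be the conjugate one, but in the paper's application $\dim B = 2$, so both values coincide and either way $\tilde\alpha_\Sigma \equiv 0$. For the Pin structure, the paper's ``hence'' glosses over the fact that Pin structures on summands do not automatically combine to one on the direct sum (unlike Spin structures): as you correctly identify, the obstruction is the cross term $w_1(TB)^2$. Your argument that this vanishes via the Wu formula is, however, specific to closed surfaces; as you acknowledge at the end, a cleaner route is that the paper has already imposed $w = \pi^*w_2(B) = 0$ (see \Cref{rmk:branechoiceforfibration}), so $w_2(B)=0$ and hence $w_1(B)^2 = w_2(B) = 0$ is part of the standing assumptions — this sidesteps the issue rather than resolving it in general, but that is all that is needed for the lemma as the paper uses it.
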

\begin{proof}
    With respect to a quadratic volume form defined by complexifying a section of $(\wedge^{3n} T^*B^3)^{\otimes2}$, a computation shows the Lagrangian correspondence $\Sigma$ has phase map $\alpha_\Sigma: \Sigma\to S^1$ constant and equal to $(-1)^{\dim B} \in S^1$.
    We equip $\Sigma$ with the grading $\tilde \alpha_\Sigma : \Sigma\to \R$ given by the unique constant function $\tilde \alpha_\Sigma \in \{0,1/2\}$ (in particular, $\tilde\alpha_\Sigma\equiv 0$ when $\dim B$ is even).
    To equip $\Sigma$ with a Pin structure, note that we have an isomorphism $T\Sigma\cong TB \oplus T^*B \oplus T^*B$. 
    Hence a Pin structure on $B$ induces one on  $\Sigma$.
\end{proof}
Note that in our case we have chosen a Pin structure on $B$ (this is an input to run Family Floer theory, see \cite[Section 3.1]{abouzaid2014family}).
Equip the Lagrangian correspondence $\Sigma$ with the natural choice of grading and Pin structure given by \Cref{lem:branestructureoncorrespondence}.

\begin{lem}\label{lem:gradingfiberwisesum}
    There are gradings on $\Gamma_{m\theta}$ and $\Gamma^{n[l]}$ inducing any given grading on $\Gamma^{n[l]}_{m\theta}$.
\end{lem}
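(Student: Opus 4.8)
The plan is to exploit that all three Lagrangians $\Gamma_{m\theta}$, $\Gamma^{n[l]}$ and $\Gamma^{n[l]}_{m\theta}$ are sections of $\pi:\mcal K\to K$, hence connected (each is diffeomorphic to $K$ via the projection). By the choices of quadratic volume form and Pin structure recorded in \Cref{rmk:branechoiceforfibration} every section admits a grading, so for each of them the set of gradings is a non-empty torsor over $H^0(-;\Z)\cong\Z$; concretely, any two gradings differ by a global integer constant, and the torsor structure is compatible with restriction. First I would fix arbitrary reference gradings $\tilde\alpha_1^0$ on $\Gamma_{m\theta}$ and $\tilde\alpha_2^0$ on $\Gamma^{n[l]}$ and let $\tilde\alpha^0$ be the grading they induce, together with the fixed grading on the correspondence $\Sigma$ from \Cref{lem:branestructureoncorrespondence}, on the geometric composition $\Gamma_{m\theta}\otimes\Gamma^{n[l]}$. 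Recall from \Cref{eq:tensorofsections} and the identity $\Gamma(d\varphi_1)\otimes\Gamma(d\varphi_2)=\Gamma(d(\varphi_1+\varphi_2))$ that this composition is exactly the embedded section $\Gamma^{n[l]}_{m\theta}$ and that the composition is transverse, so the induced grading is well defined.

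The key step — and the only genuinely non-formal one — is the additivity of grading shifts under this composition: replacing $\tilde\alpha_1^0$ by $\tilde\alpha_1^0+k_1$ and $\tilde\alpha_2^0$ by $\tilde\alpha_2^0+k_2$, with $k_1,k_2\in\Z$, changes the induced grading on $\Gamma^{n[l]}_{m\theta}$ from $\tilde\alpha^0$ to $\tilde\alpha^0+k_1+k_2$. I would justify this either by quoting the behaviour of gradings under geometric composition with a fixed graded correspondence from \cite[Section~3]{wehrheim2010quilted}, or, more concretely, by the phase computation already used in the proof of \Cref{lem:branestructureoncorrespondence}: with respect to the product quadratic volume form the phase of $\Gamma_{m\theta}\times\Gamma^{n[l]}$ is the pointwise product $\alpha_{\Gamma_{m\theta}}\cdot\alpha_{\Gamma^{n[l]}}$, so its grading is $\tilde\alpha_1+\tilde\alpha_2$ and shifts additively in $k_1,k_2$; and since $\Sigma$ was equipped with the constant grading $0$ (here $\dim K=2$ is even), composing with $\Sigma$ merely adds a fixed function to the phase lift of $\Gamma_{m\theta}\times\Gamma^{n[l]}$ along the (transverse) composition locus and therefore does not interfere with how the shift propagates. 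Hence the map $\Z^2\to\Z$ sending a pair of grading shifts on the factors to the resulting shift on $\Gamma^{n[l]}_{m\theta}$ is $(k_1,k_2)\mapsto k_1+k_2$.

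Finally, given any grading $\tilde\alpha$ on $\Gamma^{n[l]}_{m\theta}$, write $\tilde\alpha=\tilde\alpha^0+k$ for the unique $k\in\Z$; then the gradings $\tilde\alpha_1^0+k$ on $\Gamma_{m\theta}$ and $\tilde\alpha_2^0$ on $\Gamma^{n[l]}$ induce exactly $\tilde\alpha$ on the composition, which is the assertion of the Lemma. The hard part is thus concentrated in the additivity statement of the previous paragraph; once that is in place the argument is pure bookkeeping with the $\Z$-torsor structure on the set of gradings of a connected Lagrangian, and in fact shows that the pair of gradings on the factors inducing a prescribed grading on $\Gamma^{n[l]}_{m\theta}$ is unique up to the substitution $(\tilde\alpha_1,\tilde\alpha_2)\mapsto(\tilde\alpha_1+j,\tilde\alpha_2-j)$, $j\in\Z$.
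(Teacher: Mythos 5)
Your proof is correct and follows essentially the same route as the paper: the paper's proof of this Lemma is the single sentence ``This follows directly by tracing the construction of a grading on the geometric composition explained in \cite[Section~3]{wehrheim2010quilted},'' and your argument is precisely what one gets by carrying out that tracing. You make explicit what is left implicit there: that gradings of each connected section form a $\Z$-torsor, that shifting the gradings of the two factors by $(k_1,k_2)$ shifts the induced grading on the composition by $k_1+k_2$ (a product phase plus composition with the fixed graded correspondence $\Sigma$, whose constant grading is $0$ here since $\dim K=2$ is even), and that surjectivity of $(k_1,k_2)\mapsto k_1+k_2$ gives the claim. This matches the paper in content; you have simply written out the bookkeeping the paper defers to \cite{wehrheim2010quilted}.
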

\begin{proof}
    This follows directly by tracing the construction of a grading on the geometric composition explained in \cite[Section 3]{wehrheim2010quilted}.
\end{proof}

\begin{lem}\label{lem:pinfiberwisesum}
   The Pin structure induced on the fiberwise addition of any two tropical Lagrangians equipped with the preferred Pin structure is again the preferred Pin structure.
\end{lem}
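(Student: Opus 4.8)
The plan is to check that, for tropical Lagrangians, the recipe of \cite{wehrheim2015orientations} for transporting Pin structures across a geometric composition is nothing but the pullback along $\pi$ of a statement about vector bundles on $B$, and therefore automatically takes preferred Pin structures to preferred ones.

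First I would fix the tangent bundle identifications. For a tropical submanifold $V\subset B$ with projection $p_V:L_V\to V$, the flat torus-bundle structure gives a canonical exact sequence $0\to p_V^*N^*V\to TL_V\to p_V^*TV\to 0$; after a (contractible, hence homotopically canonical) choice of metric and of complement, this yields $TL_V\cong p_V^*(TV\oplus NV)=p_V^*(TB|_V)$, and the \emph{preferred Pin structure} on $L_V$ is the one pulled back from the chosen Pin structure on $B$ under this identification. One checks immediately that this agrees with the descriptions in \Cref{sec:tropcobandrefinedcyc} for fibers ($V$ a point, $TF$ the pullback of $T^*B$) and sections ($V=B$, $T\Gamma$ the pullback of $TB$), and for the Lagrangians $L_{f_{m\theta}}$ with the cobordism-induced structure of \Cref{lem:matchingPin} (alternatively one takes the pullback description as the definition). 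Likewise $\Sigma$ fibers over the diagonal copy of $B$ inside $B^3$ via the base coordinate $q$, and the Pin structure of \Cref{lem:branestructureoncorrespondence}, under $T\Sigma\cong TB\oplus T^*B\oplus T^*B$, is the pullback of the one on $B$.

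The crux is then the following. Suppose $L_1=L_{V_1}$ and $L_2=L_{V_2}$ with $V_1\pitchfork V_2$, so that $L_1\otimes L_2=L_{V_1\cap V_2}$ (at each $q\in V_1\cap V_2$ one has $N^*_qV_1+N^*_qV_2=N^*_q(V_1\cap V_2)$, a direct sum by transversality); the case where one factor is a section is a special, easier instance, since there $\otimes$ is induced by the global symplectomorphism of fiberwise translation, which commutes with $\pi$. I claim that the entire stable isomorphism of \cite{wehrheim2015orientations} defining the composed Pin structure on $L_1\otimes L_2$ is the pullback along $L_{V_1\cap V_2}\to V_1\cap V_2$ of a stable bundle isomorphism over $B$, restricting over $V_1\cap V_2$ to the canonical (stable) identification $T(V_1\cap V_2)\oplus N^*(V_1\cap V_2)\cong TB|_{V_1\cap V_2}$. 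The reason is that $\Sigma$ couples the torus-fiber coordinates only through the translation-invariant relation $p_3=p_1+p_2$: at a point of $L_{V_1\cap V_2}$ over $b$, the linear symplectic reduction along $\Sigma$ and the associated index/determinant lines depend only on the (co)tangent spaces of $B,V_1,V_2$ at $b$, not on the fiber coordinate, so the whole construction is invariant under the fiberwise torus action and descends to $B$. Granting this, transporting the Pin structure on $B$ through the induced bundle isomorphism over $B$ produces exactly the preferred Pin structure on $L_{V_1\cap V_2}$, while pulling it back produces the Pin structure induced on $L_1\otimes L_2$; hence the two coincide.

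The main obstacle is precisely this last step: one must unwind the construction in \cite{wehrheim2015orientations} — the presentation of $L_1\otimes L_2$ by linear reduction along the coisotropic lift of $\Sigma$ and the stable comparison of the relevant determinant lines — and verify torus-fiberwise invariance at each stage, in particular that the auxiliary bundles used in the stabilization may be chosen pulled back from $B$. A minor secondary point is to confirm that, in the case $V=V(f_{m\theta})$, the pullback description of the preferred Pin structure matches the cobordism-induced one (\Cref{lem:matchingPin}); if one adopts the pullback description as primary, only the trivial checks for fibers and sections against \Cref{sec:tropcobandrefinedcyc} remain.
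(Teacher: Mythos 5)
Your proof is conceptually the same as the paper's: both hinge on the observation that the tangent bundles of the tropical Lagrangians, of $\Sigma$, and of the diagonal are all canonically built out of copies of $TK$ and $T^*K$ pulled back along the fibration, so that the Wehrheim--Woodward Pin-transport reduces to a statement about bundles over $K$. The paper makes this concrete: starting from the isomorphism $TZ \oplus T\Delta_X \cong TL \oplus T\Sigma$, it identifies $TL \oplus T\Sigma \cong (TK)^{\oplus 3}\oplus(T^*K)^{\oplus 2}$ and observes that the inclusion $TZ \cong TK \hookrightarrow TL \oplus T\Sigma$ can be homotoped to the projection onto the first $TK$ summand, from which the claim is immediate. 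You instead phrase the reduction as a ``descent along $\pi$'' argument: the stable isomorphism of \cite{wehrheim2015orientations} is torus-fibre-invariant (because $\Sigma$ couples the $p_i$ only via $p_3 = p_1 + p_2$) and so is pulled back from a bundle isomorphism over $K$. This is a clean and arguably more conceptual way to say the same thing, and the special case where one factor is a section (fibrewise addition is then a global symplectomorphism covering $\mathrm{id}_K$) is handled correctly.

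That said, as you yourself flag, your argument has a genuine gap at the key step: you assert but do not verify that the entire determinant-line / stabilisation construction of \cite{wehrheim2015orientations} commutes with the torus-fibre action and hence descends to $K$ (including the possibility of choosing the stabilising bundles as pullbacks from $K$). The paper avoids having to argue torus-invariance of the abstract construction by instead exhibiting the relevant isomorphism and the homotopy explicitly in terms of the polarisation $T\mcal K \cong T\mcal P \oplus T\mcal P$. A secondary but real point you also flag: your ``pullback from $K$'' description of the preferred Pin structure needs to be reconciled with the cobordism-induced one on $L_{f_{m\theta}}$ from Lemma~\ref{lem:matchingPin}; if one takes the pullback description as primary, that compatibility check replaces the one you would otherwise need, but it cannot simply be dropped.
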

\begin{proof}
    Let us first recall how the geometric composition of Lagrangian correspondences inherits a Pin structure.
Given a Lagrangian correspondence $\Sigma\subset X^- \x Y$ and a Lagrangian $L \subset X$, the geometric composition is defined by $\Phi_\Sigma(L) = \pi_Y(L\x_{\Delta_X} \Sigma) \subset Y$, where $\Delta_X \subset X \x X$ is the diagonal (we assume that the intersection is transverse and that the map $\pi_Y$ restricts to an embedding).
Let $Z = L\x_{\Delta_X} \Sigma$, which is diffeomorphic to $\Phi_\Gamma(L)$ via $\pi_Y$.
Consider the isomorphism $TZ \oplus T\Delta_X \cong TL \oplus T \Sigma$.
It follows that Pin structures on $\Sigma$, $L$ and $\Delta_X$ determine one on $Z$ (and hence on $\Phi_\Sigma(L) \cong Z$).

In the present case, $X = \mcal K$ comes equipped with a polarisation $\mcal P$ given by the tangent space to the fibers, giving an isomorphism $T\mcal K \cong T\mcal P \oplus T\mcal P$.
Since fibers have a preferred Pin structure induced from a choice of Pin structure on the base,  $\mcal K$ (and thus $\Delta_{\mcal K}$) have a natural Pin structure.
One can then see that, with the choice of Pin structure for $\Sigma$ explained above, the fiberwise addition of two tropical Lagrangians equipped with the preferred Pin structure yields another Lagrangian with the preferred Pin structure (the reason being that the inclusion $TZ \cong TK \into TL \oplus T \Sigma \cong (TK)^{\oplus 3} \oplus (T^*K)^{\oplus 2}$ can be homotoped to the map that identifies $TZ$ with the first $TK$ factor on the right, hence the Pin structure induced on $Z$ is that of $K$).
\end{proof}

We now turn to the second issue, namely the admissibility of the surgery cobordisms.
The main observation is that all the Lagrangians we are considering are product-type when pulled back to the cover $T^4 = T^2 \x T^2 \to \mcal K$.
Furthermore,  the surgery cobordisms $V\subset \mcal K \x \C$ involved in \Cref{eq:jeffsurgerysections} pull back under $p: T^4 \x \C \to \mcal K \x \C$ to either
\begin{equation}\label{eq:cobordismisproducttype}
    p^{-1}(V) = S^1 \x W \subset T^2 \x T^2\x \C \quad \text{ or } \quad  p^{-1}(V) = W \x S^1 \subset T^2 \x \C \x T^2,
\end{equation}
where $W \subset T^2 \x \C$ is a surgery cobordism  in $T^2$ and $S^1 \subset T^2$ is the zero section.

\begin{lem}\label{lem:matchinggradings}
    The cobordism $V$ admits a grading.
\end{lem}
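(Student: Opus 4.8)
The plan is to show that the Maslov class $\mu_V \in H^1(V;\Z)$ vanishes; once this is known, any lift $\tilde\alpha_V$ of the phase map $\alpha_V$ is a grading on $V$, as required. I would deduce the vanishing of $\mu_V$ from the corresponding statement on the finite cover $p : T^4 \x \C \to \mcal K \x \C$, where \eqref{eq:cobordismisproducttype} exhibits the pullback of $V$ as a product. The elementary input that makes this bootstrap work is that $H^1(V;\Z)$ is torsion-free: by the universal coefficient theorem $H^1(V;\Z) \cong \Hom(H_1(V;\Z),\Z)$, since $\mathrm{Ext}(H_0(V;\Z),\Z)=0$ ($H_0$ of a manifold being a free abelian group) and $\Hom(-,\Z)$ is always torsion-free. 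For the finite covering $p$ of degree $d$ the transfer map satisfies $\mathrm{tr}\circ p^* = d\cdot\mathrm{id}$, so $p^*$ is injective on the torsion-free group $H^1(V;\Z)$. Hence it suffices to prove $p^*\mu_V = \mu_{p\inv(V)} = 0$.

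By \eqref{eq:cobordismisproducttype} we have $p\inv(V) = S^1 \x W$, up to reordering the two $T^2$ factors, with $S^1$ the zero section in one $T^2$ factor and $W \subset T^2 \x \C$ a surgery cobordism whose ends are straight circles in $T^2$. I would take the quadratic volume form on $\mcal K \x \C$ to be of product type --- the complexification of a factorizing section of $(\wedge^2 T^*K)^{\otimes 2}$ on the $\mcal K$ factor, times $(dz)^{\otimes 2}$ on $\C$ --- so that $p^*\eta^2$ is again a product volume form on $T^2 \x T^2 \x \C$. Since the phase map of a product Lagrangian with respect to a product volume form is the product of the phase maps, the K\"unneth decomposition gives $\mu_{p\inv(V)} = \mu_{S^1} \oplus \mu_W \in H^1(S^1;\Z)\oplus H^1(W;\Z)$. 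The first summand vanishes because $S^1$ is a straight geodesic, and we are reduced to showing $\mu_W = 0$, that is, to gradeability of the surface cobordism $W$ inside $T^2 \x \C$.

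All ends of $W$ are straight circles in $T^2$ (including the fibers $L_{\mathfrak f_{m\theta}}$ and the smoothed graph $\Gamma(d\mathfrak f_{m\theta})$, both of which have non-winding tangent direction, hence are graded), so $\mu_W$ restricts to zero on each end. When the surgery has a single neck, $W$ is a pair of pants, which deformation retracts onto a wedge of two of its boundary circles; hence $H_1(W;\Z)$ is carried by the ends and $\mu_W = 0$. In general $W$ picks up one extra handle per additional surgery neck, and here I would use that all the necks of Hicks' surgery along the graph of a convex function have the same local model, so that the phase of $W$ rotates by the same amount across each neck and the net rotation around each additional handle cancels; alternatively one can invoke the graded version of the surgery construction \cite{hicks2020tropical}, which produces $W$ already equipped with a grading since its inputs are graded. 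Either way $\mu_W = 0$, hence $\mu_{p\inv(V)} = p^*\mu_V = 0$, hence $\mu_V = 0$ and $V$ admits a grading. The main obstacle is precisely this last step --- controlling the phase map of $W$ over the surgery region, equivalently making precise that the surgery cobordism of graded Lagrangians in a surface is itself graded; the product-volume-form bookkeeping in the second paragraph is routine but should also be checked carefully.
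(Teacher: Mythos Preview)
Your approach is essentially identical to the paper's: both pull back to the product cover, factor the phase map, observe the $S^1$ factor has constant phase, and use injectivity of $p^*$ on the torsion-free group $H^1(V;\Z)$ to descend the vanishing of $\mu_{p^{-1}(V)}$ to $\mu_V$. The one difference is the substep $\mu_W = 0$: the paper simply cites \cite[Lemma~5.2]{haug2015lagrangian}, whereas you argue it directly --- your pair-of-pants argument is clean, but in the multi-neck case the ``same local model, net rotation cancels'' sketch (or the appeal to Hicks) is exactly the content of Haug's lemma, so you may as well cite it.
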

\begin{proof}
    Note that the phase map of $p\inv(V)$ is the product of the phase maps of $S^1$ and $W$.
    The former is constant and the latter is null-homotopic by \cite[Lemma 5.2]{haug2015lagrangian}; thus $p\inv(V)$ admits a grading.
    Since furthermore $\alpha_{p\inv(V)} =\alpha_V \circ p\restr{p\inv(V)}$ and $p\restr{p\inv(V)}:p\inv(V) \to V$ induces an injection on $H^1(-;\Z)$, the map $\alpha_V$ is also null-homotopic.    
    Thus $V$ admits a grading and, given a grading on $\Gamma_{m\theta}$ (resp. $\Gamma^{n[l]}$),  \Cref{eq:jeffsurgerysections} holds for $L_{f_{m\theta}}$ (resp. $L_{f^{nl}}$) and $\Gamma_0$ equipped with the induced grading from $V$.
    Note that Lagrangian cobordism classes are invariant under a grading change of $2k$, whereas changing the grading by $2k + 1$ modifies their class in the cobordism group by a factor of $-1$ (this can be seen using a product-type cobordism $\gamma \x L$, where $\gamma \subset \C$ is a C-shaped curve).
\end{proof}

\begin{lem}\label{lem:matchingPin}
    The cobordism $V$ admits a unique Pin structure restricting to the preferred one on its two section ends.
\end{lem}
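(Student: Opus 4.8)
The plan is to reduce the statement to a computation on the double cover $T^4 \x \C$, where everything is of product type, and then to use obstruction theory together with the connectivity of the relevant cobordism to pin down uniqueness.

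First I would observe, as in the proof of \Cref{lem:matchinggradings}, that the surgery cobordism $V \subset \mcal K \x \C$ pulls back under $p : T^4 \x \C \to \mcal K \x \C$ to a product $p\inv(V) = S^1 \x W$ (or $W \x S^1$), where $W \subset T^2 \x \C$ is a standard surgery cobordism between two straight circles and their connect-sum and $S^1$ is the zero section in the other $T^2$ factor. A Pin structure on $p\inv(V)$ is a Pin structure on $T(p\inv(V)) \oplus p^*E$, and since $w_2(B) = 0$ for $B = S^1 \x S^1$ we have $p^*E = 0$; thus we simply need a Pin structure on $T(p\inv(V)) = TS^1 \boxplus TW$. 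As $S^1$ is parallelizable and $W$ is an orientable surface with boundary (hence $w_2(W) = 0$ and in fact $H^2(W;\Z_2) = 0$), the bundle $TS^1 \boxplus TW$ is Pin, and moreover $H^1(p\inv(V);\Z_2) \cong H^1(S^1;\Z_2) \oplus H^1(W;\Z_2)$ is the torsor of Pin structures. The preferred Pin structures on the section and tropical-Lagrangian ends are, by construction, the product of the preferred Pin structure on the $S^1$ factor with the ones on $W$'s ends; so on the cover there is a unique Pin structure on $p\inv(V)$ restricting to these preferred structures on both ends, because $W$ is a connected surface whose two boundary circles carry a consistent choice (a product-type surgery cobordism in $T^2$ deformation retracts onto a graph, and a Pin structure on such a surface is determined by its restriction to any set of loops generating $H_1$; the two boundary circles, being isotopic in $W$, force the same restriction, and since the relevant cobordism class only sees this restriction one checks directly that it admits a compatible Pin structure, unique because $H^1(W, \partial W; \Z_2) = 0$).

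Next I would push this down to $\mcal K \x \C$. The covering map $p\restr{p\inv(V)} : p\inv(V) \to V$ is a double cover, hence induces an injection $p^* : H^1(V;\Z_2) \into H^1(p\inv(V);\Z_2)$ (with image the invariants under the deck transformation $\sigma$). A Pin structure on $V$ restricting to the preferred structures on the ends pulls back to a $\sigma$-invariant Pin structure on $p\inv(V)$ restricting to the (pulled-back) preferred structures on the ends; by the previous paragraph there is at most one such, so uniqueness on $V$ follows once existence is established. For existence, the obstruction to a Pin structure on $TV \oplus E\restr{V}$ is $w_2(V) + (\pi^* w_2(K))\restr{V} = w_2(V)$ (again $w_2(K) = 0$), and one checks that $V$ is an orientable $3$-manifold with boundary, so $w_2(V) = 0$ and Pin structures exist; among these, the torsor $H^1(V;\Z_2)$ acts, and I would fix the one agreeing with the preferred structure on, say, the $\Gamma_0$-end (which is possible since restriction $H^1(V;\Z_2) \to H^1(\Gamma_0;\Z_2)$ can be analyzed via the long exact sequence of the pair), then verify it also restricts correctly on the $\Gamma_{m\theta}$-end by comparing pullbacks to the cover, where we already know the matching holds.

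The main obstacle I anticipate is the \emph{existence} half on $\mcal K \x \C$ itself: showing not merely that $V$ admits some Pin structure, but that one of them restricts to the \emph{preferred} structure on \emph{both} ends simultaneously. The two ends $\Gamma_0$ and $\Gamma_{m\theta}$ (or $L_{f_{m\theta}}$) may carry a priori different constraints, and the difference of the two "preferred" choices, transported across $V$, is a class in $H^1(V;\Z_2)$ whose vanishing needs to be checked. The clean way to settle this is exactly the cover argument: pull everything back to $p\inv(V) = S^1 \x W$, where the preferred structures are manifestly products and $W$ is connected, so the discrepancy class already vanishes upstairs; since $p^*$ is injective on $H^1(-;\Z_2)$, it vanishes downstairs too. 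I would therefore organize the proof so that the cover computation does the real work and the descent is formal.
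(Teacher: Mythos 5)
Your strategy of reducing everything to the double cover $T^4\times\C$, where the cobordism is a genuine product $S^1\times W$, is a reasonable alternative to the paper's direct approach (the paper instead presents $V$ as a mapping torus $([0,1]\times W)/{\sim}$ and classifies Pin structures by lifting the gluing reflection to $Pin_3$, then computes the explicit restriction map $H^1(V;\Z_2)\to H^1(\partial V;\Z_2)$). However, your descent step contains a genuine error.

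You claim that the covering map $p|_{p^{-1}(V)}:p^{-1}(V)\to V$ induces an \emph{injection} $p^*:H^1(V;\Z_2)\into H^1(p^{-1}(V);\Z_2)$. This is false for a \emph{nontrivial} double cover in $\Z_2$-coefficients. By the Gysin sequence of the associated $S^0$-bundle,
$$
H^0(V;\Z_2)\xra{\cup\, w}H^1(V;\Z_2)\xra{p^*}H^1(p^{-1}(V);\Z_2),
$$
the kernel of $p^*$ is generated by the class $w\in H^1(V;\Z_2)$ classifying the cover, and $w\neq 0$ precisely because the cover is nontrivial. (The simplest instance: for the connected double cover $S^1\to S^1$, $p^*$ on $H^1(-;\Z_2)$ is the zero map.) With this injectivity gone, your uniqueness argument---''at most one such on the cover, so at most one downstairs''---does not follow: two distinct Pin structures on $V$ differing by $w$ pull back to the same Pin structure on $p^{-1}(V)$. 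Your existence argument in the last paragraph has the same hole, since ''verify it also restricts correctly on the other end by comparing pullbacks to the cover'' again requires knowing that nothing is lost in passing to the cover.

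The argument \emph{can} be repaired, but this requires an extra observation you do not make: the covering class $w$ restricts to the covering class of $T^2\to K$ on each Klein-bottle end, which is nonzero; hence any $\beta\in\ker p^*=\{0,w\}$ with $\beta|_{\partial V}=0$ must be $0$. Combined with uniqueness on the cover (which is correct), this does give $\ker\bigl(H^1(V;\Z_2)\to H^1(\Gamma^{n[l]}\sqcup\Gamma_0;\Z_2)\bigr)=0$, recovering the conclusion. But as written, the claimed injectivity of $p^*$ is the load-bearing step, and it is wrong. (A smaller issue: you assert $H^1(W,\partial W;\Z_2)=0$, but by Lefschetz duality this group is $H_1(W;\Z_2)\cong\Z_2^2$ for a pair of pants; the injectivity of the restriction $H^1(W;\Z_2)\to H^1(\partial W;\Z_2)$ you actually want holds for a different reason, namely that $H^2(W;\Z_2)=0$ forces the connecting map $H^1(\partial W)\to H^2(W,\partial W)\cong\Z_2$ to be surjective and a dimension count then shows the restriction is injective.) The paper avoids these pitfalls entirely by working directly with $V$.
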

\begin{proof}

Let us analyze the first case in \Cref{eq:cobordismisproducttype} (the second is similar but simpler).
The topology of $W$ is that of a pair of pants. 
In particular, $TW$ is trivial and it admits a canonical Pin structure, which in turn induces one on $[0,1] \x W$. 
Let $t$ be a coordinate on $[0,1]$ and $(p,z)$ be coordinates on $W \subset T^2 \x \C$.
Then giving a Pin structure on 
$$
V \simeq \frac{[0,1] \x W }{(t,p,z)\sim (t+1,-p,z)}
$$
amounts to choosing a lift to $Pin_3$ of the linearisation of the gluing map. 
One can see that in a suitable trivialisation of $TW$, the linearization of the gluing map takes the form $(\partial_u,\partial_v,\partial_t) \mapsto (\partial_u,-\partial_v,\partial_t)$, which is a reflection along the hyperplane $\partial_v = 0$. 
Recalling that lifts of reflections $R \in O_n$ to $Pin_n$ are identified with co-orientations of the fixed hyperplane,  the two choices of Pin structure above can be identified with the vectors $\pm \partial_v$.
Given such a choice, we equip the ends of the cobordism with the restricted Pin structure. 
Note that the induced Pin structure is the same on all three ends.

Lastly, recall that Pin structures on  $V$ are a torsor over $H^1(V;\Z_2) \cong \Z_2^2 \oplus \Z_2$. 
The restriction map $H^1(V;\Z_2) \to H^1(\partial V;\Z_2) \cong \Z_2^2\oplus \Z_2^2\oplus \Z_2^2$ is given by 
\begin{equation}\label{eq:restrictionoftorsor}
    (a_1,a_2,b) \mapsto (a_1,b) \oplus (a_2,b) \oplus (a_1 + a_2,b)
\end{equation}
where we have decomposed $\partial V = \sqcup_{i=1}^3 Z_i$ as a disjoint union of its three boundary Klein bottles.
Since the canonical Pin structure on $W \x [0,1]$ together with a choice $\pm \partial_t$ induces the same Pin structure on the three ends, it follows from this and \Cref{eq:restrictionoftorsor} that there exists a unique Pin structure on $V$ restricting to the preferred one on $\Gamma^{n[l]}$ and $\Gamma_0$.
\end{proof}

\begin{lem}\label{lem:matchinglocsyst}
    Given a local system  $\eta = \eta_\Z \otimes\eta_{\Z_2}: H_1(\Gamma^{n[l]}_{m\theta}) \to G$ on $\Gamma^{n[l]}_{m\theta}$, one can put local systems on $L_{f_{m\theta}}$ and $\Gamma_0$ and the trivial local system on $L_{f^{nl}}$  such that \Cref{lem:cobsectionstogenerators} holds.
\end{lem}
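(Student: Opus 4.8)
The plan is to track the $G$-local system through the two operations that enter \Cref{lem:cobsectionstogenerators}, namely the fibrewise sum $\otimes$ and the surgery cobordisms of \Cref{eq:jeffsurgerysections}; gradings and Pin structures have already been handled in Lemmas \ref{lem:gradingfiberwisesum}, \ref{lem:pinfiberwisesum}, \ref{lem:matchinggradings} and \ref{lem:matchingPin}.

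\emph{Fibrewise sum.} By \cite{subotic} the local system induced on a geometric composition $L_1\otimes L_2=\Phi_\Sigma(L_1\x L_2)$ is the tensor product of the pullbacks of the local systems on $L_1$ and $L_2$ along the two natural maps out of $Z=(L_1\x L_2\x\mcal K)\cap\Sigma$. For Lagrangian sections all of these maps are diffeomorphisms which, post-composed with $\pi$, coincide with the bundle projection to $K$; hence a local system on $\Gamma^{n[l]}_{m\theta}=\Gamma_{m\theta}\otimes\Gamma^{n[l]}$, viewed as a homomorphism $H_1(K)\to G$, is the pointwise product of the homomorphisms carried by $\Gamma_{m\theta}$ and $\Gamma^{n[l]}$. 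Writing $H_1(K)\cong\Z\oplus\Z_2$ with $\Z$-summand the $dx$-loop and $\Z_2$-summand the $dy$-loop, I would put the $\Z$-part $\eta_\Z$ on $\Gamma_{m\theta}$ and the $2$-torsion part $\eta_{\Z_2}$ on $\Gamma^{n[l]}$; the product formula then recovers $\eta$. Keeping $\eta_{\Z_2}$ on the $\Gamma^{n[l]}$-factor is what will eventually place it on the zero section, as needed in \Cref{cor:generatorscobhom}.

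\emph{Surgery cobordisms.} It then remains to push $\eta_\Z$ and $\eta_{\Z_2}$ through the cobordisms $\Gamma(df_{m\theta})\sim L_{f_{m\theta}}+\Gamma_0$ and $\Gamma(df^{nl})\sim L_{f^{nl}}+\Gamma_0$. The essential input is the product description \Cref{eq:cobordismisproducttype}: the pullback of either cobordism to $T^4\x\C$ is a product $W\x\Gamma_0$ (or $\Gamma_0\x W$) with $W\subset T^2\x\C$ a standard surgery cobordism, a compact connected surface with boundary; every boundary circle of such a surface is a primitive class in $H_1(W;\Z)$, hence a direct summand, so the restriction to the section end $H^1(W\x\Gamma_0;G)\to H^1(\partial_{\mathrm{sec}}W\x\Gamma_0;G)$ is split surjective. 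Performing this $\Z_2$-equivariantly for the deck transformation of $T^4\x\C\to\mcal K\x\C$ and descending, $\eta_\Z$ extends over the first cobordism; tracing which loops of the section end survive into the other ends — the $dx$-loop of $\Gamma(df_{m\theta})$ becomes the fibre loop of $L_{f_{m\theta}}$ while the $dy$-loop persists — shows that the induced system on $L_{f_{m\theta}}$ records $\eta_\Z$ and the one on the $\Gamma_0$-end is a $\Z_2$-system. For the second cobordism the loop carrying $\eta_{\Z_2}$ bounds in the $L_{f^{nl}}$-ends (which are attached along circles contractible in $W$ in the relevant direction), so the induced local system there is trivial while $\eta_{\Z_2}$ is absorbed by the zero section; this is precisely the assertion of the lemma. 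Together with the grading and Pin statements, \Cref{lem:cobsectionstogenerators} then holds as a relation of $G$-branes.

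\emph{Main obstacle.} The delicate step is the descent: one must check that the extensions of local systems over the product cobordisms $W\x\Gamma_0$ can be chosen compatibly with the covering group of $T^4\x\C\to\mcal K\x\C$ — whose generator combines the half-period translation in $x_1$ with the elliptic involution in $(x_2,y_2)$ — so that they descend to $V\subset\mcal K\x\C$; equivalently, that a retraction of $W$ onto its section-end circle can be taken equivariant for this generator. Carrying this out, together with the bookkeeping of which boundary circles of $W$ correspond to which ends and of how the multiplicity in $L_{f_{m\theta}}=m\,L_{C_1}$ enters, is the bulk of the argument; everything else reduces to the preceding lemmas.
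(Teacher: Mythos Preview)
Your overall plan---handle the fibrewise sum via Subotic's tensor-product rule, then push local systems through the two surgery cobordisms---matches the paper's, but your execution diverges at the first step in a way that creates unnecessary work and a gap.

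\textbf{Where you differ.} You split $\eta$ as $\eta_\Z$ on $\Gamma_{m\theta}$ and $\eta_{\Z_2}$ on $\Gamma^{n[l]}$. The paper instead places the \emph{entire} local system $\eta=\eta_\Z\otimes\eta_{\Z_2}$ on $\Gamma_{m\theta}$ and the trivial system on $\Gamma^{n[l]}$, writing
\[
(\Gamma^{n[l]}_{m\theta},\eta)=(\Gamma_{m\theta},\eta)\otimes(\Gamma^{n[l]},\mathbf 1).
\]
This makes the second surgery cobordism carry only trivial local systems, so $(\Gamma^{n[l]},\mathbf 1)\sim(\Gamma_0,\mathbf 1)+(L_{f^{nl}},\mathbf 1)$ holds automatically and $L_{f^{nl}}$ receives the trivial system for free. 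All the content is then concentrated in the first surgery, which the paper handles not by the product description and equivariant descent you propose, but by a direct Mayer--Vietoris computation of $H_1$ of the cobordism $V\subset\mcal K\times\C$ itself:
\[
H_1(V)\cong\frac{H_1(\Gamma_{m\theta})\oplus H_1(\Gamma_0)}{v\sim w}\oplus\Z^{m-1},
\]
with $v,w$ the $\Z_2$-torsion generators. From this one reads off exactly which extensions exist and picks $h=\mathbf 1$ on $\Gamma_0$ and $\nu_i$ on the components of $L_{f_{m\theta}}$ with $\prod_i\nu_i=\eta_\Z$.

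\textbf{The gap in your approach.} Having placed $\eta_{\Z_2}$ on $\Gamma^{n[l]}$, you must now argue that after the second surgery it lands on $\Gamma_0$ and becomes trivial on $L_{f^{nl}}$. Your justification---that the loop carrying $\eta_{\Z_2}$ ``bounds in the $L_{f^{nl}}$-ends''---is not established: the $y$-loop is precisely the direction of this surgery, and the relation among boundary circles in the pair of pants only says their monodromies multiply to $1$, not that any particular one is forced to be trivial. You would need an $H_1$ computation of the second cobordism (in $\mcal K\times\C$, where the $y$-direction is the non-orientable one) analogous to the paper's for the first, together with the equivariant-descent argument you flag as the main obstacle. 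The paper sidesteps all of this: by putting $\mathbf 1$ on $\Gamma^{n[l]}$ there is nothing to check, and the Mayer--Vietoris computation is done once, downstairs, with no reference to the cover.
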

\begin{proof}
Let us first recall how the construction in \cite{subotic} interacts with local systems.
Let $\Sigma \subset X^- \x Y$ be a  Lagrangian correspondence and consider a Lagrangian $L \subset X$ equipped with a local system $\eta$.
Then, assuming $\pi_{Y}: L_1 \x_{X} \Sigma \to Y$ is an embedding, one equips the Lagrangian $\Phi_\Sigma(L) = \pi_{Y}(L_1 \x_{X} \Sigma)$ with the local system given by the composition 
$$
\pi_1(\Phi_\Sigma (L)) \xrightarrow{(\pi_{Y}^{-1})_\#} \pi_1(L_1 \x_{X} \Sigma) \xrightarrow{(p_1\circ i)_\#} \pi_1(L_1) \xrightarrow{\eta} G
$$
where $i: L_1 \x_{X} \Sigma \into L_1 \x \Sigma$ is the inclusion and $p_1 : L_1 \x \Sigma \to L_1$ is the projection.

The above construction shows that, given any local system $\eta$ on $\Gamma^{n[l]}_{m\theta}$, one has
$$
(\Gamma^{n[l]}_{m\theta},\eta) = (\Gamma_{m\theta},\eta) \otimes (\Gamma^{n[l]},\mathbf 1),
$$
Here, in the right-hand side we denote still by $\eta$ the  local systems on $\Gamma_{m\theta}$ induced by the canonical diffeomorphism between $\Gamma^{n[l]}_{m\theta}$ and $\Gamma_{m\theta}$, and we denote by  $\mathbf{1}$  the trivial local system. 

We first study the potential restrictions on local systems for the existence of a cobordism $V$ between $\Gamma_{m\theta}$, $\Gamma_0$ and $L_{f_{m\theta}}$ as in \Cref{eq:jeffsurgerysections}.
One can use a Mayer-Vietoris decomposition of the cobordism $V$ between $\Gamma_{m\theta}$, $\Gamma_0$ and $L_{f_{m\theta}}$ to show that 
$$
H_1(Z) \cong 
\frac{H_1(\Gamma_{m\theta}) \oplus H_1(\Gamma_0)}{v \sim w} 
\oplus
\Z^{m-1}
$$
where $v,w \in \Z_2$ are the generators of the torsion factor of $H_1(\Gamma_{m\theta})$ and $H_1(\Gamma_0)$ respectively.
The last factor $\Z^{m-1}$ can be identified with cycles living in the conormal direction of $m-1$ of the Lagrangians in the surgery $L_{f_{m\theta}} \cong \sqcup_{i=1}^m N^*S^1 / N^*_\Z S^1$.
Hence, given the local system  $\eta = \eta_\Z \otimes \eta_{\Z_2} : H_1(\Gamma_{m\theta}) \to G$, \Cref{eq:jeffsurgerysections} can be upgraded to
$$
(\Gamma_{m\theta}, \eta_\Z \otimes \eta_{\Z_2}) \sim 
(\Gamma_0,h\otimes \eta_{\Z_2})
+
(L_{f_{m\theta}}, \oplus_{i=1}^m (\nu_i  \otimes \eta_{\Z_2})),
$$
where $h: H_1(\Gamma_0) \to G$ is a local system that factors through $H_1(\Gamma_0)/\Z_2$ and $\nu_i$ are local systems on the $i$-th copy of $L_{f_{m\theta}}$ such that $\eta_\Z(v) = h(v) \prod_i \nu_i(v_i)$.
Here, $v_i$ are the generators of the fiber direction of $L_{f_{m\theta}}$ and $v$ is the generator of $\Z \subset \Z \oplus \Z_2 \cong H_1(\Gamma_{m\theta}) \cong H_1(\Gamma_0)$.

On the other hand, there is clearly a cobordism relation
$$
(\Gamma^{n[l]},\mathbf{1})
\sim
(\Gamma_0, \mathbf{1})
+
(L_{f^{n[l]}}, \mathbf{1}).
$$
Choosing $h = \mathbf 1$ and $\nu_i \in G$ satisfying the relation $\eta_\Z = \prod_i \nu_i$, we get  cobordism relations compatible with local systems
\begin{align*}
\begin{split}
    (\Gamma_{m\theta}^{n[l]},\eta) &=(\Gamma_{m\theta},\eta_\Z \otimes \eta_{\Z_2}) \otimes (\Gamma^{n[l]},\mathbf 1)\\
    &\sim ( 
    (L_{f_{m\theta}}, \oplus_{i=1}^m (\nu_i  \otimes \eta_{\Z_2}))
    +
    (\Gamma_0,\eta_{\Z_2})
    )
    \otimes 
    ( 
    (L_{f^{n[l]}}, \mathbf{1})
    +
    (\Gamma_0, \mathbf{1})
    )\\
    &=(L_{ f_{m\theta}}\otimes L_{f^{n[l]}},\oplus_{ij}\nu_i) 
    + (L_{f_{m\theta}},\oplus_i(\nu_i\otimes\eta_{\Z_2})) 
    + (L_{f^{n[l]}},\mathbf 1 ) 
    + (\Sigma_0,\eta_{\Z_2}).
\end{split}
\end{align*}
\end{proof}

\begin{lem}\label{lem:surgerycobordismisunobstructed}
    The pull back to $T^4 \x \C$ of the cobordism $V$ is tautologically unobstructed. 
\end{lem}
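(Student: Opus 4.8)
The plan is to reduce tautological unobstructedness of $p^{-1}(V)$ to the corresponding statement for the two-dimensional surgery cobordism $W \subset T^2 \times \C$, using the product structure recorded in \eqref{eq:cobordismisproducttype}. First I would recall that by \eqref{eq:cobordismisproducttype} we have $p^{-1}(V) = S^1 \times W$ (or $W \times S^1$), where $S^1 \subset T^2$ is the zero section and $W \subset T^2 \times \C$ is a Biran--Cornea surgery cobordism between two straight Lagrangians and their surgery. The key point is that tautological unobstructedness is well-behaved under products: if $W$ bounds no non-constant $J_W$-holomorphic disks for some almost complex structure $J_W$ on $T^2 \times \C$, and $S^1$ bounds no non-constant $J_{T^2}$-holomorphic disks in $T^2$ (which holds for the standard integrable $J$, since $S^1$ is exact in $T^2$), then $S^1 \times W$ bounds no non-constant $(J_{T^2} \times J_W)$-holomorphic disks in $T^2 \times T^2 \times \C$. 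This is because the projections of a holomorphic disk with boundary on the product to each factor are themselves holomorphic disks with boundary on the respective Lagrangian, hence constant; so the original disk is constant.

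Thus the heart of the argument is to show that the surgery cobordism $W \subset T^2 \times \C$ is tautologically unobstructed. Here I would invoke the fact that Biran--Cornea's surgery cobordisms in dimension two are quasi-exact: the relevant surgery trace cobordism in $T^2 \times \C$ between two embedded curves and their surgery admits a compatible almost complex structure bounding no non-constant holomorphic disks. One way to see this is that $W$ is an exact Lagrangian cobordism for a suitable primitive (the curves $S^1 \times \{a\}$, $\{b\} \times S^1$ in $T^2$ are exact once one passes to the universal cover, and the surgery handle is built to be exact in the standard Darboux model, cf. \cite[Section 6]{biran2013lagrangian}), and exact Lagrangians bound no holomorphic disks for any compatible $J$. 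Alternatively one cites \cite[Lemma 5.2]{haug2015lagrangian} / the discussion in \Cref{sec:lagcobT2}, where the relevant two-dimensional surgery cobordisms are already used as quasi-exact cobordisms in the computation of $\Cob(T^2)$.

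I then assemble the two ingredients: choosing $J$ on $T^4 \times \C$ of the product form $J_{T^2} \times J_W$ (which one can arrange to agree with a split form $J_{T^4} \times j_\C$ near infinity, as required for the grading construction), the product $p^{-1}(V) = S^1 \times W$ bounds no non-constant $J$-holomorphic disks, which is exactly the assertion. The main obstacle I anticipate is purely bookkeeping: making sure the product almost complex structure can be chosen compatibly with the symplectic form on $T^4 \times \C$ \emph{and} with the cylindrical-at-infinity condition needed elsewhere (e.g. in \Cref{lem:matchinggradings}); but since the surgery cobordism $W$ is already product-type outside a compact set and the standard $J$ on the $\C$-factor is the obvious choice, this is routine. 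One should also note that tautological unobstructedness of $p^{-1}(V)$ is exactly the condition required in \Cref{def:cobgroupKk} (where cobordisms are asked to be tautologically unobstructed only after pulling back to $T^4 \times \C$), so no statement about $V$ itself in $\mcal K \times \C$ is needed.
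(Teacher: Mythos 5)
Your overall strategy matches the paper's: decompose $p^{-1}(V) = S^1 \times W$ via \eqref{eq:cobordismisproducttype}, then argue separately in each factor using a product almost complex structure, with the $S^1$ factor forcing the projected disk to be null-homotopic hence constant. The difference is in how you handle $W$. Your primary argument — that $W$ is an \emph{exact} Lagrangian cobordism — does not quite make sense as stated, because $T^2 \times \C$ does not carry an exact symplectic form; ``exact Lagrangian'' requires a primitive of $\omega$ on the ambient manifold. You could salvage this by lifting to $\R^2 \times \C$, where the relevant Lagrangians and the surgery handle are exact and any disk lifts (since $D^2$ is simply connected), but you have not spelled this out and it is a genuinely different argument from the paper's. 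The paper instead invokes \cite[Lemma 5.2]{haug2015lagrangian}, which shows $W$ has Maslov class zero, and then appeals to \emph{genericity}: since $c_1(T^2 \times \C) = 0$, every disk with boundary on a Maslov-zero Lagrangian has virtual dimension $n-3+\mu = -1$, so for generic $J$ there are no non-constant holomorphic disks. You do cite Haug's lemma as an alternative, but frame it as giving quasi-exactness directly; be aware that the lemma gives Maslov index zero, and the step from there to no disks requires the genericity/dimension argument rather than being automatic. Your remarks about arranging the product $J$ to be compatible and cylindrical at infinity are correct and a reasonable thing to check, though the paper does not dwell on it.
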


\begin{proof}
    Recall from \Cref{eq:cobordismisproducttype} that, up to reordering of the factors, $p\inv(V)$ is of the form
    $$
    p\inv(V) = S^1 \x W \subset T^2 \x (T^2 \x \C)
    $$
    for $S^1 \subset T^2$  a non-contractible curve and $W$  a surgery cobordism between Lagrangians circles in $T^2$ (in fact, it is always the case for us that one of the Lagrangians is the $0$-section and the other is the graph of a tropical function).
    It was proved in \cite[Lemma 5.2]{haug2015lagrangian} that $W$ has Maslov index 0, thus it bounds no non-constant $J$-holomorphic disks for a generic choice of almost complex structure $J$ on $T^2 \x \C$.
    On the other hand, the projection to the first factor of any disk must be null-homotopic for topological reasons; furthermore, choosing the almost complex structure on $T^2 \x (T^2 \x \C)$ to be of product type $j_{T^2}\oplus j_{T^2\x \C}$ the projection of the disk is a null-homotopic pseudo-holomorphic disk, hence constant.
    Thus $p\inv(V)$ does not bound any non-constant pseudo-holomorphic disks for a complex structure of the form $j_{T^2}\oplus J$.
    
\end{proof}
\begin{rmk}
    The almost complex structure for which $W$ does not bound any holomorphic disk need not be invariant under the covering group.
    Hence, we cannot say that $V$ is tautologically unobstructed (the almost complex structure does not descend to $\mcal K$).
    This is the reason why we need to work with cobordisms with the weaker property that their pullback is tautologically unobstructed.
    Nonetheless, we argue in \Cref{ap:multivaluedFuk} that such cobordisms still induce iterated cone decompositions between their ends (see \Cref{prop:conesfromTOpullbacks}).
\end{rmk}

Summarizing, we have proved the following Proposition:
\begin{prop}\label{prop:admissibilitysurgery}
    Consider a Lagrangian $G$-brane supported on $\Gamma_{m\theta}$ equipped with the preferred Pin structure and an arbitrary grading and local system.
    There exists a Lagrangian cobordism $V \subset \mcal K \x \C$ between $\Gamma_{m\theta}, \Gamma_0$ and $L_{f_{m\theta}}$ such that:
    \begin{enumerate}
        \item $V$ admits a $G$-brane structure restricting to the given $G$-brane structure on $\Gamma_{m\theta}$;
        \item the restriction of the Pin structure of $V$ to $\Gamma_0$ and $L_{f_{m\theta}}$ is the preferred one;
        \item the pullback of $V$ to $T^4 \x \C$ is tautologically unobstructed.
    \end{enumerate}
    A similar statement holds for the Lagrangians $\Gamma^{n[l]}, \Gamma_0$ and $L_{f^{nl}}$.
\end{prop}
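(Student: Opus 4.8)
The plan is to build $V$ explicitly and then read each of the three properties off a lemma already established in this section. First I would take $V$ to be the surgery cobordism of \Cref{prop:jeffsurgery}/\Cref{eq:jeffsurgerysections}, whose ends are $\Gamma(df_{m\theta})$, $\Gamma_0$ and $L_{f_{m\theta}}$, and concatenate it with the suspension cobordism of the Hamiltonian isotopy of \Cref{cor:Hamiltonianisotopicsections}, so that one end becomes $\Gamma_{m\theta}$ itself. The structural point I would then invoke, which underlies all three claims, is \Cref{eq:cobordismisproducttype}: since $f_{m\theta}$ depends only on the $x$-coordinate of $K$, the pullback $p\inv(V)$ under the double cover $p:T^4\x\C\to\mcal K\x\C$ is, up to reordering the factors, of the form $S^1\x W$, where $S^1\subset T^2$ is the zero section and $W\subset T^2\x\C$ is a standard surgery cobordism between a circle and the graph of a smoothed one-variable tropical polynomial.

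With $V$ so chosen, property (3) is \Cref{lem:surgerycobordismisunobstructed}: one picks an almost complex structure $J$ on $T^2\x\C$ for which $W$ (of Maslov index zero by \cite[Lemma 5.2]{haug2015lagrangian}) bounds no non-constant holomorphic disk, and the product structure $j_{T^2}\oplus J$ on $T^4\x\C$ then leaves $p\inv(V)=S^1\x W$ tautologically unobstructed, because the first factor of any such disk is a null-homotopic holomorphic disk in $T^2$, hence constant. For the Pin structure, \Cref{lem:matchingPin} gives a \emph{unique} Pin structure on $V$ restricting to the preferred Pin structure on its two section ends $\Gamma_{m\theta}$ and $\Gamma_0$; since by definition (see the footnote to \Cref{cor:generatorscobhom}) the preferred Pin structure on $L_{f_{m\theta}}$ is exactly the one this cobordism induces, this simultaneously proves (2) and supplies the Pin part of (1).

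It then remains to upgrade this to a full $G$-brane structure on $V$ extending the prescribed grading and local system on $\Gamma_{m\theta}$, completing (1). The grading comes from \Cref{lem:matchinggradings}: the phase map of $p\inv(V)=S^1\x W$ is the product of the constant phase of $S^1$ and the null-homotopic phase of $W$, so $\alpha_{p\inv(V)}$, and hence $\alpha_V$ (using that $p\restr{p\inv(V)}$ injects $H^1(V;\Z)$ into $H^1(p\inv(V);\Z)$), is null-homotopic, and I would then choose the grading on $V$ restricting to the given one on $\Gamma_{m\theta}$, inducing gradings on $\Gamma_0$ and $L_{f_{m\theta}}$. The local system is produced by the Mayer--Vietoris computation inside the proof of \Cref{lem:matchinglocsyst}: writing the first homology of the surgery region as $(H_1(\Gamma_{m\theta})\oplus H_1(\Gamma_0))/(v\sim w)\oplus\Z^{m-1}$, one extends $\eta=\eta_\Z\otimes\eta_{\Z_2}$ over $V$ by distributing its $\Z$-monodromy among the $m-1$ conormal cycles and taking the trivial local system in that slot on $\Gamma_0$, so that it restricts to $\eta_{\Z_2}$-twisted local systems on the remaining two ends. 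Bundling the grading, the Pin structure and this local system yields the required $G$-brane structure on $V$; the argument for $\Gamma^{n[l]},\Gamma_0,L_{f^{nl}}$ is verbatim, with $W$ now living in the other $T^2$-factor.

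The step I expect to be the main obstacle --- and the reason I would organize the proof around \Cref{lem:matchingPin} --- is the Pin-structure matching: one must track the lift to $Pin_3$ of the linearization of the gluing map in the mapping-torus presentation $V\simeq([0,1]\x W)/((t,p,z)\sim(t+1,-p,z))$, identify lifts of the reflection $\partial_v\mapsto-\partial_v$ with co-orientations $\pm\partial_v$ of the fixed hyperplane, and then use the explicit restriction map $H^1(V;\Z_2)\to H^1(\partial V;\Z_2)$, $(a_1,a_2,b)\mapsto(a_1,b)\oplus(a_2,b)\oplus(a_1+a_2,b)$, to check that exactly one Pin structure on $V$ restricts to the preferred one on the two section ends. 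By contrast, once the product description \Cref{eq:cobordismisproducttype} is in hand, the unobstructedness and grading inputs are comparatively soft.
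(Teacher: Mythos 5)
Your proposal is correct and follows essentially the same approach as the paper: the paper itself presents \Cref{prop:admissibilitysurgery} as a summary of \Cref{lem:matchinggradings,lem:matchingPin,lem:matchinglocsyst,lem:surgerycobordismisunobstructed}, which are exactly the four lemmas you assemble, all hinging on the product decomposition of $p^{-1}(V)$ in \Cref{eq:cobordismisproducttype}. The one refinement you add is explicitly concatenating the surgery cobordism with a suspension cobordism so that the end is $\Gamma_{m\theta}$ rather than the merely Hamiltonian isotopic $\Gamma(df_{m\theta})$---a bookkeeping point the paper elides.
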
 
\appendix

    \section{Homology with local coefficients}\label{ap:homology}
Let $\mcal K$ be a symplectic bielliptic surface and $\pi:\mcal K \to K$ its Lagrangian torus fibration over a tropical Klein bottle.
Denote by $\det_\Z K = \wedge^2 T^*_\Z K$ the (integral) orientation bundle of $K$ and let $\xi := \pi^* \det_\Z K$ be the local system on $\mcal K$ obtained by pullback.
In this Appendix we compute the second singular homology $H_2(\mcal K;\xi)$ of $\mcal K$ with coefficients in the local system $\xi$.

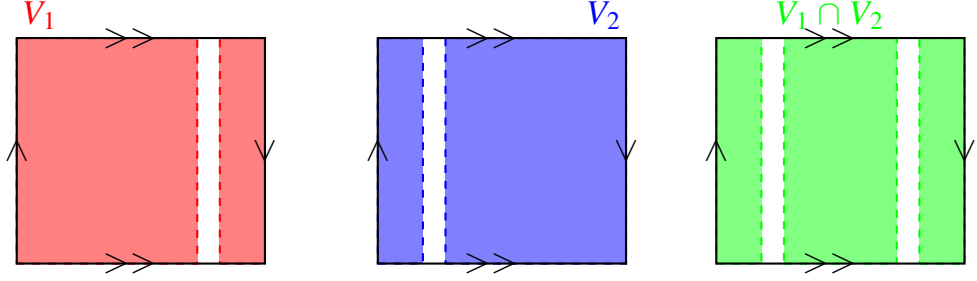
\begin{figure}
    \centering
    \begin{tikzpicture}[scale = 0.6, transform shape, thick]

        \begin{scope}
            \filldraw[color = red, fill = red!50, dashed]  (-3,3) node (v1) {} rectangle (1,-2);
            \filldraw[color = red, fill = red!50, dashed]  (1.5,3) rectangle  (2.5,-2) node (v2) {};
            
            \draw  (v1) rectangle(v2);
            
            \node[scale = 2, rotate = 90] at (-3,0.5) {$>$};
            \node[scale = 2, rotate = 90] at (2.5,0.5) {$<$};
            
            \node[scale = 2] at (-0.5,3) {$>>$};
            \node[scale = 2] at (-0.5,-2) {$>>$};
            
            \node[color = red, scale = 2] at (-2.5,3.5) {$V_1$}; 
        \end{scope}
        
        \begin{scope}[shift={(8,0)}]
            \filldraw[color=blue, fill=blue!50, dashed]   (-3,3) node (v3) {} rectangle (-2,-2);
            \filldraw[color=blue, fill=blue!50, dashed]  (-1.5,3) rectangle (2.5,-2) node (v4) {};
            
            \draw  (v3) rectangle(v4);
            
            \node[scale = 2, rotate = 90] at (-3,0.5) {$>$};
            \node[scale = 2, rotate = 90] at (2.5,0.5) {$<$};
            
            \node[scale = 2] at (-0.5,3) {$>>$};
            \node[scale = 2] at (-0.5,-2) {$>>$};
            
            \node[color = blue, scale = 2] at (2,3.5) {$V_2$};
        \end{scope}
        
        \begin{scope}[shift={(15.5,0)}]
            \filldraw[color=green, fill=green!50, dashed]  (-3,3) node (v1) {} rectangle (-2,-2);
            \filldraw[color=green, fill=green!50, dashed]  (1.5,3) rectangle  (2.5,-2) node (v2) {};
            \filldraw[color=green, fill=green!50, dashed]  (-1.5,3) rectangle (1,-2);
            
            \draw  (v1) rectangle(v2);
            
            \node[scale = 2, rotate = 90] at (-3,0.5) {$>$};
            \node[scale = 2, rotate = 90] at (2.5,0.5) {$<$};
            
            \node[scale = 2] at (-0.5,3) {$>>$};
            \node[scale = 2] at (-0.5,-2) {$>>$};
            
            \node[color = green, scale = 2] at (-0.5,3.5) {$V_1 \cap V_2$};     
        \end{scope}
        
    \end{tikzpicture}
    
    \caption{Decomposition of the Klein bottle to run Mayer-Vietoris. Note that both $V_1$ and $V_2$ retract to a circle, whereas $V_1 \cap V_2$ retracts to the disjoint union of two circles}
    \label{fig:MVdecomposition}
\end{figure}

\begin{prop}
    The second homology of $\mcal K$ with coefficients in the local system $\xi$ is given by
    \begin{equation}
    H_2(\mcal K; \xi) \cong \Z^4 \oplus \Z_2.
    \end{equation}
    Furthermore, every class has a Lagrangian representative.
\end{prop}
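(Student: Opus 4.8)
The plan is to compute $H_2(\mathcal K;\xi)$ by a Mayer–Vietoris argument on the base, pushed up to $\mathcal K$ via the torus fibration. First I would fix the decomposition of the Klein bottle $K = V_1 \cup V_2$ indicated in \Cref{fig:MVdecomposition}, where $V_1$ and $V_2$ each deformation retract onto a circle and $V_1 \cap V_2$ retracts onto a disjoint union of two circles. Pulling back along $\pi$, this gives an open cover $\mathcal K = \pi^{-1}(V_1) \cup \pi^{-1}(V_2)$ of $\mathcal K$ by pieces that fiber over circles. Over each circle the restriction of the fibration is a $T^2$-bundle, and the restriction of the local system $\xi = \pi^*\det_\Z K$ is pulled back from the orientation local system of that circle inside $K$; on the circles coming from $V_1 \cap V_2$ the base circles are orientation-preserving loops, so $\xi$ restricts trivially there, whereas on the "core" circle of the Klein bottle it restricts nontrivially. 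I would then write down the Mayer–Vietoris sequence in homology with $\xi$-coefficients,
$$
\cdots \to H_2(\pi^{-1}(V_1\cap V_2);\xi) \to H_2(\pi^{-1}V_1;\xi)\oplus H_2(\pi^{-1}V_2;\xi) \to H_2(\mathcal K;\xi) \to H_1(\pi^{-1}(V_1\cap V_2);\xi) \to \cdots,
$$
and compute each term.

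The terms $H_\ast(\pi^{-1}(V_i);\xi)$ and $H_\ast(\pi^{-1}(V_1\cap V_2);\xi)$ reduce to the homology of $T^2$- (or $T^3$-) bundles over a circle, which one handles with the Wang exact sequence, keeping careful track of the monodromy action on $H_\ast(T^2)$ twisted by the $\pm 1$ coming from $\xi$. The monodromy on the fiber $T^2$ along the core circle of $K$ is, up to conjugation, the matrix $\mathrm{diag}(1,-1)$ (see \Cref{lem:classificationklein} and \Cref{rmk:branechoiceforfibration}); combined with the sign twist from $\xi$ this produces the $2$-torsion. I would assemble these computations to identify the image and kernel of the Mayer–Vietoris connecting maps; the free rank $4$ should come out as contributions of the zero-section, a fiber, and the two tropical Lagrangians $L_{C_1}, L_{C_2}$ (cf. \Cref{eq:tropicalsubmanifolds}), while the $\Z_2$ is detected by the difference $L_{C_2} - L_{C_3}$. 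The main obstacle I anticipate is bookkeeping: correctly computing the twisted monodromy actions and the Mayer–Vietoris gluing/connecting homomorphisms, and making sure no extra torsion or rank is hidden in the $H_3 \to H_2$ part of the sequence.

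For the final sentence of the statement — that every class has a Lagrangian representative — I would exhibit explicit Lagrangians realizing a generating set: the zero-section $L_K = \Gamma_0$, a torus fiber $L_{\{0\}} = F_p$, and the tropical Lagrangians $L_{C_1}$, $L_{C_2}$ lifting the codimension-one tropical submanifolds $C_1 = \{x = 0\}$ and $C_2 = \{y = 1/2\}$, with the $\Z_2$-generator realized by $L_{C_2} - L_{C_3}$ where $C_3 = \{y = 0\}$. Each of these is a tropical Lagrangian lift $L_V = N^*V / N^*_\Z V$ as in \Cref{sec:lagtorusfib}, hence genuinely Lagrangian and $\xi$-oriented by \Cref{rmk:branechoiceforfibration}; once one checks (via the intersection pairing with the generators, or directly tracing the Mayer–Vietoris classes) that these span $H_2(\mathcal K;\xi)$, the claim follows since any integral combination of Lagrangians is represented by the corresponding formal sum. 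This last verification — that the listed tropical Lagrangians really do generate, with the stated torsion relation $2(L_{C_2}-L_{C_3}) = 0$ and no others — is the place where the Mayer–Vietoris identification must be matched up carefully with geometry, and is the only genuinely delicate point beyond the computation itself.
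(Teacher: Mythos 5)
Your high-level plan (Mayer--Vietoris on the base of the torus fibration, then exhibiting tropical Lagrangian representatives) and your list of generators match the paper's, but the explanation of \emph{where} the $\Z_2$-torsion comes from is wrong, in a way that would derail the computation if you carried it out as written. You reference the decomposition $K = V_1 \cup V_2$ shown in the paper's figure, where both $V_i$ are unions of vertical strips. These retract onto orientation-\emph{preserving} $y$-circles, not onto the orientation-reversing ``core'' circle. Consequently the restricted fibrations $\pi^{-1}(V_i)\to V_i$ and $\pi^{-1}(V_1\cap V_2)\to V_1\cap V_2$ are \emph{trivial}, and $\det_\Z K$ restricts trivially to $V_i$ and to $V_1\cap V_2$. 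There is no monodromy $\mathrm{diag}(1,-1)$ and no sign twist to track on the pieces; the paper simply uses the K\"unneth isomorphism (equivalently, Wang with trivial monodromy) to get $H_j(\pi^{-1}(V_i);\xi)\cong\bigoplus_k H_k(S^1)\otimes H_{j-k}(T^2)$, which is torsion-free. Running a Wang sequence with nontrivial monodromy on these pieces would produce spurious torsion in $H_*(\pi^{-1}(V_i);\xi)$ where there is none, and is in any case inconsistent with the figure's decomposition: a decomposition where $V_1,V_2$ retract onto orientation-reversing circles is the M\"obius-band decomposition, for which $V_1\cap V_2$ is connected, not two circles.

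The $\Z_2$ factor actually arises from the Mayer--Vietoris \emph{gluing} maps. Because one of the two inclusions $U_1\cap U_2 \into U_i$ carries you from one component of $U_1\cap U_2$ to the other across the orientation-reversing loop of $K$, that inclusion acts with a sign on the conormal factor: writing $H_2(U_1\cap U_2;\xi)\cong(\Z\oplus\Z^2)^{\oplus 2}$ with generators $(a_1,b_1,c_1,a_2,b_2,c_2)$, the map $(i^2_1\oplus i^2_2)_*$ sends this to $(a_1+a_2,\,b_1-b_2,\,c_1+c_2)\oplus(a_1+a_2,\,b_1+b_2,\,c_1+c_2)$, and it is the cokernel of this (the asymmetry between $b_1-b_2$ and $b_1+b_2$) that produces $\Z\oplus\Z_2\oplus\Z$. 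The remaining $\Z^2$ comes from $\ker(i^1_1\oplus i^1_2)_*$, which the paper splits off via the $0$-section and the conormal lift of $\{y=1/2\}$. So the delicate bookkeeping you flag is real, but it is concentrated entirely in the gluing maps, not in ``twisted monodromy actions'' on the pieces. Your geometric identification of the generators (fiber, $0$-section, $L_{C_1}$, $L_{C_2}$, and $L_{C_2}-L_{C_3}$ for the $\Z_2$) does agree with what the paper finds; the paper additionally verifies that $L_{C_2}-L_{C_3}$ represents the $\Z_2$-class (a cotangent lift, hence a non-Lagrangian torus) by exhibiting an explicit $3$-chain.
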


\begin{proof}   
    We will use a Mayer-Vietoris decomposition $\mcal K = U_1 \cup U_2$ coming from a decomposition $K = V_1 \cup V_2$ of the base (see \Cref{fig:MVdecomposition}), i.e. $U_i = \pi\inv(V_i)$.
    Note the restricted fibrations $U_i \to V_i$ and $U_1 \cap U_2 \to V_1 \cap V_2$ are trivial.
    Furthermore, since the local system $\xi$ is trivial on the fiber direction (it's pulled back from the base) and the homology of the fiber is free, the K\"unneth isomorphism for homology with local coefficients says\footnote{Although Equations (\ref{eq:kunnethUi})-(\ref{eq:kunnethintersection}) seem to forget the local system, we will see shortly that the maps appearing in the Mayer-Vietoris long exact sequence are different from those computing integral homology.}
    \begin{align}\label{eq:kunnethUi}
        \begin{split}
            H_j(U_i;\xi\restr{U_i}) &\cong \bigoplus_{k=0}^j H_k(V_i; \det_\Z K \restr{V_i}) \otimes H_{j-k} (T^2;\Z)\\
            &\cong \bigoplus_{k=0}^j H_k(S^1; \Z) \otimes H_{j-k} (T^2;\Z)
        \end{split}
    \end{align}
    \begin{equation}\label{eq:kunnethintersection}
           H_j(U_1 \cap U_2;\xi\restr{U_1 \cap U_2}) \cong \left(\bigoplus_{k=0}^j H_k(S^1;\Z) \otimes H_{j-k}(T^2;\Z)\right)^{\oplus 2}.
    \end{equation}
    
    {\it To simplify notation, from now on we will omit the restriction notation and simply write $H_k(U;\xi) \equiv H_k(U;\xi\restr{U})$ for any $U \subset \mcal K$.
    We will also write $H_k(U) \equiv H_k(U;\Z)$ whenever we are taking integral homology.}
    
    The Mayer-Vietoris long exact sequence splits to give
    \begin{equation}\label{eq:mayervietorisses}
        0 \to \frac{H_2(U_1;\xi) \oplus H_2(U_2;\xi)}{\im (i^2_1\oplus i^2_2)_*} \to H_2(\mcal K;\xi) \to \ker (i^1_1\oplus i^1_2)_* \to 0
    \end{equation}
    where the maps $(i^j_1\oplus i^j_2)_*:H_j(U_1 \cap U_2;\xi) \to H_j(U_1;\xi) \oplus H_j(U_2;\xi)$ are induced by the inclusions $i_k: U_1 \cap U_2 \into U_k$.
    One can check that $\ker (i^1_1\oplus i^1_2)_*  \cong \Z^2$, with one factor  generated by the difference of the two primitive elements of $H_1(V_1\cap V_2;\Z) \otimes H_0(T^2)$
    and the other by the difference of two primitive elements of $H_0(V_1 \cap V_2; \Z) \otimes \vspan{dy} \subset H_0(V_1 \cap V_2; \Z) \otimes  H_1(T^2)$ (recall there is a canonical identification $H_1(F_p;\Z) \simeq T^*_{p,\Z} K$). 
    Analyzing the Mayer-Vietoris boundary map, one sees the short exact sequence (\ref{eq:mayervietorisses}) is split by the map $\ker (i^1_1\oplus i^1_2)_* \to  H_2(\mcal K;\xi)$ sending these two generators to the $0$-section (which is a cycle with $\xi$-coefficients) and the conormal lift of the tropical curve $\{y = 1/2\}$.
    We now compute the quotient in the left of \Cref{eq:mayervietorisses}:
    
    \begin{itemize}
        \item Using (\ref{eq:kunnethUi}) we get $H_2(U_1;\xi)\cong \Z \oplus \Z^2 \cong H_2(U_2;\xi)$, the first factor generated by a fiber and the last two by the conormal and cotangent lift of the tropical circle $\{x = 1/2\} \subset K$.

        \item To compute $\im (i^2_1\oplus i^2_2)_*$, note that $H_2(U_1 \cap U_2;\xi) \cong (\Z \oplus \Z^2)^{\oplus 2}$ by \Cref{eq:kunnethintersection}.
        Denoting by $a_1,b_1,c_1,a_2,b_2,c_2$ the generators of $H_2(U_1 \cap U_2;\xi) \cong (\Z \oplus \Z^2)^{\oplus 2}$, the map $(i^2_1,i^2_2)_*:H_2(U_1 \cap U_2;\xi) \to H_2(U_1;\xi) \oplus H_2(U_2;\xi)$ is given by
        $$
        (a_1,b_1,c_1,a_2,b_2,c_2) \mapsto (a_1 + a_2, b_1 - b_2, c_1 + c_2)\oplus (a_1 + a_2, b_1 + b_2, c_1 + c_2).
        $$
        Hence 
        $$
        \im (i^2_1\oplus i^2_2)_* = \vspan{(u,v-w,z)\oplus (u,v+w,z), \, u,v,w,z \in \Z} \subset H_2(U_1;\xi) \oplus H_2(U_2;\xi).
        $$
        \item Putting everything together, we get
        $$
        \frac{H_2(U_1;\xi) \oplus H_2(U_2;\xi)}{\im (i^2_1\oplus i^2_2)_*} \cong \Z \oplus \Z_2 \oplus \Z
        $$
    generated by a fiber and the cotangent and conormal lift to the curve $\{ x = 1/2 \}$ respectively.
    \end{itemize}
    Reordering the terms, we have computed
    $$
    H_2(\mcal K; \xi) \cong \Z^4 \oplus \Z_2,
    $$
    with generators a fiber, the $0$-section, the conormal lifts to the curves $\{x = 1/2\}$ and $\{ y= 1/2\}$ and the cotangent lift of the curve $\{x = 1/2\}$.
    
    For the last part of the Theorem, note all but the last homology class have been given Lagrangian representatives. 
    For the last one, the $3$-chain obtained by taking the $y$-cotangent direction to the $2$-chain given by the bottom-half of $K$ has boundary $T^*\{x=0\} - N^*\{y=1/2\} + N^*\{y=0\}$, showing that the symplectic torus $T^*\{x=0\}$ representing the $\Z_2$-factor has the difference of Lagrangians $N^*\{y=1/2\} - N^*\{y=0\}$ as a representative.
\end{proof}

\begin{rmk}
    Note there are important differences between $H_2(\mcal K;\xi)$ and $H_2(\mcal K; \Z)$.
    Most notably for our purposes, the zero-section is not a cycle with integer coefficients, whereas it is with $\xi$-coefficients and furthermore it defines a non-zero homology class.
    A similar phenomenon happens for the conormal lift of the tropical circles $\{y=0\}$ and $\{y=1/2\}$: they are topologically Klein bottles and do not define cycles with integer coefficients, but they do with $\xi$-coefficients.
    Also, the Lagrangian (conormal) lift of the $y$-axis is $2$-torsion in $H_2(\mcal K;\Z)$, but it has infinite order in $H_2(\mcal K;\xi)$.
\end{rmk}

     \section{Integral Chern character}\label{ap:integralChern}
In this appendix we show that for a bielliptic surface $Y$ the Chern character $ch: K_0(D^bCoh(Y)) \to \CH^*(Y)_\Q$ can be lifted to an integral isomorphism.
That is, there exists a well-defined map $\widetilde{ch}: K_0(D^bCoh(Y)) \to \CH^*(Y)$ and it is an isomorphism.
\begin{rmk}
    Similar results appear in \cite[Corollary 1.5]{huybrechts2016lectures}, where Huybrechts shows that for a K3 surface the Chern character $ch: K_0(K3) \to \CH^*(K3)_\Q$ factors through the natural map $\CH^*(K3) \to \CH^*(K3)_\Q$ in a canonical way, and that the corresponding map $K_0(K3) \to \CH^*(K3)$ is an isomorphism.
    A crucial property for his proof to work is that $\CH_0(K3)$ is torsion-free. 
    That clearly does not hold in our case, where $\CH_0(Y) = \Z \oplus \Alb(Y)$, the latter factor being an elliptic curve.
\end{rmk}

We first state a preliminary result, whose proof is a simple computation:
\begin{lem}\label{lem:quasilinear}
    Let $V = \oplus_i V_i$ be a graded $\Z$-module and let $W$ be another $\Z$-module. 
    Assume we have a commutative map $V\otimes V \to W$ and maps $H_i:V_i \to W$ such that
    \begin{equation}\label{eq:quasilinear}
        H_i(\alpha + \beta) = H_i(\alpha) + H_i(\beta) + \alpha\beta.
    \end{equation}
    Then the map $H: V \to W$ defined by
    $$
    H(\oplus_iv_i) : = \sum_i H_i(v_i) + \sum_{i<j} v_iv_j
    $$
    satisfies \Cref{eq:quasilinear}.
\end{lem}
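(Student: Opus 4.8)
The plan is a direct expansion. Fix $v=\bigoplus_i v_i$ and $w=\bigoplus_i w_i$ in $V$, so that $v+w=\bigoplus_i(v_i+w_i)$. First I would substitute into the defining formula for $H$ and apply the hypothesis \eqref{eq:quasilinear} componentwise to the summands $H_i(v_i+w_i)$, producing $\sum_i H_i(v_i)+\sum_i H_i(w_i)+\sum_i v_iw_i$; the point to keep track of is the extra ``diagonal'' bilinear term $\sum_i v_iw_i$.

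Next I would expand the bilinear part $\sum_{i<j}(v_i+w_i)(v_j+w_j)$ into its four pieces $\sum_{i<j}v_iv_j$, $\sum_{i<j}w_iw_j$, $\sum_{i<j}v_iw_j$ and $\sum_{i<j}w_iv_j$. The first two are exactly the bilinear contributions appearing in $H(v)$ and in $H(w)$. For the remaining cross-terms I would use commutativity of the pairing $V\otimes V\to W$ to rewrite $\sum_{i<j}w_iv_j=\sum_{i<j}v_jw_i=\sum_{j<i}v_iw_j$, so that $\sum_{i<j}v_iw_j+\sum_{i<j}w_iv_j=\sum_{i\neq j}v_iw_j$. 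Adding back the diagonal term $\sum_i v_iw_i$ split off in the previous step assembles the full double sum $\sum_{i,j}v_iw_j=\bigl(\sum_i v_i\bigr)\bigl(\sum_j w_j\bigr)=vw$. Collecting all contributions then yields $H(v+w)=H(v)+H(w)+vw$, which is precisely \eqref{eq:quasilinear} for $H$.

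The argument is purely formal; the only step requiring any care is the reindexing of the off-diagonal cross-terms via commutativity, together with the observation that the diagonal term $\sum_i v_iw_i$—which is produced from the $H_i$-part rather than from the bilinear part—is exactly what is needed to complete $\sum_{i\neq j}v_iw_j$ to the full product $\bigl(\sum_i v_i\bigr)\bigl(\sum_j w_j\bigr)$. I would also note in passing that all the sums involved are finite, since elements of the graded module $V$ have finite support, so there is nothing to check about convergence or well-definedness.
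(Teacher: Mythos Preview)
Your proof is correct and is exactly the direct computation the paper alludes to (the paper does not spell out a proof, stating only that it is ``a simple computation''). The expansion, the use of commutativity to reassemble the off-diagonal cross-terms, and the observation that the diagonal term from the $H_i$ completes the product $vw$ are all as they should be.
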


\begin{prop}\label{prop:integralchern}
    Let $Y$ be an algebraic bielliptic surface.
    There is an isomorphism of abelian groups 
        $$
            ch:K_0(D^bCoh(Y)) \to CH^*(Y).
        $$
\end{prop}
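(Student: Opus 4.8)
The plan is to prove the isomorphism in two steps: first establish that the rational Chern character $ch\colon K_0(D^bCoh(Y))\otimes\Q \to \CH^*(Y)_\Q$ respects the integral lattices once we account for the correction terms coming from the multiplicativity failure of $ch$ in mixed degrees, and second to compare the two sides piece-by-piece using the filtration by codimension. On the $K$-theory side, recall that $K_0(D^bCoh(Y)) = K_0(Y)$ carries the codimension (topological) filtration $F^0 \supseteq F^1 \supseteq F^2$, with $F^p/F^{p+1}$ mapping to $\CH^p(Y)$ via the cycle class; since $Y$ is a smooth projective surface, the graded pieces $\mathrm{gr}^p K_0(Y)$ are isomorphic to $\CH^p(Y)$ up to torsion that is killed by $(p-1)!$ (Riemann--Roch without denominators, cf.\ the Grothendieck--Riemann--Roch argument of SGA6). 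Here $p \le 2$, so the only potential denominator is a factor of $2$ in codimension $2$; the key point is to show this denominator does not actually occur for bielliptic surfaces.

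First I would set up the codimension filtration and the associated graded. We have $F^2 K_0(Y) = $ classes of skyscraper-type sheaves, which surjects onto $\CH_0(Y) = \CH^2(Y)$; since we computed $\CH_0(Y) \cong \Z \oplus \Alb(Y)$, and in fact every class in $\CH^2(Y)$ is represented by an honest closed point (or difference thereof), one sees $F^2 K_0(Y) \xrightarrow{\sim} \CH^2(Y)$ with no correction needed: the second Chern class of the structure sheaf of a point is $[\mathrm{pt}]$ exactly. Next, $F^1/F^2 \cong \Pic(Y) = \CH^1(Y)$ via $c_1$, again an exact isomorphism. Finally $F^0/F^1 \cong \Z$ via the rank. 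So the associated graded of $K_0(Y)$ is exactly $\bigoplus_p \CH^p(Y)$ \emph{integrally}, and the only question is whether the filtration on $K_0(Y)$ splits compatibly and whether $ch$ provides such a splitting without introducing halves.

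The main obstacle is controlling the mixed terms: $ch(E) = \mathrm{rk}(E) + c_1(E) + \tfrac12(c_1(E)^2 - 2c_2(E))$, and the $\tfrac12 c_1(E)^2$ term is the source of potential non-integrality. Here I would invoke Lemma~\ref{lem:quasilinear}: the failure of the component-wise cycle-class maps to be additive is governed precisely by a bilinear "multiplication" term $\alpha\beta$, and the Lemma produces a genuine homomorphism $K_0(Y) \to \CH^*(Y)$ by adding back the cross terms $\sum_{i<j} v_i v_j$. Concretely, define $\widetilde{ch}$ on $F^2$ as $c_2$ (up to sign), on $\mathrm{gr}^1$ as $c_1$, on $\mathrm{gr}^0$ as rank, and patch these together with the correction of Lemma~\ref{lem:quasilinear} applied to $V = \bigoplus_p \mathrm{gr}^p K_0(Y)$, $W = \CH^*(Y)$, and the intersection product $\CH^1 \otimes \CH^1 \to \CH^2$. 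One checks that with this definition $\widetilde{ch}$ lands in $\CH^*(Y)$ (integrally), is additive, and after tensoring with $\Q$ agrees with the usual $ch$ (because the correction terms are exactly those that make $ch$ multiplicative modulo the rank filtration). Since $ch_\Q$ is a rational isomorphism and $\widetilde{ch}$ induces isomorphisms on all graded pieces $\mathrm{gr}^p K_0(Y) \xrightarrow{\sim} \CH^p(Y)$, a five-lemma / filtration induction argument shows $\widetilde{ch}\colon K_0(D^bCoh(Y)) \to \CH^*(Y)$ is an isomorphism of abelian groups, which is the claim. The one delicate verification I would flag is that $F^2 K_0(Y) \to \CH_0(Y)$ is genuinely an isomorphism and not merely a surjection with finite kernel — this uses that $\CH_0(Y)$ has no "extra" torsion beyond what is visible (which follows from Bloch's theorem, as cited in Section~\ref{sec:isoK}), so that the Riemann--Roch denominators are vacuous in this case.
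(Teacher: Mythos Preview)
Your overall shape is right—use the codimension filtration, identify the graded pieces via Fulton's result for smooth surfaces, and replace the offending $\tfrac12 c_1^2$ term by something integral—but your application of Lemma~\ref{lem:quasilinear} has a genuine gap. You propose to take $V = \bigoplus_p \mathrm{gr}^p K_0(Y)$ with $H_1 = c_1$ on $\mathrm{gr}^1$, but $c_1$ is \emph{linear}, so it does not satisfy the quasilinear relation $H_1(\alpha+\beta) = H_1(\alpha) + H_1(\beta) + \alpha\beta$ with respect to the intersection product $\CH^1 \otimes \CH^1 \to \CH^2$. The entire content of the argument is precisely to construct such a quasilinear map $H\colon \CH^1(Y) \to \CH^2(Y)$ integrally (an ``integral half-square''), and this is not automatic: it requires the specific structure of the bielliptic surface.

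The paper applies Lemma~\ref{lem:quasilinear} not to the codimension grading of $K_0$ but to the explicit decomposition $\CH^1(Y) \cong \Z^2 \oplus \Z_2^2 \oplus \Pic^0(Y)$ into cyclic and divisible pieces, with generators $D_1,\dots,D_4,D_5^p$. One computes that $D_i^2 = 0$ for $i \ne 2$ (trivial normal bundles, or the 2-torsion argument for $D_3,D_4$), so $H_i \equiv 0$ works on those summands. On the summand $\Z\langle D_2\rangle$ one has $D_2^2$ a 2-torsion element of $\CH_0(Y)$, hence lying in the \emph{divisible} subgroup $\Alb(Y)$; choosing $P$ with $2P = D_2^2$ one sets $H_2(kD_2) = k^2 P$, and then the Lemma assembles these into a single $H$. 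The modified Chern character is $\widetilde{ch}(\mcal F) = \mathrm{rk}(\mcal F) + c_1(\mcal F) + (H\circ c_1 - c_2)(\mcal F)$, whose additivity is now a direct check. Your injectivity argument via the filtration and Fulton's isomorphisms is correct and matches the paper; what is missing is this construction of $H$, which is where the bielliptic hypothesis (and the divisibility of $\Alb(Y)$) actually enters.
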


\begin{proof}
    Recall from \Cref{sec:isoK} that $\CH^1(Y) = \Z^2 \oplus \Z_2^2 \oplus \Pic^0(Y)$.
    Write $Y = (E_1 \x E_2)/\Z_2$ as in \Cref{def:biellipticalgebraic} and let $\pi_i: Y \to E_i/\Z_2$, where $E_1/\Z_2$ is again an elliptic curve and $E_2/G \cong \P^1$.
    Then, following \cite{serrano1990divisors,bergstrom2019brauer}, we can give the following explicit generators for $\CH^1(Y)$.
    The first two $\Z$-factors are generated by a fiber of $\pi_1$ and a reduced multiple fiber of $\pi_2$; call these $D_1$ and $D_2$.
    The second two $\Z_2$-factors are generated by $F_1 - F_2$ and $F_1 - F_4$, where $F_i$ are the four reduced multiple fibers of $\pi_2$ and we take $F_1 = D_2$; call these two generators $D_3$ and $D_4$.
    Lastly,  $\Pic^0(Y)$ is an elliptic curve, whose points correspond to the divisor classes  $D_p:=D_1 - \pi_1\inv(p)$ for $p\in E_1/\Z_2 \cong \Pic^0(Y)$;  we will write $D_5^p$ for these generators.
    Since the normal bundles to both $D_1$ and $D^p_5$ are trivial, it follows that $D_1^2 = (D^p_5)^2 = 0 \in \CH_0(Y)$.
    Furthermore, we have
    $$
    D_3^2 = (F_1 - F_2)^2 = F_1^2 + F_2^2 = 0
    $$
    since $F_i^2 = F_j^2$ for all $i,j$ and these classes are necessarily $2$-torsion.
    The same argument shows that $D_4^2 = 0$.

Now write $\CH^1(Y) = \Z^2 \oplus \Z_2^2 \oplus \Pic^0(Y) = \oplus_i V_i$, where $V_i$ denotes the $i$-th summand of $\CH^1(Y)$ as in \Cref{lem:quasilinear}.
    Then the previous discussion shows that the maps $H_i \equiv 0$ for $i \neq 2$ satisfy \Cref{eq:quasilinear}.
    Furthermore, define $H_2(kD_2) = k^2 P$, where $P \in \CH^2(Y)$ is chosen such that $2P = D_2^2$ (that such $P$ exists follows from $D_2^2$ being a $2$-torsion element in $\CH_0(Y)$, hence it lives in the divisible subgroup $\Alb(Y) \subset \CH_0(Y)$).
    Then
    $$
    H_2(mD_2 + nD_2) = (m+n)^2 P = H_2(mD_2) + H_2(nD_2) + 2mnP 
    $$
    with $2mnP = mnD_2^2$, so $H_2$ also satisfies \Cref{eq:quasilinear}.
    Let  $H:\CH^1(Y) \to \CH^2(Y)$ be the map defined in \Cref{lem:quasilinear}. 
    We define a modified Chern character $\tilde{ch}:K_0(D^bCoh(Y)) \to \CH^*(Y)$ via
    $$
    \tilde{ch}(\mcal F) = rk(\mcal F) + c_1(\mcal F) + (H\circ c_1 - c_2)(\mcal F).
    $$
    Note that $\tilde{ch}$ is a group homomorphism: for $\mcal F_1,\mcal F_2\in K_0(D^bCoh(Y))$ we have
    \begin{align*}
        \tilde{ch}_2(\mcal F_1 \oplus \mcal F_2) &= H(c_1(\mcal F_1\oplus\mcal F_2)) -c_2(\mcal F_1\oplus \mcal F_2)\\
        &=H(c_1(\mcal F_1) + c_1(\mcal F_2)) - (c_2(\mcal F_1) + c_2(\mcal F_2) + c_1(\mcal F_1)c_1(\mcal F_2))\\
        &=H(c_1(\mcal F_1)) + H(c_1(\mcal F_2)) + c_1(\mcal F_1)c_1(\mcal F_2) - (c_2(\mcal F_1) + c_2(\mcal F_2) + c_1(\mcal F_1)c_1(\mcal F_2))\\
        &=\tilde{ch}_2(\mcal F_1) + \tilde{ch}_2(\mcal F_2).
    \end{align*}
    
    After establishing the existence of a well-defined integral Chern character $\tilde{ch}:K_0(D^bCoh(Y)) \to \CH^*(Y)$, the fact that it is an isomorphism is a minor adaptation of the argument in \cite[Corollary 1.5]{huybrechts2016lectures}.
    Namely, we have $\tilde{ch}(\Oo) = 1$ so $\CH_2(Y) \subset \im(\tilde{ch})$.  
    Since $\tilde{ch}(\mcal O_p) = c_2 (\mcal O_p) = p$, it follows that also $\CH_0(Y) \subset \im(\tilde{ch})$.
    Lastly, $\CH_1(Y) \subset \im(\tilde{ch})$ because $\tilde{ch}(\Oo(D)) = 1 + D + H(D)$ and both $1$ and $H(D)$ are in $\im(\tilde{ch})$. 
    Hence $\tilde{ch}$ is surjective.
    For injectivity, note that for any smooth surface $S$ we have isomorphisms \cite[Example 15.3.6]{fulton2013intersection}
    \begin{align*}
        rk:F_0 K_0(S) / F_1 K_0(S) &\to \Z\\
        c_1: F_1K_0(S)/F_2K_0(S) &\to \Pic(S)\\
        c_2: F_2K_0(S) &\to \CH_0(S),
    \end{align*}
    where $F_iK_0(S) \subset K_0(S)$ is the filtration given by sheaves whose support has dimension at most $2-i$.
    Suppose now that $\tilde{ch}(\mcal F) = 0$.
    Since $\tilde{ch}(\mcal F) = 0$ implies $0 = rk(\mcal F) = c_1(\mcal F)$, we get $\mcal F \in F_2K_0(D^bCoh(Y))$. 
    Furthermore $0 = H(c_1(\mcal F)) - c_2(\mcal F) = -c_2(\mcal F)$, so using that $c_2: F_2K_0(D^bCoh(Y)) \to \CH_0$ is an isomorphism we get $\mcal F = 0$.
\end{proof}

\begin{rmk}
    Unlike for K3 surfaces, our proof does not produce a canonical lift of the Chern character: we have made a choice of the `half class' $P \in \CH_0(Y)_{hom}$ such that $2P = D_2^2$. 
\end{rmk}

In \Cref{sec:isoK} we argue that $\CH^*(Y) = \Z^4 \oplus \Z_2^2 \oplus E^2$ and give explicit generators: we denote these by $Z_i, i = 1,\dots,6$ and $Z_{p},Z'_{p'}, (p,p')\in E^2$.
More explicitly, $Z_1 = Y, Z_2$ is some reference point $p_0 \in Y$, the divisors $Z_3,\dots,Z_6,Z_p$ are the divisors denoted by $D_1,\dots,D_4,D_5^p$ in the proof of \Cref{prop:integralchern}, and $Z'_{p'}$ is represented by $p_0 - p' \in \CH_0(Y)_{hom}$.
We consider classes $[\mcal O_{Z_i}], [\mcal O_{Z_{p}}], [\mcal O_{Z'_{p'}}] \in K_0(D^bCoh(Y))$ represented by the structure sheaves of such generators of $\CH^*(Y)$.
The following result will be useful to prove \Cref{th:isoK}:
\begin{cor}\label{cor:generatorsK0Coh}
    There is a well-defined group homomorphism
    \begin{align*}
        h:\Z^4 \oplus \Z_2^2 \oplus E^2 &\to K_0(D^bCoh(Y))\\
        (n_1,\dots,n_6,p,p') &\mapsto \sum_i n_i[\mcal O_{Z_i}] + [\mcal O_{Z_{p}}] +[\mcal O_{Z'_{p'}}]
    \end{align*}
    and it is an isomorphism.
\end{cor}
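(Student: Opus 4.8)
The plan is to reduce the statement completely to the integral Chern character isomorphism $\widetilde{ch}\colon K_0(D^bCoh(Y))\xrightarrow{\ \sim\ }\CH^*(Y)$ of \Cref{prop:integralchern}, combined with the explicit presentation $\CH^*(Y)\cong\Z^4\oplus\Z_2^2\oplus E^2$ and its generators $Z_1,\dots,Z_6,Z_p,Z'_{p'}$ recorded in \Cref{sec:isoK}. Concretely, I would compute $\widetilde{ch}$ on each of the classes $[\mathcal O_{Z_i}]$, $[\mathcal O_{Z_p}]$, $[\mathcal O_{Z'_{p'}}]$, show that the resulting assignment on generators assembles into the standard identification isomorphism post-composed with an explicit automorphism of $\Z^4\oplus\Z_2^2\oplus E^2$, and then transport both well-definedness and bijectivity from $\CH^*(Y)$ back to $K_0$ using that $\widetilde{ch}$ is a group isomorphism.

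The first step is the computation. Recall $\widetilde{ch}(\mathcal F)=\mathrm{rk}(\mathcal F)+c_1(\mathcal F)+\bigl(H(c_1(\mathcal F))-c_2(\mathcal F)\bigr)$, with $H\colon\CH^1(Y)\to\CH^2(Y)$ the map built in \Cref{lem:quasilinear} and the proof of \Cref{prop:integralchern}. I would use the elementary formulas $c(\mathcal O_D)=1/(1-D)$ for a smooth divisor $D$ (hence $c_1(\mathcal O_D)=D$, $c_2(\mathcal O_D)=D^2$) and $\widetilde{ch}(\mathcal O_x)=[x]$ for a reduced point $x$, together with the self-intersection vanishings already isolated in the proof of \Cref{prop:integralchern}: $D_1^2=(D_5^p)^2=D_3^2=D_4^2=0$, distinct fibres of the same fibration are disjoint, and $D_2^2=2P$ with $P$ a torsion $0$-cycle. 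Interpreting $[\mathcal O_{Z_5}]=[\mathcal O_{F_1}]-[\mathcal O_{F_2}]$, $[\mathcal O_{Z_6}]=[\mathcal O_{F_1}]-[\mathcal O_{F_4}]$, $[\mathcal O_{Z_p}]=[\mathcal O_{D_1}]-[\mathcal O_{\pi_1^{-1}(p)}]$ and $[\mathcal O_{Z'_{p'}}]=[\mathcal O_{p_0}]-[\mathcal O_{p'}]$, these inputs give $\widetilde{ch}(\mathcal O_{Z_1})=1$, $\widetilde{ch}(\mathcal O_{Z_2})=Z_2$, $\widetilde{ch}(\mathcal O_{Z_3})=Z_3$, $\widetilde{ch}([\mathcal O_{Z_p}])=Z_p$ and $\widetilde{ch}([\mathcal O_{Z'_{p'}}])=Z'_{p'}$ on the nose, whereas the three classes attached to $D_2$ pick up $0$-cycle corrections: $\widetilde{ch}(\mathcal O_{Z_4})=Z_4-P$, $\widetilde{ch}([\mathcal O_{Z_5}])=Z_5-2P$ and $\widetilde{ch}([\mathcal O_{Z_6}])=Z_6-2P$. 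One also records that $D_2^2$ being $2$-torsion forces $2P$ to be $2$-torsion and $P$ to be $4$-torsion.

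The second step deduces the claim. Well-definedness of the formula defining $h$ requires $2[\mathcal O_{Z_5}]=2[\mathcal O_{Z_6}]=0$, which follows from $2Z_5=2Z_6=0$ and injectivity of $\widetilde{ch}$; and it requires $p\mapsto[\mathcal O_{Z_p}]$ and $p'\mapsto[\mathcal O_{Z'_{p'}}]$ to be group homomorphisms, which follows because their composites with $\widetilde{ch}$ are respectively the Abel--Jacobi parametrization $p\mapsto Z_p=[D_1-\pi_1^{-1}(p)]$ of $\Pic^0(Y)$ and the Albanese-type map $p'\mapsto Z'_{p'}=[p_0]-[p']$ on $\CH_0(Y)_{\hom}$, both of which are homomorphisms by construction of the identifications in \Cref{sec:isoK}. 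Granting this, the first step shows that $\widetilde{ch}\circ h=\iota_0\circ\tau$, where $\iota_0\colon\Z^4\oplus\Z_2^2\oplus E^2\xrightarrow{\ \sim\ }\CH^*(Y)$ is the standard identification and $\tau$ is the endomorphism fixing every coordinate except the $\Alb$-summand of $E^2$, which it shifts by $-(n_4+2n_5+2n_6)$ times the fixed torsion class $P$; this $\tau$ is well defined (the order-two relations on the $Z_5,Z_6$ coordinates are respected because $4P=0$) and unipotent, hence an automorphism. Therefore $\widetilde{ch}\circ h$ is an isomorphism, and since $\widetilde{ch}$ is an isomorphism, so is $h$.

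The main obstacle is bookkeeping rather than depth. First, $\widetilde{ch}$ is \emph{not} the honest Chern character—it carries the correction term $H\circ c_1$—so the $-P$, $-2P$ contributions above are genuine and must be shown to organize into an honest automorphism of $\Z^4\oplus\Z_2^2\oplus E^2$, not merely an abstract bijection; this is exactly where the torsion bounds on $P$ are used. Second, one must use throughout the \emph{same} identifications of the two elliptic-curve factors (one with $\Pic^0(Y)$ via a fibre class of $\pi_1$, one with $\Alb(Y)=\CH_0(Y)_{\hom}$ via a base point) as in \Cref{sec:isoK}, so that the homomorphism property of the two parametrizations is literally the statement that those parametrizations were set up to be group isomorphisms. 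Everything else is routine intersection-theoretic computation.
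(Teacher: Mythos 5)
Your approach is essentially identical to the paper's: compute $\widetilde{ch}$ on the proposed generators, show the composite $\widetilde{ch}\circ h$ differs from the standard identification $\Z^4\oplus\Z_2^2\oplus E^2\cong\CH^*(Y)$ by a unipotent shear supported in the divisible $0$-cycle factor, and transport the isomorphism back through $\widetilde{ch}$. Both proofs reduce the problem to the integral Chern character of \Cref{prop:integralchern} and hinge on the same observation that the correction terms involving $P$ are absorbed by an automorphism.

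The one place you diverge is in the actual values of $\widetilde{ch}$ on the $\Z_2$ generators. The paper records $\widetilde{ch}([\mcal O_{Z_i}])=Z_i$ for $i=5,6$ (and $Z_4+P$ for $i=4$); you obtain $Z_5-2P$, $Z_6-2P$ (and $Z_4-P$). Your computations are the correct ones if $[\mcal O_{Z_5}]$ and $[\mcal O_{Z_6}]$ denote differences of structure sheaves $[\mcal O_{F_1}]-[\mcal O_{F_2}]$, etc.—which is the interpretation forced by the use of this corollary in the proof of \Cref{th:isoK}, where the family Floer functor sends tropical Lagrangians to structure sheaves of subvarieties. (The paper's formulae are what one gets from the line-bundle classes $[\mcal O_Y(D_i)]-[\mcal O_Y]$ instead; this is a different element of $K_0$ and the paper's phrasing ``structure sheaves'' suggests that is not what is meant.) Your extra $-2P$ terms do not affect the conclusion—as you note, they still produce a unipotent shear, well defined because $4P=D_2^2+D_2^2=0$—but you are being more careful than the paper on this point, and you correctly isolate the torsion bound on $P$ as what makes the shear well posed on the $\Z_2$ summands. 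Everything else in your argument (well-definedness via injectivity of $\widetilde{ch}$, the homomorphism property of the two $E$-parametrizations by construction) matches the paper.
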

\begin{proof}
    To see it is well-defined we must check that the relations
    \begin{align*}
        2[\mcal O_{Z_i}] &= 0,  \, i = 5,6\\
        [\mcal O_{Z_{p_1+p_2}}] &= [\mcal O_{Z_{p_1}}] + [\mcal O_{Z_{p_2}}],\, p_j \in E\\
        [\mcal O_{Z'_{p'_1+p'_2}}] &= [\mcal O_{Z'_{p'_1}}] + [\mcal O_{Z'_{p'_2}}],\, p'_j \in E\\
    \end{align*}
    hold in $K_0(D^bCoh(Y))$. 
    It is equivalent to show that their images under $\tilde{ch}$ satisfy such relations.
    For the first two, note that $\widetilde{ch}([\mcal O_{Z_i}]) = Z_i$ for $i = 5,6$, and that the classes $Z_i$ are $2$-torsion in $\CH^*(Y)$. 
    For the last two, we have $\widetilde{ch}([\mcal O_{Z_{p}}]) = Z_{p}$ for any $p \in E$ (and similarly for $Z_{p'}$), so the relations also hold.
    
    To see it is an isomorphism, note that $\widetilde{ch}([\mcal O_{Z_i}]) = Z_i$ for all $i \neq 4$ and $\widetilde{ch}([\mcal O_{Z_4}]) = Z_4 + P$, hence  the composition $\widetilde{ch}\circ h$ is given by
    \begin{align*}
        \Z^4 \oplus \Z_2^2 \oplus E^2 &\to \CH^*(Y)\\
        (n_1,\dots,n_6,p,p') &\mapsto \sum_i n_iZ_i +n_4 P + Z_{p} +Z'_{p'}.
    \end{align*}
    This map is clearly an isomorphism, hence so is $h$.
\end{proof}
     \section{Fukaya categories from multivalued perturbations}\label{ap:multivaluedFuk}

In this appendix we sketch a definition of a Fukaya category $\mcal Fuk(X;Y)$ of a symplectic manifold together with a finite covering $\pi: Y \to X$ using multivalued perturbations.
We then apply this construction to $Y = T^4 \x \C$ and $X = \mcal K \x \C$ and argue that quasi exact cobordisms in $\mcal Fuk(\mcal K\x \C; T^4 \x \C)$ between weakly exact Lagrangians still give iterated cone decomposition results as in \cite{biran2021lagrangian}, as claimed in \Cref{sec:isoK}.

Let $X$ be a symplectic manifold and $\pi:Y \to X$ a finite covering with covering group $H$.
Given a collection of Lagrangians $\mcal L(X)$, we define two Fukaya categories:
\begin{itemize}
    \item  $\mcal Fuk(X) = \mcal Fuk(X;\mcal L(X))$ is the Fukaya category whose objects are pairs $(L,J_L)$ for $L \in \mcal L(X)$ and  $J_L$  an almost complex structure such that $L$ bounds no non-constant $J_L$-holomorphic disk.
    Morphism spaces and the $A_\infty$-structure are defined as usual \cite{fukaya2009lagrangian,seidel2008fukaya}.
    
    \item $\mcal Fuk(X;Y) = \mcal Fuk(X; \mcal L(X),Y)$ is a  Fukaya category whose objects are pairs $(L,\tilde J_L)$ for $L\in\mcal L(X)$ and such that $\pi\inv(L)$ bounds no $\tilde J_L$ holomorphic disks (note that now $\tilde J_L$ is an almost complex structure on $Y$, not on $X$).
    Morphism spaces in  $\mcal Fuk(X;Y)$ are defined to be the same as in $\mcal Fuk(X)$.
    To define the $A_\infty$-operations, let $(L_0,\tilde J_0),\dots,(L_k,\tilde J_k)$ be objects of $\mcal Fuk(X;Y)$ and let $D_k$ be a disk with $k+1$ boundary punctures $p_0,\dots,p_k\in \partial D$.
    Given an element $\mathtt h=(h_0,\dots,h_k)\in H^{k+1}$, we define an {\it $\mathtt h$-perturbation datum} to be a map $J_g: D_k \to \mcal J= \{\omega-\text{compatible a.c.s on } Y\}$  whose restriction to the $i$-th boundary component of $D_k$ agrees with $(h_i)^*\tilde J_i$ (here we denote by $h_i: Y \to Y$ the covering automorphism induced by $h_i$).
    A {\it universal choice of perturbation datum} is a choice of $\mathtt h$-perturbation datum for each $\mathtt h\in H^{k+1}$ and each fiber of the universal curve over the moduli-space of disks with $k+1$ punctures; these should be regular and compatible in the usual sense (where we think of the elements $h_i$ as labeling the $i$-th boundary component of $D_k$).
    Given  $y$ the data of $y_0,\dots,y_{k-1}$  Hamiltonian chords from $L_i$ to $L_{i+1}$ and $y_k$ a Hamiltonian chord from $L_0$ to $L_k$ and $\mathtt h\in H^{k+1}$, we define $\mcal M(\mathtt h;y)$ to be the moduli-space of disks $u:D^2 \setminus\{p_1,\dots,p_k\} \to Y$ that are $J_g$-holomorphic for the given choice of $\mathtt h$-perturbation datum and asymptotic to some lift of the $y_i$ at each boundary puncture $p_i$.
    Then we have  $A_\infty$-operations
    \begin{align}\label{eq:ainfinityoperations}
        \begin{split}
        \mu_k:
        \hom((L_{k-1},\tilde J_{k-1}),(L_{k},\tilde J_{k}))
        \otimes\dots\otimes
        \hom&((L_{0},\tilde J_{0}),(L_{1},\tilde J_{1}))
        \to 
        \hom((L_{0},\tilde J_{0}),(L_{k},\tilde J_{k}))\\
        \vspan{\mu_k(y_0\otimes\dots\otimes y_{k-1}),y_k} & = \sum_{\mathtt h \in H^{k+1}} \frac{\# \mcal M(\mathtt h;y)}{|H|^{k+2}}
        \end{split}
    \end{align}
    The proof that these operations satisfy the $A_\infty$-relations is the usual one.

    \begin{rmk}
        From the above expression one sees  that $|H|$ should be invertible in the coefficient ring used to define the Fukaya category.
    \end{rmk}
\end{itemize}
Note that if $\tilde J_{i}$ is invariant under $H$ then there is an action of $H$ on $\mcal M(\mathtt h;y)$ via $h\cdot u = h\circ u$ (which is still holomorphic for the original choice of $\mathtt h$-perturbation datum due to the invariance of $\tilde J_{i}$).
In the particular case where $\tilde J_i = \pi^* J_i$ is an almost complex structure pulled back by $\pi$, we get an isomorphism $\mcal M(\mathtt h;y)/H \cong \mcal M_X(y)$ for $\mcal M_X(y)$  the moduli-space of disks used in the definition of $\mcal Fuk(X)$.
Hence $\# \mcal M_X(y) = \frac{\mcal M(\mathtt h;y)}{|H|}$, so it follows from \Cref{eq:ainfinityoperations}  that we have an embedding 
\begin{align}\label{eq:embeddingFuktoFukmulti}
    \begin{split}
            \iota: \mcal Fuk(X) & \into \mcal Fuk(X;Y)\\
            (L,J_L) &\mapsto (L,\pi^* J_L).
    \end{split}
\end{align}
More generally, we have an action of $H$ on $\cup_{\mathtt h\in H^{k+1}} \mcal M(\mathtt h;y)$ that, for any $h \in H$, sends a disk $u \in \mcal M(\mathtt h;y)$ to the disc $h\circ u \in \mcal M(h'\cdot \mathtt h;y)$, where $h\cdot \mathtt h := (h\cdot h_0,\dots, h\cdot h_k)$.
We think of the quotient moduli-space
$$
\frac{\bigcup_{\mathtt h\in H^{k+1}} \mcal M(\mathtt h;y)}{H }\cong \bigcup_{\mathtt h\in H^{k+1}/H} \mcal M(\mathtt h;y) 
$$
as a moduli space of multi-perturbed holomorphic disks in $X$, in the sense of \cite[Section 5]{salamon1999lectures}.
An $H$-orbit of a disk $u$ in $Y$ corresponds to the projected disk $\pi \circ u$ inside $X$.

Now we apply the above construction to $\pi:Y \x \C \to  X \x \C$.
We restrict the objects $\mcal L(X\x \C)$ to be  Lagrangian cobordisms in $X \x \C$ equipped with all the extra data that we would like the corresponding cobordism group to have (for us, they come equipped with $G$-brane structures).
This gives a Fukaya category $\mcal Fuk_{cob}(X \x \C;Y \x \C)$ whose objects are by definition Lagrangian cobordisms in $X\x \C$ (carrying a $G$-brane structure) that are tautologically unobstructed when pulled back under $\pi$. 
The machinery of  \cite{biran2014lagrangian,biran2021lagrangian} shows that these objects induce iterated cone decompositions in $\mcal Fuk(X;Y)$.
Using that the functor $\iota$ in \Cref{eq:embeddingFuktoFukmulti} is an embedding, it follows that cobordisms in $\mcal Fuk(X \x \C; Y \x \C)$ with ends $\iota(L_0),\dots,\iota(L_k)$ induce cone decompositions between the $L_i$ in $\mcal Fuk(X)$.
In particular, we have the following:
\begin{prop}\label{prop:conesfromTOpullbacks}
    Let $\pi: Y \to X$ be a finite covering, with both $Y$ and $X$ symplectic manifolds and $\pi^*\omega_X = \omega_Y$.
    Let $\Cob(X)$ be a cobordism group whose generators are weakly exact Lagrangians in $X$, and whose relations come from cobordisms $V \subset X \x \C$ such that $(\pi\x\id_\C)\inv(V) \subset Y \x \C$ is tautologically unobstructed.
    Then there is a well-defined map
    $$
    \Cob(X) \to K_0(\mcal Fuk(X)).
    $$
\end{prop}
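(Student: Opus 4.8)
The plan is to deduce this from the auxiliary category $\mcal Fuk(X;Y)$ constructed above, following the template of \cite{biran2014lagrangian,biran2021lagrangian}. The first step is the observation that the Lagrangians in play pull back well: if $L\subset X$ is weakly exact then, since $\pi^*\omega_X=\omega_Y$, every smooth disk $(D^2,\partial D^2)\to(Y,\pi\inv(L))$ has the same symplectic area as its composition with $\pi$, a disk in $(X,L)$; hence $\omega_Y(\pi_2(Y,\pi\inv(L)))\subseteq\omega_X(\pi_2(X,L))=0$ and $\pi\inv(L)$ is again weakly exact, in particular tautologically unobstructed in $Y$. Consequently every generator $L$ of $\Cob(X)$ defines an object $\iota(L)=(L,\pi^*J_L)$ of $\mcal Fuk(X;Y)$, and every cobordism $V\subset X\x\C$ allowed in the definition of $\Cob(X)$---one with weakly exact ends and with $(\pi\x\id_\C)\inv(V)$ tautologically unobstructed---defines an object of the cobordism Fukaya category $\mcal Fuk_{cob}(X\x\C;Y\x\C)$.

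The second step is to check that the functor $\iota$ of \Cref{eq:embeddingFuktoFukmulti} is a \emph{strict}, cohomologically full and faithful $A_\infty$-embedding. By construction the morphism complexes of $\mcal Fuk(X)$ and $\mcal Fuk(X;Y)$ literally agree; and, choosing the universal perturbation datum on the $Y$-side to be the $\pi$-pullback of the one used on the $X$-side, the computation preceding \Cref{eq:embeddingFuktoFukmulti} gives $\mcal M(\mathtt h;y)/H\cong\mcal M_X(y)$ for every $\mathtt h\in H^{k+1}$, so that $\sum_{\mathtt h\in H^{k+1}}|H|^{-(k+2)}\#\mcal M(\mathtt h;y)=\#\mcal M_X(y)$ and the $A_\infty$ structure constants match on the nose. (This is where one needs $|H|$ invertible in the coefficient ring; in our applications the ground field is the characteristic-zero Novikov field $\Lambda$.) It follows that $\iota$ extends to a fully faithful $A_\infty$-functor $\mathrm{Tw}\,\iota:\mathrm{Tw}\,\mcal Fuk(X)\to\mathrm{Tw}\,\mcal Fuk(X;Y)$ on twisted complexes, which therefore takes cones to cones and reflects isomorphisms.

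The third and main step is to run the Biran--Cornea(--Shelukhin) cone-decomposition machinery inside $\mcal Fuk(X;Y)$ rather than in an ordinary Fukaya category. The apparatus of \cite{biran2014lagrangian,biran2021lagrangian}---the inclusion and projection $A_\infty$-functors attached to a cobordism, the Künneth-type computations of Floer cohomology against the fiber Lagrangians $\gamma\x L$, and the resulting extraction of an iterated cone decomposition---only ever refers to counts of holomorphic disks and strips. Replacing these throughout by the corresponding multivalued-perturbation counts in the cover $Y\x\C$, which is precisely where the hypothesis that $(\pi\x\id_\C)\inv(V)$ is tautologically unobstructed is used, one obtains: for a cobordism $V$ with ends $L_0,L_1,\dots,L_k$, an iterated cone decomposition of $\iota(L_0)$ with successive factors $\iota(L_1),\dots,\iota(L_k)$ (up to grading shifts) in $\mathrm{Tw}\,\mcal Fuk(X;Y)$; since the ends are weakly exact one may take all almost complex structures there to be $\pi$-pullbacks, so this twisted complex lies in the image of $\mathrm{Tw}\,\iota$. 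By the full faithfulness of the previous step the decomposition already holds in $\mathrm{Tw}\,\mcal Fuk(X)$, whence $[L_0]=\sum_i\pm[L_i]$ in $K_0(\mcal Fuk(X))$ with signs fixed by the grading conventions; these are arranged so that a cobordism relation $L_1^++\dots+L_k^+\sim L_1^-+\dots+L_s^-$ is sent to $\sum_i[L_i^+]=\sum_j[L_j^-]$. Hence $L\mapsto[L]$ descends to the asserted homomorphism $\Cob(X)\to K_0(\mcal Fuk(X))$.

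I expect the main obstacle to be exactly this last step: making precise that the Biran--Cornea--Shelukhin theorem survives the passage to the multivalued-perturbation framework---in particular that transversality and Gromov compactness for multivalued perturbations are compatible with the cobordism and quilted constructions used there, and that the perturbation data at the ends can be chosen consistently as $\pi$-pullbacks so that the cone decomposition genuinely lands in $\iota(\mcal Fuk(X))$ and not merely in $\mcal Fuk(X;Y)$. Everything else is formal once $\iota$ is known to be a strict fully faithful embedding.
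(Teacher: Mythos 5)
Your proof follows the same route as the paper's: embed $\mcal Fuk(X)$ into the multivalued-perturbation category $\mcal Fuk(X;Y)$ via $\iota$, run the Biran--Cornea(--Shelukhin) cone-decomposition machinery inside $\mcal Fuk(X;Y)$ where the pulled-back cobordism is tautologically unobstructed, and use that $\iota$ is an embedding to transport the decomposition back to $\mcal Fuk(X)$. Your elaborations (that weak exactness pulls back under $\pi$ because $\pi^*\omega_X=\omega_Y$, and the explicit flag that transversality/Gromov compactness for the multivalued setup and the compatibility of end perturbations with $\pi$-pullbacks are the genuine technical points) are correct and, if anything, more careful than the paper's terse statement that ``the machinery of \cite{biran2014lagrangian,biran2021lagrangian} shows\dots''.
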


Specializing to the case $X = \mcal K$ and $Y = T^4$ gives the result claimed in \Cref{sec:isoK}.
     \section{Proof of \Cref{prop:symplecticRoitman}}\label{ap:Roitmantheorem}
In this appendix we prove \Cref{prop:symplecticRoitman} by using the arguments in \cite[Section 7]{sheridan2020rational} and applying some elementary linear algebra.
It is worth pointing out that, while \cite[Lemma 7.9]{sheridan2020rational} should be thought of as the symplectic analog of Mumford's theorem \cite{mumford1969rational}, our extension is the symplectic analog of a more general theorem due to Roitman \cite[Theorem 5]{roitman1971gamma}.
In fact, the key result from linear algebra that we use was already encountered and proved by Roitman in his original paper.

Let $L \subset X$ be a Lagrangian submanifold.
Weinstein's tubular neighborhood theorem gives an identification between deformations of $L$ and closed $1$-forms on $L$ that are close to $0$. 
Since exact $1$-forms define Hamiltonian isotopic Lagrangians, we obtain an identification between nearby Lagrangians to $L$ modulo Hamiltonian isotopy and a neighborhood of $0$ in the first deRham cohomology group $H^1_{dR}(L) \cong H^1(L;\R)$.
More precisely, recall that given a Lagrangian isotopy $\mcal L = \{L_t\}_{t \in [0,1]}$, there is an associated {\it flux map} $Flux\, \mcal L \in H^1(L;\R) \cong \Hom(\pi_1(L),\R)$ defined via
\begin{align*}
    \begin{split}
        Flux\, \mcal L: \pi_1(L) &\to \R \\
        [\gamma] &\mapsto \int_{\Gamma} \omega
    \end{split}
\end{align*}
where $\Gamma := \cup_t \gamma_t$ is the cylinder obtained by parallel transporting $\gamma = \gamma_0$ along the isotopy $\mcal L$ (see \cite[Section 6]{solomon2013calabi} for more details).
Since the flux is invariant under homotopies of $\mcal L$ with fixed end-points and it vanishes if (and only if for nearby Lagrangians)  $L_0$ and $L_1$ are Hamiltonian isotopic,\footnote{Similar statements for the flux of a path of symplectomorphisms can be found in \cite[Section 10.2]{mcduff2017introduction}, and were adapted to this setting in \cite{solomon2013calabi}.} the mapping
$$
L' \mapsto Flux\, \mcal L_{L'}
$$
sending a Lagrangian $L'$ close to $L$  to the flux of an isotopy $\mcal L_{L'} = \{L_t\}_{t\in[0,1]}$ with $L_0 = L$ and $L_1 = L'$ provides an explicit indentification between Lagrangians close to $L$ modulo Hamiltonian isotopy and a neighborhood of the origin $\mcal U_L \subset H^1(L;\R)$ (note that for sufficiently close Lagrangians, the isotopy $\mcal L_{L'}$ is well-defined up to homotopy with fixed end-points). 
\begin{defn}
    Given a class $\alpha \in \mcal U_L$, we denote by $L(\alpha)$ the corresponding deformation of $L$.
\end{defn}
Note the Lagrangian $L(\alpha)$ can be explicitly represented (up to Hamiltonian isotopy) as the graph of any $1$-form representing the cohomology class $\alpha$.

Let $\mathbb L := (L_1,\dots L_k)$ be a tuple of Lagrangians and let $\mathbb U_{\mathbb L} := \prod_i \mcal U_{L_i}$.
It follows from the invariance of the Lagrangian cobordism class under Hamiltonian isotopy that there is a well-defined map 
\begin{align*}
    \begin{split}
        f_{\mathbb L}:\mathbb U_{\mathbb L} & \to \Cob(X)\\
        \alpha & \mapsto \sum_i L(\alpha_i).
    \end{split}
\end{align*}

One can easily incorporate local systems into the picture so that if  $\mathbb L = (L_1,\dots L_k)$ is a tuple of Lagrangians as above and $\mathbb U_{\mathbb L} := \prod_i (\mcal U_{L_i} \x H^1(L_i;G))$, there is a well-defined map 
\begin{align*}
    \begin{split}
        f_{\mathbb L}:\mcal U_{\mathbb L} & \to \Cob(X)\\
        (\alpha_i,\eta_i) & \mapsto \sum_i (L(\alpha_i),\eta_i).
    \end{split}
\end{align*}
Here, the notation $(L(\alpha_i),\eta_i)$ means the Lagrangian $L(\alpha_i)$ equipped with the local system induced from $\eta_i:\pi_1(L_i) \to G$ via the isotopy.
Writing $\dim \mathbb U_{\mathbb L} := \dim (\prod_i \mcal U_{L_i})$, we present the following notion of dimensionality of Lagrangian cobordism groups:
\begin{defn}\label{def:infinitedimensionality}
    We say the Lagrangian cobordism group $\Cob(X)$ is {\it $d$-dimensional} if $d$ is the minimum $N \in \N$ such that there exists a sequence $(\mathbb L_i,\mathbb U_{\mathbb L_i})_{i\in \N}$ of Lagrangian tuples $\mathbb L_i = (L^{(i)}_1,\dots, L^{(i)}_{n_i})$ and (not-necessarily open) subsets $\mathbb U_{\mathbb L_i} \subset \prod_j (H^1(L^{(i)}_j;\R) \x H^1(L^{(i)}_j;G))$ containing $0$ and of dimension at most $N$ such that $\cup_i \im(f_{\mathbb{L}_i}) = \Cob(X)$.
\end{defn}
\begin{rmk}
    The above notion of dimensionality of cobordism groups is similar, but not equal, to that in \cite{sheridan2020rational}---even without the presence of local systems.
    The key difference is that we allow arbitrary sets $\mathcal U_{L_i}$ containing $0$, whereas Sheridan-Smith require them to be {\it open}.
    For instance, for $X = T^2$ it is known by Haug \cite{haug2015lagrangian} (see also our \Cref{sec:lagcobT2}) that the Lagrangian cobordism group of a $2$-torus is $\Cob(T^2) \cong \Z^2 \oplus S^1$.
    This group is $1$-dimensional under \Cref{def:infinitedimensionality}, but it is not finite-dimensional if one requires the subsets $\mathbb U_{\mathbb L_i}$ to be open neighborhoods of $0$ and their dimensions to be bounded.
\end{rmk}

Sheridan-Smith prove an infinite-dimensionality result for cobordism groups as follows. 
First, they prove:

\begin{lem}(\cite[Lemma 7.4]{sheridan2020rational})
    Given $\mathbb L$ and $\mathbb L'$ finite tuples of Lagrangians, the subset
    $$
    \mcal Z_{\mathbb L,\mathbb L'} =
    \{(\alpha,\alpha') \in \mathbb U_{\mathbb L} \x \mathbb U_{\mathbb L'} | f_{\mathbb L}(\alpha) = f_{\mathbb L'} (\alpha')\}
    $$
    is a countable union of (open subsets of)  affine subspaces of $\mathbb U_{\mathbb L} \x \mathbb U_{\mathbb L'}$.
\end{lem}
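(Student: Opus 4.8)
The plan is to reconstruct the argument of \cite[Lemma 7.4]{sheridan2020rational}, the only genuinely new point being the bookkeeping for the $G$-local systems. The starting observation is that, by the very definition of $\Cob(X)$ as a quotient, a point $(\alpha,\alpha')$ lies in $\mcal Z_{\mathbb L,\mathbb L'}$ if and only if the formal difference $\sum_i(L_i(\alpha_i),\eta_i)-\sum_j(L'_j(\alpha'_j),\eta'_j)$ can be written as a finite $\Z$-linear combination of elementary cobordism relations. First I would organise such a witnessing combination into a \emph{combinatorial datum}: the finite list of cobordisms occurring, each taken up to Hamiltonian isotopy from a fixed countable collection of reference cobordisms; the identification of the \emph{external} ends of these cobordisms with the Lagrangians $L_i(\alpha_i)$ and $L'_j(\alpha'_j)$; and the pairing of the \emph{internal} ends that are required to cancel. (That a countable reference collection suffices rests, as in \Cref{rmk:hprinciple}, on undecorated Lagrangian cobordism being governed by algebraic topology, the decorations taking values in essentially discrete sets, and flux being the only remaining continuous modulus once a Hamiltonian-isotopy class has been fixed.) Since there are only countably many combinatorial data, it is enough to show that, for each fixed one, the locus of $(\alpha,\alpha')$ realised by a configuration of that type is an open subset of an affine subspace of $\mathbb U_{\mathbb L}\x\mathbb U_{\mathbb L'}$.

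Now fix a combinatorial datum. By Weinstein's neighbourhood theorem a $C^1$-small deformation of any of the reference Lagrangians — or of any of the reference cobordisms — is the graph of a closed $1$-form, well defined up to an exact form, that is, a point of a ball in $H^1(-;\R)$; under this identification the flux of an isotopy between two nearby Lagrangians is simply the \emph{difference} of the two corresponding cohomology classes, hence an affine-linear function of the deformation parameters. The requirement that the prescribed configuration of cobordisms actually closes up with its external ends equal (up to Hamiltonian isotopy) to the $L_i(\alpha_i)$ and $L'_j(\alpha'_j)$ then becomes a system of equations asserting that, at each internal cancelling pair, the two relevant flux classes coincide, and that each cobordism's flux restricts on its ends to the fluxes prescribed there. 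All of these are affine-linear in $(\alpha,\alpha')$ together with the interior moduli (the fluxes of the cobordisms relative to the references); eliminating the interior moduli, the solution set projects to an affine subspace $A\subset\mathbb U_{\mathbb L}\x\mathbb U_{\mathbb L'}$. Conversely, once these linear flux constraints hold one can actually build the configuration by deforming the reference cobordisms within their Weinstein neighbourhoods (equivalently, by suspending the corresponding Hamiltonian isotopies), which is possible for all sufficiently small deformation parameters — an open condition. Hence the realised locus is an open subset of $A$. For the local systems one simply adds, for each internal cancelling pair and each cobordism, the condition that the corresponding monodromy homomorphisms to $G$ agree; since parallel transport identifies $H^1$ of nearby Lagrangians, these are again ``affine'' conditions — intersections with cosets of subgroups — in the $H^1(-;G)$ factors, imposed alongside the ones above. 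Taking the countable union over all combinatorial data proves the lemma.

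I expect the main difficulty to be exactly the first step: justifying that the witnessing relations fall into \emph{countably} many combinatorial types, since the internal ends of the cobordisms are a priori unconstrained Lagrangians. One route is separability of the relevant spaces of Lagrangians combined with the $h$-principle; but in fact, for the intended application — \Cref{prop:symplecticRoitman} — it is enough to establish the \emph{containment} $\mcal Z_{\mathbb L,\mathbb L'}\subseteq\bigcup_k A_k$ into a countable union of affine subspaces, and this already follows from the flux obstruction alone: equality in $\Cob(X)$ forces a fixed linear combination of the fluxes $(\alpha_i),(\alpha'_j)$ to lie in a fixed countable set of values (controlled by the periods of $\omega$ and the discrete decoration data), whose preimage under an affine-linear map is a countable union of affine subspaces, irrespective of what the internal ends are. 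This weaker statement feeds directly into the Roitman-type linear algebra used to prove \Cref{prop:symplecticRoitman}.
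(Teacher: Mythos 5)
The paper does not give a proof of this lemma --- it imports it from \cite{sheridan2020rational} as a black box --- so there is no in-paper argument to compare against. Your main line of reasoning (organise witnessing relations into combinatorial types built from a countable reference collection of cobordisms, observe that the end-matching constraints become affine-linear in the deformation parameters under the Weinstein identification with $H^1$, and get countability from separability of the space of Lagrangian cobordisms plus discreteness of the brane data) is a faithful reconstruction of the intended argument and is essentially the right approach.

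The fallback in your last paragraph, however, has a genuine gap, and you should not rely on it. You claim that for \Cref{prop:symplecticRoitman} it is enough to establish the containment $\mcal Z_{\mathbb L,\mathbb L'}\subseteq\bigcup_k A_k$, and that this already follows from the flux obstruction alone. This is insufficient for two reasons. First, the flux obstruction only cuts out affine subspaces of small codimension (a period constraint is codimension one, or at most the rank of a single flux-type map), whereas the Roitman bound \Cref{lem:roitmanbound} gives a useful dimension estimate only because the isotropic subspaces involved have codimension at least $m=N$, with $N$ chosen arbitrarily large in the proof of \Cref{prop:symplecticRoitman}. Second, and more fundamentally, that proof applies the Roitman bound to the whole affine subspace $A_k$ through a point of $\mcal Z$, which requires knowing $\Omega\restr{A_k}=0$. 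The isotropy input from \Cref{lem:Omegavanishes} is only that $\Omega$ vanishes on $\mcal Z$; the step to $\Omega\restr{A_k}=0$ uses that $\mcal Z\cap A_k$ is \emph{open} in $A_k$ (a constant-coefficient alternating form that vanishes on an open subset of an affine subspace vanishes on the whole subspace). A bare containment $\mcal Z\subseteq\bigcup_k A_k$ does not supply this openness, so the dimension bound cannot be extracted. The full statement of the lemma --- that $\mcal Z$ \emph{is} a countable union of open subsets of affine subspaces --- is genuinely needed, and the countability must therefore be handled by the harder route you sketch first rather than bypassed via the flux argument.
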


Consider now the $n$-form $\Omega\in  \wedge^n T^* \mathbb U_{\mathbb L}$ defined by
\begin{equation}\label{eq:Omega}
    \Omega(\alpha^1,\dots,\alpha^n) = \sum_i \int_{L_i} \alpha^1_i\cup\dots\cup\alpha^n_i
\end{equation}
for $\alpha^j=(\alpha^j_1,\dots,\alpha^j_k) \in \mcal U_{L_1}\x\dots\x\mcal U_{L_k} \simeq T\mathbb U_{\mathbb L}$.
It can be naturally extended to an $n$-form on $\mathbb U_{\mathbb L} \x \mathbb U_{\mathbb L'}$ via the identification $\mathbb U_{\mathbb L} \x \mathbb U_{\mathbb L'} \simeq \mathbb U_{\mathbb L \cup \mathbb L'}$. 

\begin{lem}\label{lem:Omegavanishes}(\cite[Lemma 7.5]{sheridan2020rational})
    The $n$-form $\Omega$ vanishes on $\mcal Z_{\mathbb L,\mathbb L'}$.
\end{lem}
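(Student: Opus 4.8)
The plan is to reduce the statement to a purely local (pointwise) claim on tangent spaces and then invoke the fact, already available to us, that $\mcal Z_{\mathbb L,\mathbb L'}$ is a countable union of affine subspaces on each of which $f_{\mathbb L} = f_{\mathbb L'}$ is constant. So fix a point $(\alpha_0,\alpha_0')\in\mcal Z_{\mathbb L,\mathbb L'}$ lying on one of these affine pieces $A$, and let $W := T_{(\alpha_0,\alpha_0')}A \subset T_{(\alpha_0,\alpha_0')}(\mathbb U_{\mathbb L}\x\mathbb U_{\mathbb L'}) \cong \mathbb U_{\mathbb L\cup\mathbb L'}$ be the corresponding linear subspace; it suffices to show $\Omega$ restricted to $\wedge^n W$ vanishes. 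By definition $W$ consists of those infinitesimal deformations $(\beta,\beta')$ of the tuple $\mathbb L\cup\mathbb L'$ that preserve the cobordism class to first order, i.e.\ for which $f_{\mathbb L}(\alpha_0+t\beta) = f_{\mathbb L'}(\alpha_0'+t\beta')$ for small $t$.

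First I would extract the key linear-algebra consequence of lying in $W$: moving along $A$ is a Hamiltonian isotopy of the (disjoint union of the) tuples involved, hence the total flux through any loop is unchanged and, more to the point, the \emph{area} swept is zero — concretely, summing the Stokes-type identity over the components of the cobordism realising the relation $f_{\mathbb L}(\alpha) = f_{\mathbb L'}(\alpha)$ forces the bilinear pairing $\sum_i \int_{L_i}\beta_i\cup\gamma_i$ (over all Lagrangians in $\mathbb L\cup\mathbb L'$, with the two tuples entering with opposite signs) to vanish for all $\beta,\gamma \in W$. In other words, $W$ is \emph{isotropic} for the skew-symmetric form $\Omega^{(2)}(\beta,\gamma) := \sum_i \pm\int_{L_i}\beta_i\cup\gamma_i$ (the cup-product pairing on the $\mathbb U$-factor, which is antisymmetric since $n$ is even — here is where $\dim X \geq 4$ and the even-degree middle dimension enter, exactly as in Sheridan–Smith). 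This is the step I expect to be the main obstacle: one must check carefully that the Stokes argument really produces this pairing and not some boundary correction, and that the signs work out so that the form is genuinely skew (this uses that the cobordism is Lagrangian, so $\omega$ integrates to zero over it, and that the two ends of the cobordism carry opposite orientations).

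Once $W$ is known to be isotropic for the skew form $\Omega^{(2)}$, the vanishing of the $n$-form $\Omega|_{\wedge^n W}$ is the elementary linear-algebra lemma (due originally to Roitman, and the one the excerpt's introduction to this appendix advertises): if a skew-symmetric bilinear form on a finite-dimensional space $V$ restricts to $0$ on a subspace $W$, then \emph{every} alternating form obtained from it by the ``Pfaffian-type'' expression $\Omega(\alpha^1,\dots,\alpha^n) = \sum_i \int_{L_i}\alpha^1_i\cup\dots\cup\alpha^n_i$ — which by the ring structure of $H^*(L_i)$ factors through iterated cup products of the degree-$1$ parts, i.e.\ through the bilinear pairing $\Omega^{(2)}$ — also vanishes on $\wedge^n W$. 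More precisely, $\Omega(\alpha^1,\dots,\alpha^n)$ is a sum of products of pairings $\int_{L_i}\alpha^j_i\cup\alpha^{j'}_i$ (grouping the $n$ even-degree-$1$ classes into $n/2$ pairs via associativity of $\cup$), and each such product contains a factor $\Omega^{(2)}(\alpha^j,\alpha^{j'})$ which is zero when all arguments lie in $W$. Carrying this out component by component over the Lagrangians of $\mathbb L\cup\mathbb L'$ and summing gives $\Omega|_A = 0$, and since $\mcal Z_{\mathbb L,\mathbb L'}$ is the (countable) union of such affine pieces $A$, we conclude $\Omega$ vanishes on all of $\mcal Z_{\mathbb L,\mathbb L'}$, as claimed.
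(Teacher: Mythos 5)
The paper gives no proof of this lemma---it is quoted from \cite[Lemma 7.5]{sheridan2020rational}---so I am comparing your proposal with the argument that reference (and the logic of this appendix) actually requires.

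There is a genuine gap, and it sits precisely where the appendix would become superfluous if your route worked. Each $L_i$ is an $n$-dimensional Lagrangian in the $2n$-dimensional $X$. The bilinear form you introduce, $\Omega^{(2)}(\beta,\gamma)=\sum_i\pm\int_{L_i}\beta_i\cup\gamma_i$ with $\beta_i,\gamma_i\in H^1(L_i;\R)$, asks you to integrate a degree-$2$ class over an $n$-manifold; this is only defined when $n=2$. For $n>2$ there is no such scalar pairing, so there is nothing for $W$ to be isotropic with respect to. Consequently the claimed ``Pfaffian-type'' factorization of $\Omega(\alpha^1,\dots,\alpha^n)=\sum_i\int_{L_i}\alpha^1_i\cup\cdots\cup\alpha^n_i$ into products of pairings $\int_{L_i}\alpha^j_i\cup\alpha^{j'}_i$ is ill-posed for $n>2$, and for $n=2$ it is circular ($\Omega$ and $\Omega^{(2)}$ are the same form). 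Two further symptoms that the plan is off: the $n$-form $\Omega$ is alternating for \emph{all} $n$ simply because degree-$1$ classes anticommute under cup product---it does not rely on $n$ being even, and $n$ is not assumed even in \Cref{prop:symplecticRoitman}; and if $\Omega$ really reduced to a $2$-form, one could use the symplectic dimension bound of \cite{sheridan2020rational} directly and would never need Roitman's $q$-form bound (\Cref{lem:roitmanbound}), which is the whole point of \Cref{ap:Roitmantheorem}.

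The correct argument (this is what \cite[Lemma 7.5]{sheridan2020rational} does) works with the $n$-form directly via Stokes on the cobordism. On each affine piece $A\subset\mcal Z_{\mathbb L,\mathbb L'}$ there is a Lagrangian cobordism $V$, and tangent vectors to $A$ arise as restrictions of flux classes $\beta^1,\dots,\beta^n\in H^1(V;\R)$ to the boundary components $L_i\subset\partial V$. Then, for classes tangent to $A$,
\[
\Omega(\alpha^1,\dots,\alpha^n)=\sum_i\pm\int_{L_i}\bigl(\beta^1\cup\cdots\cup\beta^n\bigr)\big|_{L_i}
=\bigl\langle\,\beta^1\cup\cdots\cup\beta^n,\ i_*[\partial V]\,\bigr\rangle=0,
\]
since $i_*[\partial V]=0$ in $H_n(V;\R)$: in the long exact sequence of the pair $(V,\partial V)$, $[\partial V]=\partial[V,\partial V]$ and $i_*\circ\partial=0$. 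This is the Stokes-type step your proposal gestures at, but it must be carried out at the level of the $n$-fold cup product on $V$, not reduced to a bilinear pairing on the $L_i$.
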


The key property is that  $\Omega$ is non-degenerate for $n = 2$.
Hence we have a symplectic form on $\mathbb U_{\mathbb L} \x \mathbb U_{\mathbb L'}$ and an isotropic subspace $\mcal Z_{\mathbb L, \mathbb L'} \subset \mathbb U_{\mathbb L} \x \mathbb U_{\mathbb L'}$, which gives a bound $\dim \mcal Z_{\mathbb L, \mathbb L'} \leq \frac{1}{2}\dim (\mathbb U_{\mathbb L} \x \mathbb U_{\mathbb L'})$.
Using this, Sheridan-Smith state the infinite-dimensionality of $\Cob(X)$:

\begin{lem}(\cite[Lemma 7.9]{sheridan2020rational})\label{lem:sslemma7.9}
    Suppose $X$ is a symplectic four manifold containing a Lagrangian of genus at least one.
    Then, for any countable family $\{\mathbb L_i\}$ of finite tuples $\mathbb L_i = (L^i_1,\dots,L^i_{n_i})$ such that $\sup_i \dim \mathbb U_{\mathbb L_i} < \infty$, the cobordism group $\Cob (X)$ is not covered by the images of the maps $f_{\mathbb L_i}$. 
\end{lem}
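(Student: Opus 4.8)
The plan is to run the symplectic linear-algebra argument of Sheridan--Smith, but to probe $\Cob(X)$ not with a single Lagrangian but with an arbitrarily long tuple made of copies of a fixed Lagrangian of genus $\geq 1$, so that the freely available dimension on the probing side overwhelms the fixed bound $D:=\sup_i\dim\mathbb U_{\mathbb L_i}<\infty$. First I would fix a closed orientable Lagrangian surface $T\subset X$ with $b_1(T)=2g$, $g\geq 1$, and argue by contradiction: assume $\bigcup_i\operatorname{im}(f_{\mathbb L_i})=\Cob(X)$. For $N\in\N$ put $\mathbb L_N:=(T,\dots,T)$ ($N$ copies), so that $\mathbb U_{\mathbb L_N}$ is an open neighbourhood of $0$ in $(H^1(T;\R))^{N}$ of dimension $2gN$. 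Writing $\mathrm{pr}_N:\mathbb U_{\mathbb L_N}\x\mathbb U_{\mathbb L_i}\to\mathbb U_{\mathbb L_N}$ for the projection and $\mcal Z_{\mathbb L_N,\mathbb L_i}=\{(\alpha,\beta):f_{\mathbb L_N}(\alpha)=f_{\mathbb L_i}(\beta)\}$ as above, the assumption forces
\[
\mathbb U_{\mathbb L_N}\;=\;\bigcup_i\mathrm{pr}_N\bigl(\mcal Z_{\mathbb L_N,\mathbb L_i}\bigr).
\]

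Next I would estimate the dimension of each affine piece of $\mcal Z_{\mathbb L_N,\mathbb L_i}$. By \cite[Lemma 7.4]{sheridan2020rational} (restated above), $\mcal Z_{\mathbb L_N,\mathbb L_i}$ is a countable union of open subsets of affine subspaces $A\subset\mathbb U_{\mathbb L_N}\x\mathbb U_{\mathbb L_i}\simeq\mathbb U_{\mathbb L_N\cup\mathbb L_i}$; since the $2$-form $\Omega$ of \Cref{eq:Omega} is constant (translation invariant) and vanishes on $\mcal Z_{\mathbb L_N,\mathbb L_i}$ by \Cref{lem:Omegavanishes}, it vanishes on all of each such $A$. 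The restriction of $\Omega$ to the first factor is the direct sum of $N$ copies of the intersection form of $T$, hence \emph{nondegenerate}, so the kernel $K$ of $\Omega$ on $\mathbb U_{\mathbb L_N}\x\mathbb U_{\mathbb L_i}$ lies entirely in the second factor and $\dim K\leq\dim\mathbb U_{\mathbb L_i}\leq D$. Passing to the symplectic quotient by $K$, the image of the linear part $\vec A$ of $A$ is isotropic, and the standard bound gives
\[
\dim A\;=\;\dim\bigl(\vec A/(\vec A\cap K)\bigr)+\dim(\vec A\cap K)\;\leq\;\tfrac12\dim\bigl(\mathbb U_{\mathbb L_N}\x\mathbb U_{\mathbb L_i}\bigr)+\tfrac12\dim K\;\leq\;gN+D.
\]
Choosing $N$ with $gN>D$ then makes $gN+D<2gN=\dim\mathbb U_{\mathbb L_N}$, so every $\mathrm{pr}_N(A)$ is contained in a \emph{proper} affine subspace of $\mathbb U_{\mathbb L_N}$. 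Hence $\bigcup_i\mathrm{pr}_N(\mcal Z_{\mathbb L_N,\mathbb L_i})$ is a countable union of proper affine pieces of $\R^{2gN}$, which has Lebesgue measure zero and therefore cannot equal the nonempty open set $\mathbb U_{\mathbb L_N}$ --- a contradiction. Since any such $N>D/g$ works, a countable family $\{\mathbb L_i\}$ with $\sup_i\dim\mathbb U_{\mathbb L_i}<\infty$ cannot have $\bigcup_i\operatorname{im}(f_{\mathbb L_i})=\Cob(X)$, which is the assertion.

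I expect the main obstacle to be the bookkeeping around the possible \emph{degeneracy} of $\Omega$ on the factors $\mathbb U_{\mathbb L_i}$: the Lagrangians $L^i_j$ occurring in the covering family need not be orientable surfaces (they could be Klein bottles or $\R\P^2$'s, for which $H^2(-;\R)=0$ and the cup product vanishes identically), so $\Omega$ is only presymplectic on $\mathbb U_{\mathbb L_N\cup\mathbb L_i}$. The remedy is exactly the one used above: confine the kernel $K$ to the $\mathbb U_{\mathbb L_i}$-block using nondegeneracy on the $\mathbb U_{\mathbb L_N}$-block, and absorb $\dim K\leq D$ into the additive error, which is harmless once $N\gg D$. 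The two routine points still needing care are the passage from ``$\Omega$ vanishes on an open subset of $A$'' to ``$\Omega$ vanishes on $A$'' (immediate from constancy of $\Omega$) and the measure-theoretic (equivalently Baire-category) step ruling out that a countable union of proper affine subspaces covers an open subset of $\R^{2gN}$.
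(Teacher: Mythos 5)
Your proof is correct, and it follows the Sheridan--Smith strategy that the paper sketches, but you have supplied a nontrivial detail that the paper's summary of that strategy glosses over: the paper (in the discussion preceding \Cref{lem:sslemma7.9}) asserts that ``$\Omega$ is non-degenerate for $n=2$'' on $\mathbb U_{\mathbb L}\times\mathbb U_{\mathbb L'}$, but this is only true if every Lagrangian involved is an orientable surface, and the tuples $\mathbb L_i$ are arbitrary. You correctly identify this and fix it by confining the kernel $K$ to the $\mathbb U_{\mathbb L_i}$ block (using nondegeneracy of the intersection form on the $\mathbb U_{\mathbb L^{(N)}}$ block) and then applying the presymplectic quotient bound $\dim A\leq\tfrac12\dim V+\tfrac12\dim K$, so the error term $\dim K\leq D$ becomes harmless once $N\gg D$.

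It is worth comparing this with the paper's own proof of the higher-dimensional generalization (\Cref{prop:symplecticRoitman}, proved in \Cref{ap:Roitmantheorem}), since that proof is the closest analogue in the text. There the symplectic half-dimension estimate is replaced by Roitman's bound (\Cref{lem:roitmanbound}), $\dim W\leq\dim V-m$, applied to the decomposition $V_1=\mathbb U_{\mathbb L_i}\oplus\mcal U_T$, $V_j=\mcal U_T$ for $j\geq 2$, which packages the whole ``unpredictable'' block $\mathbb U_{\mathbb L_i}$ together with one copy of $\mcal U_T$ so that each $\omega_j\neq 0$; this also absorbs degeneracy, but does so differently and, crucially, works for $n$-forms with any $n\geq 2$. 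Your presymplectic-quotient bound is genuinely a $2$-form fact and does not generalize to higher $n$, which is exactly why the paper brings in Roitman's lemma for \Cref{prop:symplecticRoitman}. For the $n=2$ statement at hand, the two bounds are essentially interchangeable (requiring $N>D$ versus $N>D/g$), so either route proves \Cref{lem:sslemma7.9}. Your treatment of the final measure-theoretic step and of the passage from ``$\Omega$ vanishes on an open subset of an affine piece'' to ``$\Omega$ vanishes on the whole affine piece'' is also correct.
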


The dimension bound $\dim \mcal Z_{\mathbb L \mathbb L'} \leq \frac{1}{2}\dim (\mathbb U_{\mathbb L} \x \mathbb U_{\mathbb L'})$ is the key to make their proof work.
This bound comes from having a symplectic form, which only works in the case $n = 2$. 
For arbitrary $n$, recall from \Cref{eq:Omega} that we have  an $n$-form $\Omega$ of the form $\Omega = \sum_i pr_i^*\omega_i$, where $pr_i:\mathbb U_{\mathbb L} \to \mcal U_{L_i}$ is the projection and $\omega_i:H^1(L_i;\R)^{\otimes n} \to \R$ is given by cup-product and integration.
It turns out that in a situation like this one still has a useful bound:

\begin{lem}\label{lem:roitmanbound}(\cite[Lemma 9]{roitman1971gamma})
    Let $V = \oplus_{j=1}^m V_j$ be a graded vector space and $0 \neq \omega_j \in \wedge^q V_j^*$ non-zero $q$-forms on $V_j$, $q \geq 2$.
    Denote by $pr_j:V \to V_j$  the natural projection.
    If $W \subset V$ is an isotropic subspace for the $q$-form $\Omega = \sum_j pr^*_j \omega_j$, then $\dim W \leq \dim V - m$.
\end{lem}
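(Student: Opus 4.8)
The plan is to reduce the statement to the case $q=2$, where it is standard symplectic linear algebra, by contracting the $q$-form with $q-2$ vectors taken from the isotropic subspace $W$ itself. So the first step is the case $q=2$. Writing $\mathrm{rad}(\omega_j)=\{v\in V_j:\iota_v\omega_j=0\}$ and $\mathrm{rad}(\Omega)=\{v\in V:\iota_v\Omega=0\}$, the identity $\iota_v\Omega=\sum_j pr_j^*(\iota_{pr_j v}\,\omega_j)$ shows $\mathrm{rad}(\Omega)=\bigoplus_j\mathrm{rad}(\omega_j)$. A nonzero alternating $2$-form has even rank at least $2$, so $\dim\mathrm{rad}(\omega_j)\le\dim V_j-2$ and hence $\dim\mathrm{rad}(\Omega)\le\dim V-2m$. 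Since $\Omega$ descends to a nondegenerate form on $V/\mathrm{rad}(\Omega)$, the image of $W$ there is isotropic, of dimension at most $\tfrac{1}{2}\dim(V/\mathrm{rad}(\Omega))$; together with $\dim W\le\dim(W\cap\mathrm{rad}(\Omega))+\tfrac{1}{2}\dim(V/\mathrm{rad}(\Omega))$ this gives $\dim W\le\tfrac{1}{2}(\dim V+\dim\mathrm{rad}(\Omega))\le\dim V-m$.

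For $q\ge 3$ I would argue by induction on $\dim V$ (the case $m=1$ is immediate: $W=V$ would force $\omega_1=0$). The key identity is $\iota_{v_{q-2}}\cdots\iota_{v_1}\Omega=\sum_j pr_j^*(\iota_{pr_j v_{q-2}}\cdots\iota_{pr_j v_1}\,\omega_j)$, valid for all $v_i$, together with the fact that when $v_1,\dots,v_{q-2}\in W$ the resulting $2$-form is still isotropic on $W$. Suppose first that $pr_j(W)=V_j$ for every $j$. Then for each fixed $j$ the set of tuples $(w_1,\dots,w_{q-2})\in W^{q-2}$ with $\iota_{pr_j w_{q-2}}\cdots\iota_{pr_j w_1}\omega_j=0$ is a proper Zariski-closed subset of the affine space $W^{q-2}$: if it were the whole space, surjectivity of $pr_j|_W$ would force $\omega_j=0$. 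A finite union of proper Zariski-closed subsets of an affine space over an infinite field is proper, so one can choose $w_1,\dots,w_{q-2}\in W$ for which all $m$ contracted forms $\omega_j':=\iota_{pr_j w_{q-2}}\cdots\iota_{pr_j w_1}\omega_j$ are nonzero. Then $\Omega':=\iota_{w_{q-2}}\cdots\iota_{w_1}\Omega=\sum_j pr_j^*\omega_j'$ is a sum of $m$ nonzero forms, now of degree $2$, for which $W$ is isotropic, and the case $q=2$ gives $\dim W\le\dim V-m$.

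It remains to treat the case $pr_{j_0}(W)=:V_{j_0}'\subsetneq V_{j_0}$ for some $j_0$, and here I would drop the ambient dimension. Since $W\subseteq\bigoplus_j pr_j(W)$, we have $W\subseteq V':=V_{j_0}'\oplus\bigoplus_{j\ne j_0}V_j$, and $\dim V'<\dim V$. If $\omega_{j_0}|_{V_{j_0}'}\ne 0$, then $\Omega|_{V'}$ is still a sum of $m$ nonzero $q$-forms adapted to the decomposition of $V'$, so the inductive hypothesis gives $\dim W\le\dim V'-m<\dim V-m$. If $\omega_{j_0}|_{V_{j_0}'}=0$, then $\Omega|_{V'}$ only involves the remaining $m-1$ summands; projecting $W$ to $\bigoplus_{j\ne j_0}V_j$ along $V_{j_0}'$ yields an isotropic subspace $\bar W$ for $\sum_{j\ne j_0}pr_j^*\omega_j$, so by induction $\dim\bar W\le\dim V-\dim V_{j_0}-(m-1)$, while the kernel $W\cap V_{j_0}'$ of the map $W\to\bar W$ has $\dim(W\cap V_{j_0}')\le\dim V_{j_0}'\le\dim V_{j_0}-1$; adding the two estimates gives $\dim W\le\dim V-m$ once more.

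I expect the main obstacle to be exactly this last case distinction: the clean contraction argument only applies when every projection $pr_j|_W$ is surjective, and the non-surjective case forces the dimension-reduction step above, after which one has to verify the induction on $\dim V$ is well-founded — each reduction strictly decreases $\dim V$ while preserving the hypotheses that each $\omega_j$ is a nonzero $q$-form on its summand (they are either unchanged, restricted to a subspace on which the form remains nonzero, or discarded). Over an infinite field — in particular over $\mathbb R$, the case relevant to the application — the genericity argument via Zariski-closed subsets is unproblematic; over a finite field one would first extend scalars, which changes neither isotropy nor dimension.
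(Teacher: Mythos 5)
The paper does not prove this lemma at all: it is cited directly as Roitman's Lemma~9 and used as a black box, so there is no in-paper argument to compare against. Your proof is correct and self-contained. The $q=2$ case via $\mathrm{rad}(\Omega)=\bigoplus_j\mathrm{rad}(\omega_j)$, $\dim\mathrm{rad}(\omega_j)\le\dim V_j-2$, and the half-dimension bound for isotropics of the induced symplectic form on $V/\mathrm{rad}(\Omega)$ is exactly right. The reduction of $q\ge 3$ to $q=2$ by $(q-2)$-fold contraction with vectors from $W$ is sound: the identity $\iota_{w_{q-2}}\cdots\iota_{w_1}\,pr_j^*\omega_j=pr_j^*(\iota_{pr_jw_{q-2}}\cdots\iota_{pr_jw_1}\omega_j)$ holds, $W$ remains isotropic for the contracted $2$-form, and the Zariski-genericity argument correctly produces a tuple making all $m$ contracted forms nonzero simultaneously, \emph{provided} each $pr_j|_W$ is surjective. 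Your handling of the non-surjective case is the one place that requires care and you get it right: when $\omega_{j_0}|_{pr_{j_0}(W)}\ne 0$ you shrink $V_{j_0}$ and recurse on smaller $\dim V$; when $\omega_{j_0}|_{pr_{j_0}(W)}=0$ you check that the projection $\bar W$ of $W$ to $\bigoplus_{j\ne j_0}V_j$ is still isotropic for $\sum_{j\ne j_0}pr_j^*\omega_j$, apply the inductive hypothesis to a space with $m-1$ summands, and combine with $\dim(W\cap V'_{j_0})\le\dim V_{j_0}-1$; the induction on $\dim V$ is well-founded since both reductions strictly decrease the ambient dimension. The only cosmetic point: the parenthetical "$m=1$ is immediate" is not actually used as the base case — the base case of the induction on $\dim V$ is $\dim V=0$ (which forces $m=0$ and is trivial), and $m=1$ is handled uniformly by the general argument.
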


With this result, the proof of  \cite[Lemma 7.9]{sheridan2020rational} translates almost directly:

\begin{proof}[Proof of \Cref{prop:symplecticRoitman}]
    Let $N$ be such that $\dim \mathbb U_{\mathbb L_i} < N$ for all $i$, and consider the family $\mathbb L^{(N)} = (T,\dots,T)$ consisting of $N$ copies of the Lagrangian torus $T$.
    We choose $\mcal U_T$ to be an open neighborhood of $0$ and as before write $\mathbb U_{\mathbb L^{(N)}} = \prod_{k=1}^N \mcal U_T$.
    We will prove that the images of the maps $f_{\mathbb L_i}$ can't cover $\im(f_{\mathbb L^{(N)}}) \subset \Cob(X)$, so that in particular they can't cover $\Cob(X)$.
    Note that the images of the maps $f_{\mathbb L_i}$ cover $\im(f_{\mathbb L^{(N)}})$ if and only if the projections  $$
     \mcal Z_{\mathbb L_i, \mathbb L^{(N)}} 
    \subset 
    \mathbb U_{\mathbb L_i} \x \mathbb U_{\mathbb L^{(N)}} 
    \to 
    \mathbb U_{\mathbb L^{(N)}}
    $$
    cover all of $\mathbb U_{\mathbb L^{(N)}}$.
    We will show that $\dim \mcal Z_{\mathbb L_i, \mathbb L^{(N)}} < \dim \mathbb U_{\mathbb L^{(N)}}$ for all $i$, and since countably many sets of strictly lower dimension cannot surject onto $\mathbb U_{\mathbb L^{(N)}}$ the result will follow.

    Consider the decomposition $\mathbb U_{\mathbb L_i} \oplus \mathbb U_{\mathbb L^{(N)}} = \oplus_j V_j$ given by  $V_1 = \mathbb U_{\mathbb L_i} \oplus \mathcal U_T$ and $V_j = \mathcal U_T$ for $j = 2,\dots,N$.
    Given a Lagrangian $L$ we denote by $\omega_L \in \wedge^n H^1(L;\R)^*$ the $n$-form on $H^1(L;\R)$ given by cup-product and integration; they naturally restrict to $n$-forms on $\mcal U_L \subset H^1(L;\R)$.
    We then define $n$-forms $\omega_j \in \wedge^n V_j^*$ by $\omega_1 = \sum_k \omega_{L_k^{(i)}} + \omega_T$ and $\omega_j =  \omega_T$ for $j = 2,\dots,N$.
    Note that $\omega_j \neq 0$ for all $j$ since $T$ is a torus and thus $\cup: H^1(T;\R)^{\otimes n} \to H^n(T;\R)$ is a surjection.
    The form
    $$
    \Omega = \sum_{j=1}^N pr_j^* \omega_j,\quad pr_j : V  = \mathbb U_{\mathbb L_i} \oplus \mathbb U_{\mathbb L^{(N)}} = \oplus_j V_j \to V_j
    $$
    vanishes on $\mcal Z_{\mathbb L_i, \mathbb L^{(N)}}$ by \Cref{lem:Omegavanishes}, hence we have:
    \begin{align*}\label{eq:bound2}
        \dim \mcal Z_{\mathbb L_i, \mathbb L^{(N)}} & \leq \dim(\mathbb U_{\mathbb L_i} \x \mathbb U_{\mathbb L^{(N)}}) - N \quad & \text{(\Cref{lem:roitmanbound})}\\
&< N + \dim \mathbb U_{\mathbb L^{(N)}} - N \quad & (\dim \mathbb U_{\mathbb L_i} < N)\\
        &=\dim \mathbb U_{\mathbb L^{(N)}}.
    \end{align*}
    In particular the projection $\cup_i \mcal Z_{\mathbb L_i, \mathbb L^{(N)}} \subset \cup_i \mathbb U_{\mathbb L_i} \x \mathbb U_{\mathbb L^{(N)}} 
    \to 
    \mathbb U_{\mathbb L^{(N)}}$ cannot be surjective.
\end{proof}

\renewbibmacro{in:}{}
\def\bibrangedash{ -- }
\printbibliography

\end{document}